\pgfplotsset{compat=1.18}
\numberwithin{equation}{chapter}
\theoremstyle{plain}
\newtheorem{tw}{Theorem}
\numberwithin{tw}{chapter}
\newtheorem{lem}[tw]{Lemma}
\newtheorem{prop}[tw]{Proposition}
\theoremstyle{definition}
\newtheorem{defi}[tw]{Definition}
\newtheorem*{defi*}{Definition}
\newtheorem{przyk}[tw]{Example}
\newtheorem*{ntn*}{Notation}
\newtheorem*{note*}{Note}
\newtheorem{uw}[tw]{Remark}
\newtheorem*{rem*}{Remark}
\newtheorem*{kom}{Comment}
\newcommand{\R}{\mathbb{R}}
\newcommand{\T}{\mathbb{T}}
\newcommand{\norm}[1]{\|#1\|}
\newcommand{\eps}{\epsilon}
\newcommand{\loc}{\mathrm{loc}}
\newcommand{\dt}{ dt}
\DeclareMathOperator{\esssup}{ess\,sup}
\DeclareMathOperator{\diverg}{div}
\DeclareMathOperator{\supp}{supp}
\DeclareMathOperator{\tr}{tr}
\DeclareMathOperator{\intel}{int}
\DeclareMathOperator{\capac}{Cap}
\renewcommand{\epsilon}{\varepsilon}
\renewcommand{\phi}{\varphi}
\begin{document}
\setlength{\parindent}{0cm}
\setlength{\parskip}{.2cm}

\thispagestyle{empty}
\begin{center}
{\bf\Large University of Warsaw}\\
{\large Doctoral School of Exact and Natural Sciences}
\end{center}

\vspace*{4cm}
\begin{center}
{Łukasz Chomienia}
\end{center}

\vspace*{1cm}
\begin{center}
{\huge\bf \mbox{Partial Differential Equations} }\\
\vspace*{.3cm}
{\huge\bf on Low-Dimensional Structures}\\
\end{center}

\vspace*{1cm}
\begin{center}
{PhD dissertation}
\end{center}

\vspace*{2cm}
\begin{flushright}
\textbf{Supervisor:}\\

dr hab. Anna Zatorska-Goldstein, prof. UW
\end{flushright}

\vspace*{1.8cm}
\begin{center}
April 2024
\end{center}
\newpage
\thispagestyle{empty}

\textit{Author's declaration}\\
I hereby declare that this dissertation is my own work. As a part of my thesis research, I have referred to the paper titled "Higher regularity of solutions to elliptic equations on low-dimensional structures" \cite{Cho23}. 
This paper was co-authored with Michał Fabisiak, who is a doctoral candidate at the University of Warsaw. Therefore, I need to clarify my contribution to the paper. I assert that all the proposed ideas, methods, and their implementation included in the final version of paper 
\cite{Cho23} 
were developed and prepared by me.
The second author proposed some minor ideas and refinements to the final version of the paper.

\vspace*{1cm}
3 April 2024\hfill
\begin{minipage}[t]{8cm}
\centering
..................................................\\
{\footnotesize Łukasz Chomienia}
\end{minipage}

\vspace*{4cm}
\textit{Supervisor's declaration}\\
This dissertation is ready to be reviewed.

\vspace*{1cm}
3 April 2024\hfill
\begin{minipage}[t]{8cm}
\centering
..................................................\\
{\footnotesize dr hab. Anna Zatorska-Goldstein, prof. UW}
\end{minipage}

\newpage

\newgeometry{left=2.4cm, right=2.4cm, top=3cm, bottom=2.5cm}
I would like to thank my supervisor -- Professor Anna Zatorska-Goldstein, for her continuous support and help, and to Professor Piotr Rybka for suggesting many interesting ideas. 

The research of the author was in part supported by the National Science Centre, Poland, by the Grant: 2019/33/B/ST1/00535. 
\restoregeometry

\newpage

\thispagestyle{plain}
\begin{center}
    \large
    \textbf{Abstract}
    \vspace{0.2cm}
\end{center}

This thesis pertains to the study of elliptic and parabolic partial differential equations on "thin" structures. These structures are closed sets $S$ embedded in Euclidean space $\mathbb{R}^3$ of the form $S = \bigcup_{i=1}^m S_i$, where $S_i$ are smooth manifolds of dimensions $\dim S_i \in \{1,2\}$.

In this setting, a singular measure $\mu$ with support on $S$ is defined and equipped with a tangent bundle $T_\mu$. The $\mu$-related gradient $\nabla_{\mu}$ is established by the orthogonal projection $P_{\mu}$ on the tangent bundle $T_{\mu}$.
The Sobolev space $H^1_{\mu}$ is defined as a complement of $C^{\infty}_c(\mathbb{R}^3)$ in the norm $\| \cdot \|_{\mu}:=\left( \| \cdot \|_{L^2_{\mu}}^2 + \| \nabla_{\mu} \cdot \|^2_{L^2_{\mu}}\right)^{\frac{1}{2}}$.

Low-dimensional elliptic-type problems of the form $\int_{\Omega}A_{\mu}\nabla_{\mu}u \cdot \nabla_{\mu}\phi d\mu = \int_{\Omega}f\phi d\mu$ are considered within this setting.

For strong-form parabolic problems, a setting based on the Bouchitt{\'e}-Fragl{\`a} framework \cite{Bou02} is proposed. A space of pairs $(u,b)$ is introduced, where $u\in H^1_{\mu}$ and $b$ is a Cosserat vector field playing the role of an artificial normal component of the classical gradient. Specifically, it is demanded that $\nabla_{\mu} u+b \in H^1_{\mu}$. In this space, low-dimensional counterparts of classical second-order operators are introduced.

{The thesis consists of two papers authored by the researcher:
\begin{itemize}
\item \textit{Parabolic PDEs on low-dimensional structures}\cite{Cho24}
\item \textit{Higher regularity of solutions to elliptic equations on low-dimensional structures}\cite{Cho23}.
\end{itemize}}

The first main objective of the thesis is to establish the strong and weak low-dimensional counterparts of the parabolic problem $\partial_t u - \diverg(B\nabla u) = 0$ with the Neumann boundary condition and initial datum $u = g,$ defined on the low-dimensional structure $S$. The suitable measure-theoretic second-order framework is developed by analogy with the first-order setting sketched above. Fundamental outcomes regarding the existence and uniqueness of solutions are established by combining the two crucial facts: first, the low-dimensional counterpart $L_{\mu}$ of the operator $\diverg(B\nabla u)$ is closed, and second, the operator $L_{\mu}$ generates a {proper type} of a semigroup.

The main technical result is achieving the closedness of the low-dimensional second-order $L_{\mu}$. This is done by applying special geometric extensions of functions defined on $S$ with the convergence results of the second-order operators, and new characterisations of the involved spaces of functions.

To construct a semigroup generated by $L_{\mu}$, a variant of Magyar \cite{Mag89} of the Hille-Yosida Theorem for non-invertible operators is adapted. The idea is to construct the semigroup by the series of "forward" iterations $\frac{1}{k!}L_{\mu}^{k} = \frac{1}{k!}(L_{\mu} \circ... \circ L_{\mu}).$ The proposed method only accesses highly regular initial data $g \in \bigcap_{k=1}^{\infty}D(L^{k}_{\mu}).$

An alternative direction of study is presented to extend the class of accessible initial data. Weak-type parabolic problems are defined, and the existence of solutions is obtained by the application of the Lions version of the Lax-Milgram Lemma. It is also shown that the obtained weak solutions are regularized by demonstrating that they belong to the space of solutions to a stronger type of problem.

The asymptotic behaviour of parabolic solutions is studied to establish a connection between parabolic and elliptic problems.

The second aspect of the thesis is to examine the higher regularity of weak solutions to the abovementioned elliptic problems. A componentwise $H^2(S_i)$ regularity is proven as the most elementary regularity result.
The geometry of the set $S$ causes problems with applications of the difference quotients method due to the lack of the proper definition for shifts $u(\cdot + hv),\; v\in S$ of weak solutions $u\in H^1_{\mu}$. To circumvent the related issues, suitable extensions of solutions are constructed.

Our proposed method involves extending the solution to the entire space using a formula that can be informally represented as:
"$\widetilde{u}(x,y,z) := u(0,y,z) - (\tr^\Sigma u)(0,y,0) + u(x,y,0)$".
This approach has been combined with componentwise estimates, trace estimates, and other tools to conclude that the $H^1_{\mu}$ space is closed with respect to the generalised shifts of functions. Next, the uniform upper bound for the generalised difference quotients is obtained. While this result confirms that the solution satisfies $u\in H^2(S_i)$, it falls short of our expectations as the correspondence in the second-order behaviour between various component manifolds is not controlled.

The essential regularity result is established by rectifying the obtained outcome. This theorem proves that for any weak elliptic solution $u\in H^1_{\mu}$ exists some Cosserat vector field $b$ being a witness of the membership of $u$ to the domain of the second-order operator $L_{\mu}.$

The continuity of weak solutions in neighbourhoods of junction sets $S_i\cap S_j$ is investigated by connecting the established results with the Sobolev-capacity theory and facts related to the global behaviour of the partial traces. As a result, it is concluded that the solutions are indeed continuous.

\vspace{3cm}

\textbf{2020 Mathematics Subject Classification:} 35K10, 35K65, 28A25, 47D06 35J15, 35B65, 35R06, 35D30.\\

\textbf{Key words and phrases:} non-standard domains, rectifiable sets, generating semigroup, second-order parabolic equation, existence and uniqueness of solutions, weak solutions, regularity, singular measures.

\newpage

\thispagestyle{plain}
\begin{center}
    \large
    \textbf{Streszczenie}
    \vspace{0.2cm}
\end{center}

Niniejsza rozprawa dotyczy badania eliptycznych i parabolicznych równań różniczkowych cząstkowych na "cienkich" strukturach. Struktury te są domkniętymi podzbiorami $S$ przestrzeni euklidesowej $\mathbb{R}^3$ o postaci $S = \bigcup_{i=1}^m S_i,$ gdzie $S_i$ są gładkimi rozmaitościami wymiaru \hbox{$\dim S_i \in \{1,2\}.$}

Na strukturze $S$ rozważamy miarę $\mu$ oraz stowarzyszoną z nią wiązkę styczną. W tym celu wprowadzamy zbiór $N_{\mu}$ gładkich pól wektorowych o zwartych nośnikach $w \in C^{\infty}_c(\mathbb{R}^3;\mathbb{R}^3)$ takich, że $w=\nabla v$ na $\text{supp} \ \mu$ dla pewnej funkcji $v \in C^{\infty}_c(\mathbb{R}^3),$ która znika na $\text{supp} \ \mu$. Przestrzeń styczna $T_{\mu}(x)$ do miary $\mu$ w punkcie $x$ jest zadana jako dopełnienie ortogonalne w przestrzeni $\R^3$ zbioru $ \{w(x) \in \mathbb{R}^3: w \in N_{\mu}\}$.

Gradient odpowiadający mierze $\mu$, oznaczany $\nabla_\mu,$ jest zdefiniowany przez rzut ortogonalny $P_{\mu}$ na wiązkę styczną $T_{\mu}.$
W tym kontekście rozważamy zarówno zagadnienia eliptyczne
jak również silne zagadnienia paraboliczne. W tym celu wprowadzamy przestrzeń Sobolewa $H^1_{\mu}$, zdefiniowana jako uzupełnienie $C^{\infty}_c(\mathbb{R}^3)$ w normie $\| \cdot \|_{\mu}:=\left( \| \cdot \|_{L^2_{\mu}}^2 + \| \nabla_{\mu} \cdot \|^2_{L^2_{\mu}}\right)^{\frac{1}{2}}$. Wykorzystujemy także teorię Bouchitt{\'e}'a-Frag\`alii wprowadzoną w pracy \cite{Bou02}. Rozważamy mianowicie przestrzeń par $(u,b),$ gdzie $u\in H^1_{\mu}$ oraz $b$ jest polem wektorowym Cosserata odgrywającym rolę sztucznej składowej normalnej klasycznego gradientu. W szczególności, wymagane jest aby $\nabla_{\mu} u+b \in H^1_{\mu}$. W przestrzeni tej zostają wprowadzone niskowymiarowe odpowiedniki klasycznych operatorów drugiego rzędu.
Rozważana teoria równań różniczkowych cząstkowych jest zgodna z niskowymiarowymi zagadnieniami wariacyjnymi oraz niskowymiarowymi problemami eliptycznymi słabej postaci.

Rozprawa składa się z wyników otrzymanych przez autora w dwóch pracach:
\begin{itemize}
\item \textit{Parabolic PDEs on low-dimensional structures}\cite{Cho24}
\item \textit{Higher regularity of solutions to elliptic equations on low-dimensional structures}\cite{Cho23}.
\end{itemize}

Pierwszym z głównych zagadnień poruszanych w rozprawie jest zdefiniowanie na strukturze $S$ silnego i słabego niskowymiarowego odpowiednika parabolicznego zagadnienia Neumanna postaci \hbox{$\partial_t u - \diverg(B\nabla u) = 0$} z danymi początkowymi $u = g.$
Zasadniczym rezultatem jest wykazanie istnienia i jednoznaczności rozwiązań. Dowód tego faktu bazuje na udowodnieniu dwóch zasadniczych faktów: operator $L_{\mu}$ będący niskowymiarowym odpowiednikiem klasycznego operatora $\diverg(B\nabla u)$ jest operatorem domkniętym, oraz operator $L_{\mu}$ generuje półgrupę {właściwego typu}.

Kluczowym wynikiem technicznym jest wykazanie domkniętości niskowymiarowego operatora drugiego rzędu $L_{\mu}.$ Dowód polega na konstrukcji specjalnych rozszerzeń funkcji zadanych na niskowymiarowej strukturze $S,$ zastosowaniu wyników dotyczących zbieżności operatorów drugiego rzędu, oraz użyciu nowych charakteryzacji przestrzeni funkcyjnych.

Konstrukcja półgrupy generowanej przez operator $L_{\mu}$ bazuje na zaadaptowaniu wariantu Magyara \cite{Mag89} Twierdzenia Hille'a-Yosidy. Idea zastosowanej metody polega na konstrukcji półgrupy za pomocą szeregu iteracji "w przód": $\frac{1}{k!}L_{\mu}^{k} = \frac{1}{k!}(L_{\mu} \circ... \circ L_{\mu}).$ Zaproponowane podejście wymaga bardzo regularnych danych początowych $g \in \bigcap_{k=1}^{\infty}D(L^{k}_{\mu}).$

W rozprawie badane jest także alteratywne podejście, którego celem jest otrzymanie rezultatów dla szerszej klasy danych początkowych. W tym celu wprowadzamy zagadnienia paraboliczne słabego typu. Istnienie rozwiązań otrzymujemy przez zastosowanie Lematu Laxa-Milgrama w wersji {Lionsa.} Wykazujemy także wyższą regularność słabych rozwiązań oraz
badamy ich asymptotykę.

Drugim kluczowym zagadnieniem wchodzącym w skład rozprawy są badania dotyczące wyższej regularności wspomnianych wcześniej, problemów eliptycznych. Podstawowy wynik dotyczy wyższej regularności typu $H^2(S_i)$ na wszystkich rozmaitościach składowych struktury $S.$ Geometria zbioru $S$ powoduje przeszkody w prawidłowym zdefiniowaniu przesunięcia słabego rozwiązania $u\in H^1_{\mu}$, tzn. zdefiniowaniu \hbox{$u(\cdot + hv)$}, dla $v\in S$ i zastosowaniu metody ilorazów różnicowych.  Aby ominąć te trudności  konstruujemy odpowiednie rozszerzenia rozwiązań.

Nasza metoda polega na zastosowaniu rozszerzeń słabych rozwiązań do całej przestrzeni euklidesowej przy użyciu formuły, która w sposób nieformalny może zostać wyrażona następującym wzorem:
\[
\text{"}\widetilde{u}(x,y,z) := u(0,y,z) - (\tr^\Sigma u)(0,y,0) + u(x,y,0)\text{"}.
\]
Podejście to łączy wykorzystanie odpowiednich oszacowań na rozmaitościach składowych, estymacji śladów, oraz innych narzędzi w celu wykazania, że przestrzeń $H^1_{\mu}$ jest zamknięta względem operacji brania uogólnionych przesunięć funkcji. Wynikiem przeprowadzonego rozumowania jest konkluzja, iż rozwiązania posiadają regularność typu $u\in H^2(S_i).$

Dzięki zastosowaniu wyżej wspomnianego wyniku otrzymujemy główny rezultat dotyczący regularności słabego rozwiązania $u\in H^1_{\mu}$ niskowymiarowego zagadnienia eliptycznego: istnieje pewne pole wektorowe Cosserata $b,$ które świadczy o tym, że funkcja $u$ należy do dziedziny operatora drugiego rzędu $L_{\mu}.$

Otrzymane rezultaty dotyczą również zachowania słabych rozwiązań w otoczeniu zbiorów styku rozmaitości składowych, tj. w otoczeniu zbiorów $S_i\cap S_j$. Dzięki zastosowaniu omówionych wyników wraz z użyciem pewnych faktów z teorii pojemności Sobolewa, oraz pewnych uzyskanych faktów dotyczących globalnego zachowania się śladów możemy wnioskować o ciągłości rozwiązań.

\tableofcontents


\chapter{Introduction}

\section{Overview of the theory and state of the art}

The field of engineering sciences poses several fundamental challenges that are difficult to overcome due to the unique geometry involved. One of such significant challenges is determining the optimal value of a quantity that is an outcome of a specific process on a "thin" fragment of the domain.

\begin{figure}[h]
\centering
\includegraphics[scale=0.5]{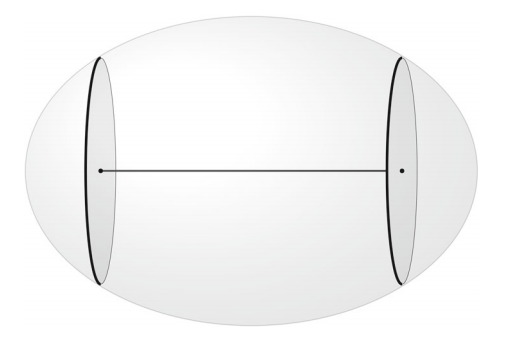}
\caption{Two discs glued to the interval - a simple model of a "thin" conductor.\\ The figure taken from \cite{Ryb20}.}
\label{fig1}
\end{figure}

There were several attempts at the analysis on "thin" subsets.  For example, in paper \cite{Ace91}, the authors use the method of fattening the lower dimensional manifold to avoid working with the sets of the zero Lebesgue measure.

If the structure that is described as "thin" exhibits a degree of smoothness that is suitable, it is possible to obtain accurate results through the employment of the fattening method, as, for instance, demonstrated in \cite{Fon98}. Generally speaking, this method involves considering the higher dimensional version of a variational problem in an $\epsilon$-neighborhood $Z_{\eps}$ of the aforementioned "thin" structure $Z$ and then taking the limit of $\epsilon$-solutions as $\epsilon$ approaches zero. A study of such convergence is usually done with the help of $\Gamma$-convergence. Definitions and facts related to the $\Gamma$-convergence theory of functionals are explained, for example, in the book \cite{Mas93}.


However, it should be noted that this approach has a limitation in that it cannot cover structures such as the one illustrated in Figure \ref{fig1} due to the required level of smoothness. Geometric structures that have cusps, corners, some kind of discontinuity, or are composed of parts with varying dimensions of the tangent space cannot be accurately approximated using a sequence of auxiliary problems defined on shrinking $\eps$-neighbourhoods.
However, if the considered structure is at least twice continuously differentiable, the existence of suitable neighbourhoods is guaranteed by the existence of smooth normal vector fields. This follows from the Tubular Neighbourhood Theorem.
The existence of $\eps$-neighbourhoods in cases where the dimension of the tangent structure varies cannot be addressed by single-scale convergence of neighbourhoods. This results in irregularities at the points of intersection of components of different dimensions. The singularities that arise in the limit produce problems with passing to the $\eps$-limit of solutions.

Another approach has been designed to circumvent the abovementioned difficulties and covers the general case of irregular, lower-dimensional, closed subsets of the Euclidean space $\R^n$. This class of structures contains a variety of geometrical objects originating from various fields, including frames, CW-complexes, graphs, stratified manifolds, and many others. The central idea of this new approach is to pair the "thin" subset of the Euclidean space with a measure $\mu$ that is singular with respect to the Lebesgue measure of the domain in which it is embedded. Further in the thesis, we will name such "thin" subsets or corresponding singular measures as low-dimensional structures. By equipping the related measure $\mu$ with the tangent bundle, various notions of differential geometry can be reformulated in the measure-theoretical setting.
With this in hand, it is possible to transfer the analytical or geometrical framework into the introduced measure-related setting. Indeed, the first- and second-order variational theories have certain counterparts there.
If a "thin" domain $S$ is of the form $S=\bigcup_{i=1}^mS_i \subset \R^n$, where $S_i$ are component manifolds, then very often in applications, it is enough to use $\mu = \Sigma_{i=1}^m\mathcal{H}^{\dim S_i}\lfloor_{S_i},$ where $\mathcal{H}^k$ is the natural $k$-dimensional Hausdorff measure associated with each component. Such measures are sometimes called \textit{the multijunction measures} (see, e.g., \cite{Bou02}).


The basics of this new theory were introduced by Bouchitté,  Buttazzo, and Seppecher in \cite{Bou97}. The authors study the issues of finding minimizers of functionals posed on sets that are singular with respect to the underlying Lebesgue measure.
The simplest example is a question of minimizing the energy functional
\begin{equation}\label{funkcjonal1}
F(\phi)=\int_{A} |\nabla \phi|^2
\end{equation}
on the "thin" set $A \subset \R^n,$ in the class of smooth functions $\phi \in C^{\infty}_c(\R^n)$ satisfying some kind of boundary conditions.  The direct approach to such issues demands the functional to be lower semicontinuous in a given class of functions. To construct a proper relaxation -- the lower-semicontinuous envelope, i.e., the largest lower-semicontinuous functional smaller than $F$, the measure-theoretical framework is utilized to establish a "thin" version of the first-order Sobolev spaces denoted by $H^1_{\mu}$.

The concept of Sobolev spaces related to a singular measure was independently introduced by many authors using different approaches, see \cite{Fra99, Lou14} for more discussion. We will rely on the framework introduced in \cite{Bou97}. It was later reestablished in \cite{Haf03} in a slightly different setting relying on concepts from \cite{Fra99}.
The authors of \cite{Fra99} obtained a notion of the tangent bundle to a measure in a less complicated way and more adequate to apply in our further applications.

To define the tangent structure $T_\mu$, by analogy with a standard smooth setting, we first consider the class
\[
N_{\mu} = \{ w\in C^{\infty}_c(\R^3;\R^3)\colon\;  w = \nabla v \quad \text{for some $v\in C^{\infty}_c(\R^3)$, s.t. $v=0$ on $S$} \}.
\]
The space $T_{\mu}(x)$ is defined as the $\R^3$-orthogonal complement of the evaluation $N_{\mu}(x).$ This construction allowed for introducing the Sobolev-like space $H^1_{\mu}$ as a completion of smooth functions $C^{\infty}_c(\R^n)$ in the norm $(\int_{\Omega}|u|^2d\mu + \int_{\Omega}|\nabla_{\mu}u|^2 d\mu)^{\frac{1}{2}}.$ For a detailed discussion and comparison of different formulations of the setting, please refer to \cite{Fra99}. The formal definition is given in {Section \ref{first_order}}.

In paper \cite{Man05} Mandallena refined the relaxation results given in \cite{Bou97}. He proposed a method of constructing the lower-semicontinuous envelope for functionals on a low-dimensional set $A$ of the form $\int_Af(\nabla u),$ where the integrand $f$ need not be convex. It was a significant improvement of the results of \cite{Bou97} as they worked only under the assumption of convexity of an integrand. In papers \cite{Bou01, Bou021} the low-dimensional analysis is connected with the $\Gamma$-convergence approach to study convergence in various scales.

The results listed above are characterized by the fact that functionals under examination are of the first-order; that is, the integrand depends only on the gradient and lower-order terms.
It was initially unclear how to approach the handling of higher-order problems. However, Bouchitt{\'e} and Fragal{\`a} made significant advancements in this area in their work \cite{Bou02}. Their paper focuses on variational problems of the second-order. They adapt the technicalities used to establish the low-dimensional first-order framework into a more complex second-order setting. One of the main challenges encountered in this process was the inability to decouple second-order derivatives from first-order derivatives. The low-dimensional second-order setting is the fundamental framework used in our research to study strong-type parabolic problems on the low-dimensional structures.

As discussed earlier, the $\mu$-tangent gradient $\nabla_{\mu}u$, defined as the projection of the full gradient onto the tangent structure, is the right low-dimensional counterpart of the classical gradient. This means that to establish the second-order differential operator on a "thin" structure $S,$ we first need to focus on how $\nabla_{\mu}$ acts on a tangent vector field.

Let us focus on the structure $S=S_1\cup S_2 \subset \R^2$ such that
\[
S_1=\{(x,0)\in \R^2: x\in [-1,1]\} \quad \text{and} \quad  S_2=\{(0,y)\in \R^2: y\in [-1,1]\}.
\]
Let $\mu = \mathcal{H}^1\lfloor_{S_1}+\mathcal{H}^1\lfloor_{S_2}.$ Consider $u\in H^1_{\mu}$ and $v:=\nabla_{\mu}u:S \to \R^2$ be a fixed tangent vector field on $S.$  A first trivial observation is that $v$ does not necessary belong to the domain of $\nabla_{\mu}$ due to a potential discontinuity in $S_1\cap S_2.$ A natural way to circumvent this problem is to consider an extension of $v.$ To this end, we add to $v$ some normal to $S$ vector field and extend this new vector field to obtain, let us say, smooth vector field $\widetilde{v}:\R^2 \to \R.$
The constructed vector field $\widetilde{v}$ is differentiable in the sense of $\nabla_{\mu}.$ Now the issue is in understanding how the function $\nabla_{\mu}\widetilde{v}$ is connected to $u$ and to the choice of extension. These observations suggest that the operator associated with the measure-related second-order derivative should not solely act on a single function supported on a lower-dimensional structure, but instead, it should also depend on the normal (to the structure $S$) component of the (full) gradient. Moreover, this is still not enough to ensure that the second derivative operator will be a well-defined single-valued operator. We should also consider not the standard Hessian but its projection onto the space of "tangent matrices".  Otherwise, the second-order operator would be multivalued, which is an outcome that is rather not expected. The precise definition of the second-order framework is given in Section \ref{sec_ord_frame}.

In \cite{Bou02}, the authors resolve the mentioned problems by assuming that the measure-related second-order operator acts on pairs $(u,b)$, which consists of a function $u$ supported on a low-dimensional structure and an additional, normal to the low-dimensional structure, vector field $b,$ called later \textit{the Cosserat vector field} (see Definition \ref{cosserat} in Section \ref{sec_ord_frame}). This vector field plays the role of the artificial normal component of the $\mu$-tangent gradient $\nabla_{\mu}$. Taking a sequence of smooth functions $w_n$ defined in the whole space, such that $w_n \to u$ and $\nabla^{\perp}w_n \to b$ in a suitable sense, we are able to define the low-dimensional second-order derivative operator $A_{\mu}(u,b)$ acting on pairs $(u,b),$ as the limit of $Q(\nabla^2 w_n),$ where the operator $Q$ is a certain kind of projection on the space of "tangent matrices".

The fact that the introduced second derivative involves acting on the mentioned Cosserat vector fields is the source of some original phenomena that appear in this theory and are not present in the classical (Euclidean) cases. For instance, it is worth noting that finding a suitable relaxation of the second-order functional in this context is not a local problem. This fact is clearly illustrated in the example given in \cite{Bou02}, which we briefly sketch here.

Let $S \subset \R^2$ be an equilateral triangle with the prescribed measure $\mu = \mathcal{H}^1\lfloor_{S}.$ We consider a problem of finding a relaxation of the Dirichlet functional $G(u)=\int_S |\nabla u|^2.$ It is shown that the relaxed form of $G$ on the structure $S$ is expressed by the formula
\[
\widetilde{G}(u) =\inf \{\int |u''|^2+2|b'|^2d\mu\},
\]
where the infimum is taken over the set of pairs $(u,b),$ with a fixed function $u,$ satisfying a certain compatibility condition. On the other hand, if we remove one side of the triangle, and denote the restriction of the measure $\mu$ to this structure by $\nu,$ then the relaxation of $G$ is given by
\[
\overline{G}(u)=\int |u''|^2d\nu.
\]
The recalled non-locality is a unique feature of the low-dimensional variational setting.

The development of variational theories in the context of singular subsets has raised questions about differential equations in this area. In the Euclidean setting, there is a well-known relationship between the calculus of variations and the theory of partial differential equations. Therefore, it is natural to ask about a similar relationship in the low-dimensional framework. Surprisingly, this aspect has not been widely studied. Partial differential equations appeared for the first time in the low-dimensional context in paper \cite{Bou01}, but in a form connected with the measure-theoretic two-scale convergence method used there; not as an independent object of studies. The first publication focused on this subject was \cite{Ryb20}, published in 2020, aiming to provide a rigorous framework for the problem of heat dissipation in a "thin" conductor. The authors of the article studied the class of weak elliptic Neumann problems in the low-dimensional framework and sought to establish the existence of solutions in the Sobolev-type space $H^1_{\mu}$. More precisely, they examine if there exists a function $u \in H^1_{\mu}$ satisfying for all $\phi \in C^{\infty}(\R^d)$
\begin{equation}\label{bla1}
\int_{\Omega}A_{\mu}\nabla_{\mu}u \cdot \nabla_{\mu} \phi d\mu = \int_{\Omega}f\phi d\mu,
\end{equation}
where $\nabla_{\mu}$ is a $\mu$-tangent gradient, $A_{\mu}$ is a suitable relaxation of the matrix of coefficients, and $\mu$ is a measure corresponding to the given low-dimensional structure.

The proof is based on the fact that this issue can be treated as the Euler-Lagrange equation for some low-dimensional variational problem. The existence and uniqueness of solutions are obtained by solving a minimization problem of the corresponding energy functional. An important technical contribution of the paper is in establishing a new variant of the Poincar{\'e}-type inequality. Problems with the classical Poincar{\'e} inequality appear if a point of junction is small in the sense of the Sobolev capacity (for basic facts and definitions related to the Sobolev capacity theory, we refer to \cite{Eva15}). Paper \cite{Ryb20} proposes a weaker variant of the Poincar\'e inequality valid on the general class of low-dimensional structures, see Section \ref{first_order}

However, it is important to note that further properties of such solutions have not been discussed, nor have other types of equations been examined. This lack of further investigation, coupled with the intriguing and unique phenomena that emerge in low-dimensional problems, serves as a key motivator for our current research.

Let us just mention that recent studies conducted in \cite{But23} use the low-dimensional setting to consider a specific variant of the so-called mass optimization problem - the problem of minimization of $\min\{-E(\mu)+C(\mu)\},$ where $E$ is the energy functional, $C$ is the cost functional and the minimization is over the space of positive scalar measures $\mu.$ Later, the $\mu$-related low-dimensional analysis was applied in paper \cite{Bol22}, where the authors study links between the free material design problem, the mass optimization problem, and the theory of Monge-Kantorovich. Another version of the mass optimization problem, related to the problem of determining the optimal conductivity tensor, was considered in  \cite{Lew23}.

\section{Brief discussion of the results}

The research presented in the thesis is focused on various types of partial differential equations defined on low-dimensional structures. We can distinguish two primary objectives of our studies.

The first part is devoted to establishing the theory of strong and weak parabolic equations in the low-dimensional setting. This encompasses various types of time-dependent equations and their relation with other variational theories.
We establish key properties of second-order differential operators and prove the main theorem regarding the existence and uniqueness of solutions to second-order problems with the Neumann initial data. In order to broaden the class of accessible initial data, we also investigate parabolic problems of weak-type and examine the regularity of weak solutions. Additionally, we explore the asymptotic behaviour of parabolic solutions, establishing a connection between solutions to low-dimensional parabolic and elliptic equations. Our work relies on the development of new characterizations of Sobolev-type spaces on low-dimensional structures and the establishment of new functional-theoretic properties of low-dimensional differential operators.
This type of result is the outcome of the paper "Parabolic PDEs on low-dimensional structures" \cite{Cho24}.

The second objective of our studies is to develop the regularity theory for low-dimensional weak elliptic problems. The results connected to this subject are based on the paper "Higher regularity of solutions to elliptic equations on low-dimensional structures" \cite{Cho23}.

\subsection{Low-dimesional parabolic problems}

Let $S = \bigcup_{i=1}^mS_i \subset\R^3,$ where $S_i$ are compact, smooth submanifolds (with boundary) of $\R^3,$ which are pairwise transversal and $\dim S_i \in \{1,2\}.$
The goal of this work is to establish the right meaning of the following formal Neumann boundary problem on the geometric structure $S,$
\begin{equation}\label{formal}
\begin{matrix}
u_t - \diverg(B\nabla u) = 0 & \text{ in } & S \times [0,T], \\ \\
B\nabla u \cdot \nu = 0 & \text{ on } & \partial S \times [0,T], \\ \\
u = g & \text{ on } & S \times \{0\},
\end{matrix}
\end{equation}
where the matrix of coefficients $B$ satisfies a suitable ellipticity condition, $(B\nabla u \cdot \nu)\lfloor_{\partial S}$ is an appropriate normal derivative and $g$ is a given initial data.

To translate the problem into a suitable framework, we associate with the structure $S$ a corresponding singular measure $\mu,$ which encodes the geometry of the set $S.$ By an application of the second-order setting of \cite{Bou02}, we define a counterpart of the operator $\partial_t  -L,$ where $Lu=\diverg(B\nabla u)$ is related with the underlying measure $\mu.$

The main result deals with the existence of solutions:
\begin{itemize}
    \item[$\bullet$] The low-dimensional conterpart of Problem \eqref{formal} has a unique solution $u,$ assuming that the initial data $g$ is sufficiently regular. This is the statement of Theorem \ref{existence} in Chapter 4.
\end{itemize}
For definitions of the involved operators (Definition \ref{secondop} and Definition \ref{lap}) we refer to Chapter 2. Reformulated Problem \eqref{parabolic} and Definition \ref{parabolic2} of a solution can also be found in Chapter 2 of the thesis.
The proof applies the general semigroup theory of differential operators (for example, see \cite{Paz83}) and a specific variant of the Hille-Yosida Theorem about operators generating contraction semigroups established in \cite{Mag89}. A core of this approach is in avoiding using a resolvent operator, which turns out to be not well-defined due to the lack of invertibility of the considered operator.

A crucial technical component of the proof is verifying the following property:
\begin{itemize}
    \item [$\bullet$] The low-dimensional realisation of the operator $\diverg(B\nabla u)$ is a closed operator. This is the content of Theorem \ref{main} in Chapter 4.
\end{itemize}
Definitions of the mentioned differential operator and its domain are located in Chapter 2 (Definition \ref{secondop} and Definition \ref{0domain}, respectively).

In our reasoning, we use a second-order functional space consisting of pairs $(u,b),$ where $u$ is a function that belongs to a proper subspace of the Sobolev-type space $H^1_{\mu}$ related to $\mu$, and $b$ is a Cosserat vector field normal to the low-dimensional structure $S.$ Precise formulation of the considered second-order space is given in Definition \ref{dommu}. We construct the operator $L_{\mu},$ which is a low-dimensional generalization of $\diverg(B \nabla u).$
This implies that the second-order operator also needs to be defined on the space of pairs $(u,b).$ On the other hand, as our aim is to generalise the classical parabolic problems posed in Euclidean domains, on smooth manifolds, and we aim for consistency with the low-dimensional stationary problems defined in \cite{Ryb20}, the operator $L_{\mu}$ cannot depend on the choice of the particular Cosserat vector field $b.$ Indeed, the proposed construction of the second-order equation operator $L_{\mu},$ see Definition \ref{secondop} in Chapter 2, satisfies all of the expected properties.

The initial step needed in our construction of solutions to the low-dimensional counterpart of the Neumann problem, see Definition \ref{parabolic2}, is in proving the closedness of the operator $L_{\mu}.$ It turns out that this issue is much more challenging than the closedness of the second derivative operator introduced in \cite{Bou02}.

It is worth noting that for a given value of $u$, the vector field $b$ is not uniquely determined, and there is insufficient information regarding its behaviour. Thus, issues arise in controlling the convergence of sequences of tuples $(u_n,b_n)$. Even in scenarios where the sequence $u_n$ converges in a suitable strong sense and it is established that $L_{\mu}u_n$ also converges, concluding that the limit of the sequence $u_n$ also has a corresponding Cosserat field is unfeasible.

To address these challenges, we propose a procedure for modifying the vector field $b_n$ for a given pair $(u_n,b_n)$. As the process is based on special geometrical constructions, we localise it to reduce the initial structure to a set of generic parts. This can be done with the help of the new characterisation of the low-dimensional Sobolev-type spaces. The method of constructing the new sequence of Cosserat vector fields generates a new normal vector field $\widetilde{b}_n$, which possesses properties comparable to the original field $b_n$. The rationale behind introducing the modified sequence of vector fields $\widetilde{b}_n$ is that its convergence can be entirely controlled in terms of the corresponding functions $u_n$. Combining these conclusions with the closedness results given in \cite{Bou02}, Lemma \ref{comp} in Chapter 3, characterisations of membership in the domain of the second-order derivative operator and some additional facts related to well-posedness of Whitney-type extensions, we can derive that the limit term $u$ ($u_n\to u$) is also equipped with the proper Cosserat vector field $b.$ Finally, this implies that the equation operator $L_{\mu}$ is closed in a considered sense.

The subsequent task of our programme of establishing the existence of strong parabolic solutions relies on adapting the Magyar variant of the Hille-Yosida theorem proposed in paper \cite{Mag89}. The essential obstacle that we encounter is the lack of a well-defined resolvent operator corresponding to the equation operator $L_{\mu}.$ Indeed, the operator $L_{\mu}$ is not surjective in the expected sense, which implies that the resolvent can not be defined. The approach presented in \cite{Mag89} circumvents the use of the resolvent operator. Applying the results of \cite{Mag89} to the considered framework, we construct the contraction semigroup generated by the operator $L_{\mu}.$ The method of construction of the generated semigroup is based on a "forward" iteration process, namely the semigroup is obtained as a limit $n\to \infty$ of the series of subsequent iterations 
\begin{equation*}
\sum_{k=0}^{k=n}\frac{1}{k!}L_{\mu}^{k} = \sum_{k=0}^{k=n}\frac{1}{k!}(L_{\mu} \circ... \circ L_{\mu}).
\end{equation*}

This method of construction of solutions works well for our problem, but it has a drawback that is inseparable from its nature. It forces narrowing the set of admissible initial data to the class $\bigcap_{k=1}^{\infty} D(L_{\mu}^{k})$ of functions that are smooth with respect to the operator $L_{\mu}.$
To broaden the class of accessible initial data, we explore various versions of weak formulations of the parabolic problem (Chapter 5) and study their regularity. 
\begin{itemize}
    \item [$\bullet$] The weak counterpart of the low-dimensional parabolic problem has a unique solution. Moreover, under the additional regularity assumption on the initial data, the stronger type of the problem is uniquely solvable. These kind of results are established in Chapter 5.
\end{itemize}
Proving the existence of solutions to weak variants of the considered issue is less complicated than that of the strong formulation. To this end, we adapt the Lions version of the Lax-Milgram theorem to the considered setting. Further, our focus is on verifying the membership of the obtained weak solutions in the space of solutions to equations of a more regular form (yet less regular than the initially considered strong-type setting). We derive important information about the regularity of weak solutions and demonstrate the connection between parabolic and elliptic problems.

We examine the asymptotic behaviour of weak-type solutions. We show that in a long time, weak parabolic solutions converge to weak solutions to the low-dimensional elliptic problem (Section \ref{53}). The results are obtained by the use of the methods proposed in \cite{Gol08} for the classical $p$-Laplace equations.

\subsection{Higher regularity of solutions to elliptic problems.}

The second major objective of our project is focused on validating the higher regularity of low-dimensional solutions of elliptic problems. This is a fundamental and preliminary step towards establishing the relationship between this category of equations and the second-order variational theory of Bouchitt{\'e} and Fragal{\`a}, as introduced in their publication \cite{Bou02}, or between the strong-form parabolic problems examined by the author in paper \cite{Cho24}.

In \cite{Ryb20}, the authors prove the existence of weak solutions to elliptic problems considered on a general class of glued manifolds of potentially different dimensions (equation \eqref{bla1}). This naturally leads to the question of the higher regularity of said solutions. In the low-dimensional setting, the most fundamental notion of regularity is the higher Sobolev regularity on the component manifolds of the given low-dimensional structure. This kind of regularity is required for further studies but is far from sufficient from the perspective of the low-dimensional structures theory since it does not capture any additional connection between higher-order behaviour on different component manifolds of the structure. It turns out that a membership of a weak solution in the domain of the second-order operator introduced in \cite{Bou02} is the most adequate kind of differential regularity in this framework.
Indeed, the mentioned framework is rich enough to encode the geometry of the structure, provides correspondence between regularity on various components and arises naturally in low-dimensional variational problems.

Firstly, we address the aforementioned problem of upgrading the elliptic regularity in the classical Sobolev sense on components. This is the starting point for further improvements.
Further, we examine other regularity-related properties of weak solutions. Combining the local regularity result with some facts from the capacity theory, we conclude that weak solutions are continuous. A different critical aspect of our study uses technical properties of the low-dimensional second-order operator to prove the membership of weak solutions in the proper higher-order space of functions.

We are especially interested in the elliptic problems on structures with one- or two-dimensional parts embedded in $\R^3$, and throughout the research, we restrict our attention to this setting. Such restriction is motivated by physical applications (see, for example, \cite{Lew23}) as well as the application of our earlier results about parabolic problems, which use methods designed to work in the case of structures consisting of at most two-dimensional component manifolds. It is expected that our research may be generalised to analogous problems in an arbitrary $n$-dimensional Euclidean space, but this needs further non-trivial improvements. To avoid technical complications, we also impose additional restrictions on the class of considered structures. Nonetheless, we believe that the presented methods are likely to carry over to more general geometric structures. Potential generalisations are discussed in the next section.

The following facts concerning an additional regularity of low-dimensional weak solutions to elliptic equations constitute the main results of our studies:
\begin{itemize}
\item On each component manifold $S_i$ of the low-dimensional structure $S$ a weak solution $u$ of the elliptic issue \eqref{bla1} has the extra regularity $u \in H^2(S_i).$ This is the statement of Theorem \ref{globreg} in Chapter 6.$ $

\item A weak solution $u$ is globally continuous on the given low-dimensional structure, that is $u \in C(S).$ This fact is precisely expressed in Theorem \ref{ciaglosc_ogolne} in Chapter 6.$ $

\item If $u$ solves weak problem \eqref{bla1}, then $u$ is a member of the domain of the low-dimensional second-order derivative operator $L_{\mu}.$ This is the result of Theorem \ref{nalezenie} in Chapter 6.
\end{itemize}

The proof of the result of the first point is technical and combines new constructions and methods dedicated to the considered low-dimensional framework as well as modifications of the classical facts. As our approach is closely related to the geometry of the examined structure before we proceed to the proper consideration, we first show that without losing the generality, one can simplify the reasoning by examining a set of generic types of substructures of the form $S_i\cap S_j$, $S_i\cap S_j\neq \emptyset.$   The main obstacle in showing componentwise higher regularity lies in defining a proper notion of shift $u(\cdot + hv),\; h>0,\; v\in S$ of the function $u$ supported on the "thin" subset $S$.

As we aim to generalise the difference quotients method (see, for instance, \cite{Eva10}), it is necessary to address this question. We propose a construction based on a properly chosen sequence of well-behaved extensions of weak solutions.

To illustrate the idea, let us suppose that $u$ is a function supported on $S=D_1\cup D_2 \subset \R^3$, with $D_1$ being the unit disc with origin at zero in variables $x,y$ and $D_2$ analogous disc but in variables $y,z.$ Moreover, let the support of $u$ be contained within the set $(1-\eps)S$ for some small $\eps >0.$

Determining the shift of the function $u$ in the direction parallel to the set of the intersection of two component manifolds, that is, as $\Sigma=S_1\cap S_2 = \{(0,y,0): y \in [-1,1]\},$ is a straightforward task. Precisely, defining the function $u(\cdot + he_y)$, where $|h|$ is sufficiently small, poses no problems. However, determining the shift in the direction of the variable $x$ or $z$ is not immediately apparent as we do not know the value of the shift on the component orthogonal to the direction of shifting.

We would intuitively like to extend the function $u:S \to \R$ to $\widetilde{u}:\R^3 \to \R$ by the formula
\begin{equation}\label{rozszerzenie}
\text{"}\widetilde{u}(x,y,z) := u(0,y,z) - (\tr^\Sigma u)(0,y,0) + u(x,y,0)\text{"},
\end{equation}
where $\tr^{\Sigma}$ denotes the trace on the intersection $\Sigma.$
Above, we used quotation marks because the presented formula expresses a general idea standing behind the proposed extension. The formal construction of the extension is quite involved and technical, and it is the crucial part of Theorem \ref{globreg} in Chapter 6. It is important to mention that in the general case (as considered in Theorem \ref{globreg}), the function to which we apply this formula is a solution to a low-dimensional elliptic problem. It turns out that it is not valid for an arbitrary function of the class $H^1_{\mu}.$

In the next step, we construct a sequence of approximations $\alpha_n$ of the extension $\widetilde{u}$ sharing two special properties. Firstly, we can extend each term $\alpha_n$ from the considered structure to the whole Euclidean space and obtain a sufficiently regular function. Secondly, the global behaviour of extensions is controlled in terms of the original function posed on the low-dimensional structure. Now, each term of the sequence $\alpha_n$ can be shifted giving the sequence $\alpha_n(\cdot + hv),\; v\in S.$ To prove that the shifted sequence converges in a suitable sense and the limit is regular enough, we examine the regularity of the trace $\tr^{\Sigma}u.$ Establishing that $\tr^{\Sigma}u \in H^2(\Sigma)$ and combining this result with the local regularity in regions separated from the intersection set $\Sigma$ we are able to conclude that the function space is closed under shifts and the generalised difference quotients. This allows us to prove a higher regularity of solutions by controlling uniform bounds of generalised difference quotients.

Latter results are implications or refinements of the main theorem. We establish the global continuity of low-dimensional weak solutions by utilising some facts from the Sobolev capacity theory and conclusions provided by our main result. An interesting fact is that the continuity on the whole structure depends on the dimensions of the components.

Furthermore, still relying on the main theorem, we provide a membership of a weak solution in the domain of the measure-related second-order operator $L_{\mu}$. To show that a solution $u$ is in $D(L_{\mu})$ we need to prove that there exists the Cosserat vector field corresponding to $u.$ That is for $u$ exists some normal to $S$ vector field $b$ such that the pair $(u,b)$ belongs to the domain of the low-dimensional second-order derivative. The main tool used to establish this is the closedness result for the operator $L_{\mu}$ stated in \cite{Cho24}.

\subsection{Further discussion of the results}

Let us briefly discuss some other potentially related results and highlight differences. Equipped with the naturally induced metric, the low-dimensional structure $S$ is a metric measure space. In the category of metric measure spaces, there are several known candidates for generalisations of the Euclidean gradient, amongst them two are probably best known: the upper gradient discussed in \cite{Amb05, Hei15} and the Cheeger gradient, see \cite{Che99}. It turns out that interpreting the low-dimensional setting as metric measure spaces, the upper gradient corresponds to $|\nabla_{S_i}|,$ where $\nabla_{S_i}$ is a component of the gradient tangent to $S_i$. In other words, such an approach loses a significant amount of information required by our goals. Therefore, we start with the $\mu$-related gradient, which is the most natural generalisation of a gradient tangent to a manifold. It should be mentioned that our regularity results are strongly based on a form of the gradient and the fact that it represents the classical gradient locally. Potential generalisations of the obtained regularity results to a less geometrical, more abstract setting seem to be difficult.

It should be noted that the considered setting of measures equipped with a tangent structure is very close to a special case of the general notion of a varifold, see \cite{All72}. The difference is that in the case of the theory examined in this project, tangent structures to the considered objects are defined intrinsically by the inherited embedding into the Euclidean space, and
varifolds may possess very general tangent structures.

We would like to draw attention to a recent paper \cite{Cap22} that addresses the Neumann problem for the fractional Laplacian. The authors of this paper consider this problem in the context of a general setting of doubling metric measure spaces $X$, which are equipped with the Poincaré inequality. Through the development of Caccioppoli-type estimates in this context, the authors establish the H{\"o}lder continuity of solutions to this problem. Moreover, the authors investigate the existence of solutions to a similar fractional Laplacian problem on those spaces that are equivalent to the "boundary" $\overline{X}\setminus X.$ Significantly, they are able to relax the assumptions and expand the class of considered spaces by eliminating the Poincaré inequality assumption. Our low-dimensional setting, in contrast to the general theory of \cite{Cap22}, offers a distinct advantage in that it provides a well-defined notion of the second-order derivative, which enables us to consider second-order problems of the strong form.

There is a rich theory of partial differential equations on graphs. For an introduction to the theory, we recommend referring to \cite{Lag04}, and for an overview of some results, we suggest \cite{Meh01}. There are two ways to describe analytic problems on graphs. In the continuous approach, we consider functions defined on the edges of the graph with some transmission conditions posed on vertices, for instance see \cite{Kra20}. In the discrete approach, we consider classes of functions defined on the vertices of the graph, and the integration or differential operators are in the discrete form, see \cite{Gri16}. From our perspective, the former type of setting is more important as it is closer to the framework we consider. Therefore, we limit our discussion to it. The extensive research in this theory includes studies of elliptic or parabolic problems \cite{Kra20}, or variational problems \cite{Lin22}.
Moreover, this type of formalism has a wide range of practical applications, for instance, in biological sciences \cite{Kra20} or image processing \cite{Haf16}. Comparing the theory of partial differential equations on a graph with the setting we examine, we can point out two differences. First, graphs are naturally one-dimensional structures, and in our considerations, we study a more general class of objects that also have their own geometry. Second, and more importantly, in the analysis on graphs the functions are usually initially defined on the edges $e_i,\; i=1,...,m$ of graphs, that is, spaces like $\Pi_{i=1}^m H^1([0,1])$ are considered, and the interplay between edges is introduced as set of conditions posed in nodes of the graph.  In our approach, we define functional spaces on the given structure $S$, which is a subset of $\R^3$, as a completion of smooth functions $u:\R^3\to\R$ in a corresponding norm. This is crucial as the problems we consider are related to variational problems obtained as relaxations of the classical issues of the form $F:C^{\infty}(\R^3)\to\R$. Besides that, in the low-dimensional setting, counterparts of transmission conditions used in graph theories are naturally included in domains of finiteness of low-dimensional differential operators.

Let us note that, in general, the solutions to low-dimensional problems are not simple gluings of classical solutions on component manifolds.
To present that the low-dimensional solutions might differ in an essential way from solutions of classical problems, we recall the conclusion given by Example \ref{ex2}. For a more detailed discussion, see Example \ref{ex2} and \ref{ex3} in Section \ref{52}, showing that quite unexpected phenomena appear even in the simple case of the stationary heat equation.

Assume that $\Omega \subset \mathbb{R}^2$ is a 2-dimensional unit ball $B(0,1),$ and set
\[
E_1:=\{(y,0): y\in [-1,1]\}, \quad E_2:=\{(0,z):z \in [-1,1]\}
\]
and $\mu := \mathcal{H}^1|_{E_1}+\mathcal{H}^1|_{E_2}.$ Let us choose
	$f:=\begin{cases}
	y \text{ on } E_1,\\
	0 \text{ on } E_2
	\end{cases}$
	and consider the stationary heat problem (see \cite{Ryb20} for the existence and uniqueness result)
	\begin{equation}\label{slabe}
	\int_{\Omega} \nabla_{\mu}u\cdot \nabla_{\mu} \phi d\mu = \int_{\Omega} f \phi d\mu
	\end{equation}
	for $\phi \in C^{\infty}_c(\R^2).$

	On the component $E_1$ let us take $u_1(y):= -\frac{21}{1080}-\frac{y^4}{12}+\frac{y^3}{6}+\frac{y^2}{6}-\frac{y}{2}$ and on the component $E_2$ the constant function $u_2(z):=-\frac{21}{1080}.$ It can be observed (see Example \ref{ex2}) that neither $u_1$ nor $u_2$ satisfies the weak equation separately on components, because $\int_{E_i} u_i dx \neq 0.$ On the other hand, the function
	$u := \begin{cases}
	u_1 \text{ on } E_1,\\
	u_2 \text{ on } E_2
	\end{cases}$
	belongs to $H^1_{\mu}$ with $\int_{\Omega}ud\mu =0,$ and is a solution to weak problem \eqref{slabe}.$ $

We would like to discuss the potential applications of our results and the future prospects of development in the related theory. By establishing the right parabolic setting, we can describe time-dependent phenomena whose static counterparts were examined previously. This includes a long list of various types of problems mentioned earlier. We also note that the strong-form second-order framework was not widely studied before in the analysis of singular measures, and the obtained results provide a suitable setting for expressing issues studied earlier in a more regular way. Rephrasing the weak problems in this way is important because it allows us to capture more of the specific geometrical properties of the singular subsets.

The higher regularity results given for the elliptic type equations are not only important in applications to weak problems as considered in \cite{Ryb20}, but also have a wide range of other possible applications. Specifically, due to the Euler-Lagrange correspondence of weak elliptic problems with the first-order variational theory established in \cite{Ryb20} our results imply higher regularity of minimizers. Given the richness of the variational theory on low-dimensional structures, this has far-reaching consequences in many areas.

There are several ways in which the obtained results can be generalized. First and foremost, it is worth noting that our proofs of main theorems included certain additional restrictions on the class of considered structures. Specifically, we assumed that all structures are embedded in $\R^3$ and we excluded the possibility of the common intersection of three components of the structure. This was done because, in the main technical result of our paper \cite{Cho24}, which is the proof of the closedness of the operator $L_{\mu},$ we proposed a suitable explicit geometrical construction of extensions of functions defined on low-dimensional structures that works in this setting. However, it does not appear to be immediately generalizable to higher dimensions. We utilized this result in our paper devoted to regularity theory, where we conclude that weak solutions belong to the domain of higher-order operators. Although we have not encountered any counterexamples, we believe it may be possible to replicate the proposed construction with a more abstract one that will work in higher dimensions.

Regarding the elimination of the common intersections of multiple components, a similar situation arises in the paper dealing with parabolic problems. We believe that such generalization in the regularity-related paper can be established by the following observation: the smooth approximation formula for functions supported in low-dimensional domains, as described in \cite{Cho23}, can be interpreted as a variant of the inclusion-exclusion principle. By analogy, if we extend this formula to the case of $k$-many structures, the further complications that arise appear to be only of a technical nature and likely can be eliminated.

The thesis is organised as follows.

In Chapter 2 we evoke basic notions of the new framework. Among others, we present here: definitions of spaces of functions and the adequate low-dimensional framework, preliminary properties of introduced operators, or notions of solutions to parabolic problems.

Chapter 3 contains elementary and basic results describing the properties of newly introduced objects. Section 3.3 includes new and significant characterisations of first- and second-order spaces of functions.

Chapter 4 is devoted to the main theorems of the parabolic setting -- the proof of the existence of a semigroup generated by the operator $L_{\mu}$ and the result showing the closedness of the operator $L_{\mu}.$

Chapter 5 is devoted to weak variants of the parabolic equations and the regularity of solutions. We prove that when passing $t \to \infty,$ parabolic solutions converge to a solution of the stationary equation. This chapter also contains examples of low-dimensional issues, in particular Example \ref{ex2} shows it is not always possible to obtain low-dimensional weak solutions with the expected regularity by adding up weak solutions component-wise.

Chapter 6 deals with the higher regularity of solutions to the elliptic problems. A various types of regularity-related statements are proven there. Finally, we establish connections between the strong-form second-order operator and weak solutions of elliptic equations.

\section{Notation}

Here we collect the basic notation used throughout the thesis:
\begin{itemize}
    \item[] $P(X)$ -- the power set of the set $X$
    \item[] $\perp$ -- orthogonality relation in the sense of the $\R^3$ scalar product
    \item[] $\R^{3\times 3}_{\text{sym}}$ -- the space of $3 \times 3$ symmetric matrices
    \item[] $\mu$ -- a positive Radon measure
    \item[] $\dim M$ -- a dimension of a smooth manifold $M$
    \item[] $(f)_A$ -- the mean of a function $f$ on a set $A$ i.e., $(f)_A=\frac{1}{\mu(A)}\int_A f d\mu$
    \item[] {$X^*$ -- the space of continuous linear functionals on a Banach space $X$
    \item[] $g|_A,\; \nu\lfloor_A$ --  a restriction of a function $f,$ a measure $\nu$ respectively, to a set $A$.}
\end{itemize}

{We sometimes abuse the notation in the following two cases.
\begin{itemize}
    \item[] If $f:A\to \R,\; B\subset A,$ and $f|_B\in K(B),$ where $K(B)$ is some set of functions on $B,$ we write $f\in K(B).$  
    \item[] If $M$ is a smooth manifold and $g:M\to \R$ we write that $g$ has property $p$ almost everywhere (a.e. for short) on $M,$ meaning that $g$ satisfies $p$ almost everywhere on $M$ with respect to the standard Hausdorff measure $\mathcal{H}^{\dim M}\lfloor_M$ restricted to the the manifold $M.$  
\end{itemize}}

\chapter{Preliminaries}
In this chapter, we present concepts and theories that form the foundation of our research. We formulate the low-dimensional framework and provide an overview of the associated analytical setting. Following this, we delve into differential problems, which encompass various forms of low-dimensional counterparts of elliptic and parabolic problems.

To conduct an analysis in the considered setting, it is essential to establish a notion of a tangent bundle to a closed subset of the Euclidean space. Various methods can be used to define such objects, including a variant presented in \cite{Bou97}, utilizing the general theory of variational manifolds as in \cite{All72}, or generalizing the definition of the notion of tangent space in the case of a smooth manifold as presented in \cite{Haf03}. A detailed discussion of different methods can be found in paper \cite{Fra99}.

Each of the concurrent methods of defining such objects is better suited to certain problems. For this purpose, we choose the method based on the analogy with the classical way of constructing a tangent structure to a smooth manifold, as presented in \cite{Fra99}. Our decision was influenced by two factors. Firstly, as presented in \cite{Ryb20}, this definition works well with partial differential equations. Secondly, it suits second-order problems considered in this scenario. Indeed, by utilizing this definition, we immediately obtain the existence of a smooth approximating sequence of functions posed on the whole space. This is crucial in constructing suitable extensions of functions defined on a low-dimensional structure.

The definitions pertaining to the first-order functional setting have been sourced from \cite{Bou97}, \cite{Haf03}, and \cite{Fra99}. The majority of the definitions in the second-order framework have been extracted from \cite{Bou02}. The operator related to the second-order equation is the concept defined in \cite{Cho24}. The theory of weak elliptic equations has been presented in a format outlined in \cite{Ryb20}. The spaces of time-dependent functions and appropriate framework used later in the weak parabolic equations are taken from \cite{Sho97}.
The parts related to the higher regularity with respect to the abstract operator and semigroup theory are taken from \cite{Mag89}.

Throughout the rest of the chapter, if not specified more precisely, the letter $\mu$ denotes a positive real-valued Radon measure on $\R^3.$

\section{Low-dimensional structures}\label{21}

We introduce a primary object of interest -- the class of low-dimensional structures. This class consists of specific Radon measures that represent geometrical structures.

The presented class of measures, introduced in \cite{Cho24}, is a modification of that defined in papers \cite{Bou97}, \cite{Haf03}.

\begin{defi}(Low-dimensional structure)\label{lds}
Let $\Omega \subset \R^3$ be a non-empty, open, bounded and connected set. For a fixed $m \in \mathbb{N}$ and $0\leqslant k \leqslant m+1,$ let $S_i,\; 1 \leqslant i \leqslant m$ be a:
\begin{itemize}
    \item $2$--dimensional compact smooth manifold with boundary for $i\leqslant k$;
    \item $1$--dimensional compact smooth manifold with boundary for $i > k$.
\end{itemize}
{In addition, let the atlas of each $S_i$ contains exactly one map.}\footnote{The assumption is a technical one, and it is added to simplify the proof of the main theorem.} Assume further that for each $1 \leqslant i,j \leqslant m, \; i \neq j$:
\begin{itemize}
\item[LDS1:] $\partial \Omega \cap S_i = \partial S_i$;
\item[LDS2:] $S_i$ is transversal to $S_j$ and $\partial S_i \cap \partial S_j =\emptyset$;
\item[LDS3:] for any $1\leqslant i,j,k \leqslant m, \; S_i\cap S_j \cap S_k = \emptyset.$
\end{itemize}
With each $S_i$ we associate the pair $(\mathcal{H}^{\dim S_i}\lfloor_{S_i}, \theta_i),$ where $\mathcal{H}^{\dim S_i}\lfloor_{S_i}$ is the $\dim S_i$-dimensional Hausdorff measure restricted to $S_i$ and $\theta_i \in L^{\infty}(S_i)$, where $\theta_i \geqslant c > 0,$ for some constant $c.$
We say that a positive Radon measure $\mu$ belongs to the class $\widehat{\mathcal{S}}$ if it is of the form $$\mu = \sum_{i=1}^m \theta_i \mathcal{H}^{\dim S_i}\lfloor_{S_i}.$$
 	
A positive Radon measure $\mu$ belongs to the class $\widetilde{\mathcal{S}}$ if it is of the form $$\mu = \sum_{i=1}^m \mathcal{H}^{\dim S_i}\lfloor_{S_i}.$$
\end{defi}

Let us briefly explain the restrictions imposed on the $\widehat{\mathcal{S}}$ class. First of all, we
should understand that each structure $S$ is dependent on the choice of the region $\Omega$, maybe
not in a topological, but at least in a geometrical sense. This is forced by
{LDS1}. The transversality {condition of LDS2} excludes those kinds of domains that do not need
the presented framework and similar results can be proved with the help of standard methods. {The second part of condition LDS2 is introduced to avoid problems with defining the normal vector field to the boundary of $S$.}
Finally, {condition LDS3} disallows for more than two component manifolds to intersect at a single
point. This assumption is introduced to simplify technicalities, and we believe it can be relaxed
for a price of longer computations.

Throughout the thesis, we name measures belonging to the
class $\widehat{\mathcal{S}}$ as low-dimensional structures. We also use the same name to refer to sets
on which the singular measures are supported. This ambiguity does not produce confusion; the
type of object we refer to should be clear from a context.

By $\mathcal{S}$ we denote the subclass of the class $\widetilde{\mathcal{S}}$ consisting of low-dimensional structures whose component manifolds have fixed dimension, that is, if $\mu \in \mathcal{S}$ and $\supp\mu = \bigcup_{i=1}^mS_i,$ then for all $i\in \{1,...,m\}$ $\dim S_i = k,$ for $k=1$ or $k=2.$
The subclass $\mathcal{S}$ will be especially important for us when considering problems related to the strong-form operators (see Section \ref{sec_ord_frame}).
In some cases, it is more appropriate to refer directly to the set $S$ in which the low-dimensional structure $\mu$ is supported and write $S \in \mathcal{S}.$ Indeed, we use this from time to time if such disambiguation does not cause problems.

The introduced above class $\widehat{\mathcal{S}}$ of lower dimensional structures considered here is a proper subset of the one used, for example, in \cite{Ryb20}. The difference is in two aspects. First, we assume that a boundary of each component manifold $S_i$ is exactly the set of intersection of $S_i$ with the boundary of the ambient set $\Omega$ {(condition LDS1)}. In \cite{Ryb20}, the authors allow a situation, where $\partial S_i \subset S_j,\; i\neq j.$ A second difference is in an additional restriction on possible types of intersections of components. This is expressed in {LDS3}. This restriction rules out common intersections of three or more component manifolds. The above-mentioned conditions are not crucial from our perspective and were introduced to simplify further reasoning and shorten the exposition.

Let $\mu \in \widehat{\mathcal{S}}$ and $S = \supp \mu.$ Let $S = \bigcup_{i=1}^m S_i,$ where $S_i$ are component manifolds. We denote
$$\partial S := \bigcup_{i=1}^m \partial S_i.$$
For any $i \in \{1,...,m\}$ let $n\lfloor_{\partial S_i}:\partial S_i \to \R^3$ denote the uniquely determined outward normal unit vector field to $\partial S_i$ in the sense of Spivak \cite{Spi65}.
By the outward normal unit vector field to the low-dimensional structure $\mu$ we call the function $n: \partial S \to \R^3$ defined as
$$n:= \bigcup_{i=1}^m n\lfloor_{\partial S_i}.$$

We list some examples of low-dimensional structures here. For instance, this class includes the following structures.
\begin{itemize}
    \item Any smooth manifold of a dimension $1$ or $2$ with a boundary embedded in $\R^3$ can be considered as a low-dimensional structure with exactly one component and the corresponding Hausdorff measure.
    \item CW-complex with smooth cells is an example of a low-dimensional structure. In fact, this generalizes the first point as any smooth manifold has a CW-complex structure.
    \item Certain finite graphs belong to the class $\mathcal{S}.$
    \item An isometric embedding into the Euclidean space of a singular metric space $X$, as considered by Gromov and Schoen in \cite{Gro92}.
    \item Smooth immersed submanifolds that have transversal self-intersections.
\end{itemize}

Let us discuss the following two examples of structures.
\begin{itemize}
\item Let the set $A \subset \R^3$ be defined as $A=A_1 \cup A_2,$ with component manifolds $A_1 = \{(0,0,z)\in \R^3: z \in [-1,1]\}$ and $A_2=\{(x,y,0)\in \R^3: x^2+y^2\leqslant 1\}.$ Let $\mu = \mathcal{H}^1\lvert_{A_1}+\mathcal{H}^2\lvert_{A_2}.$ Clearly, the measure $\mu$ is a low-dimensional structure with the obvious partition to component manifolds.
\item The second example presents a structure that is not contained in our definition.

Let us consider the disc $B_1=\{(x,y,0) \in \R^3: x^2+y^2 \leqslant 1\}$ and the manifold $B_2=\{(x,y,z)\in \R^3: x,y\in [-1,1], z=x^2 \}.$ We define a subset $B = B_1 \cup B_2$ with the corresponding singular measure $\nu = \mathcal{H}^2\lvert_{B_1}+\mathcal{H}^2\lvert_{B_2}.$\\
The tangent space to the component $B_2$ in the point $(0,0,0)$ is a plane $\R^2.$ Obviously at any point of $B_1$ the tangent is also $\R^2.$ This means that the {condition LDS2} of the definition of the low-dimensional structure is violated.
\end{itemize}

\section{First-order space of functions}\label{first_order}

To define a derivative of a function given on some (possibly irregular) low-dimensional structure (or, in general, on a support of a Radon measure), we need to develop a notion of a tangent bundle of a measure. Before formalising this object, let us recall the well-known measure-related Lebesgue space.

	 For $p \in [1,\infty]$ the Lebesgue space $L^p_{\mu}$ is defined as a subspace of the space of $\mu$-measurable functions such that
	 \begin{itemize}
	 \item[a)] for $p\in [1,\infty)$ the norm $$\norm{f}_{L^p_{\mu}}:=\left(\int_{\Omega} |f|^p d\mu\right)^{\frac{1}{p}},$$
	 \item[b)] for $p=\infty$ the norm
	 $$\norm{f}_{L^{\infty}_{\mu}}:=\esssup_{\mu} |f|,$$
	\end{itemize}
	is finite.

A tangent space to a Radon measure $\mu$ can be defined in many non-equivalent ways. An extensive discussion of this subject with a comparison of various definitions can be found in \cite{Fra99}. We follow the method of construction proposed in \cite{Bou97} and in \cite{Haf03}. The main advantage of this approach is its simplicity and direct analogy with a classical tangent space to a smooth manifold.


\begin{defi}[Tangent space to a Radon measure]
	Consider a set of all smooth functions vanishing on $\supp \mu,$ and the set $\mathcal{T}^{\perp}_{\mu}$ of gradients of such functions:
	$$\mathcal{T}^{\perp}_{\mu}:= \left\{ w \in C^{\infty}_c(\R^3;\R^3): w=\nabla v \text{ on } \supp \mu \text{ for some } v \in C^{\infty}_c(\R^3, \, v=0 \text{ on } \supp \mu \right\}.$$
	It is easy to observe that a set-valued mapping $T_{\mu}^{\perp}: \R^3 \to P(\R^3),$
	$$T_{\mu}^{\perp} (x):= \left\{w(x)\in \R^3: w \in \mathcal{T}^{\perp}_{\mu}  \right\}$$ assigns to each $x \in \R^3$ certain linear subspace of $\R^3.$
	The space $T_{\mu}(x)$ tangent to the measure $\mu$ at a point $x \in \R^3$ is defined as
	$$T_{\mu}(x):= \left\{ w \in \R^3: w\perp T_{\mu}^{\perp} (x)  \right\},$$
	the symbol $\perp$ denotes the orthogonality in the Euclidean scalar product in $\R^3.$
\end{defi}

With a notion of the tangent structure to $\mu$, we might project the classical gradient onto it to obtain its measure-related counterpart. Strictly speaking,
for $\mu$ almost every $x \in \R^3$ let $P_{\mu}(x):\R^3 \to T_{\mu}(x)$ denote the orthogonal projection onto the tangent space $T_{\mu}.$ Then the tangent gradient of a function $u \in C^{\infty}_c(\R^3)$ is defined for a.e. $x \in \R^3$ as
$$\nabla_{\mu} u(x):= P_{\mu}(x) \nabla u(x).$$

We are ready to introduce the basic Sobolev-like space $H^1_{\mu}.$
The Sobolev space $H^1_{\mu}$ is defined as a completion of the space $C^{\infty}_c(\R^3)$ in the Sobolev norm
$$\norm{\cdot}_{\mu}:=\left( \norm{\cdot}^2_{L^2_{\mu}} + \norm{\nabla_{\mu} \cdot}^2_{L^2_{\mu}}\right)^{\frac{1}{2}}.$$

Combining a useful characterisation of the space $H^1_{\mu}$ established in \cite[Lem. 2.2]{Bou02} with the fact that multiplying a measure by a bounded and separated from zero density does not change a tangent structure \cite{Ryb20}, we derive the following observations expressing relations between classical tangent spaces and spaces tangent to a measure $\mu \in \widehat{\mathcal{S}}.$

\begin{prop}[see \cite{Bou02, Ryb20}] \label{tanprop}
Assume, that $\mu \in \widehat{\mathcal{S}},$ $\supp \mu = \bigcup_{i=1}^m S_i.$ A classical tangent structure on the component manifold $S_i$ is denoted by $T_{S_i},$ and a classical tangent gradient on the component $S_i$ is denoted by $\nabla_{S_i}.$ Then
	\begin{itemize}
		\item[a)] $T_{\mu}(x) = \sum_{i=1}^mT_{S_i}(x)$ for $\mu$-a.e. $x \in \R^3,$
		\item[b)] if $u \in H^1_{\mu},$ then for $i \in \left\{1,...,m\right\}$ $u\lfloor_{S_i} \in H^1(S_i),$
		\item[c)] if $u \in H^1_{\mu},$ then for $i \in \left\{1,...,m\right\}$ $(\nabla_{\mu}u)\lfloor_{S_i} = \nabla_{S_i}u,$
		\item[d)] for a fixed $i\in\left\{1,...,m\right\},$ let $\phi \in C^{\infty}(\R^3),\; \phi=0$ on $\bigcup_{j\neq i}S_j,$ $u \in L^2_{\mu}$ and $u\lfloor_{S_i} \in H^1(S_i),$ then $\phi u \in H^1_{\mu}.$
	\end{itemize}
\end{prop}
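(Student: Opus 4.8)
\textbf{Proof proposal for Proposition \ref{tanprop}.}

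The plan is to reduce everything to the cited characterisation \cite[Lem.~2.2]{Bou02} of $H^1_\mu$ and the invariance of the tangent bundle under bounded, strictly positive density changes \cite{Ryb20}, and then to localise to a single component manifold. For part (a), I would first treat the model case $\mu = \sum_i \mathcal H^{\dim S_i}\lfloor_{S_i}$ (the $\widetilde{\mathcal S}$ case); the general $\widehat{\mathcal S}$ case follows immediately because replacing $\mathcal H^{\dim S_i}\lfloor_{S_i}$ by $\theta_i\mathcal H^{\dim S_i}\lfloor_{S_i}$ with $\theta_i$ bounded and bounded away from zero does not alter $T_\mu$. For the model case, the inclusion $T_{S_i}(x)\subset T_\mu(x)$ for $\mathcal H^{\dim S_i}$-a.e.\ $x\in S_i$ is the statement that each component's classical tangent is contained in the measure-tangent; this I would get from the defining property $\mathcal T^\perp_\mu(x)\perp T_{S_i}(x)$: if $v\in C^\infty_c(\R^3)$ vanishes on $\supp\mu$ then in particular $v$ vanishes on $S_i$, so $\nabla v(x)$ is $\R^3$-orthogonal to the classical tangent $T_{S_i}(x)$ at any point $x\in S_i$ where $S_i$ is smooth, hence $T^\perp_\mu(x)\subset T_{S_i}^\perp(x)$ and therefore $T_{S_i}(x)\subset T_\mu(x)$. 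Summing over $i$ with $x\in S_i$ gives $\sum_i T_{S_i}(x)\subset T_\mu(x)$ a.e. The reverse inclusion is the place where I expect the real work: away from the junction set $\bigcup_{i\neq j}(S_i\cap S_j)$ — which is $\mu$-null by LDS2 and LDS3 (it is a finite union of submanifolds of strictly smaller dimension) — a point $x$ lies on exactly one component $S_i$, and there one must exhibit, for every vector $w\perp T_{S_i}(x)$, a function $v\in C^\infty_c(\R^3)$ vanishing on all of $\supp\mu$ with $\nabla v(x)=w$; this is a local construction using a tubular neighbourhood of $S_i$ near $x$ (away from the other components), taking $v$ to be a smooth function of the signed normal coordinates cut off to have compact support and to avoid the finitely many other components, which is possible precisely because $x$ is at positive distance from $\bigcup_{j\neq i}S_j$. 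This yields $T^\perp_\mu(x)\supset T^\perp_{S_i}(x)$, hence $T_\mu(x)\subset T_{S_i}(x)=\sum_j T_{S_j}(x)$ at such $x$. Combining the two inclusions gives (a).

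For (b) and (c), I would invoke \cite[Lem.~2.2]{Bou02}: an element $u\in H^1_\mu$ is, by definition, an $L^2_\mu$-limit of smooth $w_n\in C^\infty_c(\R^3)$ with $\nabla_\mu w_n$ Cauchy in $L^2_\mu$; restricting to a fixed $S_i$ and using part (c) of Proposition \ref{tanprop} at the level of smooth functions (namely $(\nabla_\mu w)\lfloor_{S_i}=P_{S_i}\nabla w = \nabla_{S_i}w$ for $w\in C^\infty_c(\R^3)$, which is exactly the a.e.\ identity $T_\mu=\sum T_{S_j}$ restricted to $S_i$ from part (a)), we see that $w_n\lfloor_{S_i}\to u\lfloor_{S_i}$ in $L^2(S_i)$ and $\nabla_{S_i}w_n$ is Cauchy in $L^2(S_i;T_{S_i})$. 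By completeness of $H^1(S_i)$ and closedness of the weak gradient on the smooth compact manifold $S_i$, the limit $u\lfloor_{S_i}$ lies in $H^1(S_i)$ with $\nabla_{S_i}(u\lfloor_{S_i}) = \lim \nabla_{S_i}w_n = (\lim\nabla_\mu w_n)\lfloor_{S_i} = (\nabla_\mu u)\lfloor_{S_i}$, which is precisely (b) and (c). One small point to check is that the $L^2_\mu$-limit $\nabla_\mu u$ is independent of the approximating sequence — this is part of the statement of \cite[Lem.~2.2]{Bou02}, so I would just cite it.

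For (d), fix $i$ and $\phi\in C^\infty(\R^3)$ with $\phi=0$ on $\bigcup_{j\neq i}S_j$, and take $u\in L^2_\mu$ with $u\lfloor_{S_i}\in H^1(S_i)$. Since $u\lfloor_{S_i}\in H^1(S_i)$ on a compact smooth manifold, I would approximate it by functions $v_n\in C^\infty(S_i)$ in the $H^1(S_i)$ norm, extend each $v_n$ to a function $V_n\in C^\infty_c(\R^3)$ (possible since $S_i$ is a compact embedded submanifold with boundary), and consider $\phi V_n\in C^\infty_c(\R^3)$. Then $\phi V_n$ vanishes on every $S_j$, $j\neq i$, so $\nabla_\mu(\phi V_n)$ vanishes $\mu$-a.e.\ off $S_i$, and on $S_i$ it equals $\nabla_{S_i}(\phi V_n)\lfloor_{S_i}$, which converges in $L^2(S_i)$ by the product rule and boundedness of $\phi,\nabla\phi$ on $\supp\mu$. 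Hence $\phi V_n$ is Cauchy in $\|\cdot\|_\mu$ and its $L^2_\mu$-limit is $\phi u$ (the limit is $\phi(u\lfloor_{S_i})$ on $S_i$ and $0$ off $S_i$, which is exactly $\phi u$ $\mu$-a.e.\ because $\phi=0$ on the other components), so $\phi u\in H^1_\mu$. The only subtlety here is making sure the extensions $V_n$ do not interfere with the other components in a way that spoils the limit — but since $\phi\equiv 0$ on $\bigcup_{j\neq i}S_j$, the values of $V_n$ there are irrelevant, so this causes no problem.

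The main obstacle is the reverse inclusion in part (a): producing, at a generic point $x\in S_i$ away from all junctions, enough compactly supported smooth functions vanishing on the entire support of $\mu$ whose gradients at $x$ fill out the full normal space $T_{S_i}(x)^\perp$. The transversality and no-triple-intersection hypotheses (LDS2, LDS3) together with LDS1 are exactly what guarantee that such $x$ is separated from the rest of $\supp\mu$, making the tubular-neighbourhood construction of these cutoff functions possible; once that local construction is in place the rest is routine.
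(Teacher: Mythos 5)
Your argument is correct, but it follows a more self-contained route than the paper, which gives no standalone proof of this proposition: the text simply reads the four statements off from the characterisation of $H^1_{\mu}$ for multijunction measures in \cite[Lem.~2.2]{Bou02} (which already contains the componentwise description of $H^1_{\mu}$ and of $\nabla_{\mu}$, including well-definedness of the $\mu$-gradient) combined with the observation from \cite{Ryb20} that multiplying each $\mathcal{H}^{\dim S_i}\lfloor_{S_i}$ by a density $\theta_i$ bounded and bounded away from zero does not change $T_{\mu}$. You instead prove the tangent-bundle identity (a) directly from the definition: the easy inclusion from the vanishing of tangential derivatives of test functions that are zero on $\supp\mu$, and the reverse inclusion by a tubular-neighbourhood construction of compactly supported functions vanishing on all of $\supp\mu$ whose gradients at a point $x\in S_i$ off the (correctly identified, $\mu$-null by LDS2--LDS3) junction set span $T_{S_i}(x)^{\perp}$; you then deduce (b)--(c) by restricting an approximating sequence to $S_i$ and using closedness of the weak gradient on the compact manifold, delegating only the sequence-independence of $\nabla_{\mu}u$ to \cite{Bou02}, and you prove (d) by an explicit extension-and-cutoff approximation, where the key (and correctly used) point is that $\phi\equiv 0$ on $S_j$ forces the tangential part of $\nabla\phi$, hence $\nabla_{\mu}(\phi V_n)$, to vanish a.e.\ on the other components. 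The citation route is shorter; yours buys transparency, making explicit exactly where the structural hypotheses LDS1--LDS3 and the $\mu$-negligibility of intersections and boundaries enter, and it reduces the weighted case $\widehat{\mathcal{S}}$ to $\widetilde{\mathcal{S}}$ in the same way the paper implicitly does.
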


The basic tool in the classical Sobolev spaces is the Poincar{\'e}-type inequality. 

{If the considered low-dimensional structure has all component manifolds of the same dimension, then, as pointed out in \cite[Sec. 2.2]{Ryb20}, the standard Poincar{\'e} inequality is valid.}
\begin{lem}[Standard Poincar{\'e} inequality]\label{standpoin}
Assume that $\mu\in \widehat{\mathcal{S}},\;$ $\supp\mu=S=\bigcup_{i=1}^mS_i$ and for $i \in \{1,...,m\}\;$ $\dim S_i=k\in \{1,2\},$ then the standard Poincar{\'e} inequality
\begin{equation}\label{stpoin}
\int_{\Omega}|u - (u)_S|^2d\mu \leqslant C \int_{\Omega}|\nabla_{\mu}u|^2d\mu
\end{equation}
is valid for any $u \in H^1_{\mu}.$
\end{lem}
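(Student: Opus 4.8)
The plan is to reduce the statement to the classical Poincaré inequality on each component manifold and then handle the interaction between components through the junction sets. Since all $S_i$ have the same dimension $k\in\{1,2\}$, each $S_i$ is a compact smooth Riemannian manifold with boundary, so the standard Poincaré inequality $\int_{S_i}|v-(v)_{S_i}|^2\,dx\leqslant C_i\int_{S_i}|\nabla_{S_i}v|^2\,dx$ holds for every $v\in H^1(S_i)$. By Proposition \ref{tanprop}, if $u\in H^1_\mu$ then $u|_{S_i}\in H^1(S_i)$ and $(\nabla_\mu u)|_{S_i}=\nabla_{S_i}u$, so these componentwise inequalities are available for the restrictions of $u$. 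Summing over $i$ gives control of $\sum_i\int_{S_i}|u-(u)_{S_i}|^2\,d\mu$ by $\int_\Omega|\nabla_\mu u|^2\,d\mu$ (absorbing the densities $\theta_i$, which are bounded above and below, into the constant). The remaining task is to pass from the componentwise averages $(u)_{S_i}$ to the global average $(u)_S$.

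The key step is therefore a connectedness argument controlling the differences $|(u)_{S_i}-(u)_{S_j}|$. Since $\Omega$ is connected and $S=\bigcup S_i$ with the $S_i$ transversal, the ``incidence graph'' whose vertices are the components $S_i$ and whose edges are the nonempty intersections $S_i\cap S_j$ is connected (a smooth approximating sequence $\phi_n\to u$ in $H^1_\mu$ cannot otherwise have a limit that distinguishes the pieces — more precisely, a function in $H^1_\mu$ that is locally constant on each component with values unconstrained across junctions would have to be approximated by genuinely continuous smooth functions, forcing agreement). For each edge $S_i\cap S_j\neq\emptyset$, pick a point or small patch near the junction; using the trace of $u$ on $S_i$ and on $S_j$ and the fact that an $H^1_\mu$ function has well-defined traces on the junction sets that agree (being limits of continuous functions), one estimates $|(u)_{S_i}-(u)_{S_j}|$ by $C(\|u-(u)_{S_i}\|_{H^1(S_i)}+\|u-(u)_{S_j}\|_{H^1(S_j)})$ via a local Poincaré/trace inequality on each side. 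Chaining these estimates along paths in the (finite) incidence graph yields $|(u)_{S_i}-(u)_{S_j}|\leqslant C\sum_\ell\|\nabla_\mu u\|_{L^2_\mu}$ for all $i,j$, and hence $|(u)_{S_i}-(u)_S|\leqslant C\|\nabla_\mu u\|_{L^2_\mu}$ for each $i$.

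Combining the two ingredients finishes the proof: write
\begin{equation*}
\int_\Omega|u-(u)_S|^2\,d\mu=\sum_{i=1}^m\int_{S_i}|u-(u)_S|^2\,\theta_i\,d\mathcal H^k\leqslant 2\sum_{i=1}^m\int_{S_i}|u-(u)_{S_i}|^2\theta_i\,d\mathcal H^k+2\sum_{i=1}^m\mu(S_i)\,|(u)_{S_i}-(u)_S|^2,
\end{equation*}
and bound the first sum by the componentwise Poincaré inequalities and the second by the junction estimate, both controlled by $C\int_\Omega|\nabla_\mu u|^2\,d\mu$. A density argument (prove the inequality first for $u\in C^\infty_c(\R^3)$, where all traces are literal restrictions and the incidence-graph reasoning is transparent, then pass to the limit in $H^1_\mu$) makes every step rigorous.

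The main obstacle is the junction estimate $|(u)_{S_i}-(u)_{S_j}|\leqslant C\|\nabla_\mu u\|_{L^2_\mu}$: one must make precise the sense in which the traces of $u$ from the two adjacent components agree on $S_i\cap S_j$, and get a local inequality near the junction with a constant independent of $u$. On the smooth approximants this is just continuity plus a local Poincaré inequality, but one must check the constants behave and that the (finitely many) junctions, possibly of mixed dimension relative to the components, do not cause the trace inequality to degenerate — transversality (condition LDS2) and compactness of the $S_i$ are what save this. Everything else is bookkeeping over the finite incidence graph.
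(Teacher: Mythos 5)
You should note first that the thesis itself gives no proof of Lemma \ref{standpoin}: it is simply quoted from \cite[Sec.\ 2.2]{Ryb20}, so there is no in-paper argument to compare yours with. Judged on its own terms, your skeleton is the standard and essentially correct route: componentwise classical Poincar\'e inequalities made available through Proposition \ref{tanprop} (restrictions lie in $H^1(S_i)$ and $\nabla_\mu u$ restricts to $\nabla_{S_i}u$), agreement of the traces from the two sides of each junction exactly as in Proposition \ref{slady}, a trace/Poincar\'e bound for $|(u)_{S_i}-(u)_{S_j}|$ at every nonempty $S_i\cap S_j$, chaining over the finite incidence graph, and the final splitting of $\int_\Omega|u-(u)_S|^2\,d\mu$. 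The treatment of the densities $\theta_i$ (bounded above and away from zero) is harmless as you say, and the two cases behave as you expect: for $k=2$ the junctions are curves of positive $\mathcal{H}^1$-length and the trace operator $H^1(S_i)\to L^2(S_i\cap S_j)$ does the work, for $k=1$ the junctions are points and point evaluation is continuous on $H^1$ of an interval; all constants depend only on the finitely many fixed compact components, so the chaining and the convex-combination step $(u)_S=\sum_j\frac{\mu(S_j)}{\mu(S)}(u)_{S_j}$ are fine.

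The genuine gap is your justification of connectivity of the incidence graph. Connectedness of $\Omega$ does not imply it: two disjoint parallel discs spanning a ball satisfy LDS1--LDS3 of Definition \ref{lds} (they are vacuously transversal and have disjoint boundaries), yet their incidence graph has no edge; for that structure the function equal to $+1$ on one disc and $-1$ on the other belongs to $H^1_\mu$ (approximate by smooth functions locally constant near each disc) and has $\nabla_\mu u=0$, so inequality \eqref{stpoin} fails outright. Your parenthetical argument --- that a function locally constant on each component ``would have to be approximated by genuinely continuous smooth functions, forcing agreement'' --- is circular: it asserts precisely the triviality of $\ker\nabla_\mu$ that connectivity is supposed to deliver, and it is false when components do not meet, since a smooth function can take different constant values near disjoint pieces while its $\mu$-tangential gradient vanishes. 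The correct reading is that connectivity of the structure (equivalently, that the components cannot be split into two nonempty families with no mutual intersections) is an implicit hypothesis of the lemma, inherited from \cite{Ryb20}, and must be assumed, not derived; this is exactly the phenomenon that the generalised Poincar\'e inequality of Lemma \ref{gpi} is designed to handle when it fails. Once that hypothesis is put in place, the rest of your argument closes without further changes.
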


It is also shown in \cite[Sec. 2.2]{Ryb20} that in a case in which there exist $i,j\in \{1,...,m\}$ such that $\dim S_i \neq \dim S_j,$ where $S_i, S_j$ are component manifolds of the low-dimensional structure $\mu \in \widehat{\mathcal{S}},$ the Poincar{\'e} inequality \eqref{stpoin} for functions of the class $H^1_{\mu}$ is not valid. In such a situation, we can prove that the kernel of $\nabla_{\mu}$ is two-dimensional, which proves that the Poincar{\'e} inequality \eqref{stpoin} is false. This last fact is implied by the observation that the function that on each component is a different constant belongs to the space $H^1_{\mu}.$ The fact that the kernel of $\nabla_{\mu}$ is two-dimensional is related to the null Sobolev capacity of the intersection of component manifolds {and it follows as in the proof of Proposition \ref{zero_extension} of {Section \ref{31}.}}

Due to this fact, the authors of the mentioned paper introduce a generalised version of the Poincar{\'e} inequality (see \cite[Thm. 2.1]{Ryb20}]):
 \begin{lem}[Generalised Poincar\'e inequality]\label{gpi}
 Consider a partition $I_1,...,I_d$ of the set of indexes $\left\{1,...,m\right\}=I_1\cup...\cup I_d,$ where sets \hbox{$I_k,\; k\in \{1,...,d\}$} are pairwise disjoint and the following conditions are satisfied:
\begin{itemize}
		\item[a)] for any $k\in \{1,...,d\}$ the characteristic function $\chi_{\left\{\bigcup_{i\in I_k}S_i\right\}}$ of a sum of component manifolds with indexes in $I_k$ belongs to the kernel of the tangent gradient operator, that is
		\begin{equation*}
		\chi_{\left\{\bigcup_{i\in I_k}S_i\right\}} \in \ker \nabla_{\mu}
		\end{equation*}
		\item[b)] the set $I_k,\;$ $k\in \{1,...,d\}$ is the largest set of indices with the property a): if $\alpha \in I_k$ and $\widetilde{I}_k:= I_k \setminus \{\alpha\},$ then
		\begin{equation*}
		\chi_{\left\{\bigcup_{i\in \widetilde{I}_k}S_i\right\}} \notin \ker \nabla_{\mu}.
		\end{equation*}
	\end{itemize}
	With any element $I_k$ of the partition, we associate the projection
	\begin{equation*}
	P_ku:=\chi_{\left\{\bigcup_{i\in I_k}S_i\right\}} \strokedint_{\bigcup_{i\in I_k}S_i} u d \mu,
	\end{equation*}
	where $\strokedint_Xfd\mu := \frac{1}{\mu(X)}\int_Xfd\mu.$
	Then for $\mu \in \widetilde{\mathcal{S}}$ exists a set of positive constants $\left\{C_1,...,C_d\right\}$ such that for all $k\in\{1,...,d\}$ and any $u \in H^1_{\mu}$ the following version of the Poincar{\'e} inequality is satisfied
	\begin{equation}\label{weakPoincar\'e}
	\sum_{j\in I_k} \int_{\Omega}|u-P_ku|^2d \mu_j \leqslant C_k \sum_{j\in I_k}\int_{\Omega}|\nabla_{\mu}u|^2d\mu_j,
	\end{equation}
	for $\mu_j:=\mu\lfloor_{S_j}.$
\end{lem}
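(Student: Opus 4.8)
The plan is to reduce the claim to a statement about a finite-dimensional quotient and then to a standard Poincaré inequality applied component-by-component. First I would fix $k \in \{1,\dots,d\}$ and consider the subspace $H^1_{\mu,k} \subset H^1_{\mu}$ obtained by restricting functions to the union $T_k := \bigcup_{i \in I_k} S_i$; by Proposition \ref{tanprop}(b),(c) this restriction lands in $\bigoplus_{i \in I_k} H^1(S_i)$ and the restricted tangent gradient agrees componentwise with $\nabla_{S_i}$. The key structural observation, which I would establish first, is that $T_k$ is $\mathcal{H}^{\dim S_i}$-connected through its junctions in the sense relevant to $H^1_\mu$: by the maximality condition (b), removing any single component $S_\alpha$ from $I_k$ destroys the property that the characteristic function of the union lies in $\ker\nabla_\mu$, which — combined with the capacity argument referenced before the lemma (the intersections $S_i \cap S_j$ have null Sobolev capacity only when dimensions differ, and otherwise a Sobolev function cannot jump across a junction) — forces $\ker(\nabla_\mu|_{T_k})$ to be exactly the one-dimensional space of constants on $T_k$. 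This is the heart of the matter.

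Granting that, I would argue by the standard compactness-contradiction (Rellich) scheme: suppose \eqref{weakPoincar\'e} fails for some $k$, so there is a sequence $u_n \in H^1_\mu$ with $\sum_{j \in I_k}\int |u_n - P_k u_n|^2 d\mu_j = 1$ and $\sum_{j\in I_k}\int |\nabla_\mu u_n|^2 d\mu_j \to 0$. Replacing $u_n$ by $u_n - P_k u_n$ we may assume $P_k u_n = 0$, i.e. the mean of $u_n$ over $T_k$ vanishes. Each restriction $u_n|_{S_i}$ is bounded in $H^1(S_i)$, so along a subsequence $u_n|_{S_i} \to v_i$ strongly in $L^2(S_i)$ and weakly in $H^1(S_i)$, with $\nabla_{S_i} v_i = 0$, hence each $v_i$ is constant on $S_i$. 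The traces of $u_n$ on each junction $S_i \cap S_j$ (inside $I_k$) must match in the limit — this uses continuity of the trace operator $H^1(S_i) \to L^2(S_i\cap S_j)$ and is exactly where the same-dimension/positive-capacity junctions enter — so the constants $v_i$ agree across connected junctions; by the kernel computation above, the function $v$ defined by these constants lies in $\ker(\nabla_\mu|_{T_k})$, which is one-dimensional, so $v$ is a single constant on all of $T_k$. But the mean-zero condition passes to the limit, forcing $v \equiv 0$, contradicting $\sum_{j}\int |v|^2 d\mu_j = \lim \sum_j \int |u_n|^2 d\mu_j = 1$.

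The main obstacle I anticipate is making rigorous the claim that $\ker(\nabla_\mu|_{T_k})$ consists only of constants, i.e. that the maximality condition (b) genuinely encodes connectivity of $T_k$ in the $H^1_\mu$-sense. One must rule out the possibility that $\ker(\nabla_\mu|_{T_k})$ contains functions locally constant on $T_k$ but taking different values on different "capacity-disconnected pieces" — precisely the phenomenon that makes the standard Poincaré inequality fail on the whole structure. The resolution is to invoke, as the excerpt indicates, the zero-capacity characterisation (Proposition \ref{zero_extension} in Section \ref{31}): a junction $S_i \cap S_j$ carries nontrivial capacity (so forces trace-matching of $H^1_\mu$ functions) precisely when $\dim S_i = \dim S_j$, and the partition into the $I_k$ is designed so that within each $I_k$ all "genuine" (positive-capacity) connections are used up, whence condition (b) is equivalent to: the graph whose vertices are $\{S_i\}_{i\in I_k}$ and whose edges are the positive-capacity junctions is connected. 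Once this combinatorial reformulation is in place, the kernel computation and the trace-matching step in the compactness argument go through, and the constants $\{C_k\}$ are obtained for free from the contradiction scheme. A secondary point to check is that the argument only needs $\mu \in \widetilde{\mathcal{S}}$ (unit densities), as stated — the densities $\theta_i$ would only affect the constants, but the trace and Rellich estimates are cleanest in the unweighted case.
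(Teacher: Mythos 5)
First, a point of comparison: the thesis itself does not prove Lemma \ref{gpi} --- it is recalled from \cite[Thm. 2.1]{Ryb20} --- so your proposal has to stand on its own. The second half of your scheme (componentwise Rellich compactness on each compact $S_i$, constancy of the limit from the vanishing gradients, matching of traces across same-dimension junctions via Proposition \ref{slady}, and passage of the zero-mean and unit-norm conditions to the limit) is sound and is a standard way to produce the constants $C_k$. The genuine gap is exactly at the step you yourself single out as the heart of the matter: you claim that condition b) is \emph{equivalent} to connectedness of the graph on $I_k$ whose edges are the positive-capacity junctions, and from this you deduce that $\ker\nabla_{\mu}$ restricted to $T_k=\bigcup_{i\in I_k}S_i$ consists only of constants. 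Only one implication is true: connectedness does give b). The converse fails, because b) only forbids the removal of a \emph{single} index. Concretely, take $m=4$ with $S_1,S_2$ two transversally intersecting $2$-discs and $S_3,S_4$ a second such pair, the two crosses disjoint from each other, and the single block $I_1=\{1,2,3,4\}$. Then $\chi_{S}\equiv 1$ lies in $\ker\nabla_{\mu}$, so a) holds; and for every $\alpha$ the function $\chi_{S\setminus S_\alpha}$ jumps across the genuine (same-dimension, positive-capacity) junction between $S_\alpha$ and its partner, hence by Proposition \ref{slady} it is not even an element of $H^1_{\mu}$, so b) holds as well. Yet $\ker\nabla_{\mu}$ restricted to this block is two-dimensional (independent constants on the two crosses), and the inequality with the single projection $P_1$ fails for $u=0$ on $S_1\cup S_2$, $u=1$ on $S_3\cup S_4$. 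So the one-dimensionality of the kernel cannot be derived from a) and b) as you use them, and your contradiction argument breaks at the point where the locally constant limit is claimed to be a single constant on $T_k$.

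The repair is not hard, but it must be made explicit: the blocks $I_k$ have to be taken as the equivalence classes of the relation generated by positive-capacity junctions (equivalently, the grouping described in the remark following the lemma, which is how the partition is meant in \cite{Ryb20}); the literal single-removal condition b) is a strictly weaker paraphrase of that maximal/connected grouping. If you start from that canonical partition --- so that connectedness of the junction graph of each block is part of the setup rather than something to be deduced from b) --- then your capacity dichotomy (Propositions \ref{slady} and \ref{zero_extension}), the kernel identification, and the Rellich contradiction argument all go through, and the restriction to $\mu\in\widetilde{\mathcal{S}}$ is indeed only a matter of constants, as you say.
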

Let us remark that the idea of introducing this kind of partition of component manifolds is to divide the low-dimensional structure into maximal groups of components on which the classical Poincar{\'e} inequality is valid.

\section{Second-order framework}\label{sec_ord_frame}

Let us assume that $S\subset \R^2$ is a closed subset defined as
$$S:=S_1 \cup S_2,\;\; S_1:=\{(x,0): x\in [-1,1]\},\;\; S_2:=\{(0,y): y\in [-1,1]\}.$$ Consider an arbitrary function $u:\R^2\to \R$ that is smooth as a function on $\R^2$ and in addition let assume that $\partial_xu(0,0)\neq \partial_yu(0,0).$
As the function $u|_S$ is continuous, differentiable in every point of $S$ and is obtained as a restriction of some $\R^2$-smooth function, it seems not controversial to expect that such function $u|_S$ is an element of any properly defined space of lower dimensional first-order functions on $S$. A discontinuity at the level of gradients can present challenges in managing higher-order derivatives. A natural way to circumvent such difficulties is to replace the "inner" gradient of the function $u|_{S}$ with a restriction of a classical gradient of the open set extension. However, using such restrictions may prove problematic in capturing the underlying geometry of a structure. Besides that, we are interested in considering less regular functions on the set $S$ that do not need to be restrictions of some regular function posed on an open set containing $S$. As such, it is imperative to explore alternative approaches to ensure a comprehensive understanding of the structure.
This indicates that when thinking of a higher-order regularity of functions defined on $S,$ we need to deal somehow with this kind of discontinuity in points of transversal intersection of components $S_1$ and $S_2.$
Although the jump type discontinuity of the gradient of functions supported on "thin" subsets of weak regularity may look like an obstacle to establishing a second-order theory, it has been shown that the problem can be addressed in the measure-related framework by introducing an artificial regularizing component of the "inner" gradient.

This part is focused on defining the operator of a second derivative and corresponding spaces of functions. A general idea is close to the one used in the previous chapter to construct first-order Sobolev spaces, but now a formal realisation is much more complicated and technical. The first problem is that, in general, even in the case of the smooth function $u,$ the $\mu$-tangent gradient does not belong to $H^1_{\mu}$ space. Thus we cannot apply the operator $\nabla_{\mu}$ to it. A second significant thing is related to a choice of a smooth approximating sequence of a function defined on some low-dimensional structure. It turns out that a choice of an approximating sequence affects the value of a second derivative of the limiting function. To solve this problematic issue, we introduce a certain kind of a normal vector field $b,$ consider the differentiability of $\nabla_{\mu}u+b,$ and properly project the matrix of second-order derivatives.

The presented here exposition of the second-order framework follows paper \cite{Bou02}. Such notions were originally introduced to study the calculus of variations on low-dimensional structures in a measure-oriented setting.

{In this section we assume that $\mu \in \mathcal{S}.$}

For each $u \in C^{\infty}_c(\R^3)$ by $\nabla^{\perp}u$ we denote a $\mu$-a.e. normal component of a gradient, that is\\ ${\nabla u = \nabla_{\mu} u + \nabla^{\perp} u.}$ The symbol $\R^{3\times 3}_{\text{sym}}$ stands for the space of $3 \times 3$ symmetric matrices, and $\nabla^2u$ is a matrix of second partial derivatives of a function $u.$

We introduce a set of triples
\begin{align*}
\mathfrak{g}_{\mu}&:= \left\{(u,\nabla^{\perp}u,\nabla^2u): u \in C^{\infty}_c(\R^3)\right\} \\
\overline{\mathfrak{g}_{\mu}} &:= \text{the closure of the set $\mathfrak{g}_{\mu}$ in the space $L^2_{\mu} \times L^2_{\mu}(\R^3; T_{\mu}^{\perp}) \times L^2_{\mu}(\R^3;\R^{3\times 3}_{\text{sym}})$}
\end{align*}
The set $\overline{\mathfrak{g}_{\mu}}$ is equipped with the natural product norm.

We define the space
$$\mathfrak{m}_{\mu}:= \left\{z \in L^2_{\mu}(\R^3;\R^{3\times 3}_{\text{sym}}): (0,0,z)\in \overline{\mathfrak{g}_{\mu}}\right\}.$$

We need the following technical result (see \cite[Prop3.3(ii)]{Bou02}).
\begin{prop}
For $\mu$-almost every $x\in \R^3$ there exists a $\mu$-measurable multifunction $M^{\perp}_{\mu}:\R^3 \to P(\R^3),$ such that
$$\mathfrak{m}_{\mu} = \left\{z \in L^2_{\mu}(\R^3;\R^{3\times 3}_{\text{sym}}): z(x) \in M^{\perp}_{\mu}(x) \text{ for } \mu-a.e.\; x\in \R^3 \right\}.$$
\end{prop}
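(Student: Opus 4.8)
The plan is to realize $\mathfrak{m}_\mu$ as the family of $L^2_\mu$-sections of a suitable measurable multifunction, using the fact that $\mathfrak{m}_\mu$ is a closed $L^2_\mu$-module. First I would observe that $\mathfrak{m}_\mu$ is a closed linear subspace of $L^2_\mu(\mathbb{R}^3;\mathbb{R}^{3\times 3}_{\mathrm{sym}})$: it is the preimage of the closed set $\overline{\mathfrak{g}_\mu}$ under the continuous linear inclusion $z \mapsto (0,0,z)$. The key structural point, however, is that $\mathfrak{m}_\mu$ is moreover a module over $L^\infty_\mu$, i.e.\ closed under multiplication by bounded $\mu$-measurable scalar functions. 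To see this one uses that $\mathfrak{g}_\mu$ essentially has this property up to lower-order corrections: if $(0,0,z) \in \overline{\mathfrak{g}_\mu}$ is approximated by $(u_n, \nabla^\perp u_n, \nabla^2 u_n)$ and $\psi \in C^\infty_c(\mathbb{R}^3)$, then $\psi u_n$ is again admissible and $\nabla^2(\psi u_n) = \psi \nabla^2 u_n + (\text{terms involving } \nabla\psi \otimes \nabla u_n \text{ and } u_n \nabla^2\psi)$; the correction terms converge to $0$ in $L^2_\mu$ because $u_n \to 0$ and $\nabla^\perp u_n \to 0$ while the tangential part $\nabla_\mu u_n = \nabla u_n - \nabla^\perp u_n$ is controlled via the fact that $(0,0,z)\in\overline{\mathfrak g_\mu}$ forces $u_n\to 0$ in $H^1_\mu$ as well (this is where I would invoke the structure of $\overline{\mathfrak g_\mu}$, cf.\ \cite[Lem.~2.2, Prop.~3.3]{Bou02}). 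Hence $\psi z \in \mathfrak{m}_\mu$ for $\psi \in C^\infty_c$, and by density and closedness of $\mathfrak{m}_\mu$ this extends to all $\psi \in L^\infty_\mu$.

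Once $\mathfrak{m}_\mu$ is known to be a closed $L^\infty_\mu$-submodule of $L^2_\mu(\mathbb{R}^3;\mathbb{R}^{3\times 3}_{\mathrm{sym}})$, the statement is an instance of the general measurable-selection/disintegration principle: every closed $L^\infty_\mu$-submodule of $L^2_\mu(\mathbb{R}^3;\mathbb{R}^{n})$ is of the form $\{z : z(x) \in V(x) \text{ for } \mu\text{-a.e.\ } x\}$ for some $\mu$-measurable multifunction $V$ with closed (indeed linear, here) values. I would construct $M^\perp_\mu$ explicitly: fix a countable dense subset $\{z_k\}_{k\in\mathbb{N}}$ of $\mathfrak{m}_\mu$ and set $M^\perp_\mu(x) := \overline{\mathrm{span}}\{z_k(x) : k \in \mathbb{N}\} \subseteq \mathbb{R}^{3\times 3}_{\mathrm{sym}}$ for $\mu$-a.e.\ $x$. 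Measurability of $x \mapsto M^\perp_\mu(x)$ follows from measurability of each $z_k$ together with standard facts on measurable multifunctions (the span of countably many measurable vector fields is measurable). The inclusion $\mathfrak{m}_\mu \subseteq \{z : z(x) \in M^\perp_\mu(x)\ \mu\text{-a.e.}\}$ is immediate for the $z_k$ and then for all of $\mathfrak{m}_\mu$ by passing to an a.e.-convergent subsequence.

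The reverse inclusion is the part that genuinely needs the module structure. Suppose $z \in L^2_\mu(\mathbb{R}^3;\mathbb{R}^{3\times 3}_{\mathrm{sym}})$ satisfies $z(x) \in M^\perp_\mu(x)$ for $\mu$-a.e.\ $x$; I must show $z \in \mathfrak{m}_\mu$. By orthogonal projection in the Hilbert space $L^2_\mu(\mathbb{R}^3;\mathbb{R}^{3\times 3}_{\mathrm{sym}})$ write $z = z' + z''$ with $z' \in \mathfrak{m}_\mu$ and $z'' \perp \mathfrak{m}_\mu$. Using that $\mathfrak{m}_\mu$ is an $L^\infty_\mu$-module, for every $\psi \in L^\infty_\mu$ and every $w \in \mathfrak{m}_\mu$ one has $\int \psi\, \langle z'', w\rangle\, d\mu = \int \langle z'', \psi w\rangle\, d\mu = 0$, hence $\langle z''(x), w(x)\rangle = 0$ for $\mu$-a.e.\ $x$; applying this to $w = z_k$ for all $k$ gives $z''(x) \perp M^\perp_\mu(x)$ a.e. On the other hand $z'' = z - z'$ with $z(x) \in M^\perp_\mu(x)$ and $z'(x) \in M^\perp_\mu(x)$ a.e., so $z''(x) \in M^\perp_\mu(x)$ a.e.; therefore $z''(x) = 0$ a.e., i.e.\ $z = z' \in \mathfrak{m}_\mu$, as desired. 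The main obstacle I anticipate is the verification of the $L^\infty_\mu$-module property — specifically, checking carefully that multiplication by a cutoff produces only correction terms that vanish in $L^2_\mu$, which requires exploiting the precise way $\overline{\mathfrak{g}_\mu}$ couples the function, its normal gradient, and its Hessian (and in particular that $(0,0,z)\in\overline{\mathfrak g_\mu}$ forces the approximating $u_n$ to go to zero in the full $H^1_\mu$ norm, not merely in $L^2_\mu$). Everything after that is a routine application of measurable-multifunction theory, and could alternatively be quoted directly from \cite[Prop.~3.3]{Bou02}.
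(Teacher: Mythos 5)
The paper does not actually prove this proposition: it is imported verbatim from Bouchitt\'e--Fragal\`a and merely cited as \cite[Prop. 3.3(ii)]{Bou02}, so there is no internal argument to compare yours against line by line. Your sketch essentially reconstructs the proof from that reference, and it is correct in structure: $\mathfrak{m}_{\mu}$ is a closed subspace of $L^2_{\mu}(\R^3;\R^{3\times 3}_{\text{sym}})$ (preimage of the closed set $\overline{\mathfrak{g}_{\mu}}$ under $z\mapsto(0,0,z)$), it is stable under multiplication by $C^{\infty}_c$ functions, and then the representation by a measurable multifunction follows either from the general theory of closed $L^{\infty}_{\mu}$-submodules or, as you do, directly via a countable dense family $\{z_k\}$, setting $M^{\perp}_{\mu}(x)=\mathrm{span}\{z_k(x)\}$ and killing the orthogonal component $z''$ pointwise through the module property; that second half of your argument is complete (note that testing $\int \psi\,\langle z'',w\rangle\,d\mu=0$ with $\psi\in C^{\infty}_c$ already forces $\langle z''(x),w(x)\rangle=0$ $\mu$-a.e., so the Lusin-type upgrade to $L^{\infty}_{\mu}$ multipliers is a convenience, not a necessity). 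The one substantive point you leave open is the one you flag yourself: that $(u_n,\nabla^{\perp}u_n,\nabla^2u_n)\to(0,0,z)$ in $L^2_{\mu}$ forces $\nabla_{\mu}u_n\to 0$ in $L^2_{\mu}$, which is exactly what makes the cross terms $\nabla\psi\otimes\nabla u_n$ in $\nabla^2(\psi u_n)$ vanish. For $\mu\in\mathcal{S}$ this is true and is proved componentwise: away from the junctions the intrinsic Hessian of $u_n$ on $S_i$ is controlled in $L^2(S_i)$ by $\nabla^2u_n$ and $\nabla^{\perp}u_n$ (second fundamental form terms), and interpolation between $L^2(S_i)$ and $H^2(S_i)$ on the compact components gives $\nabla_{S_i}u_n\to 0$. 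In \cite{Bou02} this first-order control is the content of Lemma 3.2, not of Proposition 3.3 itself, so you should cite it as such (or supply the interpolation argument); quoting Proposition 3.3 at that point would be circular, since Prop. 3.3(ii) is precisely the statement being proven.
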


For $\mu$-a.e. $x \in \R^3,$ the space of $\mu$-tangent matrices $M_{\mu}$ is defined as
$$M_{\mu} := \left\{z \in L^2_{\mu}(\R^3;\R^{3\times 3}_{\text{sym}}): z(x) \perp  M^{\perp}_{\mu}(x) \text{ for } \mu-a.e.\; x\in \R^3\right\}.$$

The space $M_{\mu},$ intuitively can be seen as a second-order counterpart of the space $T_{\mu}.$
For more details of $M^{\perp}_{\mu},$ $M_{\mu}$ and the space $\mathfrak{g}_{\mu},$ we refer to \cite[Lem. 3.2, Prop. 3.3]{Bou02}.

\begin{defi}\label{drop} 
For $\mu$-a.e. $x\in \R^3,$ let $Q_{\mu}(x):\R^{3\times 3}_{\text{sym}} \to M_{\mu}(x)$ be the orthogonal projection. Consider the set
\begin{equation}\label{dommu}
D(A_{\mu}):=\left\{(u,b)\in L^2_{\mu} \times L^2_{\mu}(\R^3;T_{\mu}^{\perp}): \exists z\in L^2_{\mu}(\R^3;\R^{3\times 3}_{\text{sym}}) \text{ such that } (u,b,z)\in \overline{\mathfrak{g}_{\mu}}\right\}.
\end{equation}
On the domain $D(A_{\mu})$ we introduce the operator $A_{\mu}$ as
\begin{equation}\label{drop}
A_{\mu}(u,b) := Q_{\mu}(z).
\end{equation}
\end{defi}
This operator can be understood as a $\mu$-related matrix of second-order derivatives of a function $u$.

{\begin{uw}\label{ogmiary}
In \cite[Sec. 3]{Bou02} the operator $A_{\mu}$ is introduced in a more general class of measures. In particular, it should be noted that the same construction is valid if the measure $\mu \in \mathcal{S}$ is equipped with densities $\theta_i \in C^{\infty}(S_i),$ where $\theta_i$ is bounded and separated from zero.
\end{uw}}

\begin{defi}[The Cosserat vector field]\label{cosserat}
A vector field $b\in L^2_{\mu}(\R^3;T_{\mu}^{\perp})$ with values in the orthogonal complement to $T_\mu$ and such that $(u,b) \in D(A_{\mu})$ is called the Cosserat vector field of a function $u \in L^2_{\mu}$.
\end{defi}

{We would like to mention the following proposition describing the orthogonal projection $Q_{\mu}$ in case of $\mu \in \mathcal{S}$ (see \cite[ Prop.3.9]{Bou02}).
\begin{prop}[Representation of $Q_{\mu}$]\label{blocks}
If $\mu \in \mathcal{S},$ then for every $E \in \R^{3\times 3}_{sym}$ we have
$Q_{\mu}(E)= P_{\mu}EP_{\mu} + P_{\mu}^{\perp}EP_{\mu}+P_{\mu}EP_{\mu}^{\perp}.$
\end{prop}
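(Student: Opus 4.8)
\textbf{Proof plan for Proposition \ref{blocks}.}

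The plan is to reduce the claim to a pointwise linear-algebra identity. By the characterisation of $\mathfrak{m}_\mu$ via the multifunction $M^\perp_\mu$ and the definition of $M_\mu$ as its pointwise orthogonal complement inside $\R^{3\times 3}_{\mathrm{sym}}$, it suffices to show that for $\mu$-a.e.\ $x$ the orthogonal projection $Q_\mu(x):\R^{3\times 3}_{\mathrm{sym}}\to M_\mu(x)$ is given by $E\mapsto P_\mu EP_\mu + P_\mu^\perp EP_\mu + P_\mu EP_\mu^\perp$, where $P_\mu=P_\mu(x)$ is the orthogonal projection onto $T_\mu(x)$ and $P_\mu^\perp=\mathrm{Id}-P_\mu$ is the projection onto $T_\mu^\perp(x)$. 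Since $\mu\in\mathcal S$, all component manifolds have the same dimension $k\in\{1,2\}$, so $T_\mu(x)=T_{S_i}(x)$ for the (essentially unique) component through $x$; in particular $\dim T_\mu(x)=k$ is constant, which is exactly what makes the block decomposition below uniform.

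First I would record the block decomposition of $\R^{3\times 3}_{\mathrm{sym}}$ induced by the splitting $\R^3=T_\mu(x)\oplus T_\mu^\perp(x)$. Writing any $E\in\R^{3\times 3}_{\mathrm{sym}}$ as $E=P_\mu EP_\mu + (P_\mu^\perp EP_\mu + P_\mu EP_\mu^\perp) + P_\mu^\perp EP_\mu^\perp$, these three summands are mutually orthogonal in the Frobenius inner product (this is a direct computation using $\operatorname{tr}$-cyclicity and $P_\mu P_\mu^\perp=0$, $P_\mu^2=P_\mu$, symmetry of the projections). So $\R^{3\times 3}_{\mathrm{sym}}$ is an orthogonal direct sum of the "tangent-tangent" block, the "mixed" block, and the "normal-normal" block. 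The map in the statement is precisely the orthogonal projection onto the sum of the first two blocks, so the proposition is equivalent to the identity $M_\mu(x)=\{E:P_\mu^\perp EP_\mu^\perp=0\}$ for $\mu$-a.e.\ $x$, equivalently $M^\perp_\mu(x)=\{E:E=P_\mu^\perp EP_\mu^\perp\}$, i.e.\ $M^\perp_\mu(x)$ is exactly the normal-normal block.

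The substance of the argument is therefore computing $M^\perp_\mu(x)$, i.e.\ identifying which symmetric matrix fields $z$ arise as $L^2_\mu$-limits of Hessians $\nabla^2 w_n$ of smooth functions $w_n$ with $w_n\to 0$ and $\nabla^\perp w_n\to 0$ in $L^2_\mu$. Here I would invoke \cite[Lem.\ 3.2, Prop.\ 3.3]{Bou02}, whose local analysis identifies $M^\perp_\mu$; concretely, on a flat model chart where $S_i$ is a coordinate subspace, a function vanishing on $S_i$ together with its $\mu$-tangent gradient forces the restriction of its Hessian to the tangent directions (and the mixed tangent--normal directions, since $\nabla^\perp w_n\to 0$ controls the first normal derivatives along $S_i$) to vanish in the limit, while the purely normal second derivatives remain free — giving $M^\perp_\mu(x)=P_\mu^\perp\R^{3\times3}_{\mathrm{sym}}P_\mu^\perp$. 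Passing from the flat model to a general component uses the smooth chart (here the single-chart assumption on each $S_i$ and the transversality LDS2 keep the localisation clean). The main obstacle is precisely this step: proving that the mixed block $P_\mu^\perp EP_\mu + P_\mu EP_\mu^\perp$ does \emph{not} lie in $M^\perp_\mu(x)$ — i.e.\ that one genuinely cannot kill the mixed second derivatives by a sequence with $w_n\to0$, $\nabla^\perp w_n\to 0$ — which is where the constancy of $\dim T_\mu$ (the defining feature of the class $\mathcal S$) and the curvature of the component manifolds enter; this is the content of \cite[Prop.\ 3.9]{Bou02}, and I would cite it for the delicate direction rather than reprove it, assembling the above orthogonality computation to package the result in the stated closed form.
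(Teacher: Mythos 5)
First, note that the thesis itself offers no proof of Proposition \ref{blocks}: it is recalled verbatim from \cite[Prop.~3.9]{Bou02} and used as an imported fact. Your reduction is correct and is indeed the heart of the matter: the three blocks $P_{\mu}EP_{\mu}$, $P_{\mu}^{\perp}EP_{\mu}+P_{\mu}EP_{\mu}^{\perp}$, $P_{\mu}^{\perp}EP_{\mu}^{\perp}$ are mutually orthogonal in the Frobenius product, so the stated formula for $Q_{\mu}$ is equivalent to the pointwise identification $M_{\mu}^{\perp}(x)=P_{\mu}^{\perp}(x)\,\R^{3\times 3}_{\text{sym}}\,P_{\mu}^{\perp}(x)$ for $\mu$-a.e.\ $x$, and \cite[Lem.~3.2, Prop.~3.3]{Bou02} only give the existence and measurability of the multifunction $M_{\mu}^{\perp}$, not its value.

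The genuine gap is the circularity at the decisive step: you propose to "cite \cite[Prop.~3.9]{Bou02} for the delicate direction rather than reprove it," but that proposition \emph{is} the statement under proof, so as a self-contained argument nothing has been established beyond the (easy) inclusion of the normal--normal block in $M_{\mu}^{\perp}$. Either cite the result wholesale, as the thesis does, or close the loop yourself; the missing direction is in fact elementary in the flat chart and does not require curvature subtleties. Concretely, if $w_n\in C^{\infty}_c(\R^3)$ with $w_n\to 0$, $\nabla^{\perp}w_n\to 0$ and $\nabla^2 w_n\to z$ in $L^2_{\mu}$, then testing against smooth functions supported in a chart of a component $S_i$ and integrating by parts \emph{along} $S_i$ shows that the tangential second derivatives of $w_n$ converge distributionally to the tangential Hessian of the limit $0$, forcing $P_{\mu}zP_{\mu}=0$, while the same argument applied to the tangential derivatives of $\nabla^{\perp}w_n$ (which tends to $0$ in $L^2_{\mu}$) kills the mixed block; the constancy of $\dim T_{\mu}$ on the class $\mathcal{S}$ and the single-chart assumption only serve to make this localisation uniform, junctions being $\mu$-negligible. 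With that paragraph added, your block-orthogonality packaging yields the stated closed form; without it, the proposal reduces to the same citation the thesis already makes.
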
}

In the next step, we propose generalising a classical tangent Hessian of a function defined on a smooth manifold. For $\mu$-a.e. $x \in \R^3$ let $P_{\mu}^{\perp}(x):\R^3 \to T_{\mu}^{\perp}(x)$ denotes an orthogonal projection onto the normal space $T_{\mu}^{\perp}(x).$
Put
$$D(C) := \{v \in L^2_{\mu}(\R^3;\R^3): P_{\mu}(x)v(x) \in (H^1_{\mu})^3,\; P_{\mu}^{\perp}(x)v(x) \in (H^1_{\mu})^3 \text{ for } \mu-a.e. \; x\in \R^3\}.$$

Let $C: D(C) \to L^2_{\mu},$ be an operator defined as
	$$Cv := P_{\mu}^{\perp}\nabla_{\mu}(P_{\mu}v) +  P_{\mu}\nabla_{\mu}(P_{\mu}^{\perp}v),$$
	and let $T_C: \mathbb{R}^3 \to \mathbb{R}^{3\times 3}$ be a tensor field
	$$T_C(x)v(x) := (Cv)(x).$$

\begin{defi}[A generalisation of a tangent Hessian to the class of lower dimensional structures]
We define an operator $\nabla^2_{\mu}: D(\nabla^2_{\mu}) \to (L^2_{\mu})^{3\times 3}$ by setting
$$
D(\nabla^2_{\mu}):= \{u \in H^1_{\mu}: \exists b : (u,b)\in D(A_{\mu})\}.
$$
and
$$
\nabla^2_{\mu}u := P_{\mu}A_{\mu}(u,b)P_{\mu} - T_Cb.
$$
\end{defi}

The operator $C: D(C)\to L^2_{\mu}$ might be extended to a continuous operator on the whole $L^2_{\mu}$ space (this can be done by the Hahn-Banach Theorem). This fact implies, that the expression $T_Cb$ makes sense for any Cosserat vector field $b$ related to $u \in D(\nabla^2_{\mu}).$

To verify that the operator $\nabla_{\mu}^2$ is properly defined on the space $D(\nabla_{\mu}^2),$ it is necessary to check that the value of the operator $\nabla_{\mu}^2$ is independent of a choice of a Cosserat vector field $b$ (see \cite[Prop.3.15(i)]{Bou02}).
\begin{prop}\label{indep}
Let $\mu \in \mathcal{S}.$ The operator $P_{\mu}A_{\mu}(u,b)P_{\mu} - T_Cb$ is independent of a choice of Cosserat vector field $b.$
\end{prop}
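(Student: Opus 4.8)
The plan is to show that if $b_1$ and $b_2$ are two Cosserat vector fields for the same $u \in D(\nabla^2_\mu)$, then $P_\mu A_\mu(u,b_1)P_\mu - T_C b_1 = P_\mu A_\mu(u,b_2)P_\mu - T_C b_2$, equivalently $P_\mu\bigl(A_\mu(u,b_1) - A_\mu(u,b_2)\bigr)P_\mu = T_C(b_1 - b_2)$. The key observation is that both sides depend only on the difference $b := b_1 - b_2$, and that $(u,b_1), (u,b_2) \in D(A_\mu)$ forces $(0, b, z_1 - z_2) \in \overline{\mathfrak{g}_\mu}$ for suitable $z_i$. So I would reduce to proving: for every $(0,b,z) \in \overline{\mathfrak{g}_\mu}$, one has $P_\mu Q_\mu(z) P_\mu = T_C b$. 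By the representation of $Q_\mu$ in Proposition \ref{blocks} (valid since $\mu \in \mathcal{S}$), $P_\mu Q_\mu(z) P_\mu = P_\mu z P_\mu$ (the cross terms $P_\mu^\perp z P_\mu$ and $P_\mu z P_\mu^\perp$ vanish under the outer $P_\mu$'s), so the target simplifies to $P_\mu z P_\mu = T_C b$ whenever $(0,b,z) \in \overline{\mathfrak{g}_\mu}$.

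To establish that identity, I would work first at the level of smooth generators: take $w_n \in C^\infty_c(\R^3)$ with $(w_n, \nabla^\perp w_n, \nabla^2 w_n) \to (0, b, z)$ in the product $L^2_\mu$-topology. The classical identity $\nabla^2 w_n = \nabla(\nabla_\mu w_n) + \nabla(\nabla^\perp w_n)$ lets me express the tangent-tangent block $P_\mu \nabla^2 w_n P_\mu$ in terms of $P_\mu \nabla_\mu(\nabla_\mu w_n) P_\mu$ plus $P_\mu \nabla_\mu(\nabla^\perp w_n)P_\mu$, and then use $\nabla_\mu w_n \to 0$ in $L^2_\mu$. The first piece should be controlled using the componentwise structure from Proposition \ref{tanprop}: on each $S_i$, $\nabla_\mu w_n = \nabla_{S_i} w_n \to 0$ in $L^2(S_i)$, and one argues that the resulting contribution to $P_\mu z P_\mu$ vanishes, while the second piece converges to $P_\mu \nabla_\mu(P_\mu^\perp b) P_\mu$ — which, when combined with the definition $Cv = P_\mu^\perp \nabla_\mu(P_\mu v) + P_\mu \nabla_\mu(P_\mu^\perp v)$ and the fact that $P_\mu^\perp b = b$ (since $b$ takes values in $T_\mu^\perp$), is exactly how $T_C b$ enters. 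Here I would lean on \cite[Prop. 3.15]{Bou02}, \cite[Lem. 3.2, Prop. 3.3]{Bou02} and Lemma \ref{comp} to make the passage to the limit rigorous and to handle the identification of the relevant closed subspaces.

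The main obstacle, I expect, is the limiting argument for the term involving $P_\mu \nabla_\mu(\nabla_\mu w_n)$: a priori $\nabla_\mu w_n \to 0$ only in $L^2_\mu$, not in $H^1_\mu$, so one cannot naively pass derivatives through the limit. The resolution should exploit that we are looking at a generator of the closed set $\overline{\mathfrak{g}_\mu}$ with \emph{first} component equal to $0$: the structure of $\mathfrak{m}_\mu$ and the multifunction $M^\perp_\mu$ from the proposition preceding Definition \ref{drop} constrains which matrix fields $z$ can appear, and in particular the tangent-tangent block of any $z \in \mathfrak{m}_\mu$ (i.e. with $(0,0,z) \in \overline{\mathfrak{g}_\mu}$) is forced to vanish — this is essentially the content that makes $\nabla^2_\mu$ single-valued. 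So the cleanest route is: first dispose of the $(0,0,z)$ case by showing $P_\mu z P_\mu = 0$ there (using that $\nabla^2 w_n$ is the Hessian of a function whose $\mu$-gradient tends to $0$), then handle the general $(0,b,z)$ case by subtracting off a smooth extension of $b$ to reduce to the $(0,0,\cdot)$ situation plus an explicit term yielding $T_C b$. This mirrors the decoupling strategy of \cite{Bou02}, and no genuinely new idea beyond careful bookkeeping of the three blocks $P_\mu \cdot P_\mu$, $P_\mu^\perp \cdot P_\mu$, $P_\mu \cdot P_\mu^\perp$ should be needed.
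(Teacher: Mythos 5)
First, note that the thesis does not actually prove Proposition \ref{indep} -- it is recalled from \cite[Prop. 3.15(i)]{Bou02} -- so you are reconstructing the argument rather than matching one. Your opening reductions are correct and are the right skeleton: by linearity of the closed subspace $\overline{\mathfrak{g}_{\mu}}$ the problem becomes showing $P_{\mu}zP_{\mu}=T_Cb$ whenever $(0,b,z)\in\overline{\mathfrak{g}_{\mu}}$, and Proposition \ref{blocks} indeed gives $P_{\mu}Q_{\mu}(z)P_{\mu}=P_{\mu}zP_{\mu}$. The gap is in how you finish. The ``classical identity'' $\nabla^2w_n=\nabla(\nabla_{\mu}w_n)+\nabla(\nabla^{\perp}w_n)$ is not an ambient identity ($\nabla_{\mu}w_n,\nabla^{\perp}w_n$ live only on $S$); what is true, and what you actually need, is the Gauss-type decomposition of the tangent--tangent block, $P_{\mu}\nabla^2w_nP_{\mu}=\mathrm{Hess}_{S_i}(w_n|_{S_i})-\mathrm{II}\cdot\nabla^{\perp}w_n$ on each $S_i$. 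The crucial point, which you only gesture at, is that the second summand is \emph{zeroth order} in $\nabla^{\perp}w_n$, so it passes to the $L^2_{\mu}$-limit and produces exactly $T_Cb$; the first summand converges strongly (as a difference of convergent terms) and its limit is identified as $0$ by testing distributionally against $w_n|_{S_i}\to0$. More seriously, your proposed ``cleanest route'' -- dispose of $(0,0,z)$ first and then reduce the general case by subtracting a smooth extension of $b$ -- is circular as stated: a Cosserat field $b$ is merely $L^2_{\mu}$, so no smooth $\psi$ with $\psi|_S=0$, $\nabla^{\perp}\psi=b$ exists, and after replacing $b$ by a smooth approximation $b_k$ the remaining triple is $(0,b-b_k,z-\nabla^2\psi_k)$, whose tangential block you cannot declare small without a quantitative bound $\norm{P_{\mu}\zeta P_{\mu}}_{L^2_{\mu}}\leqslant C\norm{\beta}_{L^2_{\mu}}$ for $(0,\beta,\zeta)\in\overline{\mathfrak{g}_{\mu}}$ -- which is essentially the statement being proved. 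The direct limit computation above avoids this and closes the argument.

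You should also notice that the facts already recalled in the chapter give a much shorter proof, bypassing approximating sequences entirely. If $b_1,b_2$ are two Cosserat fields for $u$, then by inclusion \eqref{prop1} both $\nabla_{\mu}u+b_1$ and $\nabla_{\mu}u+b_2$ lie in $(H^1_{\mu})^3$, hence $v:=b_1-b_2\in(H^1_{\mu})^3$ is a normal-valued field with $P_{\mu}v=0$, so $v\in D(C)$ and $Cv=P_{\mu}\nabla_{\mu}v$. By identity \eqref{prop2},
\begin{equation*}
P_{\mu}A_{\mu}(u,b_1)P_{\mu}-P_{\mu}A_{\mu}(u,b_2)P_{\mu}
= P_{\mu}\nabla_{\mu}\bigl((\nabla_{\mu}u+b_1)-(\nabla_{\mu}u+b_2)\bigr)
= P_{\mu}\nabla_{\mu}v = Cv = T_Cb_1-T_Cb_2,
\end{equation*}
since the extension $T_C$ agrees with $C$ on $D(C)$; rearranging gives the claim. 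Your geometric route, once repaired as indicated, re-derives the content of \eqref{prop2} and Proposition \ref{blocks} from scratch, which is legitimate but duplicates work the paper already quotes from \cite{Bou02}.
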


Let us recall an observation (\cite[Prop. 3.10]{Bou02}) describing more straightforwardly a structure of the operator $A_{\mu}$ if $\mu$ is an element of the class $\mathcal{S}.$
Firstly please notice that in the considered case, the following inclusion is true
\begin{equation}\label{prop1}
D(A_{\mu}) \subset \left\{(u,b)\in H^1_{\mu} \times L^2_{\mu}(\R^3;T_{\mu}^{\perp}): \nabla_{\mu}u+b \in H^1_{\mu}\right\}.
\end{equation}
The above inclusion is strict for a generic measure being a member of $\mathcal{S}.$
A second worth-to-observe thing is that if $\mu \in \mathcal{S},$ then {thanks to Proposition \ref{blocks},} the operator $A_{\mu}$ takes a rather expected form and can be expressed as
\begin{equation}\label{prop2}
A_{\mu}(u,b)P_{\mu} = \nabla_{\mu}(\nabla_{\mu}u+b).
\end{equation}
\begin{uw}\label{blormk}
This fact turns out to be helpful. Indeed as observed in \cite{Bou02}, by Proposition \ref{blocks}, the fact that $A_{\mu}(u,b)=Q_{\mu}(A_{\mu}(u,b))$, and the fact that $P_{\mu}^{\perp}A_{\mu}(u,b)P_{\mu}$ and $P_{\mu}A_{\mu}(u,b)P_{\mu}^{\perp}$ are transpose to each other, a behaviour of the operator $A_{\mu}(u,b)$ is fully determined by the form in equation \eqref{prop2}.
\end{uw}

{\begin{defi}[Space of elliptic matrices]\label{eli}
Let $\mathcal{B}$ stand for a class of $3 \times 3$ smooth, symmetric, and elliptic matrices, i.e., there exists a constant $C>0$ such that for every $\xi \in \mathbb{R}^3$ there holds $\sum_{i,j=1}^3b_{ij}\xi_i\xi_j \geq C|\xi|^2.$  
\end{defi}}

In \cite{Cho24} we introduced an operator that can be interpreted as an implementation of the operator $\diverg(B \nabla u)$ in the setting of the second-order low-dimensional theory. As the formula is given in terms of the previously proposed operators, it is consistent both with the second-order theory of Bouchitt{\'e} and Fragal{\`a} \cite{Bou02} and with the theory of first-order differential equations established in \cite{Ryb20}.

\begin{defi}[Second-order differential operator $L_{\mu}$]\label{secondop}
We introduce the tangent divergence operator $\diverg_{\mu}:(H^1_{\mu})^3 \to L^2_{\mu},$ as
$$
\diverg_{\mu} ((v_1,v_2,v_3)) := \tr\nabla_{\mu}(v_1,v_2,v_3).
$$
We set $D(L_{\mu}) := D(\nabla^2_{\mu})$ and let $B \in \mathcal{B}.$
The low-dimensional second-order differential operator $L_{\mu} : D(L_{\mu}) \to L^2_{\mu}$ is defined as
$$
L_{\mu}u := \sum_{i,j=1}^3 b_{ij}(\nabla^2_{\mu}u)_{ij} + \sum_{i=1}^3 (\nabla_{\mu}u)_i \diverg_{\mu}(b_{i1},b_{i2},b_{i3}).
$$
\end{defi}


\begin{defi}[Operator $\Delta_{\mu}$]\label{lap}
Let the operator $L_{\mu}$ be as defined above and assume that the matrix $B$ is the identity matrix, i.e. $B=Id.$ In such case the operator $L_{\mu}$ will be denoted by $\Delta_{\mu}$ and its domain by $D(\Delta_{\mu}).$ The operator $\Delta_{\mu}$ can be expressed as  $$\Delta_{\mu}u = \tr \nabla^2_{\mu}u.$$
\end{defi}

A slightly modified version of the domain $D(L_{\mu}),$ which includes information about Neumann boundary conditions, will also be needed. The definition we give might be without any problems expressed in case of any regular enough flow through the boundary $\partial S$, but for clarity of further proceedings, we pose it in the special case of zero flow through the boundary.

\begin{defi}[$D(L_{\mu})$ with Neumann boundary conditions]\label{0domain}
	Assume that a matrix $B$ corresponds to $L_{\mu}$ as in Definition \ref{secondop}. By $D(L_{\mu})_N \subset D(L_{\mu})$ we denote a subspace that consists of all $u\in D(L_{\mu})$ such that for all component manifolds $S_i,\; i\in \{1,...,m\}$ we have
	$$(B\nabla_{S_i} u)|_{\partial S_i} \cdot n|_{S_i} = 0,$$
	almost everywhere with respect to the measure $\mathcal{H}^{\dim S_i - 1}$ on $\partial S_i.$ Here $n$ is the outward normal unit vector to $\partial S.$
\end{defi}

Directly from the definition of $D(L_{\mu})$ it follows, that if $u\in D(L_{\mu}),$ then for all $i\in \{1,...,m\}$ $u|_{S_i}\in H^2(S_i).$ This implies, that a trace of $\nabla_{S_i} u$ for $u \in D(L_{\mu})$ is well-defined. Moreover, please note that $D(L_{\mu})_N$ is a Banach subspace of the space $D(L_{\mu})$ in the inherited norm.

Analogously the domain of the operator $A_{\mu}$ can be equipped with Neumann boundary conditions.
\begin{defi}[$D(A_{\mu})$ with Neumann boundary conditions]
We define the subspace $D(A_{\mu})_N \subset D(A_{\mu})$ as
$$D(A_{\mu})_N:=\left\{(u,b)\in D(A_{\mu}): u\in D(L_{\mu})_N\right\}.$$
\end{defi}

From a perspective of the research conducted in this thesis, an important role will be played by semigroups of linear operators whose evolution is governed by the operator $L_{\mu}.$

We say that the operator $L_{\mu}$ generates a semigroup $S:\left[0,+\infty\right)\times L^2_{\mu} \to L^2_{\mu}$ if for all $u \in D(L_{\mu})$ we have $$L_{\mu}u = \lim_{t \to 0_+} \frac{S(t)u - u}{t},$$ where the limit is taken in the $L^2_{\mu}$-norm.

Let $\mu \in \mathcal{S}$ be a low-dimensional structure. Denote $E := \supp \mu;$ $E \subset \Omega.$
Let $E=\bigcup_{i=1}^mE_i,$ where $E_i$ is a component manifold and $n$ be a normal unit vector field on $\partial E$ directed outward. Let $l^1$ denote the one-dimensional Lebesgue measure and assume that $g \in L^2_{\mu}.$ Moreover, let $B$ be a matrix of coefficients of the operator $L_{\mu}$ as in Definition \ref{secondop}.

A low-dimensional counterpart of a parabolic problem with Neumann boundary conditions is defined as
\begin{equation}\label{parabolic}
\begin{matrix}
\partial_tu - L_{\mu}u = 0 &\; \mu\times l^1-\text{a.e. in } E \times [0,T],\\ \\

B\nabla_{\mu}u\cdot n = 0 &\; \text{ on } \partial E \times [0,T],\\ \\

u = g &\; \text{ on } E \times \{0\}.
\end{matrix}
\end{equation}

\begin{defi}[Solution to a parabolic problem]\label{parabolic2}
	A solution to the low-dimensional parabolic problem is a function $u:[0,T]\times E\to \R$ determined by a strongly continuous semigroup \hbox{$S: \left[0,T\right] \times L^2_{\mu} \to L^2_{\mu}$} (see, \cite{Mag89}) generated by the operator $L_{\mu}.$ Precisely, $u:\left[0,T\right] \to L^2_{\mu}$ is a solution of parabolic Neumann problem \eqref{parabolic} if
	\begin{itemize}
	\item[a)] for all $t \in \left[0,T\right],$
	$$u(t) = S(t)g,$$
	where $S: \left[0,T\right] \times L^2_{\mu} \to L^2_{\mu}$ is a semigroup generated by the operator $L_{\mu}$ and the function $g$ is a given initial data,
\item[b)]the function $u$ satisfies the equation $$\partial_tu - L_{\mu}u = 0,$$  $\mu\times l^1-\text{almost everywhere in } E \times [0,T],$
 \item[c)] $u$ satisfies zero Neumann boundary condition in a sense formulated below.
	\end{itemize}
	For every $t \in \left[0,T\right]$ and every $i\in \{1,...,m\}$ we have $B\nabla_{\mu}u\cdot n = 0$ almost everywhere on $\partial E_i$ with respect to $\mathcal{H}^{\dim \partial E_i}\lfloor_{\partial E_i},$ where $B\nabla_{\mu}u\cdot n:D(L_{\mu}) \to L^2_{\mathcal{H}^1\lfloor_{\partial E_i}} ,$
	$$(B\nabla_{\mu}u\cdot n)(x) := B(x)(\nabla_{\mu}u)\lvert_{\partial E_i}(x) \cdot n\lvert_{\partial E_i}(x).$$
	$(\nabla_{\mu}u)\lvert_{\partial E_i}$ is a trace of a Sobolev function on the boundary of $E_i$, and $n(x)$ is the outer normal vector (of length one) at a point $x \in \partial E.$
\end{defi}

\section{Weak formulations of elliptic and parabolic problems}\label{24}

Throughout this chapter, we do not limit our attention to structures consisting of component manifolds of fixed dimension one or two, but we consider general low-dimensional structures $\mu \in \widehat{\mathcal{S}}.$ Let us recall that on such structures, the Poincar{\'e} inequality is satisfied in the generalised sense as stated in equation \eqref{weakPoincar\'e}.

In order to introduce an appropriate elliptic setting, we begin with a notion of a suitable relaxation of a given matrix of coefficients (see \cite[Prop. 3.1]{Ryb20}).
\begin{prop}[Relaxed matrix of coefficients] \label{relax}
Let $B \in \left(L^{\infty}_{\mu}\right)^{3 \times 3}$ be such that for $\mu$-almost every $x \in \R^3$:
\begin{itemize}
\item[a)] $B(x)$ is a symmetric matrix;
\item[b)] $T_{\mu}(x) \subset  \text{Im}B(x)$;
\item[c)] $(B(x)\xi,\xi)\geqslant \lambda |\xi|^2$ for all $\xi \in \text{Im}B(x)$ and some $\lambda>0$.
\end{itemize}
The relaxation $B_{\mu}$ of the matrix $B$ is given by the formula
$$
B_{\mu}(x) := B(x) - \sum_{i=1}^l \frac{B(x)e_i(x)\otimes B(x)e_i(x)}{(B(x)e_i(x),e_i(x))},
$$
where $l = 1,2,$ $e_i(x),$ $1 \leqslant i \leqslant l$ are linearly independent, $\mu$-measurable, span $T_{\mu}(x)^{\perp} \cap ImB(x)$ and for $1 \leqslant i,j \leqslant l $, $(B(x)e_i(x),e_j(x)) = \delta_{ij},$ where $\delta_{ij}$ is the Kronecker's delta.
\qed
\end{prop}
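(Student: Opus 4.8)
The assertion is pointwise in $x$, so the plan is to fix a point $x$ at which $T_\mu(x)$, $\mathrm{Im}\,B(x)$ and the frame $e_i(x)$ are all defined (true for $\mu$-a.e.\ $x$) and to drop $x$ from the notation. By (a) and (c), the restriction of $B$ to the subspace $V:=\mathrm{Im}\,B$ is symmetric and positive definite, so $\langle u,v\rangle_B:=(Bu,v)$ is an inner product on $V$, and by symmetry $\ker B=V^\perp$ (Euclidean complement). Since $T_\mu\subset V$ by (b), the subspace $N:=T_\mu^\perp\cap V$ carries the positive definite form $\langle\cdot,\cdot\rangle_B$, so the Gram--Schmidt process produces a $B$-orthonormal basis $e_1,\dots,e_l$ of $N$, $l=\dim N$; started from a $\mu$-measurable frame of $N$ it yields $\mu$-measurable $e_i$. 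In particular the right-hand side of the claimed formula makes sense, and under the normalisation $(Be_i,e_j)=\delta_{ij}$ the denominators $(Be_i,e_i)$ equal $1$.

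Next I would invoke the variational characterisation of the relaxed matrix (this is the genuinely non-elementary input; see \cite{Bou97} and \cite[Prop.~3.1]{Ryb20}): $B_\mu$ is the symmetric matrix field with $T_\mu^\perp\subset\ker B_\mu$ whose associated quadratic form on $T_\mu$ is the tangentially minimised energy density
\[
\xi\longmapsto\min\bigl\{(B(\xi+\eta),\xi+\eta):\eta\in T_\mu^\perp\bigr\},\qquad\xi\in T_\mu .
\]
So the task reduces to evaluating this minimum. Decomposing $\eta=\eta_V+\eta_0$ with $\eta_V\in V$ and $\eta_0\in\ker B$ and using symmetry, $\eta_0$ drops out of $(B(\xi+\eta),\xi+\eta)$; since moreover $T_\mu^\perp=N\oplus\ker B$, the minimum over $\eta\in T_\mu^\perp$ equals the minimum over $\eta\in N$, attained at $\eta^*=-P^B_N\xi$, the $B$-orthogonal projection onto $N$. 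Using $B$-orthonormality of $\{e_i\}$ and $(B\xi,e_i)=(\xi,Be_i)$,
\[
\min_{\eta\in T_\mu^\perp}(B(\xi+\eta),\xi+\eta)=(B\xi,\xi)-\|P^B_N\xi\|_B^2=(B\xi,\xi)-\sum_{i=1}^l(\xi,Be_i)^2 .
\]
Since $(\xi,Be_i)^2=\bigl((Be_i\otimes Be_i)\xi,\xi\bigr)$, the right-hand side equals $(B_\mu\xi,\xi)$ with $B_\mu=B-\sum_{i=1}^l Be_i\otimes Be_i$, which is exactly the asserted formula (the denominators being $1$).

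It then remains to check that $B_\mu$ has the structural properties demanded of a relaxed matrix, closing the identification. Symmetry is immediate since each $Be_i\otimes Be_i$ is symmetric. For the support condition, $B_\mu e_j=Be_j-\sum_i Be_i\,(Be_i,e_j)=Be_j-Be_j=0$ by $B$-orthonormality, and $B_\mu$ kills $\ker B$ because $(Be_i,\eta_0)=0$ for $\eta_0\in\ker B=V^\perp$; as $T_\mu^\perp=N\oplus\ker B$ this gives $T_\mu^\perp\subset\ker B_\mu$, hence (by symmetry) $\mathrm{Im}\,B_\mu\subset T_\mu$, so $B_\mu$ is supported on the tangent bundle and on $T_\mu$ represents the minimised form above. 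Independence of the choice of $B$-orthonormal basis of $N$ follows by recognising $\sum_i Be_i\otimes Be_i$ as the operator $v\mapsto B\,P^B_N v$, where $P^B_N$ is intrinsic to $N$. Finally, $\mu$-measurability of $B_\mu$ is inherited from that of the $e_i$.

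\textbf{Main obstacle.} I expect the only non-routine point is the $\mu$-measurable choice of the frame $e_i(x)$ spanning $N(x)=T_\mu^\perp(x)\cap\mathrm{Im}\,B(x)$, since $\dim N(x)$ may jump with $x$; this is handled by a standard measurable-selection argument using that $x\mapsto T_\mu(x)$ and $x\mapsto\mathrm{Im}\,B(x)$ are $\mu$-measurable multifunctions, followed by a pointwise Gram--Schmidt. Everything else is a finite-dimensional Schur-complement computation once the variational description of the relaxation (from \cite{Bou97,Ryb20}) is taken as given.
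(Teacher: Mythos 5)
The paper itself offers no proof of this proposition: it is recalled verbatim from \cite[Prop. 3.1]{Ryb20} and stated with a \(\qed\), so there is no in-thesis argument to compare yours against line by line. Your derivation is essentially correct and supplies the underlying reasoning: once the relaxed density is characterised pointwise as \(\xi\mapsto\min\{(B(\xi+\eta),\xi+\eta):\eta\in T_\mu(x)^{\perp}\}\) for \(\xi\in T_\mu(x)\), the rest is the finite-dimensional computation you perform, and your linear algebra is sound. In particular the decomposition \(T_\mu^\perp=(T_\mu^\perp\cap\mathrm{Im}\,B)\oplus\ker B\) (valid because \(T_\mu\subset\mathrm{Im}\,B\) and \(\ker B=(\mathrm{Im}\,B)^\perp\) by symmetry), the elimination of the \(\ker B\)-component, the identification of the minimiser with the \(B\)-orthogonal projection onto \(N=T_\mu^\perp\cap\mathrm{Im}\,B\), and the verifications that the resulting matrix is symmetric, annihilates \(T_\mu^\perp\), and is independent of the chosen \(B\)-orthonormal frame are all correct; with the normalisation \((Be_i,e_j)=\delta_{ij}\) the denominators indeed reduce to \(1\), matching the stated formula.

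Two points deserve care. First, your citation for the variational characterisation is mildly circular: you quote \cite[Prop. 3.1]{Ryb20}, which is the very statement under discussion. The genuinely external input should be the relaxation (lower-semicontinuous envelope) representation theorem for measure-related Dirichlet energies from \cite{Bou97} — that the relaxed functional is \(\int (B_\mu\nabla_\mu u,\nabla_\mu u)\,d\mu\) with density obtained by minimising over the uncontrolled normal component of the gradient; only with that in hand does your Schur-complement computation identify \(B_\mu\). Second, as you note yourself, the \(\mu\)-measurable choice of the frame \(e_i(x)\) of \(N(x)\) requires a measurable-selection argument because \(\dim N(x)\) may vary; for the class \(\widehat{\mathcal{S}}\) used in the thesis this is unproblematic (componentwise the tangent bundle is smooth), but it should be said explicitly rather than left implicit. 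With those two clarifications your proposal is a complete and correct proof of the recalled result.
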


We now refer to the low-dimensional weak equations and the theory first presented in \cite{Ryb20} to introduce Weak formulation of the Neumann problem, see \cite{Cho24}.
\begin{defi}\label{defin}
Let $f \in L^2_{\mu}$ and $\int_{\Omega} fg d\mu = 0$ for all $g \in \ker \nabla_{\mu}.$ We say that $u \in H^1_{\mu},$ such that $\int_{\Omega} ug d\mu = 0$ for all $g \in \ker \nabla_{\mu},$ is a weak solution to the elliptic Neumann problem

$$\diverg(B\nabla u)=f,\quad B\nabla_{\mu} u \cdot \nu\lvert_{\partial S}=0$$
provided that

\begin{gather}\label{weak}
\int_{\Omega}B_{\mu}\nabla_{\mu} u \cdot \nabla_{\mu}\phi d\mu = \int_{\Omega} f\phi d\mu,
\end{gather}
holds for all $\phi \in C^1_c(\R^3).$
\begin{uw}\label{H1 jako testowe}
The concept of the elliptic problem as formulated in Definition \ref{defin} comes from \cite{Ryb20}, but the authors do not formulate it as a definition. Due to this fact, we credit paper \cite{Cho24} where the definition is formulated.

By the definition of the space $H^1_{\mu}$, smooth functions $C^{\infty}_c(\R^3)$ are dense in $H^1_{\mu}.$ This means that the class of test functions in the definition above may be extended to $H^1_{\mu}.$\\
Moreover, by the fundamental theorem of the calculus of variations, it can be checked that condition $\int_{\Omega}u \phi d\mu =0 $ for all $\phi\in \ker\nabla_{\mu}$ implies $\int_{\Omega} u d\mu=0.$

As it is stated in paper \cite{Ryb20}, it is possible to give a meaning to the zero Neumann condition using the theory of measures with divergences, see \cite{Che01}.
\end{uw}
\end{defi}

Our considerations will also be held in the following spaces of functions.
 \begin{defi}\label{paraclass}
Let $\mathring{L^2_{\mu}}:=\{u \in L^2_{\mu}:\int_{\Omega}u d\mu=0\},$ $\mathring{H}^1_{\mu}:= \{u \in H^1_{\mu}:\int_{\Omega}u d\mu=0\}.$ We define
$$\mathcal{H}:= L^2(0,T;{H}^1_{\mu}),\; \mathring{\mathcal{H}}:= L^2(0,T;\mathring{{H}^1_{\mu}})$$ and
$$\mathcal{T}:= \{v \in \mathcal{H}: v \in W^{1,2}(0,T;L^2_{\mu}), v(T)=0\},$$ with the zero mean counterpart $$\mathring{\mathcal{T}}:= \{v \in \mathring{\mathcal{H}}: v \in W^{1,2}(0,T;L^2_{\mu}), v(T)=0\}.$$
The spaces $\mathcal{T}, \mathring{\mathcal{T}}$ are equipped with the norm
$$\norm{u}_{\mathcal{T}}:= \left(\norm{u}^2_{L^2H^1_{\mu}}+\norm{u(0)}_{L^2_{\mu}}\right)^{\frac{1}{2}}.$$
Moreover, let $$\mathcal{C}^{\infty}_0:=\{w\in C^{\infty}([0,T];C^{\infty}_c(\R^N)): w(T)=0\},$$ and $$\mathring{\mathcal{C}^{\infty}_0}:=\{w\in C^{\infty}([0,T];C^{\infty}_c(\R^N)): w(T)=0, \int_{\Omega} w d\mu=0\}.$$
\end{defi}

In the next definition, we define operators needed for establishing the weak counterpart of the parabolic problem. Definition \ref{weak3} gives the precise meaning of the considered issue.
\begin{defi}\label{weak13}
Let $B\in \mathcal{B}.$
We introduce a bilinear form $E:\mathring{\mathcal{H}} \times \mathring{\mathcal{T}} \to \mathbb{R}$ by a formula $$E(u,v):= \int_0^T\int_{\Omega}(B\nabla_{\mu}u(t), \nabla_{\mu}v(t)) - u(t)v'(t)d\mu dt,$$ and a functional $F:\mathring{\mathcal{T}} \to \mathbb{R}$ defined as
$$F(v):=\int_0^T\int_{\Omega}f(t)v(t)d\mu dt + \int_{\Omega}u_0 v(0)d\mu.$$ Here we assume that $f \in L^2(0,T;{L^2_{\mu}}),\; u_0 \in \mathring{L^2_{\mu}}$ and $v':=\frac{d}{dt}v.$
\end{defi}	        	

\begin{defi}\label{weak3}
Let $f \in L^2(0,T;{L^2_{\mu}})$ and $u_0 \in \mathring{L^2_{\mu}}.$ By a weak parabolic problem with the zero Neumann boundary condition we name an issue of finding a function $u\in \mathring{\mathcal{H}}$ satisfying
\begin{equation}\label{parabolicweak}
E(u,\phi) = F(\phi)
\end{equation}	
for all $\phi \in \mathring{\mathcal{C}^{\infty}_0}.$
\end{defi}

\begin{uw}
As pointed out in \cite{Cho24}, upon initial observation, it might appear that the weak solutions are simply glueings of weak solutions to classical problems that have been set on manifolds $S_i$. However, this is not always the case, even in elliptic issues, as demonstrated in examples of Section \ref{52}. It is not always possible to obtain a low-dimensional weak solution with the expected regularity by simply adding up weak solutions component-wise.
\end{uw}

\section{Smooth functions with respect to operators}\label{25}

Throughout this chapter, we introduce notions used to construct the semigroup generated by the low-dimensional second-order operator. For our applications, it is enough to consider the operator $\Delta_{\mu},$ but it seems possible to generalise the presented method to cover the case of a more general form of the operator $L_{\mu}.$

In further reasonings, the space $A_s(\Delta_{\mu})_N$ plays a role of a part of the domain $D(\Delta_{\mu})_N$ in which all needed operations make sense. By this fact the set $A_s(\Delta_{\mu})_N$ will replace the domain $D(\Delta_{\mu})_N.$

{The definitions given here are taken from paper \cite{Mag89} and adapted to the low-dimensional framework considered here.}

\begin{defi}[Domain of iterations, "very regular" functions]\label{veryreg}
Let $D(\Delta^1_{\mu})_N:= D(\Delta_{\mu})_N.$ For $n \in \{2,3,...\}$ a domain of the $n$-th iteration of the operator $\Delta_{\mu}$ is defined as
$$D(\Delta^n_{\mu})_N:= \bigl\{u \in L^2_{\mu}: \Delta^{n-1}_{\mu}u \in D(\Delta_{\mu}),\; u \in D(\Delta^{n-1}_{\mu})_N\bigr\}.$$

Put $D_N:= \bigcap_{n=1}^{+\infty} D(\Delta^n_{\mu})_N.$

We introduce a space of "very regular" functions as
$$A_s(\Delta_{\mu})_N:=\Bigl\{u \in D_N: \exists M>0\; \forall n \in \mathbb{N}\; \forall t \in (0,s)\; \frac{t^n}{n!}\norm{\Delta_{\mu}^n u}_{L^2_{\mu}}<M\Bigr\}.$$ Moreover, let
$L^2_{\mu, s}:= \overline{A_s(\Delta_{\mu})_N}^{\norm{\cdot}_{L^2_{\mu}}}.$
It will be useful to put $\mathcal{E}(\Delta_{\mu})^T_N:= \bigcap_{s\in [0,T]}A_s(\Delta_{\mu})_N.$
\end{defi}

We will need to introduce a notion of smoothness with respect to an operator that generates a semigroup. In these terms, the elements of the set $A_s$ defined above can be interpreted as functions analytic with respect to a given differential operator.

\begin{defi}[Smooth functions with respect to a generator]
Let $V$ be a semigroup on the space $L^2_{\mu},$ let $G$ be a generator of $V.$ We define the space $$C^{\infty}(V):=\{u \in \bigcap_{n=1}^{\infty}D(G^n): \norm{G^n u}_{L^2_{\mu}}<+\infty\; \forall n\in \{0,1,2,3,...\}\}.$$
The family of seminorms $\{\norm{G^n \cdot}_{L^2_{\mu}}\}_{n\in \{0,1,2,...\}}$ will be denoted for short as $\norm{\cdot}_{C^{\infty}(V)}.$
In the case of the operator $\Delta_{\mu}$ we introduce the space
$$C^{\infty}(\Delta_{\mu})_N:=\{u \in {D}_N: \forall n \in \{0,1,2,...\},\; \norm{\Delta_{\mu}^n u}_{L^2_{\mu}}<+\infty\}.$$ Similarly, we denote the related family of seminorms by $\norm{\cdot}_{C^{\infty}(\Delta_{\mu})_N}.$
\end{defi}

The Magyar paper \cite[Thm. 2]{Mag89} proposes a variant of the Hille-Yosida theorem that will be crucial in our considerations of Section \ref{42}. The original formulation of the mentioned theorem is stated for general locally convex Hausdorff spaces, but we recall it in a special case of the space $L^2_{\mu}$ which is Hilbert.
\begin{tw}[Generation of a semigroup]\label{magyar}
Assume that
\begin{itemize}
\item For any neighbourhood of zero $W \subset L^2_{\mu}$ exists a neighbourhood of zero $U \subset L^2_{\mu}$ such that if for all $k\in \{1,2,...\},$ for all $u \in D(\Delta_{\mu}^k)_N$ and some constants $\alpha,C>0$ we have $(1-\alpha C)^{-k}(Id-\alpha \Delta_{\mu})^k u\in U,$ then $u \in W,$
\item $\mathcal{E}(\Delta_{\mu})^T_N$ is dense in $L^2_{\mu}.$
\end{itemize}
Then the operator $\Delta_{\mu}$ is closable in $L^2_{\mu}$ and its closure $\overline{\Delta_{\mu}}$ generates continuous locally equicontinuous semigroup $V$ in $L^2_{\mu}$ such that $e^{-Ct}V(t)$ is equicontinuous.
\end{tw}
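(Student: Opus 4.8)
The statement is the restriction to the Hilbert space $L^2_{\mu}$ of Magyar's generation theorem \cite[Thm.~2]{Mag89}, and the plan is to follow (and simplify) the argument there. The guiding idea is that, because $\Delta_{\mu}$ is not invertible, one cannot construct the semigroup from a resolvent and the Yosida approximation; instead one builds it directly from the ``forward'' iterations $\tfrac{t^{k}}{k!}\Delta_{\mu}^{k}$ on the dense class of ``analytic'' vectors $\mathcal{E}(\Delta_{\mu})^{T}_{N}$ and then extends by density and equicontinuity.

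First I would work on $\mathcal{E}(\Delta_{\mu})^{T}_{N}=\bigcap_{s\in[0,T]}A_{s}(\Delta_{\mu})_{N}$. For such $u$ and $t$ in the relevant range, the defining bound $\tfrac{\tau^{n}}{n!}\norm{\Delta_{\mu}^{n}u}_{L^2_{\mu}}<M$ (for any $\tau>t$) forces geometric decay of $\tfrac{t^{n}}{n!}\norm{\Delta_{\mu}^{n}u}_{L^2_{\mu}}$, so that
\[
V(t)u:=\sum_{k=0}^{\infty}\frac{t^{k}}{k!}\Delta_{\mu}^{k}u
\]
converges in $L^2_{\mu}$. I would then record the elementary facts: $V(0)=\mathrm{Id}$; the semigroup law $V(t)V(r)u=V(t+r)u$ by forming the Cauchy product of two absolutely convergent series; strong continuity $V(t)u\to u$ as $t\to 0^{+}$, and more precisely $\tfrac1t\bigl(V(t)u-u\bigr)\to\Delta_{\mu}u$, so that the generator of $V$ agrees with $\Delta_{\mu}$ on $\mathcal{E}(\Delta_{\mu})^{T}_{N}$; and that $V(t)$ maps $\mathcal{E}(\Delta_{\mu})^{T}_{N}$ into itself, which will be needed for the core argument at the end.

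The quantitative step, where the first hypothesis enters, is the heart of the proof and the main obstacle. One must convert the hypothesis --- ``if all normalised forward iterates $(1-\alpha C)^{-k}(\mathrm{Id}-\alpha\Delta_{\mu})^{k}u$ remain in a fixed neighbourhood $U$, then $u$ lies in the prescribed neighbourhood $W$'' --- into equicontinuity of $\{e^{-Ct}V(t):t\ge 0\}$ on $\mathcal{E}(\Delta_{\mu})^{T}_{N}$. In the classical Hille--Yosida proof this is exactly the estimate $\norm{e^{tA_{\lambda}}}\le Me^{t\omega\lambda/(\lambda-\omega)}$, obtained by expanding the exponential and invoking the resolvent power bound $\norm{R(\lambda,A)^{k}}\le M(\lambda-\omega)^{-k}$. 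Here the same computation must be run with the forward iterates $(\mathrm{Id}-\alpha\Delta_{\mu})^{k}$ in place of resolvent powers: starting from $\tfrac{d}{ds}\bigl(e^{-s/\alpha}V(s)u\bigr)=-\tfrac1\alpha e^{-s/\alpha}(\mathrm{Id}-\alpha\Delta_{\mu})V(s)u$ and iterating $k$ times, one expresses $e^{-t/\alpha}V(t)u$ as a partial Taylor polynomial in the $(\mathrm{Id}-\alpha\Delta_{\mu})^{j}u$ plus a simplex-integral remainder involving $(\mathrm{Id}-\alpha\Delta_{\mu})^{k}u$; choosing $\alpha$ comparable to $t/k$ and letting $k\to\infty$ turns $(1-\alpha C)^{-k}$ into $e^{Ct}$, and the hypothesis delivers precisely the uniform bound that the resolvent estimate would supply. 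Keeping the neighbourhoods, the constants $\alpha$ and $C$, and the passage to the limit mutually consistent is the genuinely delicate bookkeeping, carried out in \cite{Mag89}.

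Finally, using the second hypothesis (density of $\mathcal{E}(\Delta_{\mu})^{T}_{N}$ in $L^2_{\mu}$) together with the equicontinuity just obtained, I would extend $V(t)$ to a continuous, locally equicontinuous semigroup on all of $L^2_{\mu}$ with $e^{-Ct}V(t)$ equicontinuous. Its generator $G$ is closed and, by the computations on $\mathcal{E}(\Delta_{\mu})^{T}_{N}$ together with graph density (which follows from the hypotheses), it extends $\Delta_{\mu}$; hence $\Delta_{\mu}$ has a closed extension, so it is closable, and $\overline{\Delta_{\mu}}\subseteq G$. Since $\mathcal{E}(\Delta_{\mu})^{T}_{N}$ is $V$-invariant and dense it is a core for $G$, which forces $G=\overline{\Delta_{\mu}}$, completing the proof.
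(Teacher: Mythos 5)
There is nothing in the paper to compare your argument against: Theorem \ref{magyar} is not proven in the thesis at all — it is Magyar's result, quoted from \cite[Thm.~2]{Mag89} and merely specialised to the Hilbert space $L^2_{\mu}$. Your outline does follow exactly the strategy of that cited source (and the strategy the thesis describes informally): define $V(t)u=\sum_k \frac{t^k}{k!}\Delta_{\mu}^k u$ on the analytic vectors $\mathcal{E}(\Delta_{\mu})^T_N$, where the defining bound indeed gives geometric decay of the terms, verify the semigroup identities there, obtain equicontinuity of $e^{-Ct}V(t)$ from the first hypothesis, and extend by density. As a description of the route, this is faithful.

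As a proof, however, it has two genuine gaps. First, the step you yourself identify as the heart of the matter — converting the hypothesis on the iterates $(1-\alpha C)^{-k}(\mathrm{Id}-\alpha\Delta_{\mu})^k u$ into the equicontinuity bound for $e^{-Ct}V(t)$ — is not carried out; you describe the intended computation and then defer the "delicate bookkeeping" to \cite{Mag89}, i.e.\ to the very theorem being proved. Second, the closing identification is under-justified: to conclude that $\Delta_{\mu}$ (on its full domain, not merely on $\mathcal{E}(\Delta_{\mu})^T_N$) is contained in the generator $G$, you invoke "graph density, which follows from the hypotheses", but the second hypothesis only gives density of $\mathcal{E}(\Delta_{\mu})^T_N$ in the $L^2_{\mu}$-norm, not in the graph norm of $\Delta_{\mu}$; without a separate argument (in Magyar's proof this is again where the first hypothesis is exploited) your reasoning only yields that $\overline{\Delta_{\mu}\lfloor_{\mathcal{E}(\Delta_{\mu})^T_N}}$ generates $V$, which is strictly weaker than the stated conclusion that $\Delta_{\mu}$ itself is closable with $\overline{\Delta_{\mu}}=G$. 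Relatedly, interchanging $\Delta_{\mu}$ with the infinite sum (needed for $V(t)$-invariance of $\mathcal{E}(\Delta_{\mu})^T_N$ and for $\frac{d}{dt}V(t)u=\Delta_{\mu}V(t)u$) already uses some closedness-type property, which in the abstract theorem is a conclusion rather than a hypothesis, so it must be handled at the level of partial sums rather than assumed.
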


\chapter{Basic results}

This chapter contains facts and observations, which will be applied in further reasoning.

{Section \ref{31}} deals with new properties of the $H^1_{\mu}$ Sobolev-type space. The results presented here will be applied in our considerations related to the higher elliptic regularity of weak solutions. {Section \ref{32}} is devoted to the results regarding the $\mu$-related second derivative operator $A_{\mu}.$ The goal of {Section \ref{33}} is to introduce new characterisations of spaces of functions. These results will be helpful in reducing the abstract problem to a set of generic cases of a geometric nature.

Results exposed in {Section \ref{31}} are taken from \cite{Cho23}. The content of {Sections \ref{32} and \ref{33}} is established in \cite{Cho24}.

\section{Properties of the space $H^1_{\mu}$}\label{31}

If a given function $u$ belongs to the Sobolev-type space $H^1_{\mu}$, it is immediately that \hbox{$u \in H^1(S_i)$} on each component manifold $S_i.$ We propose two remarkable results showing how the classical Sobolev regularity interplays between various component manifolds.

The following observation shows a correspondence between componentwise behaviour in terms of the equality of traces. This is an important tool applied in further studies of the regularity of elliptic solutions.
\begin{prop}\label{slady}
Let $S \subset \R^3$ be a low-dimensional structure of the form $S = S_1 \cup S_2,$ with $\dim S_1 = \dim S_2,$ and denote $\Sigma = S_1 \cap S_2.$
Let $u \in H^1_{\mu}, \; u_i:= u|_{S_i},\; i=1,2.$ Then $\tr^{\Sigma}u_1=\tr^{\Sigma}u_2$, where $\tr^{\Sigma}$ denotes the usual trace on $\Sigma$.
\end{prop}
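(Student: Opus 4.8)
The plan is to exploit the density of $C^\infty_c(\R^3)$ in $H^1_\mu$ together with the componentwise $H^1$-regularity from Proposition~\ref{tanprop}, and then identify the limit of traces. Since $u \in H^1_\mu$, by definition there is a sequence $\phi_n \in C^\infty_c(\R^3)$ with $\phi_n \to u$ in the norm $\norm{\cdot}_\mu$, i.e. $\phi_n \to u$ in $L^2_\mu$ and $\nabla_\mu \phi_n \to \nabla_\mu u$ in $L^2_\mu$. Restricting to each component, Proposition~\ref{tanprop}(b)--(c) gives $\phi_n|_{S_i} \to u_i$ in $H^1(S_i)$ for $i=1,2$, because $\norm{\phi_n|_{S_i} - u_i}_{H^1(S_i)}^2 \le \frac{1}{c}\left(\norm{\phi_n - u}_{L^2_\mu}^2 + \norm{\nabla_\mu \phi_n - \nabla_\mu u}_{L^2_\mu}^2\right)$ up to the density weight (here I use $\dim S_1 = \dim S_2$ so both restrictions see the same ambient Hausdorff measure, and $\theta_i \ge c > 0$).

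Next I would invoke the continuity of the trace operator $\tr^\Sigma \colon H^1(S_i) \to L^2(\Sigma)$, valid since $\Sigma = S_1 \cap S_2$ is (by transversality, condition LDS2, and $\dim S_i \in \{1,2\}$) a smooth submanifold of codimension one inside each $S_i$ — or a point, in which case the ``trace'' is just evaluation of the continuous representative and the argument is even simpler. Continuity of $\tr^\Sigma$ gives $\tr^\Sigma(\phi_n|_{S_i}) \to \tr^\Sigma u_i$ in $L^2(\Sigma)$ for $i = 1, 2$. But $\phi_n$ is a single smooth function on all of $\R^3$, so $\phi_n|_{S_1}$ and $\phi_n|_{S_2}$ agree pointwise on $\Sigma$; hence $\tr^\Sigma(\phi_n|_{S_1}) = \tr^\Sigma(\phi_n|_{S_2}) = \phi_n|_\Sigma$ for every $n$. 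Passing to the limit in $L^2(\Sigma)$ yields $\tr^\Sigma u_1 = \tr^\Sigma u_2$.

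The main obstacle is making the trace-continuity step rigorous in the degenerate geometric configurations: when $\dim S_i = 1$, the set $\Sigma$ is zero-dimensional, so ``trace'' must be reinterpreted as the value of the (Hölder-)continuous representative furnished by the one-dimensional Sobolev embedding $H^1([0,1]) \hookrightarrow C([0,1])$, and I would need to check that the approximating sequence converges uniformly near $\Sigma$ so that the pointwise values pass to the limit. A secondary technical point is the transition between the measure $\mu$ (which may carry densities $\theta_i$ if one works in $\widehat{\mathcal S}$ rather than $\widetilde{\mathcal S}$) and the bare Hausdorff measures on the components; this is handled by the bound $\theta_i \ge c > 0$, which makes the $H^1(S_i)$ and $\mu$-weighted norms comparable, so no genuine difficulty arises there. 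Everything else is a routine density-plus-continuity argument.
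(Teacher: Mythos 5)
Your proposal is correct and follows essentially the same route as the paper's proof: approximate $u$ by a single smooth sequence, note that both restrictions agree pointwise on $\Sigma$, use the componentwise $H^1(S_i)$-convergence, and pass to the limit via continuity of the trace operator. Your extra remarks on the zero-dimensional junction (trace as evaluation of the continuous representative) and on the densities $\theta_i$ are sensible refinements that the paper leaves implicit, but they do not change the argument.
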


\begin{proof}
Let $\alpha_n \in C^{\infty}(\R^3),\; \alpha_n \rightarrow u$ in $H^1_\mu$ and let $\alpha_n^i:= \alpha_n \lvert_{S_i}, \; i=1,2$. Since $\alpha_n$ is smooth, we know that $\tr^{\Sigma}\alpha^i_n = \alpha^i_n \lvert_{\Sigma}$. Since $\alpha_n^1\lvert_{\Sigma} = \alpha_n^2\lvert_{\Sigma}$, it follows that $\tr^{\Sigma}\alpha^1_n = \tr^{\Sigma}\alpha^2_n$. Since $\alpha^i_n$ converges to $u_i$ in $H^1(S_i)$, the continuity of the trace operator implies
$$
\tr^{\Sigma}\alpha^i_n \xrightarrow{L^2(\Sigma)}\tr^\Sigma u_i, \; i=1,2.
$$
However, $\tr^{\Sigma}\alpha^1_n = \tr^\Sigma \alpha^2_n$ and thus $\tr^{\Sigma}u_1 = \tr^{\Sigma}u_2$.
\end{proof}

It remains to address the case of $\dim S_1 \neq \dim S_2$. Let us assume that $\dim S_1 < \dim S_2$; as the components $S_1, \, S_2$ of the low-dimensional structure are transversal (condition LDS2) in the definition of a low-dimensional structure), this implies $\dim(S_1 \cap S_2) < \dim S_2 - 1,$ and the $S_1\cap S_2$-trace operator of a function defined on $S_2$ is ill-posed, at least in a classical sense. Even under some extra assumptions (like, for instance, componentwise continuity) we cannot expect any agreement of traces. Indeed, we have the following result.

\begin{prop}\label{zero_extension}
Assume that $\dim S_1 \neq \dim S_2.$ Let $w \in H^1(S_i),\; i=1,2.$ Then the functions
$$
u:=
\begin{cases}
w &\text{ on }S_1,\\
0 &\text{ on }S_2
\end{cases}
\quad
\text{and}
\quad
v:=
\begin{cases}
0 &\text{ on }S_1,\\
w &\text{ on }S_2
\end{cases}
$$
belong to $H^1_{\mu}.$
\end{prop}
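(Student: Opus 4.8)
The plan is to reduce the statement to the single fact that the characteristic function $\chi_{S_1}$ (equal to $1$ on $S_1$ and to $0$ on $S_2$) lies in $\ker\nabla_\mu\subset H^1_\mu$, and then to assemble $u$ and $v$ from it. After relabelling I may assume $\dim S_1=1$, $\dim S_2=2$; write $\Sigma=S_1\cap S_2$. Granting $\chi_{S_1}\in\ker\nabla_\mu$, I also get $\chi_{S_2}=1-\chi_{S_1}\in\ker\nabla_\mu$, since the constant function $1$ is represented in $H^1_\mu$ by any $\psi\in C^\infty_c(\R^3)$ equal to $1$ on a neighbourhood of $S$. To finish, given $w$ as in the statement I would choose $g_n\in C^\infty(S_1)$ with $g_n\to w|_{S_1}$ in $H^1(S_1)$, extend each $g_n$ to some $G_n\in C^\infty_c(\R^3)$, and set $u_n:=G_n\chi_{S_1}$. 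A routine approximation (write $\chi_{S_1}$ as a $\|\cdot\|_\mu$-limit of smooth functions and use that $G_n$ is bounded with bounded gradient) shows $u_n\in H^1_\mu$ with $\nabla_\mu u_n=\chi_{S_1}\nabla_\mu G_n$, using $\nabla_\mu\chi_{S_1}=0$; moreover $u_n=g_n$ on $S_1$ and $u_n=0$ on $S_2$. Letting $n\to\infty$ and using Proposition~\ref{tanprop}(c) gives $u_n\to u$ in $\|\cdot\|_\mu$, hence $u\in H^1_\mu$ by completeness; the function $v$ is treated identically with $\chi_{S_2}$ in place of $\chi_{S_1}$.

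The core of the argument is therefore $\chi_{S_1}\in\ker\nabla_\mu$, and this is where transversality and the dimension gap enter. Since $\dim S_1+\dim S_2=3$ and $S_1$ is transversal to $S_2$, the intersection $\Sigma$ is a $0$-dimensional submanifold, hence --- the $S_i$ being compact --- a finite set $\{p_1,\dots,p_k\}$; conditions LDS1--LDS2 moreover force each $p_j$ to be interior to both $S_1$ and $S_2$, so near $p_j$ there is a chart straightening $S_1$ onto a segment of the $z$-axis and $S_2$ onto a disc in the $xy$-plane. I would then construct $\zeta_n\in C^\infty_c(\R^3)$ with $\zeta_n\to\chi_{S_1}$ in $L^2_\mu$ and $\nabla_\mu\zeta_n\to0$ in $L^2_\mu$ as follows. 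Away from $\Sigma$ the manifolds $S_1$ and $S_2$ are disjoint compacta, so I fix once and for all $\chi_\infty\in C^\infty_c(\R^3)$ equal to $1$ on a neighbourhood of $S_1\setminus\bigcup_jB(p_j,\rho)$ and to $0$ on a neighbourhood of $S_2\setminus\bigcup_jB(p_j,\rho)$ for small $\rho$. Near each $p_j$, in the straightening chart, I set $\zeta_n:=\theta_n(\sqrt{x^2+y^2})$, where $\theta_n:[0,\infty)\to[0,1]$ is a smoothed logarithmic cut-off: $\theta_n\equiv1$ on $[0,\epsilon_n]$, $\theta_n\equiv0$ on $[R_0,\infty)$ with $R_0<\rho$ and $\epsilon_n\to0$, interpolating so that $\int_0^\infty|\theta_n'(r)|^2\,r\,dr\le C/\log(R_0/\epsilon_n)\to0$. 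Such a $\zeta_n$ is identically $1$ on the $z$-axis, so $\nabla_\mu\zeta_n=0$ on $S_1$ near $p_j$; its restriction to the $xy$-plane is a radial function tending to $0$ almost everywhere with $\int_{\R^2}|\nabla\zeta_n|^2\to0$ --- which is exactly the statement that a point has zero $H^1$-capacity in dimension two. Gluing $\chi_\infty$ and the local pieces by a partition of unity subordinate to $\{\R^3\setminus\Sigma\}\cup\{B(p_j,\rho)\}_j$, arranged so that the resulting $\zeta_n$ is still identically $1$ on all of $S_1$, produces $\zeta_n\in C^\infty_c(\R^3)$ with $\zeta_n\equiv1$ on $S_1$ and $\zeta_n|_{S_2}\to0$ in $H^1(S_2)$; therefore $\zeta_n\to\chi_{S_1}$ in $\|\cdot\|_\mu$ and $\nabla_\mu\chi_{S_1}=0$.

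I expect the main obstacle to be precisely this construction near $\Sigma$, together with pinning down why $\dim S_1\ne\dim S_2$ is indispensable. One cannot approximate $\chi_{S_1}$ by functions transitioning from $1$ to $0$ along $S_1$, because a point has \emph{positive} $H^1$-capacity inside a $1$-manifold; the resolution is that $\Sigma$ has codimension two in the larger component $S_2$, so the whole transition can be concentrated in the directions transverse to $S_1$, where a logarithmic cut-off costs arbitrarily little Dirichlet energy, while $\zeta_n$ is simply held constant on $S_1$ --- the one point $p_j$ where this forces the value to be ``both $1$ and $0$'' has zero $\mu$-mass and is invisible in $L^2_\mu$. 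Organizing the partition of unity so that $\zeta_n$ is simultaneously globally $C^\infty_c$, identically $1$ on $S_1$, and convergent to $0$ on $S_2$ is the main piece of bookkeeping; and it is exactly this mechanism that breaks down when $\dim S_1=\dim S_2$ (then $\Sigma$ has codimension one in each component and no such cut-offs exist), in accordance with the trace obstruction of Proposition~\ref{slady}.
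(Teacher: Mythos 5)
Your proposal is correct and is essentially the paper's own argument: the paper likewise reduces everything to the zero Sobolev capacity of the junction inside the two-dimensional component (cited from Evans--Gariepy rather than built by hand via your explicit logarithmic cut-offs) and approximates $u$, resp.\ $v$, by products of such a capacity cut-off with smooth approximations of $w$ extended constantly in the transverse variable, e.g.\ $\widetilde{\alpha_n}(x,y,z):=\alpha_n(x,y)$. The only difference is organizational --- you isolate $\chi_{S_1}\in\ker\nabla_{\mu}$ as an intermediate lemma and handle several junction points explicitly, whereas the paper works in the flattened model and records the characteristic-function statement as a remark after the proposition.
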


\begin{proof}
Without the loss of generality, we can assume that $\dim S_1 > \dim S_2$. Let us observe that $\capac_2(S_1\cap S_2, S_1)=0$ (the $\capac$ stands for the Sobolev capacity -- for its definition see \cite[Sec. 4.7]{Eva15}). This means that there exists a sequence $\phi_n \in C^{\infty}(\R^2)$ which proves the mentioned fact.

For the function $w\in H^1(S_1)$ we find the sequence $\alpha_n \in C^{\infty}(\R^2)$ approximating in the $H^1$-norm, and for each term we introduce the $\R^3$-extension $\widetilde{\alpha_n}\in C^{\infty}(\R^3)$ by the simple formula $\widetilde{\alpha_n}(x,y,z):= \alpha_n(x,y).$

Similarly, we extend the $\phi_n$ sequence to the sequence $\widetilde{\phi_n} \in C^{\infty}(\R^3),\; \widetilde{\phi_n}(x,y,z):=\phi_n(x,y).$

Now we define another sequence $\Psi_n\in C^{\infty}(\R^3),\; \Psi_n:= 1-\widetilde{\phi_n}\widetilde{\alpha_n}.$ It can be easily verified that $\Psi_n \xrightarrow{H^1_{\mu}} u,$ what gives $u \in H^1_{\mu}.$
This provides that $u \in H^1_{\mu}.$ The fact that $v \in H^1_{\mu}$ is established analogously.
\end{proof}

\begin{uw}
From the Proposition \ref{zero_extension} we can derive a short proof of the fact that in the case of a low-dimensional structure with component manifolds of various dimensions, the $\mu$-tangent gradient operator $\nabla_{\mu}$ has more than one-dimensional kernel. Indeed, let $S$ be a low-dimensional structure. For shortening, let us assume that $S=S_1 \cup S_2$ and $\dim S_1 > \dim S_2.$ By Proposition \ref{zero_extension} we obtain that the functions 
\[
u_1=\begin{cases}
    1 & \text{ on } S_1,\\
    0 & \text{ on } S_2
\end{cases},
\quad \text{and} \quad 
u_2=\begin{cases}
    0 & \text{ on } S_1,\\
    1 & \text{ on } S_2
\end{cases}
\]
are members of the $H^1_{\mu}$ space. Of course we have $u_1,u_2 \in \ker \nabla_{\mu},$ and they span $\ker \nabla_{\mu}.$
\end{uw}

\section{Some properties of the operator $A_{\mu}$}\label{32}

The construction of $\mu$-essential second derivative operator $A_{\mu}$ is given in Section \ref{sec_ord_frame}; see Definition \ref{drop}. Despite its non-standard form, the operator shares properties expected from a reasonable differential operator. From our research perspective, the next observation is essential for further reasoning.

\begin{lem}\label{comp}
Assume that $\mu \in \mathcal{S}.$
	\begin{itemize}
		\item[a)] The operator $A_{\mu}: D(A_{\mu}) \to (L^2_{\mu})^{3 \times 3}$ is closed in the sense of the $L^2_{\mu}$-convergence, that is, if $$(u_n,b_n) \in D(A_{\mu}),\;\; u_n \xrightarrow{L^2_{\mu}} u \in L^2_{\mu},\;\; b_n \xrightarrow{L^2_{\mu}} b \in L^2_{\mu}(\R^3;T^{\perp}_{\mu})$$ and $$A_{\mu}(u_n,b_n) \xrightarrow{L^2_{\mu}} D \in (L^2_{\mu})^{3 \times 3},$$ then $$(u,b) \in D(A_{\mu})\;  \text{  and  }\;  D = A_{\mu}(u,b).$$
		\item[b)] The same is true if the domain $D(A_{\mu})$ is replaced with $D(A_{\mu})_N.$
	\end{itemize}
\end{lem}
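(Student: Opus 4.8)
The plan is to prove (a) first, exploiting the linear structure of $\overline{\mathfrak{g}_\mu}$, and then to deduce (b) from (a) by a closed–graph argument together with a componentwise trace estimate.

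For (a), I would start from the facts that $\overline{\mathfrak{g}_\mu}$ is a closed linear subspace of $L^2_\mu\times L^2_\mu(\R^3;T_\mu^\perp)\times L^2_\mu(\R^3;\R^{3\times 3}_{\text{sym}})$, that $\mathfrak{m}_\mu=\{z:(0,0,z)\in\overline{\mathfrak{g}_\mu}\}$ is exactly the fibrewise orthogonal complement of $M_\mu$, hence equals $\ker Q_\mu$, and that $A_\mu(u,b)=Q_\mu(z)$ is well defined (independent of the representative $z$) precisely because the difference of two representatives lies in $\mathfrak{m}_\mu$. The naive attempt — ``$A_\mu(u_n,b_n)\to D$, therefore the $z_n$ converge'' — would fail, because $Q_\mu$ discards the $\mathfrak{m}_\mu$–component of $z_n$. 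The remedy is to choose, for each $(u_n,b_n)\in D(A_\mu)$, a representative $z_n$ with $(u_n,b_n,z_n)\in\overline{\mathfrak{g}_\mu}$, note that $z_n-Q_\mu z_n\in\ker Q_\mu=\mathfrak{m}_\mu$ so $(0,0,z_n-Q_\mu z_n)\in\overline{\mathfrak{g}_\mu}$, and subtract: $(u_n,b_n,A_\mu(u_n,b_n))=(u_n,b_n,Q_\mu z_n)\in\overline{\mathfrak{g}_\mu}$. Then $u_n\to u$, $b_n\to b$, $A_\mu(u_n,b_n)\to D$ in the respective $L^2_\mu$–norms and closedness of $\overline{\mathfrak{g}_\mu}$ give $(u,b,D)\in\overline{\mathfrak{g}_\mu}$, so $(u,b)\in D(A_\mu)$ and $A_\mu(u,b)=Q_\mu(D)$; since each $A_\mu(u_n,b_n)=Q_\mu z_n\in M_\mu$ and $M_\mu$ is $L^2_\mu$–closed (a pointwise orthogonality condition), $D\in M_\mu$ and hence $Q_\mu(D)=D=A_\mu(u,b)$.

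For (b), I would use that by (a) the operator $A_\mu$ is closed, so $D(A_\mu)$ with the graph norm $\|u\|_{L^2_\mu}+\|b\|_{L^2_\mu}+\|A_\mu(u,b)\|_{L^2_\mu}$ is a Banach space. By the inclusion \eqref{prop1}, the map $J(u,b):=(u,\nabla_\mu u+b)$ sends $D(A_\mu)$ into $H^1_\mu\times H^1_\mu$; its graph is closed (uniqueness of $L^2_\mu$–limits), so the closed graph theorem makes $J$ bounded. Since in our situation $(u_n,b_n)\to(u,b)$ in the graph norm — using $A_\mu(u_n,b_n)\to D=A_\mu(u,b)$ from (a) — we get $u_n\to u$ in $H^1_\mu$ and $\nabla_\mu u_n+b_n\to\nabla_\mu u+b$ in $H^1_\mu$. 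Restricting to a component $S_i$ (the restriction $H^1_\mu\to H^1(S_i)$ is bounded by Proposition~\ref{tanprop}) and applying the smooth, $C^1$–bounded tangential projection onto $T_{S_i}$ — which kills $b_n|_{S_i}$ because $T_\mu^\perp\subset T_{S_i}^\perp$ — I get $\nabla_{S_i}u_n\to\nabla_{S_i}u$ in $H^1(S_i;\R^3)$ by Proposition~\ref{tanprop}(c); with $u_n|_{S_i}\to u|_{S_i}$ in $L^2(S_i)$ this yields $u_n|_{S_i}\to u|_{S_i}$ in $H^2(S_i)$. The gradient–trace map $H^2(S_i)\to L^2(\partial S_i;\R^3)$ is then continuous, so $(B\nabla_{S_i}u_n)|_{\partial S_i}\cdot n\to(B\nabla_{S_i}u)|_{\partial S_i}\cdot n$; the left–hand side vanishes for every $n$ because $u_n\in D(L_\mu)_N$, hence so does the limit, and since this holds for all $i$ and $u\in D(L_\mu)$ by (a) and \eqref{prop1}, we conclude $u\in D(L_\mu)_N$, i.e. $(u,b)\in D(A_\mu)_N$, with $A_\mu(u,b)=D$.

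I expect the main obstacle to be the reduction in part (a): there is no a priori bound on the representatives $z_n$ coming from the convergence of $Q_\mu(z_n)$, and the key point is that passing from $z_n$ to $Q_\mu(z_n)$ stays inside the closed subspace $\overline{\mathfrak{g}_\mu}$. In part (b) the only real subtlety is upgrading $L^2_\mu$–convergence of $(u_n,b_n)$ to $H^1_\mu$–convergence so that traces of tangential gradients converge, and this is delivered by closedness of $A_\mu$ through the closed graph theorem.
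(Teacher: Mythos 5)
Your proposal is correct, but it is organised differently from the paper. For part a) the paper does not argue at all: it simply cites \cite[Prop. 3.5 (i)]{Bou02}, whereas you reconstruct the proof behind that citation — replacing the representative $z_n$ by $Q_{\mu}z_n$ via $z_n-Q_{\mu}z_n\in\mathfrak{m}_{\mu}$, so that $(u_n,b_n,A_{\mu}(u_n,b_n))\in\overline{\mathfrak{g}_{\mu}}$, and then using that $\overline{\mathfrak{g}_{\mu}}$ and $M_{\mu}$ are closed. This is sound; note only that the identification $\mathfrak{m}_{\mu}=\ker Q_{\mu}$ uses that the fibres $M^{\perp}_{\mu}(x)$ are linear subspaces (true in the Bouchitt\'e--Fragal\`a setting, where $\mathfrak{m}_{\mu}$ is an $L^{\infty}$-module), and that linearity of $\overline{\mathfrak{g}_{\mu}}$ is what licenses the subtraction. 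For part b) the paper's one-line argument upgrades the convergence by the classical Poincar\'e inequality on each component (an interpolation-type estimate giving componentwise $H^2$-convergence) and then uses continuity of the trace; you instead get the upgrade abstractly: completeness of $D(A_{\mu})$ in the graph norm (which is exactly part a)) plus the closed graph theorem applied to $J(u,b)=(u,\nabla_{\mu}u+b)$ yields $H^1_{\mu}$-convergence of $\nabla_{\mu}u_n+b_n$, after which the tangential projection $P_{S_i}$ (using $T_{\mu}^{\perp}\subset T_{S_i}^{\perp}$) and trace continuity give the vanishing of the limit Neumann data. Your route buys a cleaner, estimate-free argument at the price of two implicit ingredients that the explicit Poincar\'e route does not need to invoke: the injectivity of $H^1_{\mu}\hookrightarrow L^2_{\mu}$ together with closedness of the componentwise gradient (needed for the graph of $J$ to be closed), both of which do hold for $\mu\in\mathcal{S}$ by Proposition \ref{tanprop}. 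So the result and its logical skeleton (a) $\Rightarrow$ (b) agree with the paper, but your (b) is a genuinely different, more functional-analytic mechanism than the paper's componentwise estimates.
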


\begin{proof}
		\begin{itemize}
			\item[a)] For the proof, see \cite[Prop. 3.5 (i)]{Bou02}.
			\item[b)] By the result of point a), the inclusion $D(A_{\mu})_N \subset D(A_{\mu}),$ by applying the classical Poincar{\'e} inequality \eqref{standpoin} on each component manifold $E_i$, $(\supp \mu = \bigcup_{i=1}^mE_i),$ and due to continuity of the trace operator it can be checked that the space $D(A_{\mu})_N$ is closed with respect to the convergence described in point a). Thus the proposed result follows.
		\end{itemize}
\end{proof}

Next lemma shows that a global continuity of a function $u \in D(A_{\mu})$ can be derived not only in the case of component manifolds of a fixed dimension but also if the dimensions of components vary.

\begin{lem}\label{conti}
    Let $u \in D(A_{\mu}).$ Assume that $\mu \in \mathcal{S},\; P:= \supp \mu = E_i \cup E_j,$ \hbox{$E_i\cap E_j\neq \emptyset,$} $\dim E_i=1 \text{ and } \dim E_j=2.$
	Then the function $u$ is continuous on the set $P.$
\end{lem}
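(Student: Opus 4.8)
The plan is to prove continuity of $u$ on each of the two closed pieces $E_i$ and $E_j$ separately, and then to check that the resulting pointwise values agree on the intersection $E_i\cap E_j$; the pasting lemma for continuous functions on a union of two closed sets then yields continuity of $u$ on all of $P$.

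First I would unpack the hypothesis: since $u\in D(A_\mu)$, there are a Cosserat field $b$, a matrix field $z$, and functions $w_n\in C^\infty_c(\R^3)$ with $(w_n,\nabla^\perp w_n,\nabla^2w_n)\to(u,b,z)$ in the product norm of $L^2_\mu\times L^2_\mu(\R^3;T_\mu^\perp)\times L^2_\mu(\R^3;\R^{3\times3}_{\mathrm{sym}})$, i.e.\ $w_n\to u$, $\nabla^\perp w_n\to b$ and $\nabla^2 w_n\to z$, all in $L^2_\mu$. By transversality (condition LDS2) and compactness, $E_i\cap E_j$ is a finite set of points, hence $\mathcal H^{\dim E_k}$-negligible inside each component $E_k$, $k\in\{i,j\}$. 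Therefore, restricting to $E_k$, we get $w_n|_{E_k}\to u|_{E_k}$ in $L^2(E_k)$ together with uniform $L^2(E_k)$-bounds on $\nabla^2 w_n|_{E_k}$ and on $\nabla^\perp w_n|_{E_k}$.

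The crucial step — and the main obstacle — is to upgrade these bounds to an $H^2(E_k)$-bound for $w_n|_{E_k}$, the difficulty being that the tangential gradient $\nabla_{E_k}(w_n|_{E_k})$ is not a priori controlled by the data available in $\overline{\mathfrak{g}_{\mu}}$. I would resolve this with the classical formula relating the intrinsic Hessian of a restriction to the ambient one: for $F\in C^\infty(\R^3)$ and at $\mathcal H^{\dim E_j}$-a.e.\ point of $E_j$,
\[
\nabla^2_{E_j}(F|_{E_j})(X,Y)=\langle\nabla^2F\,X,\,Y\rangle+\mathrm{II}_{E_j}(X,Y)\,\langle\nabla F,\nu_j\rangle,\qquad X,Y\in TE_j,
\]
with $\nu_j$ the unit normal and $\mathrm{II}_{E_j}$ the second fundamental form, which is bounded because $E_j$ is smooth and compact; the point is that at such a point $T_\mu^\perp=\mathrm{span}(\nu_j)$, so $\langle\nabla F,\nu_j\rangle\nu_j=\nabla^\perp F$, and the right-hand side involves only the ambient Hessian and the $\mu$-normal gradient, with no circularity. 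An analogous identity, with the curvature vector of $E_i$ in place of $\mathrm{II}_{E_j}$, holds along the curve $E_i$. Applying this to $w_n$ gives $\|\nabla^2_{E_k}(w_n|_{E_k})\|_{L^2(E_k)}\le\|\nabla^2w_n\|_{L^2_\mu}+C\|\nabla^\perp w_n\|_{L^2_\mu}$, which is bounded; combined with the standard interpolation inequality $\|\nabla_{E_k}f\|_{L^2(E_k)}\le\|\nabla^2_{E_k}f\|_{L^2(E_k)}+C\|f\|_{L^2(E_k)}$ on the compact manifold $E_k$ (each $E_k$ carries a single chart by assumption, so this is just the Gagliardo--Nirenberg estimate on a bounded smooth domain), we conclude that $\{w_n|_{E_k}\}_n$ is bounded in $H^2(E_k)$.

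Finally I would pass to a subsequence along which $w_n|_{E_i}\rightharpoonup u|_{E_i}$ in $H^2(E_i)$ and $w_n|_{E_j}\rightharpoonup u|_{E_j}$ in $H^2(E_j)$, the weak limits being identified by the already established $L^2$-convergence; in particular $u|_{E_i}\in H^2(E_i)$ and $u|_{E_j}\in H^2(E_j)$. Since $\dim E_j=2$ the embedding $H^2(E_j)\hookrightarrow C(E_j)$ is compact, and since $\dim E_i=1$ the embedding $H^2(E_i)\hookrightarrow C^1(E_i)\subset C(E_i)$ is compact; hence along a further common subsequence $w_n|_{E_k}\to u|_{E_k}$ uniformly on $E_k$ for $k\in\{i,j\}$. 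Thus $u|_{E_i}$ and $u|_{E_j}$ are continuous, and for every $p\in E_i\cap E_j$ we obtain $u|_{E_i}(p)=\lim_n w_n(p)=u|_{E_j}(p)$, because each $w_n$ is a single smooth function on $\R^3$. Consequently $u$ has a well-defined value on $E_i\cap E_j$, is continuous on the closed set $E_i$ and on the closed set $E_j$, and is therefore continuous on $P=E_i\cup E_j$, as claimed.
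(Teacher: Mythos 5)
Your proof is correct, but it takes a genuinely different route from the paper's. The paper takes the componentwise regularity $u\lfloor_{E_k}\in H^2(E_k)$ directly from inclusion \eqref{prop1} and then argues by contradiction: a jump at a junction point $p$, combined with uniform convergence of the approximating sequence on the one-dimensional component $E_i$, would produce functions tending to $0$ in $H^2(E_j)$ while staying of size at least $1$ at $p$, i.e. it would force the $H^2$-capacity of a point in the two-dimensional disc to vanish, which is impossible. You argue directly: you derive componentwise $H^2(E_k)$ bounds for the smooth approximants themselves, via the Gauss-type identity expressing the intrinsic Hessian through the ambient Hessian, the (bounded) second fundamental form and the $\mu$-normal gradient, plus interpolation for the uncontrolled first-order tangential term, and then use the compact embeddings $H^2(E_j)\hookrightarrow C(E_j)$ and $H^2(E_i)\hookrightarrow C^1(E_i)$ to obtain uniform convergence on both components simultaneously, so that agreement of the two continuous representatives at the junction points is automatic because each approximant is a single smooth function on $\R^3$. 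The two arguments rest on the same underlying fact, namely that $H^2$ controls point values in dimensions one and two (the positivity of the $H^2$-capacity of a point invoked in the paper is exactly the quantitative form of the embedding you use), but yours buys a self-contained, constructive proof that avoids capacity theory and the paper's preliminary reduction to $u\lfloor_{E_j}$ compactly supported in the interior of $E_j$, and it gives quantitative $H^2(E_k)$ bounds on the approximating sequence in terms of the $D(A_{\mu})$ data; the paper's version is shorter once inclusion \eqref{prop1} and basic capacity facts are granted, since all the work is localized at the junction point. Note also that your Hessian estimate essentially re-derives, on the level of the approximants, what the paper reads off from \eqref{prop1} for $u$ itself; keeping it at the level of the approximants is, however, precisely what gives you the uniform convergence at the junction, which is the step where your argument replaces the paper's capacity contradiction.
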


\begin{proof}
	 To avoid discussion of technical aspects let us assume that $u\lfloor_{E_j}$ is compactly supported in the interior of the component $E_j.$	
   By inclusion \eqref{prop1} we know that $u\in H^1_{\mu}$ and $u\lfloor_{E_k} \in H^2(E_k)$ for $k=i,j.$ We justify continuity of $u$ proceeding by a contradiction.

   Suppose our claim is false, i.e. $u$ is discontinuous. As $u\lfloor_{E_k} \in H^2(E_k)$ for $k=i,j$ provides continuity on each component manifold $E_k,\; k=i,j,$ a discontinuity have to appear in the junction set $E_i \cap E_j.$ Let $p \in E_i \cap E_j$ be a fixed point of discontinuity. After multiplying the function $u$ by a constant, we can assume that a jump at the discontinuity point $p$ is greater than $2.$ Let $\{\phi_n\}_{n\in \mathbb{N}},\; \phi_n \in C^{\infty}_c(\R^3)$ be an approximating sequence of $u,$ which justifies the membership in domain $D(A_{\mu}).$

		A convergence in the sense of $D(A_{\mu})$ implies convergence in the $H^1$-norm on the 1-dimensional component $E_i$ (of course $D(A_{\mu})$-convergence implies $H^2(E_i)$-convergence, but the weaker type is enough at this moment of reasoning). This implies that it is possible to extract a subsequence converging uniformly on $E_i$ (relabelling is omitted).

	Let us consider a sequence $w_n:E_j \to \R,$ $w_n:= u\lfloor_{E_j}-\psi_n.$ The sequence $w_n$ satisfies \hbox{$w_n \in C_c(\intel E_j),$} $w_n \in H^2(E_j)$ and $w_n \to 0$ in $H^2(E_j).$ Moreover, $|w_n(p)|>1.$ This means that the sequence $w_n$ can be used to show that the $H^2$-capacity of the point $p$ in the disc $E_j$ is zero. This is a contradiction with the general theory of Sobolev capacity.	
 A general situation where $u\lfloor_{E_j}$ is not necessarily compactly supported in the intel of $E_j$ can be reduced to the discussed above case by modifying the function $u\lfloor_{E_j}$ properly.
\end{proof}

\section{Local characterisation of the functional spaces}\label{33}

We introduce a local characterisation of low-dimensional Sobolev-type spaces in terms of the behaviour of functions in a neighbourhood of junction sets. This will allow us to divide our study into a certain number of "generic" local cases.

In this paragraph we assume that $\mu \in \mathcal{S}.$

\begin{defi}[Partition of unity]
	Let $\mu$ be an arbitrary measure of the class $\mathcal{S}.$ Denote $S:=\supp \mu \subset \mathbb{R}^3,$ where $S=\bigcup_{i=1}^m S_i$ and $S_i$ are component manifolds.
	We introduce the set
	$$\widetilde{J}:= \left\{p\in S: \exists i,j\in \{1,...,m\},\; i\neq j,\; p\in S_i\cap S_j\right\}.$$
	On the set $\widetilde{J}$ we define a relation $\sim_r$ as
	$$p\sim_r q \iff \exists i,j \in \{1,...,m\},\; i\neq j,\; \exists w\in C^{\infty}\left([0,1];S_i \cap S_j\right):\; w(0)=p,\; w(1)=q.$$
	It is easy to check that $\sim_r$ is an equivalence relation and the number of all equivalence classes is finite. In each equivalence class $[p]_r$ we arbitrarily choose one representative $p.$ Let $J$ stands for the set of all such representatives. Every element of the set $J$ represents a different junction set of the structure $S.$
	
A standard result of the mathematical analysis determines the existence of a finite set $K$ and a partition of unity that consists of functions $\alpha_p \in C^{\infty}_c(\R^3),\; p\in I:=J\cup K$ and two sequences of sets (open in the topology of $\R^3$):
	$\left\{O_p\right\}_{p \in I},\; \left\{U_p\right\}_{p \in I} \subset \R^3$ satisfying stated below properties.

	For each $p\in J,\; p \in S_i\cap S_j$ exists an open set $O_p \supset S_i\cap S_j$ and exists an open set $U_p$ such that $S_i \cap S_j \subset U_p\subset O_p,$ $U_p \cap O_q = \emptyset,$ for $q\in I,\; q\neq p.$

 The family $\left\{O_p\right\}_{p \in I}$ cover the set $S,$ that is $\bigcup_{p \in I} O_p \supset S.$ For each $p\in I$ the functions $\alpha_p$ satisfy $\alpha_p \subset \overline{O_p}$ and $\sum_{p\in I}\alpha_p  \equiv 1$ on $S.$
\end{defi}

The following propositions show that being a member of the spaces $H^1_{\mu}$ or $D(A_{\mu})$ can be completely characterised in terms of behaviour near points of junctions. Let us denote $\mu_p := \mu\lfloor_{O_p}$ for $p\in I$.
\begin{prop}["from local to global" characterisation of $H^1_{\mu}$]\label{loc1}
	The space $H^1_{\mu}$ can be characterised as
	$$H^1_{\mu} = \left\{u \in L^2_{\mu}: \forall p\in I\; \forall i\in \{1,...,m\}\; \exists u_p \in H^1_{\mu_p},\; u = \sum_{p \in I}\alpha_pu_p\right\}.$$
\end{prop}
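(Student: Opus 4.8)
The plan is to prove the two inclusions separately, the forward one being essentially a soft localisation argument and the backward one requiring a little work with the density of smooth functions and the partition of unity. First I would fix the partition of unity $\{\alpha_p\}_{p\in I}$ together with the open sets $O_p$, $U_p$ as in the preceding definition, and set $\mu_p := \mu\lfloor_{O_p}$. For the inclusion ``$\subseteq$'': take $u\in H^1_\mu$ and a sequence $\phi_n\in C^\infty_c(\R^3)$ with $\phi_n\to u$ in $\|\cdot\|_\mu$. For each $p\in I$ put $u_p := u$ (restricted, in the sense of the paper's abuse of notation, to a function tested against $\mu_p$); since $\alpha_p\in C^\infty_c(\R^3)$, the products $\alpha_p\phi_n$ are smooth and, by the Leibniz rule for $\nabla_\mu$ (which holds on smooth functions since $\nabla_\mu$ is a pointwise projection of the classical gradient), $\alpha_p\phi_n\to \alpha_p u$ in $L^2_\mu$ with $\nabla_\mu(\alpha_p\phi_n) = \phi_n\nabla_\mu\alpha_p + \alpha_p\nabla_\mu\phi_n \to u\nabla_\mu\alpha_p + \alpha_p\nabla_\mu u$ in $L^2_\mu$; restricting everything to $O_p$ shows $\alpha_p u\lfloor_{O_p}\in H^1_{\mu_p}$, hence one may take $u_p := u\lfloor_{O_p}\in H^1_{\mu_p}$ (up to multiplying by a cutoff equal to $1$ on $\supp\alpha_p$), and $\sum_{p\in I}\alpha_p u_p = u$ because $\sum_p \alpha_p\equiv 1$ on $S$.

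For the inclusion ``$\supseteq$'': suppose $u\in L^2_\mu$ and for each $p\in I$ there is $u_p\in H^1_{\mu_p}$ with $u=\sum_{p\in I}\alpha_p u_p$ ($\mu$-a.e.). It suffices to show each summand $\alpha_p u_p$ lies in $H^1_\mu$, since $H^1_\mu$ is a vector space and $I$ is finite. Fix $p$. By definition of $H^1_{\mu_p}$ there is a sequence $\psi_n^p\in C^\infty_c(\R^3)$ with $\psi_n^p\to u_p$ in $\|\cdot\|_{\mu_p}$, i.e.\ in $L^2$ and $\nabla_\mu$-sense over $O_p$. Then $\alpha_p\psi_n^p\in C^\infty_c(\R^3)$ and, since $\supp\alpha_p\subset\overline{O_p}$, the Leibniz computation above gives $\alpha_p\psi_n^p\to \alpha_p u_p$ in $L^2_\mu$ and $\nabla_\mu(\alpha_p\psi_n^p)\to u_p\nabla_\mu\alpha_p + \alpha_p\nabla_\mu u_p$ in $L^2_\mu$ — both convergences taking place over all of $\R^3$ precisely because the supports sit inside $\overline{O_p}$, where $\mu=\mu_p$ up to the (null) boundary. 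Here one should check that $\mu(\partial O_p\cap S)=0$, or more safely choose the $O_p$ so that this holds (which is harmless, since the junction sets are lower-dimensional and the $S_i$ are smooth); this is the one genuinely measure-theoretic point. Consequently $\alpha_p\psi_n^p$ is a Cauchy sequence in $H^1_\mu$, so its limit $\alpha_p u_p$ belongs to $H^1_\mu$, and summing over $p\in I$ yields $u\in H^1_\mu$.

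The main obstacle I anticipate is bookkeeping at the overlaps of the $O_p$: one must ensure that restricting to $O_p$ and back does not lose or double-count mass on the junction sets, and that the local Sobolev membership $\alpha_p u_p\in H^1_{\mu_p}$ genuinely globalises — that is, that a smooth approximating sequence supported near $\overline{O_p}$ converges in the \emph{global} norm $\|\cdot\|_\mu$ and not merely $\|\cdot\|_{\mu_p}$. This is exactly why the partition of unity is built with the separation property $U_p\cap O_q=\emptyset$ for $q\neq p$ and with each $O_p$ a neighbourhood of a single junction component: away from a small neighbourhood of $\widetilde J$ the structure $S$ is (locally) a disjoint union of pieces of single manifolds $S_i$, on which $H^1_\mu$ reduces to ordinary $H^1(S_i)$ by Proposition~\ref{tanprop}, and there the statement is classical. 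So the proof amounts to: (i) the trivial algebra of the partition of unity, (ii) the Leibniz rule for $\nabla_\mu$ on smooth functions, (iii) a density argument, and (iv) the harmless geometric normalisation $\mu(\partial O_p\cap S)=0$.
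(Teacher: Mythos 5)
Your proposal is correct and follows essentially the same route as the paper: the forward inclusion by restricting $u$ to the sets carrying $\mu_p$ and using $\sum_{p\in I}\alpha_p\equiv 1$ on $S$, and the backward inclusion by multiplying the local smooth approximating sequences by the partition of unity and summing, then passing to the limit in $H^1_{\mu}$. The extra points you flag (the Leibniz rule for $\nabla_{\mu}$ on smooth functions and the normalisation $\mu(\partial O_p\cap S)=0$) are details the paper leaves implicit, not a change of method.
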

\begin{proof}
		$(\implies)$ Assume that $u \in H^1_{\mu},$ put $u_p:= u\lfloor_{\supp \alpha_p}$ for $p\in I.$
		By standard properties of a partition of unity, it follows that $u = \sum_{p \in I}\alpha_pu_p.$
		
		$(\Longleftarrow)$ Let $u \in \left\{w \in L^2_{\mu}: \forall p\in I\;  \exists w_p \in H^1_{\mu_p}\; w = \sum_{p \in I}\alpha_pw_p\right\}.$
		For any $u_p \in H^1_{\mu_p},$ the functions $\alpha_p u_p$ can be extended by zero to obtain $\alpha_p u_p\in H^1_{\mu}.$ Let $\phi^n_p \in C^{\infty}(\R^3)$ be a sequence witnessing belonging of $u_p$ to $H^1_{\mu_p}.$
		Taking the sequence
		$$\sum_{p \in I}\alpha_p \phi^n_p \in C^{\infty}_c(\R^3)$$
		and passing to the limit $n \to \infty$ we conclude that $u \in H^1_{\mu}.$
\end{proof}

An analogous fact can be established in the second-order framework.
\begin{prop}["from local to global" characterisation of $D(A_{\mu})$]\label{propp}
	 There holds
	\begin{equation}\label{loc2}
	D(A_{\mu}) = \left\{(u,b)\in L^2_{\mu}\times L^2_{\mu}(\R^3;T^{\perp}_{\mu}):  \forall p\in I\; \exists (u_p,b_p)\in D(A_{\mu_p}),\; u = \sum_{p \in I}\alpha_p u_p,\; b=\sum_{p \in I}\alpha_p b_p\right\}.
	\end{equation}
\end{prop}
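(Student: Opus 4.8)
The plan is to follow the proof of Proposition~\ref{loc1} and lift each step to the second-order framework, now carrying along the Cosserat fields; throughout write $S=\supp\mu=\bigcup_{i=1}^m S_i$ and recall $\mu_p=\mu\lfloor_{O_p}$. For the inclusion $\subseteq$, I would take $(u,b)\in D(A_\mu)$, fix a witness $z\in L^2_\mu(\R^3;\R^{3\times 3}_{\text{sym}})$ with $(u,b,z)\in\overline{\mathfrak{g}_\mu}$, and a sequence $\psi_n\in\DC(\R^3)$ such that $(\psi_n,\nabla^\perp\psi_n,\nabla^2\psi_n)\to(u,b,z)$ in $L^2_\mu\times L^2_\mu(\R^3;T^\perp_\mu)\times L^2_\mu(\R^3;\R^{3\times 3}_{\text{sym}})$. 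Setting $u_p:=u\lfloor_{O_p}$, $b_p:=b\lfloor_{O_p}$, $z_p:=z\lfloor_{O_p}$, one uses that $O_p$ is open, so by Proposition~\ref{tanprop}(a) the $\mu$- and $\mu_p$-tangent and normal bundles agree $\mu_p$-a.e.\ on $O_p$, while $\norm{\cdot}_{L^2_{\mu_p}}\leqslant\norm{\cdot}_{L^2_\mu}$; hence the restricted triples converge to $(u_p,b_p,z_p)$ in $\overline{\mathfrak{g}_{\mu_p}}$, so $(u_p,b_p)\in D(A_{\mu_p})$. Since $\supp\alpha_p\subset\overline{O_p}$ and $\sum_{p\in I}\alpha_p\equiv1$ on $S$, we get $u=\sum_p\alpha_p u=\sum_p\alpha_p u_p$ and similarly $b=\sum_p\alpha_p b_p$, $\mu$-a.e.

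For $\supseteq$, suppose $(u_p,b_p)\in D(A_{\mu_p})$ with $u=\sum_p\alpha_p u_p$ and $b=\sum_p\alpha_p b_p$. Since the Definition of the partition of unity only pins the $\alpha_p$ down up to the listed properties, I would first take it so that $\sum_p\alpha_p\equiv1$ on an open neighbourhood of $S$ and, crucially, so that each cutoff varies only tangentially along $S$, i.e.\ $\nabla^\perp\alpha_p=0$ $\mu$-a.e.: near the smooth part of $S$ one extends the $\alpha_p$ constantly along the normal fibres, while near a junction set the relevant $\alpha_p$ is automatically $\equiv1$ on a full $\R^3$-neighbourhood because $U_p\cap O_q=\emptyset$ for $q\neq p$. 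For each $p$ pick $\psi_p^n\in\DC(\R^3)$ with $(\psi_p^n,\nabla^\perp\psi_p^n,\nabla^2\psi_p^n)\to(u_p,b_p,z_p)$ in $\overline{\mathfrak{g}_{\mu_p}}$. Because $(u_p,b_p)\in D(A_{\mu_p})$ forces $u_p\lfloor_{S_i}\in H^2(S_i)$ on each component meeting $O_p$ (inclusion~\eqref{prop1} together with the discussion after Definition~\ref{0domain}), the convergence $\nabla^2\psi_p^n\to z_p$ makes $\psi_p^n$ bounded in $H^2(S_i)$, whence $\psi_p^n\to u_p$ also in $H^1_{\mu_p}$ and $\nabla\psi_p^n\to\nabla_{\mu_p}u_p+b_p$ in $L^2_{\mu_p}$. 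Then $\Psi^n:=\sum_p\alpha_p\psi_p^n\in\DC(\R^3)$, and expanding $\nabla^\perp(\alpha_p\psi_p^n)=\alpha_p\nabla^\perp\psi_p^n+\psi_p^n\nabla^\perp\alpha_p$ and $\nabla^2(\alpha_p\psi_p^n)=\alpha_p\nabla^2\psi_p^n+\nabla\alpha_p\otimes\nabla\psi_p^n+\nabla\psi_p^n\otimes\nabla\alpha_p+\psi_p^n\nabla^2\alpha_p$, one reads off, using $\nabla^\perp\alpha_p=0$, that $\Psi^n\to u$ in $L^2_\mu$, $\nabla^\perp\Psi^n\to\sum_p\alpha_p b_p=b$ in $L^2_\mu(\R^3;T^\perp_\mu)$, and $\nabla^2\Psi^n$ converges in $L^2_\mu$ to some $\tilde z$ (each summand converging by the $H^1_{\mu_p}$-convergence just established). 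Therefore $(u,b,\tilde z)\in\overline{\mathfrak{g}_\mu}$, i.e.\ $(u,b)\in D(A_\mu)$; the closedness of $A_\mu$ from Lemma~\ref{comp}(a) is the conceptual fact behind this last step.

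I expect the main obstacle to be the Leibniz bookkeeping for $\nabla^2$, which has no counterpart in the first-order Proposition~\ref{loc1}: multiplying a local piece by a cutoff creates the mixed terms $\nabla\alpha_p\otimes\nabla\psi_p^n$, so one genuinely needs the full gradients $\nabla\psi_p^n$ — not merely $\psi_p^n$ and its $\mu$-normal part — to converge in $L^2_{\mu_p}$, which is why the componentwise $H^2(S_i)$-regularity encoded in $D(A_{\mu_p})$ must be exploited. The second delicate point is that the prescribed Cosserat field $b$ has to be recovered exactly, i.e.\ the terms $\psi_p^n\nabla^\perp\alpha_p$ must sum to zero; this is what dictates the choice of a partition of unity with tangential cutoffs, using the isolation of the junction sets inside the sets $U_p$. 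Once these two preparations are in place, the remainder is a routine transcription of the argument for $H^1_\mu$.
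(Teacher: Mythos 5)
Your proposal is correct and is essentially the paper's own argument: both inclusions rest on the same partition-of-unity localization (restriction/identification on $O_p$ for $\subseteq$, gluing $\Psi^n=\sum_{p\in I}\alpha_p\psi^n_p$ for $\supseteq$), and your Leibniz bookkeeping for $\nabla^{\perp}$ and $\nabla^2$, together with the use of the componentwise $H^2(S_i)$-control (inclusion \eqref{prop1}) to get $L^2_{\mu_p}$-convergence of the full gradients $\nabla\psi^n_p$, simply makes explicit the steps that the paper's terse proof glosses over. Two small remarks: the extra condition $\nabla^{\perp}\alpha_p=0$ $\mu$-a.e. is not among the properties listed in the paper's definition of the partition of unity, but such cutoffs do exist (constant near the junction sets, constant along normal fibres elsewhere) and this tangentiality is exactly what is needed — implicitly also in the paper's proof — to kill the terms $\psi^n_p\nabla^{\perp}\alpha_p$ and recover the prescribed Cosserat field $b=\sum_{p\in I}\alpha_p b_p$ on the nose; and your final step needs only the definition of $\overline{\mathfrak{g}_{\mu}}$ as a closure (you exhibit a smooth sequence whose triple converges), so the closedness statement of Lemma \ref{comp} is not actually invoked.
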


\begin{proof}
		$(\implies)$ Let $(u,b) \in D(A_{\mu}).$ For each $p \in I,$ take $u_p:= \alpha_p u.$ By a definition of the domain $D(A_{\mu})$ there exists a sequence $\psi_n \in C^{\infty}(\R^3),$ such that $(\psi_n,\nabla^{\perp}\psi_n) \to (u,b)$ in the sense of the $L^2_{\mu}\times L^2_{\mu}(\R^3;T^{\perp}_{\mu})$-convergence and $A_{\mu}(\psi_n,\nabla^{\perp}\psi_n) \to A_{\mu}(u,b)$ as $n \to \infty.$

  For any $p \in I$ consider sequence $\alpha_p\psi_n.$

  It is easy to see, that there exist $b_p \in L^2_{\mu_p}(\R^3;T^{\perp}_{\mu_p})$ such that
		$$A_{\mu_p}(\alpha_p \psi_n,\nabla^{\perp}(\alpha_p \psi_n)) \to A_{\mu_p}(\alpha_p u, b_p)$$
		in the norm of $\left(L^2_{\mu}\right)^{3 \times 3},$ when we pass with $n \to \infty.$
		This shows that
		$$(\alpha_p u, b_p) \in D(A_{\mu_p}) \text{ for all } p\in I$$
		and proves the considered implication.
		
		$(\Longleftarrow)$ Assume that $(u,b)$ belongs to the right hand side of equality \eqref{loc2}.
		Let $\psi^n_p \in C^{\infty}_c(\R^3)$ be a sequence justifying membership $(u_p,b_p) \in D(A_{\mu_p}).$ Let take $\alpha_p$ and consider $\alpha_p \psi^n_p$ for $p \in I.$
		The functions $\alpha_p u_p,$ can be extended by zero to the whole set $\supp \mu,$ thus $(\alpha_p u_p,\nabla^{\perp}(\alpha_p u_p))\in D(A_{\mu}).$
		Each term of the sum
		$$\sum_{p\in I}\alpha_p \psi^n_p \in C^{\infty}(\R^3)$$
		converges in the sense of $D(A_{\mu}),$ thus $(u,b)\in D(A_{\mu})$ with $b=\sum_{p \in I}\alpha_p b_p.$
\end{proof}

The case of the second derivative operator equipped with the Neumann boundary condition can be immediately covered too.

\begin{prop}("from local to global" characterisation of $D(A_{\mu})_N$)\label{proppp}
	There holds
	\begin{equation}\label{loc3}
	D(A_{\mu})_N = \left\{(u,b)\in L^2_{\mu}\times L^2_{\mu}(\R^3;T^{\perp}_{\mu}): \forall p\in I\;  \exists (u_p,b_p)\in D(A_{\mu_p})_N,\; u = \sum_{p \in I}\alpha_p u_p,\; b=\sum_{p \in I}\alpha_p b_p \right\}.
	\end{equation}
\end{prop}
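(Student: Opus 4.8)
This is the Neumann‑constrained version of Proposition \ref{propp}, so I would re‑run that proof and verify at each step that the boundary condition of Definition \ref{0domain} is respected. Two preliminary observations make this possible. First, by conditions LDS1 and LDS2 the junction sets $S_i\cap S_j$ are disjoint from $\partial S=\bigcup_k\partial S_k$: a point of $S_i\cap S_j$ lying in $\partial\Omega$ would lie in $\partial S_i\cap\partial S_j=\emptyset$. Hence, among the cutoffs of the partition of unity, only the interior ones $\alpha_p,\ p\in K$, can meet $\partial S$. Second, I would fix the partition of unity from the preceding definition once and for all so that, in addition to its stated properties, each $\alpha_p$ is constant in a neighbourhood of $\partial S$ inside every component $S_i$; in particular $\big(B\nabla_{S_i}\alpha_p\big)|_{\partial S_i}\cdot n|_{S_i}=0$ for all $i$. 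This is a harmless standard refinement (build the partition first on a collar of $\partial S$, extend it constantly across the collar, then extend to the rest of $S$).

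\textbf{The two inclusions.} For "$\subseteq$", take $(u,b)\in D(A_\mu)_N$. By Proposition \ref{propp} there are $(u_p,b_p)\in D(A_{\mu_p})$ with $u=\sum_p\alpha_pu_p$, $b=\sum_p\alpha_pb_p$ and $\alpha_pu_p=\alpha_pu$; it suffices to upgrade each $(u_p,b_p)$ to $D(A_{\mu_p})_N$, i.e. to verify $\alpha_pu\in D(L_{\mu_p})_N$. Since $u\in D(L_\mu)$ we have $u|_{S_i}\in H^2(S_i)$, so the product rule on $S_i$ and linearity of the trace give, a.e. on $\partial S_i$,
\[
\big(B\nabla_{S_i}(\alpha_pu)\big)\cdot n=\alpha_p\,\big(B\nabla_{S_i}u\big)\cdot n+u\,\big(B\nabla_{S_i}\alpha_p\big)\cdot n=0,
\]
the first summand vanishing because $u\in D(L_\mu)_N$ and the second by the chosen property of $\alpha_p$. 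Thus $(u,b)$ lies in the right‑hand side of \eqref{loc3}. For "$\supseteq$", start from $(u,b)$ with local data $(u_p,b_p)\in D(A_{\mu_p})_N$ and $u=\sum_p\alpha_pu_p$, $b=\sum_p\alpha_pb_p$. By Proposition \ref{propp}, $(u,b)\in D(A_\mu)$; it remains to check the boundary condition for $u$. Each $\alpha_pu_p$ extends by zero, with $\alpha_pu_p|_{S_i}\in H^2(S_i)$, and the same identity (with $u_p$ in place of $u$) shows $\big(B\nabla_{S_i}(\alpha_pu_p)\big)\cdot n=0$ a.e. on $\partial S_i$ — trivially when $\overline{O_p}\cap\partial S_i=\emptyset$, and otherwise because $u_p\in D(L_{\mu_p})_N$ together with the property of $\alpha_p$. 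Summing over $p$ and recalling $u=\sum_p\alpha_pu_p$ gives $\big(B\nabla_{S_i}u\big)\cdot n=0$ a.e. on $\partial S_i$, so $u\in D(L_\mu)_N$ and $(u,b)\in D(A_\mu)_N$; the passage through the smooth approximating sequences $\sum_p\alpha_p\psi^n_p$ is exactly as in Proposition \ref{propp}.

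\textbf{Routine ingredients versus the obstacle.} The density and approximation steps, the fact that $\alpha_pu_p$ extends by zero, the convergence of $\sum_p\alpha_p\psi^n_p$ in the $D(A_\mu)$‑sense, and (for the limit bookkeeping) the closedness supplied by Lemma \ref{comp}(b) are all inherited verbatim from Proposition \ref{propp}. The only genuinely new ingredient, and the step I expect to require the most care, is the compatibility of the Neumann condition with the cutoffs: one must ensure the cross term $u\,(B\nabla_{S_i}\alpha_p)\cdot n$ does not destroy the boundary condition when $u$ is localised, which is precisely why the partition of unity has to be chosen flat at $\partial S$. Once this refinement is in place, the argument is a direct transcription of the unconstrained case.
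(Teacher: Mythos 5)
Your argument is essentially the paper's: the paper proves this proposition simply by repeating the proof of Proposition \ref{propp} with each local domain $D(A_{\mu_p})$ replaced by its Neumann counterpart $D(A_{\mu_p})_N$, which is exactly the route you take. Your two preliminary observations — that by LDS1--LDS3 the junction sets avoid $\partial S$, and that the partition of unity can be chosen constant near $\partial S$ so the cross term $u\,(B\nabla_{S_i}\alpha_p)\cdot n$ vanishes — correctly supply the boundary-compatibility detail that the paper leaves implicit.
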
	

\begin{proof}
		The proof follows exactly the same lines as the proof of Proposition \ref{propp}. The needed modification is to change for all $p \in I$ the domains $D(A_{\mu_p})$ to its counterparts equipped with the zero Neumann boundary conditions -- the spaces $D(A_{\mu_p})_N.$
\end{proof}


\chapter{Strong-form parabolic problems}
The content presented in this chapter consists of the main results established in our paper \cite{Cho24}.

This chapter aims to establish the existence and uniqueness of solutions to strong-form parabolic equations posed on lower dimensional structures.

The chapter has the following structure. In {Section \ref{41}} we prove the main technical result of \cite{Cho24}, showing that the second-order equation operator $L_{\mu}$ is a closed operator in the sense of the $L^2_{\mu}$-convergence. The heart of the proof of this result is based on a suitable geometrical extension of functions from the low-dimensional structure to the whole space in which it is embedded. Then in {Section \ref{42}} we proceed to the main result which is proof of the existence of parabolic solutions in the sense of Definition \ref{parabolic2}. To this end, we make use of the results of \cite{Mag89}, where the author study the approach to the Hille-Yosida Theorem based on "forward" iterations of an operator. This allows us to circumvent the use of the resolvent operator and construct the demanded strong solution.

In order to establish the existence of strong-type solutions, we consider a method that does not require additional studies of a higher-order regularity of solutions. The advantage of utilizing the semigroup approach is that we can perform all operations in the space of functions with an assigned Cosserat vector field $b,$ as opposed to methods that rely on studying weak variants of the problem. This is important because obtaining the existence of the Cosserat field $b$ requires additional information about the behaviour near the intersection set, which is typically difficult to control when dealing with weak problems.

The main existential result in the parabolic setting is expressed in the following theorem:
\begin{tw}\label{existence}(Existence and uniqueness of solutions)
	Assume that $L_{\mu}=\Delta_{\mu}.$ For any given initial data $g \in A_T(\Delta_{\mu})_N$ (see, Definition \ref{veryreg}) of the low-dimensional parabolic problem (equation \ref{parabolic}) exists a unique, up to a constant, solution $u:[0,T]\to L^2_{\mu}$ in the sense of \hbox{Definition \ref{parabolic2}.}
\end{tw}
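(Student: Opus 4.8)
Since here $L_{\mu}=\Delta_{\mu}$, the plan is to produce the contraction semigroup $V$ generated by $\Delta_{\mu}$ on the Neumann domain $D(\Delta_{\mu})_N$, set $u(t):=V(t)g$, and verify the three requirements of Definition \ref{parabolic2}. I would obtain $V$ from the Magyar version of the Hille--Yosida theorem (Theorem \ref{magyar}), which requires checking its two hypotheses for $\Delta_{\mu}$; the closedness of $\Delta_{\mu}$ on $D(\Delta_{\mu})_N$ (the Neumann version of Theorem \ref{main}, inherited from Theorem \ref{main} via continuity of traces exactly as in Lemma \ref{comp}(b)) then forces $\overline{\Delta_{\mu}}=\Delta_{\mu}$, so that the semigroup produced by Theorem \ref{magyar} is generated by $\Delta_{\mu}$ itself in the sense required by Definition \ref{parabolic2}.

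\textbf{First hypothesis (equicontinuity), via dissipativity.} For $u\in D(\Delta_{\mu})_N$, using \eqref{prop2} together with the cancellation of the Cosserat terms (cf. Proposition \ref{indep}) one has $\nabla^2_{\mu}u|_{S_i}=P_{S_i}\nabla_{S_i}(\nabla_{S_i}u)$, the tangential Hessian, so $\Delta_{\mu}u|_{S_i}=\Delta_{S_i}(u|_{S_i})$ with $u|_{S_i}\in H^2(S_i)$; integrating by parts on each component and discarding the boundary terms by the zero Neumann condition gives the dissipativity estimate $\langle\Delta_{\mu}u,u\rangle_{L^2_{\mu}}=-\|\nabla_{\mu}u\|^2_{L^2_{\mu}}\le 0$. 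Hence for every $\alpha>0$, $\|(Id-\alpha\Delta_{\mu})u\|^2_{L^2_{\mu}}=\|u\|^2_{L^2_{\mu}}+2\alpha\|\nabla_{\mu}u\|^2_{L^2_{\mu}}+\alpha^2\|\Delta_{\mu}u\|^2_{L^2_{\mu}}\ge\|u\|^2_{L^2_{\mu}}$, and iterating, $\|(Id-\alpha\Delta_{\mu})^{k}u\|_{L^2_{\mu}}\ge\|u\|_{L^2_{\mu}}$ on $D(\Delta_{\mu}^{k})_N$. Fixing any $C>0$ and $\alpha$ with $0<\alpha C<1$ yields $\|(1-\alpha C)^{-k}(Id-\alpha\Delta_{\mu})^{k}u\|_{L^2_{\mu}}\ge\|u\|_{L^2_{\mu}}$, so for the ball $W$ of radius $r$ the ball $U$ of radius $r$ witnesses the first hypothesis.

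\textbf{Second hypothesis (density) --- the main obstacle.} One must show that $\mathcal{E}(\Delta_{\mu})^{T}_N=\bigcap_{s\in[0,T]}A_{s}(\Delta_{\mu})_N$ is dense in $L^2_{\mu}$, and this is where the real work lies. My plan is spectral. On $\mathring{L^2_{\mu}}$ the form $(u,v)\mapsto\langle\nabla_{\mu}u,\nabla_{\mu}v\rangle_{L^2_{\mu}}$ is closed and coercive (standard Poincaré inequality, Lemma \ref{standpoin}, valid since $\mu\in\mathcal{S}$), and its associated self-adjoint operator $-\Delta_{\mu}$ has compact resolvent (componentwise Rellich compactness of $H^1(S_i)\hookrightarrow L^2(S_i)$), so there is an $L^2_{\mu}$-orthonormal basis of eigenfunctions $e_j$ with $-\Delta_{\mu}e_j=\lambda_j e_j$ and $\lambda_j\to\infty$. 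The delicate point is the regularity of the $e_j$: each $e_j$ is a weak solution of the low-dimensional elliptic Neumann problem \eqref{weak} with right-hand side $\lambda_j e_j\in L^2_{\mu}$, hence $e_j\in D(\Delta_{\mu})_N$ (this is the membership of weak elliptic solutions in $D(L_{\mu})$, Theorem \ref{nalezenie}, with $B=Id$, the Neumann condition being inherited from the weak formulation), and since $\Delta_{\mu}^{k}e_j=(-\lambda_j)^{k}e_j\in D(\Delta_{\mu})$ for all $k$, this gives $e_j\in D_N$. For a finite combination $u=\sum_{j\le N}c_je_j$ one gets $\|\Delta_{\mu}^{n}u\|^2_{L^2_{\mu}}=\sum_{j\le N}\lambda_j^{2n}c_j^2\le\lambda_N^{2n}\|u\|^2_{L^2_{\mu}}$, so $\frac{t^{n}}{n!}\|\Delta_{\mu}^{n}u\|_{L^2_{\mu}}\le\frac{(t\lambda_N)^{n}}{n!}\|u\|_{L^2_{\mu}}\le e^{T\lambda_N}\|u\|_{L^2_{\mu}}$ for all $n$ and all $t\in(0,T)$; thus $u\in A_{s}(\Delta_{\mu})_N$ for every $s\le T$, i.e. $u\in\mathcal{E}(\Delta_{\mu})^{T}_N$. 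Finite eigencombinations are dense in $\mathring{L^2_{\mu}}$, and adjoining constants (which lie in $\ker\Delta_{\mu}$ and trivially in $\mathcal{E}(\Delta_{\mu})^{T}_N$) gives density in all of $L^2_{\mu}$.

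\textbf{Assembling the solution and uniqueness.} Theorem \ref{magyar} now yields a continuous locally equicontinuous semigroup $V$ on $L^2_{\mu}$ with generator $\overline{\Delta_{\mu}}=\Delta_{\mu}$, and dissipativity makes $V$ a contraction semigroup. Moreover $\Delta_{\mu}$ on $D(\Delta_{\mu})_N$ is dissipative, closed (Theorem \ref{main}) and extends the m-dissipative self-adjoint operator from the previous step, so the two coincide; hence $V$ is the Neumann heat semigroup and $V(t)\big(L^2_{\mu}\big)\subset D(\Delta_{\mu})_N$ for all $t\ge0$. Setting $u(t):=V(t)g$ for $g\in A_T(\Delta_{\mu})_N$: property (a) of Definition \ref{parabolic2} holds by construction; since $g\in D(\Delta_{\mu})$, the orbit $t\mapsto V(t)g$ is $C^1$ into $L^2_{\mu}$ with $\partial_tV(t)g=\Delta_{\mu}V(t)g$, which by Fubini gives $\partial_tu-\Delta_{\mu}u=0$ $\mu\times l^1$-a.e. on $E\times[0,T]$, i.e. (b); and $V(t)g\in D(\Delta_{\mu})_N$ gives the zero Neumann condition (c). For uniqueness, if $u,v$ are two solutions then $w:=u-v$ satisfies $\partial_tw=\Delta_{\mu}w$ with zero Neumann data, so $\frac{d}{dt}\|w(t)\|^2_{L^2_{\mu}}=2\langle\Delta_{\mu}w,w\rangle_{L^2_{\mu}}=-2\|\nabla_{\mu}w\|^2_{L^2_{\mu}}\le0$; thus $w(0)=0$ forces $w\equiv0$, while $w(0)=c\in\ker\Delta_{\mu}$ (a constant) forces $w(t)=V(t)c=c$, which is precisely uniqueness up to a constant. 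I expect the density of $\mathcal{E}(\Delta_{\mu})^{T}_N$ to be the hard part, as it rests on the nontrivial low-dimensional elliptic regularity and on the closedness of $\Delta_{\mu}$.
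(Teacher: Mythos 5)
Your skeleton coincides with the paper's: invoke Magyar's Theorem \ref{magyar}, feed it the closedness of $\Delta_{\mu}$ on $D(\Delta_{\mu})_N$ (Theorem \ref{main}), and read off the solution as $u(t)=V(t)g$ for $g\in A_T(\Delta_{\mu})_N$. Where you genuinely diverge is in how the two hypotheses of Theorem \ref{magyar} are checked. For the first hypothesis the paper argues by contradiction, using continuity of the classical resolvent componentwise (Proposition \ref{drei}), while you use a dissipativity estimate; your route is cleaner in spirit, but note that iterating $\norm{(Id-\alpha\Delta_{\mu})^k u}_{L^2_{\mu}}\geqslant \norm{u}_{L^2_{\mu}}$ needs the intermediate functions $(Id-\alpha\Delta_{\mu})^j u$ to admit the componentwise integration by parts without boundary terms, and Definition \ref{veryreg} only guarantees $\Delta_{\mu}^{j}u\in D(\Delta_{\mu})$, not $D(\Delta_{\mu})_N$, so the Neumann condition you discard is not automatic for the iterates. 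For the density hypothesis the paper does not use spectral theory at all: it builds explicit families (sine-plus-linear functions in the one-dimensional case, Neumann--Bessel eigenfunctions of a single disc in the two-dimensional case) whose point is that they and all their iterated Laplacians vanish on the junction $\Sigma$, so that extension by zero to the other component stays in $A_s(\Delta_{\mu})_N$, and then runs a Fourier-type completeness argument (Proposition \ref{rownasie}).

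The genuine gap is in your spectral proof of the density hypothesis, which you yourself flag as the hard part. You take eigenfunctions $e_j$ of the self-adjoint operator associated with the form on $\mathring{H}^1_{\mu}$ and claim $e_j\in D(\Delta_{\mu})_N$ "by Theorem \ref{nalezenie}, the Neumann condition being inherited from the weak formulation". Theorem \ref{nalezenie} does not deliver this: it produces a Cosserat field only for a compactly embedded substructure $\overline{S}\subset\subset \intel S$, i.e. it gives $(u,b)\in D(A_{\overline{\mu}})$, and the surrounding section explicitly renounces up-to-the-boundary regularity. So you obtain at best interior membership, not $e_j\in D(\Delta_{\mu})_N$, and a fortiori not the iterated memberships $e_j\in D_N$ with Neumann conditions that $A_s(\Delta_{\mu})_N$ requires; the identity $\Delta_{\mu}e_j=-\lambda_j e_j$ in the strong low-dimensional sense is likewise only known away from $\partial S$. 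The same problem undermines the later assembling step: your claim that $\Delta_{\mu}$ on $D(\Delta_{\mu})_N$ "coincides" with the m-dissipative form operator, hence that $V$ is the Neumann heat semigroup with $V(t)(L^2_{\mu})\subset D(\Delta_{\mu})_N$, would make $Id-\alpha\Delta_{\mu}$ surjective, which is exactly what the paper asserts fails and is the very reason Magyar's non-resolvent variant is used; the inclusion you actually have goes the wrong way ($\Delta_{\mu}|_{D(\Delta_{\mu})_N}$ is a restriction of the form operator), and a dissipative restriction of an m-dissipative operator need not equal it. The paper avoids both issues by constructing the dense family by hand (so that membership in $A_s(\Delta_{\mu})_N$ is verified directly from the zero-extension structure at the junction) and by proving separately, from Magyar's construction together with Propositions \ref{ein} and \ref{zwei} and Theorem \ref{main}, that $V_s(t)u\in D(\Delta_{\mu})_N$ for $u\in A_s(\Delta_{\mu})_N$ — which is all that is needed for initial data in $A_T(\Delta_{\mu})_N$. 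To repair your argument you would need either boundary-inclusive regularity for weak solutions (not available in the thesis) or a replacement of the eigenfunction step by junction-adapted families as in Proposition \ref{rownasie}.
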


Throughout Chapter 4 we assume that $\mu \in \mathcal{S}.$ Further, we will discuss in a more detailed way the need to pose such demands. Let us only mention that it will be crucial for us to adapt some results discussed in \cite{Bou02}, where the authors pose this kind of assumption on the considered class of measures.

\section{Closedness of the operator $L_{\mu}$}\label{41}

To prove this result, we will adapt the semigroup theory, and a conclusion will be derived by applying a certain variant of the Hille-Yosida Theorem, treating semigroups generated by differential operators. Before we proceed to the proof of Theorem \ref{existence}, we need to prove \hbox{Theorem \ref{main},} which is the main technical result of paper \cite{Cho24}.

\begin{tw}\label{main}
	Let $\mu \in \mathcal{S}$ be a low-dimensional structure. The operator \hbox{$L_{\mu}:D(L_{\mu})_N \to L^2_{\mu}$} is closed in the $L^2_{\mu}$-convergence.
\end{tw}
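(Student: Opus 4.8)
The plan is to reduce the global statement to a finite collection of "generic" local problems by means of the partition-of-unity characterisation of $D(A_\mu)_N$ from Proposition \ref{proppp}, and then to establish closedness in each local model, where the geometry is simple (a single junction $S_i\cap S_j$ with $\dim S_i,\dim S_j\in\{1,2\}$). Concretely, suppose $u_n\in D(L_\mu)_N$, $u_n\xrightarrow{L^2_\mu}u$ and $L_\mu u_n\xrightarrow{L^2_\mu}h$. Since $D(L_\mu)=D(\nabla^2_\mu)$, each $u_n$ admits a Cosserat field $b_n$ with $(u_n,b_n)\in D(A_\mu)_N$. Multiplying by the partition functions $\alpha_p$ and using Proposition \ref{proppp}, it suffices to work with $\alpha_p u_n$ supported near a single junction; so from now on assume $\supp\mu = E_i\cup E_j$ with a single connected intersection $\Sigma = E_i\cap E_j$. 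The ellipticity of $B$ together with the definition of $L_\mu$ (Definition \ref{secondop}) and the block representation $A_\mu(u,b)P_\mu = \nabla_\mu(\nabla_\mu u + b)$ from \eqref{prop2} will give, via the Neumann boundary condition and the componentwise Poincaré inequality \eqref{standpoin}, a uniform bound on $\|u_n\|_{H^2(E_k)}$ for $k=i,j$; hence $u\in H^2(E_k)$ componentwise and $\nabla_{E_k}u_n\to\nabla_{E_k}u$ in $L^2$.

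The crux is to produce, for the limit $u$, a Cosserat field $b$ with $(u,b)\in D(A_\mu)_N$, i.e. to show $u\in D(\nabla^2_\mu)$; the sequence $b_n$ carries too little information to pass to a limit directly. Here I would invoke the geometric-extension technique advertised in the introduction: for each $n$, replace $b_n$ by a modified normal field $\widetilde b_n$ built from an explicit Whitney-type extension of $u_n$ off $S$ (informally $\widetilde{u_n}(x,y,z)\approx u_n(0,y,z)-(\tr^\Sigma u_n)(0,y,0)+u_n(x,y,0)$), chosen so that $\widetilde b_n$ depends, in a controlled quantitative way, only on $u_n$ and its componentwise first and second derivatives and on $\tr^\Sigma u_n$. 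Using the trace regularity $\tr^\Sigma u_n\in H^2(\Sigma)$ (which follows from the uniform $H^2(E_k)$ bounds plus continuity of the trace) together with Proposition \ref{slady} to identify the traces coming from the two components, one gets $\widetilde b_n\to\widetilde b$ in $L^2_\mu(\R^3;T^\perp_\mu)$ with $\nabla_\mu(\nabla_\mu u_n+\widetilde b_n)$ converging as well. Since $\widetilde b_n$ and $b_n$ differ by a field in the "right" space and $A_\mu(u,b)$ is independent of the choice of Cosserat field (Proposition \ref{indep}), replacing $b_n$ by $\widetilde b_n$ does not change $L_\mu u_n$.

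With $(u_n,\widetilde b_n)\in D(A_\mu)_N$, $u_n\to u$, $\widetilde b_n\to\widetilde b$ and $A_\mu(u_n,\widetilde b_n)\to$ (the reconstructed limit matrix) all in $L^2_\mu$, the closedness of $A_\mu$ on $D(A_\mu)_N$ (Lemma \ref{comp}(b)) yields $(u,\widetilde b)\in D(A_\mu)_N$, hence $u\in D(\nabla^2_\mu)=D(L_\mu)$ with $u\in D(L_\mu)_N$ (the Neumann condition survives by continuity of the trace of $B\nabla_{E_k}u_n$, exactly as in the proof of Lemma \ref{comp}(b)). Finally, identifying the limit of $L_\mu u_n$: by definition $L_\mu u_n = \sum_{ij}b_{ij}(\nabla^2_\mu u_n)_{ij} + \sum_i(\nabla_\mu u_n)_i\diverg_\mu(b_{i\cdot})$, and each term converges in $L^2_\mu$ because $\nabla^2_\mu u_n = P_\mu A_\mu(u_n,\widetilde b_n)P_\mu - T_C\widetilde b_n\to P_\mu A_\mu(u,\widetilde b)P_\mu - T_C\widetilde b = \nabla^2_\mu u$ (continuity of $P_\mu$, $Q_\mu$ and the bounded operator $T_C$) and $\nabla_\mu u_n\to\nabla_\mu u$; hence $h = L_\mu u$. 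Reassembling the pieces $\alpha_p$ via Proposition \ref{proppp} finishes the proof.

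I expect the genuinely hard step to be the construction and control of the modified Cosserat fields $\widetilde b_n$: one must design the off-structure extension so that it is simultaneously (i) explicit enough that its normal second-order data is governed purely by componentwise and trace quantities of $u_n$, (ii) compatible at $\Sigma$ so that no spurious jump is introduced across the junction (this is where transversality, $\dim S_i\in\{1,2\}$, the absence of triple junctions, and the single-chart assumption LDS are used), and (iii) such that $(u_n,\widetilde b_n)$ genuinely lands in $D(A_\mu)_N$ and not merely in the larger space $\{\nabla_\mu u + b\in H^1_\mu\}$ of \eqref{prop1}. Verifying well-posedness of this Whitney-type extension and the convergence $\widetilde b_n\to\widetilde b$ is the technical heart; everything else is soft functional analysis (Lemma \ref{comp}, Proposition \ref{indep}) and the localisation machinery of Section \ref{33}.
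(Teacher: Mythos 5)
Your overall architecture --- localise with the partition of unity of Proposition \ref{proppp}, get componentwise control from elliptic regularity with Neumann data, replace the uncontrollable Cosserat fields $b_n$ by modified fields $\widetilde b_n$ governed by $u_n$ alone, conclude with the closedness of $A_\mu$ (Lemma \ref{comp}) and reassemble --- is exactly the paper's. The genuine gap sits precisely at the step you yourself flag as the heart: you never construct $\widetilde b_n$, and the candidate you sketch would not go through as described. You propose reading the normal data off a Whitney/trace-type extension of $u_n$ (the formula $u_n(0,y,z)-(\tr^\Sigma u_n)(0,y,0)+u_n(x,y,0)$, which is the device of the elliptic-regularity chapter) and controlling it via ``$\tr^\Sigma u_n\in H^2(\Sigma)$, which follows from the uniform $H^2(E_k)$ bounds plus continuity of the trace.'' That implication is false: the restriction of an $H^2(E_k)$ function to the interior curve $\Sigma$ lies only in $H^{3/2}(\Sigma)$, and here --- unlike in Chapter 6 --- the $u_n$ are arbitrary elements of $D(L_\mu)_N$, not solutions of an elliptic equation, so there is no equation to bootstrap tangential regularity of the trace by difference quotients. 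Moreover the normal field produced this way is constant in the direction transversal to $\Sigma$ inside each component, so its $H^1$-type convergence would hinge exactly on the trace regularity you have not established; and uniform $H^2(E_k)$ bounds alone are in any case insufficient, since Lemma \ref{comp} requires \emph{strong} $L^2_\mu$-convergence of $A_\mu(u_n,\widetilde b_n)$, i.e.\ a Cauchy property, not mere boundedness.

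The paper's device is more elementary and avoids traces of derivatives altogether. It works with the smooth approximants $u^m_n$ of each $u_n$, chosen with quantified rates \eqref{esty2}, and defines $\widetilde{b^m_n}$ on $E_i$ by copying the tangential derivative of $u^m_n\lfloor_{E_j}$ in the direction transversal to the junction (swapping the coordinates across $E_i\cap E_j$), and symmetrically on $E_j$. Because $u^m_n$ is smooth, the pair $(u^m_n,\widetilde{b^m_n})$ satisfies the hypotheses of the characterisation \cite[Prop. 3.11]{Bou02} (componentwise regularity and continuity of $(u^m_n,\nabla_\mu u^m_n+\widetilde{b^m_n})$ at the junction) with no extra trace analysis, and the strong convergence of $\nabla_\mu u^n_n+\widetilde{b^n_n}$ in $(H^1_\mu)^3$ along the diagonal subsequence is controlled purely by the componentwise elliptic estimate \eqref{esty} applied to differences $u_{n+1}-u_n$ together with \eqref{esty2}; Remark \ref{blormk} then converts this into convergence of $A_\mu(u^n_n,\widetilde{b^n_n})$. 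The one-dimensional junction is handled separately by an explicit characterisation of $D(A_\mu)$ (using Proposition \ref{slady}) and parallel transport of the junction value of the other component's derivative. Your concluding steps (independence of the Cosserat choice, preservation of the Neumann condition through convergence of normal traces, reassembly via Proposition \ref{proppp}) agree with the paper, so the plan is right; but the missing construction is the actual content of the theorem, and the substitute you suggest rests on an unjustified trace claim.
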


It turns out that the proof of this property in the case of the operator $L_{\mu}$ is more demanding than showing the closedness of $A_{\mu}.$ This phenomenon is related to the fact that a convergence of functions in the sense of $D(L_{\mu})$ space does not provide control of Cosserat vector fields.

\begin{proof}
Let $\{u_n\}_{n \in \mathbb{N}} \subset D(L_{\mu})_N$ be a sequence such that $u_n \xrightarrow{L^2_{\mu}} u \in L^2_{\mu}$ and $L_{\mu}u_n \xrightarrow{L^2_{\mu}} B \in L^2_{\mu}.$ We need to show that $u \in D(L_{\mu})_N$ and $B = L_{\mu}u.$ A proposed strategy of the proof is based on the fact that the operator $A_{\mu}$ is closed (see Lemma \ref{comp}).

For the sequence $\{u_n\}_{n \in \mathbb{N}} \subset D(L_{\mu})_N$ we modify the corresponding Cosserat sequence $\{b_n\}_{n \in \mathbb{N}}$ and construct a new sequence of normal vector fields $\{\widetilde{b}_n\}_{n \in \mathbb{N}}$ convergent in the $L^2_{\mu}$-norm and for which the result of Lemma \ref{comp} is valid.

As $D(L_{\mu})_N \subset D(L_{\mu})$ for any element $u_n \in D(L_{\mu})_N$ exists a sequence $\{u^m_n\}_{m \in \mathbb{N}} \subset C^{\infty}_c(\mathbb{R}^3)$ such that $u^m_n \xrightarrow{L^2_{\mu}} u_n,\; \nabla^{\perp}u^m_n \xrightarrow{L^2_{\mu}} b_n$ and $A(u^m_n, \nabla^{\perp}u^m_n) \xrightarrow{L^2_{\mu}} A(u_n, b_n).$ Existence of such sequences is ensured by the definition of the domain $D(A_{\mu})$ (Definition \ref{dommu}).

After extracting, if it is necessary, a subsequence from the sequence $\{u^m_n\}_{m \in \mathbb{N}}$ (we omit to relabel of the chosen subsequence) we may assume that
\begin{equation}\label{esty2}
\norm{u^m_n-u_n}_{L^2_{\mu}}\leqslant \frac{1}{m^2},\;\;\;\; \norm{\nabla^{\perp} u^m_n-b_n}_{L^2_{\mu}}\leqslant \frac{1}{m^2},\;\;\;\; \norm{A_{\mu}(u^m_n,\nabla^{\perp} u^m_n)-A_{\mu}(u_n,b_n)}_{L^2_{\mu}}\leqslant \frac{1}{m^2}.
\end{equation}
By the weaker Poincar{\'e} inequality \eqref{weakPoincar\'e} or alternatively by using the classical Poincar{\'e} inequality on each component manifold separately we notice that $\norm{\nabla_{\mu} u^m_n - \nabla_{\mu} u_n}_{L^2_{\mu}} \leqslant \frac{1}{m^2}$ possibly after passing to a subsequence once again.

Before we start the process of modifying the Cosserat vectors $b_n,$ we will show that without any loss on generality, the domain can be "straightened out" locally.

From this moment until the end of the proof, we assume that the low-dimensional structure $S$ consists of exactly two component manifolds with a non-trivial intersection.
At the end of the proof, we will evoke Proposition \ref{proppp} to conclude the global instance.

We consider two fixed manifolds $E_i,E_j$ such that $E_i \cap E_j \neq \emptyset$ and treat separately each of the following instances: $\dim E_i=\dim E_j=2,$ $\dim E_i=\dim E_j=1,$ and $\dim E_i=2,\;\dim E_j=1.$

Let $\mu \in \mathcal{S},\;$ $\supp \mu = E_i \cup E_j,\;$ $E_i\cap E_j \neq \emptyset,\;$ $\dim E_i = \dim E_j = 2.$ Let us assume that each component $E_k,\; k=i,j$ can be parametrized by a single parametrization -- existence of a partition of unity similar to the one presented in Proposition \ref{proppp} justifies, that such restriction do not narrow the class of considered structures.

Let $\Psi_i$ be a parameterization of $E_i$ (up to the boundary) and $\Psi_i(B^1)=E_i,$ where $B^1$ is the 2-dimensional unit ball in variables $(x,y)$ with the center at zero.  We define a diffeomorphism
$\Theta_i$ by the formula $\Theta_i(x,y,z):= (0,0,z)+\Psi_i(x,z).$ By a construction of $\Theta_i$ we have \hbox{$\Theta_i^{-1}(E_i \cup E_j) = B^1 \cup F_j,$} where $F_j$ is some 2-dimensional manifold and $B^1 \cap F_j$ is a smooth curve. Let $\Psi_j$ be a parametrisation of $F_j.$ Existence of such mappings is provided by assumptions posed on component manifolds in the definition of the class of low-dimensional structures $\mathcal{S}.$

Analogously to what was made in the case of the component $E_i$ we define a diffeomorphism $\Theta_j$ by the formula $\Theta_j(x,y,z):=(0,y,0) + \Psi_j(x,y).$ Now $\Theta_j^{-1}(B^1\cup F_j) = P^1 \cup P^2,$ where $P^1,P^2$ are "flat" 2-dimensional manifolds with $P^1 \cap P^2 \subset \left\{(x,y,z)\in \R^3: y=z=0\right\}.$ Let us denote $I:= \Theta_i\circ \Theta_j,$ thus clearly $I^{-1}(E_i \cup E_j) = P^1 \cup P^2.$ Having the "flat" structure $P^1 \cup P^2$ we can easily find a smooth diffeomorphism mapping both components $P^1$ and $P^2$ to $2$-dimensional discs.

If $\mu \in \mathcal{S}$ is the given low-dimensional structure and $\nu := \mathcal{H}^2\lfloor_{P^1}+ \mathcal{H}^2\lfloor_{P^2},$ then obviously $\nu \in \mathcal{S}$ and the $D(A_{\mu})$-convergence is equivalent to the convergence in the sense of $D(A_{\nu}).$ To see this, let us notice that the pushback of the measure $\mu$ by the diffeomorphism $I^{-1}$ is a measure absolutely continuous with respect to $\nu,$ and its $\nu$-related density will be bounded, smooth and separated from zero. {Recalling the Remark \ref{ogmiary}, after a simple} computation shows that convergence in the sense of $H^1_{\nu}$ is equivalent to $H^1_{{\mu}}$-convergence, $D(A_{\nu})$-convergence is equivalent to $D(A_{{\mu}})$-convergence. This also implies that convergence in the sense of $D(L_{\nu})$ is equivalent to convergence in the sense of $D(L_{{\mu}}).$ An important consequence of this observation is that in further studies it is enough to focus our attention on the case of the measure $\nu$ being a sum of Hausdorff measures representing each "flattened" component manifold.

$\boldsymbol{\dim E_i=\dim E_j =2}$

We start by examining a case of $\dim E_i=\dim E_j=2.$ Further considerations can be conducted in the coordinate system related to components $E_i,\;E_j.$ This means that without loss of generality we can take $$E_i = \left\{(x,y,0)\in \R^3: x^2+y^2\leqslant 1,\; x,y\in [-1,1]\right\},$$ $$E_j = \left\{(x,0,z)\in \R^3: x^2+z^2\leqslant 1,\; x,z\in [-1,1] \right\}$$ and
$$\mu = \mathcal{H}^2\lfloor_{E_i} + \mathcal{H}^2\lfloor_{E_j}.$$

As the intersection $E_1\cap E_2$ is the set $\left\{(x,0,0)\in \R^3: x\in [-1,1]\right\}$ we know that for any function $w \in D(L_{\mu})$ a first coordinate of the gradient vector $\nabla_{\mu} w = (w_{x,\mu}, w_{y,\mu}, w_{z,\mu})$ belongs to $H^1_{\mu},$ that is $w_{x,\mu}\in H^1_{\mu}.$ The Cosserat vector field $b$ is needed to prove the membership $(0,w_{y,\mu}, w_{z,\mu}) + b \in (H^1_{\mu})^3.$

For $n \in \mathbb{N},$ define
$$h_{n,\mu}=\left(0,{h_{n1}}_{\mu},{h_{n2}}_{\mu}\right) := \left(0,{u_{n+1}}_{y,\mu},{u_{n+1}}_{z,\mu}\right) - \left(0,{u_n}_{y,\mu},{u_n}_{z,\mu}\right) \in \left(H^1(E_k)\right)^3,\; k=i,j.$$

By a fact that $L_{\mu}u_n,\; n \in \mathbb{N}$ is a Cauchy sequence in $L^2_{\mu}$ we are going to show that, after extracting a suitable subsequence, we have for all $n \in \mathbb{N}$
$$\norm{h_{n,\mu}}_{H^1(E_k)}\leqslant \frac{1}{n^2},\; k\in \{i,j\}.$$

The operator $L_{\mu}$ can be decomposed into two classical operators acting on each component $E_k,\; k=i,j$ separately. This observation allows us to apply to restricted operators classical estimates of the elliptic regularity theory (with Neumann boundary conditions). For the detailed formulation of the recalled facts, see the appendix in \cite{Las21}, or \cite[Ch. 6.3.2]{Eva15} for the Dirichlet boundary condition version.

On each $E_k$ this gives the estimate
\begin{equation}\label{esty}
\norm{h_{n,\mu}}_{H^1(E_k)}\leqslant C\left(\norm{L_{\mu}u_{n+1} - L_{\mu}u_n}_{L^2(E_k)} + \norm{u_{n+1} - u_n}_{L^2(E_k)}\right).
\end{equation}

By the $L^2_{\mu}$-convergence of the right-hand side, after passing to a subsequence if necessary, we obtain that for $k=i,j\;$ $\norm{h_{n,\mu}}_{H^1(E_k)} \leqslant \frac{1}{n^2}.$

Further, for each $n,m \in \mathbb{N}$ the function $h^m_{n,\mu}$ is introduced as $$h^m_{n,\mu}=\left(0,{h^m_{n1}}_{\mu},{h^m_{n2}}_{\mu}\right) := \left(0,{u^{m+1}_{n+1}}_{y,\mu},{u^{m+1}_{n+1}}_{z,\mu}\right) - \left(0,{u^m_n}_{y,\mu},{u^m_n}_{z,\mu}\right) \in \left(H^1(E_k)\right)^3,\; k=i,j.$$

Applying the triangle inequality, estimate \eqref{esty} for the $H^1(E_k)$-norm of $h_{n,\mu},$ estimates \eqref{esty2} for the speed of convergence of sequences of smooth functions $A_{\mu}(u^m_n,\nabla^{\perp}u^m_n)$ and $u^m_n,$ combined with the Poincar{\'e} inequality on each component manifold $E_k,$ the following estimate is obtained (for $k \in \{i,j\}$)
\begin{equation}
\begin{aligned}\label{hestimate}
\norm{h^m_{n,\mu}}_{H^1(E_k)}\leqslant \norm{u^{m+1}_{n+1} - u^m_{n+1}}_{H^2(E_k)} +\norm{u^m_{n+1} - u_{n+1}}_{H^2(E_k)} \\+ \norm{h_{n,\mu}}_{H^1(E_k)} + \norm{u^m_{n} - u_{n}}_{H^2(E_k)} \leqslant \frac{1}{n^2} +\frac{3}{m^2}.
\end{aligned}
\end{equation}

For each smooth function $u^m_n \in C^{\infty}_c(\R^3)$ we know that $(u^m_n, \nabla^{\perp}u^m_n) \in D(A_{\mu}).$ To ensure the good behaviour of the Cosserat sequence, our next goal is to modify the sequence  \hbox{$(u^m_n,b^m_n)\in D(A_{\mu}),$} where $b^m_n = \nabla^{\perp}u^m_n.$ Precisely speaking, for each $u^m_n,$ we construct a new normal vector field $\widetilde{b^m_n},$ for which $(u^m_n, \widetilde{b^m_n})$ is also a member of $D(A_{\mu})$ and a sequence of diagonal elements of the presented below infinite lower triangular matrix\\
\begin{center}
$\begin{bmatrix}
\nabla_{\mu} u^1_1 + \widetilde{b^1_1} &  &  &  \\
\nabla_{\mu} u^2_1 + \widetilde{b^2_1} & \nabla_{\mu} u^2_2 + \widetilde{b^2_2} &  &  \\
\nabla_{\mu} u^3_1 + \widetilde{b^3_1} & \nabla_{\mu} u^3_2 + \widetilde{b^3_2} & \nabla_{\mu} u^3_3 + \widetilde{b^3_3} &\\
\vdots & \vdots & \vdots & \ddots
\end{bmatrix}.$
\end{center}
converges in the space $(H^1_{\mu})^3.$

We are going to show that the diagonal sequence $\left((u^n_n,\widetilde{b^n_n})\right)_{n \in \mathbb{N}}$ satisfies the next properties: $(u^n_n,\widetilde{b^n_n})\in D(A_{\mu}),\;$ $u^n_n \xrightarrow{L^2_{\mu}} u,\;$ $\widetilde{b^n_n} \xrightarrow{L^2_{\mu}} \widetilde{b}$ for some $u, \widetilde{b} \in L^2_{\mu},$ and besides that \hbox{$\nabla_{\mu}\left(\nabla_{\mu} u^n_n+\widetilde{b^n_n}\right) \xrightarrow{L^2_{\mu}} \nabla_{\mu}\left(\nabla_{\mu} u+\widetilde{b}\right) \in L^2_{\mu}.$}

Let a restriction of $w \in L^2_{\mu}$ to the component manifold $E_k,\; k\in \{i,j\}$ be denoted by $w^k,$ that is $w^k:= w\lfloor_{E_k}.$

For the function $u^m_n$ the new Cosserat vector field $\widetilde{b^m_n}\in L^2_{\mu}(\R^3;T^{\perp}_{\mu})$ is defined as
$$\widetilde{b^m_n}(x,y,z) := \begin{cases}
\left(0,\; 0,\; {u^m_{n\; z}}^j(x,y)\right) & \text{ on }\; E_i\\
\left(0,\; {u^m_{n\; y}}^i(x,z),\; 0\right) & \text{ on }\; E_j
\end{cases}
.$$

An idea standing behind the sequence $\widetilde{b^m_n}$ is to 'copy' the function ${u^m_{n\; y}}^i$ from the component manifold $E_i$ (in variables $(x,y)$) to the component manifold $E_j$ (in variables $(x,z)$) and analogously to 'copy' ${u^m_{n\; z}}^j$ from $E_j$ (variables $(x,z)$) to $E_i$ (variables $(x,y)$).

To ensure that the constructed pairs belong to the domain $D(A_{\mu})$ we use the fact that in the case of $\mu \in \mathcal{S}$ there exists a characterisation of membership in the domain $D(A_{\mu})$ stated in \cite[Prop. 3.11]{Bou02}. The mentioned proposition is based on the idea of constructing a smooth approximating sequence of the class $C^{\infty}(\R^3)$ by extending a sequence initially defined on $\supp \mu$ to the whole space $\R^3$ by the help of the Whitney Extension Theorem.

To conclude that each pair ${(u^m_n,\widetilde{b^m_n}) \in H^1_{\mu} \times L^2_{\mu}(\R^3;T^{\perp}_{\mu})}$ is an element of the domain $D(A_{\mu})$ we need to verify if the following three conditions are satisfied by
\begin{itemize}
\item[a)] For $k \in \{i,j\},$ we need to check if ${u^m_n}^k \in H^2(E_k)$ and ${b^m_n}^k \in \left(H^1(E_k)\right)^3.$ It turns out to be trivial due to the fact that $u^m_n$ is a smooth function in $\R^3$ and due to the way in which we constructed the Cosserat field $\widetilde{b^m_n}.$
\item[b)] We can use the same arguments as before to verify that the junction set $E_i \cap E_j$ belongs to continuity points of $(u^m_n, \nabla_{\mu}u^m_n + b^m_n).$ This is the second assumption, which needs to be satisfied.
\item[c)] The third condition demands that we check if on each component $E_k$ the corresponding restriction ${u^m_n}^k$ is of class $C^2(E_k),$ the normal vector field ${\widetilde{b^m_n}}^k$ is Lipschitz continuous on $E_k$ and $({u^m_n}^k, \nabla_{E_k}{u^m_n}^k + {\widetilde{b^m_n}}^k)\lfloor_{E_i\cap E_j} = (u^m_n, \nabla_{\mu}u^m_n + \widetilde{b^m_n})\lfloor_{E_i \cap E_j}.$
\end{itemize}

All of the above demands are satisfied immediately because of the smoothness of $u^m_n$ and the choice of the vector field $\widetilde{b^m_n}.$
As the listed conditions are satisfied, by application of the mentioned characterisation, we have $(u^m_n,\widetilde{b^m_n})\in D(A_{\mu}).$

Now we consider the diagonal sequence $$(u^n_n,\widetilde{b^n_n})_{n \in \mathbb{N}}.$$

By estimate \eqref{hestimate}, the construction of the Cosserat field $\widetilde{b^n_n}$ and a completeness of the space $H^1_{\mu}$ it follows that
$$\lim_{n \to \infty} \left( \nabla_{\mu}u^n_n +\widetilde{b^n_n} \right) \in \left(H^1_{\mu}\right)^3.$$

By Remark \ref{blormk} this means that
$$A_{\mu}(u^n_n,\widetilde{b^n_n}) \xrightarrow{\left(L^2_{\mu}\right)^{3\times 3}} D \in \left(L^2_{\mu}\right)^{3\times 3}.$$

We know that $u^n_n \xrightarrow{L^2_{\mu}} u \in L^2_{\mu}.$ Moreover, by the way it was constructed, $\widetilde{b^n_n}$ converges in $L^2_{\mu}(\R^3;T^{\perp}_{\mu})$ to some element, from now on called as $\widetilde{b},$
 that is  $\widetilde{b^n_n} \xrightarrow{L^2_{\mu}(\R^3;T^{\perp}_{\mu})} \widetilde{b}.$

A compactness of the operator $A_{\mu}$ (Theorem \ref{main}) implies that
$$A_{\mu}(u^n_n,\widetilde{b^n_n}) \xrightarrow{\left(L^2_{\mu}\right)^{3 \times 3}} A_{\mu}(u,\widetilde{b})$$
and
$$(u,\widetilde{b}) \in D(A_{\mu}).$$

Finally, under the assumption $L_{\mu}u^n_n \xrightarrow{L^2_{\mu}} B,$ we obtain that
$$L_{\mu}u^n_n \xrightarrow{L^2_{\mu}} L_{\mu}u,$$
thus
$$L_{\mu}u_n \xrightarrow{L^2_{\mu}} L_{\mu}u$$
and $u \in D(L_{\mu}).$\\

On each component manifold $E_k,\; k=1,2$ of the considered structure, we have shown the convergence ${\norm{u^n_n - u_n}_{H^2(E_k)} \to 0,}$ as $n \to \infty.$ This implies that on each boundary $\partial E_k$ we have a convergence of normal traces in $L^2(\partial E_k).$ In this way we conclude that both
$$(u,\widetilde{b}) \in D(A_{\mu})_N\; \text{ and }\; u \in D(L_{\mu})_N.$$

This proves that the operator $L_{\mu}$ is closed in the case of two component manifolds of $\dim E_i = \dim E_j =2.$

$\boldsymbol{\dim E_i=\dim E_j =1}$

Finally, we study the remaining case of $\dim E_i=\dim E_j =1.$

{Let
$$E_i = \left\{(x,0,0)\in \R^3:\; x,y\in [-1,1]\right\},$$ $$E_j = \left\{(0,y,0)\in \R^3:\; x,z\in [-1,1] \right\}$$ and
$$\mu = \mathcal{H}^1\lfloor_{E_i} + \mathcal{H}^1\lfloor_{E_j}.$$}

It is possible to follow analogously as in the two-dimensional case considered above, but the simplicity of a one-dimensional junction gives a chance to formulate an explicit definition of the domain $D(A_{\mu})$ (see \cite[Ex. 5.2]{Bou02}).

{Let $P=E_i\cup E_j,$ 
$$\tau = \begin{cases}
    (1,0,0) & \text{on $E_i$},\\
    (0,1,0) & \text{on $E_j$}
\end{cases}$$ be a tangent to $P$ unit vector field and 
$$\nu = \begin{cases}
    (0,1,0) & \text{on $E_i$},\\
    (1,0,0) & \text{on $E_j$}
\end{cases}$$ be a normal to $P$ outer unit vector field.}

{We show that in the studied case, the following characterisation is true:
$$D(A_{\mu}) = \{(u,b)\in H^1_{\mu}\times L^2_{\mu}(\R^3;T^{\perp}_{\mu}):u\lfloor_{E_k}\in H^2(E_k), b\lfloor_{E_k}\in H^1(E_k), u'\tau + b\nu\in C(P) \, \text{for $ k=\{i,j\}$} \}.$$
If the pair $(u,b)$ belongs to the right hand side of the above equality, then the conditions of \cite[Prop. 3.11]{Bou02}, recalled in the proof of two-dimensional case, are immediately valid. This implies that $(u,b)\in D(A_{\mu}).$ To prove the opposite implication, let us assume that $(u,b)\in D(A_{\mu}).$ By inclusion \eqref{prop1} we obtain that for $k=\{i,j\}$ we have $u\lvert_{E_k}\in H^2(E_k)$, and $ b\lvert_{E_k}\in H^1(E_k).$ Applying Proposition \ref{slady} to the function $u'\tau + b\nu\in (H^1_{\mu})^3,$ we conclude that $u'\tau + b\nu\in C(P).$} 

Without loss of generality, we may assume that $E_i\cap E_j = \{(0,0,0)\} \in \R^3.$ For $k \in \{i,j\},$ let $\widehat{v}:=\eta_{E_k}v$ denote the parallel transport on the component manifold $E_k$ of a vector $v\in \R^3.$ We put $b_{E_i}(0,0,0) := (\nabla_{\mu}u)\lvert_{E_j}$ and we define the Cosserat vector field on the component manifold $E_i$ as $\widehat{b}_{E_i}:=\eta_{E_i}b_{E_i}(0,0,0).$

Analogously we proceed on $E_j.$ Then we put $\widehat{b}:= \widehat{b}_{E_i} + \widehat{b}_{E_j}.$ By the above characterisation of $D(A_{\mu})$ it is easy to see that for any element of the sequence $(u_n,b_n) \in D(A_{\mu})$ there exists a mentioned before modification $\widehat{b}_n$ such that $(u_n,\widehat{b}_n)\in D(A_{\mu}).$

 By standard properties of Sobolev functions (e.g., see \cite[Thm. 7.13]{Leo09}), if $v \in H^1(E_k),$ then $v \in C(E_k),$ and moreover, if $v_n \in H^1(E_k)$ and $v_n \xrightarrow{H^1(E_k)}v,$ then exists a subsequence converging locally uniformly. Each $\widehat{b}_n$ is well-defined and a vector field $b$ being a $L^2_{\mu}$-limit of the sequence of this Cosserat vectors exists.

Due to the fact that the $L^2_{\mu}$-convergence of $L_{\mu}u_n$ implies convergence of $u_n$ in the sense of \hbox{$H^2(E_k)$} and further that $(u,b)\in D(A_{\mu}),$ by closedness of the operator $A_{\mu}$ in $L^2_{\mu}$ we conclude that $u \in D(L_{\mu})$ and $B=L_{\mu}u.$

This proves closedness in the sense of $L^2_{\mu}$-norm convergence of the operator $L_{\mu}.$
The same argument as the one used in the previously considered case gives that
the result is valid also in the subspace $D(L_{\mu})_N.$

\textbf{"From local to global"}

We have examined closedness in the "local" variants. To show a similar result on a whole structure, we need to gather all established results.

Let $u_n \in D(L_{\mu})_N,$ $u_n \xrightarrow{L^2_{\mu}}u,$ $L_{\mu}u_n \xrightarrow{L^2_{\mu}} B.$ Denote $u^p_n:=u_n\lvert_{\supp \mu_p}.$  We have shown that locally for all $p \in I$ exists $u^p \in D(L_{\mu_p})_N$ which satisfies $u^p_n \xrightarrow{L^2_{\mu_p}}u^p$ with  $L_{\mu_p}u^p_n \xrightarrow{L^2_{\mu_p}} L_{\mu_p}u^p.$

Using the introduced partition of unity, we write $$u_n = \sum_{p \in I}\alpha_p u^p_n \text{ and } u = \sum_{p \in I}\alpha_p u^p.$$

By the aforementioned observations, we have for all $p\in I$
\begin{itemize}
\item $\alpha_pu^p_n \in D(L_{\mu_p})_N,$
\item $\alpha_p u^p_n \xrightarrow{L^2_{\mu_p}}\alpha_pu^p,$
\item $L_{\mu_p}(\alpha_pu^p_n) \xrightarrow{L^2_{\mu_p}} L_{\mu_p}(\alpha_pu^p).$
\end{itemize}

Applying Proposition \ref{proppp} gives $u \in D(L_{\mu})_N$ satisfying
$L_{\mu}u_n \xrightarrow{L^2_{\mu}} L_{\mu}u.$ This proves that the operator $L_{\mu}$ is closed.
\end{proof}

\begin{kom}
In the above proof, the last two conditions (Chapter 2.1, points b) and c)) that were given on the measure $\mu \in \mathcal{S}$ played an essential role. However, we believe that these restrictions on the class of accessible measures can be loosened. This is an interesting question that we leave open for further exploration.
\end{kom}

The next propositions show more properties of the operator $L_{\mu}.$ For the need of our future applications, instead of the operator $L_{\mu}$ we consider the operator $Id - \alpha L_{\mu},$ for some fixed positive constant $\alpha.$

\begin{prop}
For any $\alpha >0,$ the operator $Id - \alpha L_{\mu}: D(L_{\mu})_N \to L^2_{\mu}$ is closed with respect to the $L^2_{\mu}$-convergence.
\end{prop}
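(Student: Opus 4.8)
The plan is to deduce the closedness of $Id - \alpha L_{\mu}$ directly from the closedness of $L_{\mu}$, which is the content of Theorem \ref{main}, just proven. The key observation is that adding a bounded (indeed continuous) operator to a closed operator preserves closedness, and here the "bounded part'' is simply the identity, scaled by $\alpha$; no domain issues arise because $Id - \alpha L_{\mu}$ is declared on the same domain $D(L_{\mu})_N$ as $L_{\mu}$.

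Concretely, I would argue as follows. Fix $\alpha > 0$ and let $\{u_n\}_{n\in\mathbb{N}} \subset D(L_{\mu})_N$ be a sequence with $u_n \xrightarrow{L^2_{\mu}} u \in L^2_{\mu}$ and $(Id - \alpha L_{\mu})u_n \xrightarrow{L^2_{\mu}} w \in L^2_{\mu}$. Since $u_n \to u$ in $L^2_{\mu}$ and $\alpha > 0$, the sequence
\[
L_{\mu}u_n = \frac{1}{\alpha}\bigl(u_n - (Id - \alpha L_{\mu})u_n\bigr)
\]
converges in $L^2_{\mu}$ to $\frac{1}{\alpha}(u - w)$. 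Thus $\{u_n\} \subset D(L_{\mu})_N$, $u_n \to u$, and $L_{\mu}u_n$ converges in $L^2_{\mu}$. By Theorem \ref{main}, the operator $L_{\mu}: D(L_{\mu})_N \to L^2_{\mu}$ is closed, so $u \in D(L_{\mu})_N$ and $L_{\mu}u_n \xrightarrow{L^2_{\mu}} L_{\mu}u$. Consequently $\frac{1}{\alpha}(u-w) = L_{\mu}u$, that is, $w = u - \alpha L_{\mu}u = (Id - \alpha L_{\mu})u$. Hence $u \in D(L_{\mu})_N = D(Id - \alpha L_{\mu})$ and $(Id - \alpha L_{\mu})u_n \to (Id - \alpha L_{\mu})u$, which is exactly the closedness of $Id - \alpha L_{\mu}$ in the $L^2_{\mu}$-convergence.

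There is essentially no obstacle here: the proof is a one-line reduction to Theorem \ref{main} via the algebraic identity above, and the only thing to check carefully is that the domains coincide (so that "$u_n \in D(Id-\alpha L_{\mu})$'' is literally "$u_n \in D(L_{\mu})_N$'') and that dividing by the fixed positive constant $\alpha$ does not affect convergence in $L^2_{\mu}$. I would also remark that the same argument shows $Id - \alpha L_{\mu}$ is closed on $D(L_{\mu})$ rather than $D(L_{\mu})_N$, should that be needed, and more generally that $\lambda\, Id - \alpha L_{\mu}$ is closed for any $\lambda \in \mathbb{R}$ and $\alpha \neq 0$; this observation is exactly what is needed later to make sense of the operators $(Id - \alpha L_{\mu})^k$ appearing in the Magyar-type iteration scheme of Section \ref{42}.
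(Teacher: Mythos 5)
Your proof is correct and takes essentially the same route as the paper: closedness of $Id - \alpha L_{\mu}$ is reduced to the closedness of $L_{\mu}$ from Theorem \ref{main} together with the continuity of the identity, which you make explicit through the identity $L_{\mu}u_n = \tfrac{1}{\alpha}\bigl(u_n - (Id-\alpha L_{\mu})u_n\bigr)$. If anything, your spelled-out verification is slightly more careful than the paper's one-line appeal to ``a sum of closed operators is closed'' (false in general, but harmless here precisely because one summand is bounded and everywhere defined --- the mechanism your algebraic identity captures exactly).
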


\begin{proof}
A sum of closed operators is a closed operator. The identity operator $Id$ is continuous, thus it is closed, and Theorem \ref{main} provides that the operator $L_{\mu}$ is closed.
\end{proof}

%
%

\begin{prop}
$D(L_{\mu})_N$ is dense in $L^2_{\mu}.$
\end{prop}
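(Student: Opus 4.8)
The plan is to prove the stronger statement that the smooth, compactly supported functions which vanish in a neighbourhood of $\partial S$ already lie in $D(L_{\mu})_N$, and that such functions are dense in $L^2_{\mu}$. First I would record two elementary facts. Since $\mu\in\mathcal S$ is a finite positive Radon measure — each component $S_i$ is compact, so $\mathcal H^{\dim S_i}\lfloor_{S_i}$ is finite — the space $C^{\infty}_c(\R^3)$ is dense in $L^2_{\mu}$ by the usual regularity of Radon measures together with mollification. Moreover, every $\psi\in C^{\infty}_c(\R^3)$ actually belongs to $D(L_{\mu})=D(\nabla^2_{\mu})$: indeed $\psi\in H^1_{\mu}$ by the definition of $H^1_{\mu}$, and the triple $(\psi,\nabla^{\perp}\psi,\nabla^2\psi)$ lies in $\mathfrak{g}_{\mu}\subseteq\overline{\mathfrak{g}_{\mu}}$, so $(\psi,\nabla^{\perp}\psi)\in D(A_{\mu})$ and hence $\psi\in D(\nabla^2_{\mu})$. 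Thus the only thing that can prevent a generic smooth $\psi$ from lying in $D(L_{\mu})_N$ (Definition \ref{0domain}) is the Neumann condition $(B\nabla_{S_i}\psi)|_{\partial S_i}\cdot n|_{S_i}=0$ on each component.

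Next I would remove that obstruction by a cutoff localised near $\partial S=\bigcup_{i=1}^m\partial S_i$. Fix $\psi\in C^{\infty}_c(\R^3)$, and for $\epsilon>0$ set $N_{\epsilon}:=\{x\in\R^3:\dist(x,\partial S)<\epsilon\}$; choose $\eta_{\epsilon}\in C^{\infty}(\R^3)$ with $0\leqslant\eta_{\epsilon}\leqslant 1$, $\eta_{\epsilon}\equiv 0$ on $N_{\epsilon}$ and $\eta_{\epsilon}\equiv 1$ off $N_{2\epsilon}$. Put $\psi_{\epsilon}:=\eta_{\epsilon}\psi\in C^{\infty}_c(\R^3)$. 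By the previous paragraph $\psi_{\epsilon}\in D(L_{\mu})$; moreover $\psi_{\epsilon}$ vanishes identically in a full neighbourhood of each $\partial S_i$, so both $\psi_{\epsilon}|_{S_i}$ and its tangential gradient $\nabla_{S_i}\psi_{\epsilon}$ vanish near $\partial S_i$, their traces on $\partial S_i$ are zero, and the Neumann condition holds trivially. Hence $\psi_{\epsilon}\in D(L_{\mu})_N$.

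It remains to check that $\psi_{\epsilon}\to\psi$ in $L^2_{\mu}$ as $\epsilon\to 0$. One has
\[
\norm{\psi-\psi_{\epsilon}}_{L^2_{\mu}}^2=\int_{\R^3}|\psi|^2(1-\eta_{\epsilon})^2\,d\mu\leqslant\Bigl(\sup_{\R^3}|\psi|\Bigr)^2\mu(N_{2\epsilon}),
\]
and the sets $N_{2\epsilon}$, all of finite $\mu$-measure, decrease to $\partial S$ as $\epsilon\downarrow 0$, so $\mu(N_{2\epsilon})\to\mu(\partial S)$ by continuity of $\mu$ from above. Finally $\mu(\partial S)=0$: under the conditions LDS1--LDS2 one has $\partial S\cap S_j=\partial S_j$, which is a manifold of dimension $\dim S_j-1$, hence $\mathcal H^{\dim S_j}(\partial S_j)=0$ and $\mu(\partial S)=\sum_j\mathcal H^{\dim S_j}(\partial S_j)=0$. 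Letting $\epsilon\to 0$ gives $\psi\in\overline{D(L_{\mu})_N}^{\,\norm{\cdot}_{L^2_{\mu}}}$; combined with the density of $C^{\infty}_c(\R^3)$ in $L^2_{\mu}$ this proves that $D(L_{\mu})_N$ is dense in $L^2_{\mu}$. I do not anticipate a genuine obstacle here: the only point requiring care is that $\partial S$ is $\mu$-null, which is exactly what makes the boundary correction asymptotically free; an alternative proof via expanding an arbitrary $f\in L^2_{\mu}$ in Neumann eigenfunctions on the components would force one to verify trace-matching across the junction sets and the existence of Cosserat fields, and is therefore less economical.
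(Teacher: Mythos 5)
Your proposal is correct and follows essentially the same route as the paper, which simply notes that smooth functions satisfying the Neumann condition lie in $D(L_{\mu})_N$ and are dense in $L^2_{\mu}$. Your version merely supplies the details the paper leaves implicit, namely the cutoff near $\partial S$ and the observation that $\partial S$ is $\mu$-null, which makes the boundary correction harmless in $L^2_{\mu}$.
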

\begin{proof}
We have the inclusion $\{u\in C^{\infty}(\Omega): (B\nabla_{\mu}u\cdot n)\lfloor_{\partial \Omega}=0\} \subset D(L_{\mu})_N.$ Moreover, the space ${\{u\in C^{\infty}(\Omega): (B\nabla_{\mu}u\cdot n)\lfloor_{\partial \Omega}=0\}}$ is a dense subset of $L^2_{\mu}.$ This two facts imply that $D(L_{\mu})_N$ is dense in $L^2_{\mu}.$
\end{proof}

Now we can proceed to the construction of a semigroup generated by the operator $\Delta_{\mu}.$

\section{Generation of a semigroup}\label{42}

Well-known methods of constructing semigroups generated by linear operators are based on the notion of the resolvent operator. In the case considered in our paper \cite{Cho24}, the operator $\lambda Id - \Delta_{\mu}: D(\Delta_{\mu})_N \to L^2_{\mu}$ (or equivalently the operator $Id - \alpha \Delta_{\mu}$) is not surjective, thus its inverse does not exist and it is not possible to introduce the resolvent operator (at least in a standard way).

To circumvent this problem, we apply a technique proposed in \cite{Mag89}, where instead of inverting the given operator, the "forward" iterations on a set of "very regular" functions are considered.

Throughout this chapter, we consider the operator $\Delta_{\mu},$ but it seems possible to generalise the presented method to cover the case of a more general form of the operator $L_{\mu}.$

\begin{prop}\label{ein}
Assume that the operator $G$ generates the semigroup $V,$ $\Delta_{\mu} \subset G$ be a restriction of $A.$ Then for any $u \in A_s(\Delta_{\mu})_N$ we have $\norm{u}_{C^{\infty}(V)}=\norm{u}_{C^{\infty}(\Delta_{\mu})_N}.$
\end{prop}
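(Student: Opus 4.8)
The plan is to prove, by induction on $n$, that every $u \in A_s(\Delta_{\mu})_N$ belongs to $D(G^n)$ and satisfies $G^n u = \Delta_{\mu}^n u$; once this is known, the asserted equality of the two generating families of seminorms is immediate, since $\norm{G^n u}_{L^2_{\mu}} = \norm{\Delta_{\mu}^n u}_{L^2_{\mu}}$ for every $n$.

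First I would unpack the hypothesis. Saying that $\Delta_{\mu}$ is a restriction of $G$ (that is, $\Delta_{\mu} \subset G$ as graphs) means precisely that $D(\Delta_{\mu})_N \subset D(\Delta_{\mu}) \subset D(G)$ and $G v = \Delta_{\mu} v$ for every $v \in D(\Delta_{\mu})$. I would also record the inclusion $A_s(\Delta_{\mu})_N \subset D_N = \bigcap_{k=1}^{\infty} D(\Delta_{\mu}^k)_N$ from Definition \ref{veryreg}, so that for a fixed $u \in A_s(\Delta_{\mu})_N$ all iterates $\Delta_{\mu}^k u$ are well-defined elements of $L^2_{\mu}$.

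The induction then runs as follows. The case $n=0$ is trivial, and $n=1$ is exactly the observation that $u \in D(\Delta_{\mu})_N \subset D(G)$ with $Gu = \Delta_{\mu}u$. For the inductive step, suppose $u \in D(G^n)$ and $G^n u = \Delta_{\mu}^n u$. Because $u \in D(\Delta_{\mu}^{n+1})_N$, the very definition of the iterated domain forces $\Delta_{\mu}^n u \in D(\Delta_{\mu})$, hence $\Delta_{\mu}^n u \in D(G)$ and $G(\Delta_{\mu}^n u) = \Delta_{\mu}(\Delta_{\mu}^n u) = \Delta_{\mu}^{n+1}u$. Combining this with the induction hypothesis gives $G^n u = \Delta_{\mu}^n u \in D(G)$, so $u \in D(G^{n+1})$ and $G^{n+1}u = G(G^n u) = \Delta_{\mu}^{n+1}u$, completing the step. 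Finally, the defining bound of $A_s(\Delta_{\mu})_N$, namely $\tfrac{t^n}{n!}\norm{\Delta_{\mu}^n u}_{L^2_{\mu}} < M$ for some fixed $t \in (0,s)$ and all $n$, shows each $\norm{\Delta_{\mu}^n u}_{L^2_{\mu}}$ is finite, so $u \in C^{\infty}(V)$ and $u \in C^{\infty}(\Delta_{\mu})_N$, and the two families of seminorms agree on $u$, i.e. $\norm{u}_{C^{\infty}(V)} = \norm{u}_{C^{\infty}(\Delta_{\mu})_N}$.

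I do not expect a genuine obstacle here; the only subtlety deserving care is checking at each stage of the induction that the iterate $\Delta_{\mu}^n u$ actually lies in $D(G)$, and this is guaranteed not by any property of the semigroup $V$ but simply by unwinding the definition of $D(\Delta_{\mu}^{n+1})_N$.
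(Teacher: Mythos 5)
Your argument is correct and is exactly the content behind the paper's one-line justification ("an elementary observation"): since $\Delta_{\mu}\subset G$, an induction using the definition of $D(\Delta_{\mu}^{n})_N$ gives $G^n u=\Delta_{\mu}^n u$ for all $n$, whence the two families of seminorms coincide on $A_s(\Delta_{\mu})_N$. No gap; your write-up simply makes explicit what the paper leaves unstated.
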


\begin{proof}
An elementary observation.
\end{proof}

Next proposition shows that spaces $L^2_{\mu,s}$ are independent of the parameter $s>0.$

\begin{prop}\label{rownasie}
For any $s>0$ the equality $L^2_{\mu,s} = L^2_{\mu}$ holds.
\end{prop}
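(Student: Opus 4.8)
The plan is to show that $A_s(\Delta_\mu)_N$ is dense in $L^2_\mu$; since $L^2_{\mu,s}$ is by definition the $L^2_\mu$-closure of $A_s(\Delta_\mu)_N$, this is all that is needed, and (as $A_s(\Delta_\mu)_N$ only shrinks when $s$ grows) it suffices to treat each fixed $s>0$. To produce a dense supply of elements of $A_s(\Delta_\mu)_N$ I would use the spectral decomposition of a self-adjoint realisation of the Laplacian. Consider the Dirichlet form $a(u,v):=\int_\Omega \nabla_\mu u\cdot\nabla_\mu v\,d\mu$ with form domain $H^1_\mu$. Since $H^1_\mu$ is complete for $\norm{\cdot}_\mu$ and $a(u,u)+\norm{u}_{L^2_\mu}^2=\norm{u}_\mu^2$, this form is closed, non-negative and (as $C^\infty_c(\R^3)$ is dense in $L^2_\mu$) densely defined, so it is associated with a unique non-negative self-adjoint operator $\widehat\Delta$ on $L^2_\mu$. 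Because every component $E_k$ is a compact manifold, Proposition \ref{tanprop}(b),(c) gives a bounded inclusion $H^1_\mu\hookrightarrow\prod_k H^1(E_k)$; composing it with the (compact) Rellich embeddings $H^1(E_k)\hookrightarrow L^2(E_k)$ and the identification $L^2_\mu\cong\bigoplus_k L^2(E_k)$ (the junction sets are $\mu$-null since $\mu\in\mathcal S$ has components of one fixed dimension) shows $H^1_\mu\hookrightarrow L^2_\mu$ is compact. Hence $\widehat\Delta$ has compact resolvent and a complete orthonormal system of eigenfunctions $\{\phi_j\}_{j\geq 0}\subset L^2_\mu$, with $\widehat\Delta\phi_j=\lambda_j\phi_j$, $0\leq\lambda_0\leq\lambda_1\leq\cdots\to\infty$.

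Next I would check that each $\phi_j$ lies in $D(\Delta_\mu)_N$ and that $\Delta_\mu\phi_j=-\lambda_j\phi_j$. On each component $E_k$ the function $\phi_j|_{E_k}$ is a weak Neumann eigenfunction of the Laplace--Beltrami operator of the compact manifold $E_k$ (no junction meets $\partial E_k$, by LDS1 and LDS2), so classical elliptic regularity gives $\phi_j|_{E_k}\in C^\infty(\overline{E_k})$ with $\partial_n\phi_j=0$ on $\partial E_k$; moreover $\phi_j\in H^1_\mu$, so its component traces agree on every junction (Proposition \ref{slady}), i.e. $\phi_j\in C(S)$. Being smooth on each component and continuous on $S$, the pair $(\phi_j,b)$ can be placed in $D(A_\mu)$ for a suitable normal field $b$: one verifies the Whitney-extension criterion \cite[Prop.\ 3.11]{Bou02} (conditions (a)--(c) recalled in the proof of Theorem \ref{main}), the only issue being to choose, along each junction, the normal component $b$ of $\nabla_\mu\phi_j+b$ so that this vector field is continuous across the junction, which is always possible because on the two sides of the junction $b$ takes values in complementary normal spaces. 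Thus $\phi_j\in D(\nabla^2_\mu)$ with the Neumann condition, i.e. $\phi_j\in D(\Delta_\mu)_N$, and since $\Delta_\mu u=\tr\nabla^2_\mu u$ reduces on each $E_k$ to the Laplace--Beltrami operator we get $\Delta_\mu\phi_j=-\lambda_j\phi_j$. Iterating (each $\Delta_\mu^{n}\phi_j=(-\lambda_j)^n\phi_j$ is again a scalar multiple of $\phi_j\in D(\Delta_\mu)_N$) yields $\phi_j\in D_N$ and $\norm{\Delta_\mu^n\phi_j}_{L^2_\mu}=\lambda_j^n$ for $\phi_j$ normalised.

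Finally I would conclude. For a finite linear combination $u=\sum_{j=1}^{N}a_j\phi_j$ one has $u\in D_N$ (a vector space), $\Delta_\mu^n u=\sum_j a_j(-\lambda_j)^n\phi_j$, hence by orthonormality $\norm{\Delta_\mu^n u}_{L^2_\mu}\leq\Lambda^n\norm{u}_{L^2_\mu}$ with $\Lambda=\max_{j\leq N}\lambda_j$, so $\frac{t^n}{n!}\norm{\Delta_\mu^n u}_{L^2_\mu}\leq\frac{(s\Lambda)^n}{n!}\norm{u}_{L^2_\mu}\leq e^{s\Lambda}\norm{u}_{L^2_\mu}$ for all $n$ and all $t\in(0,s)$; thus $u\in A_s(\Delta_\mu)_N$. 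Since $\{\phi_j\}$ is complete, such combinations are dense in $L^2_\mu$, whence $L^2_{\mu,s}=\overline{A_s(\Delta_\mu)_N}^{\norm{\cdot}_{L^2_\mu}}=L^2_\mu$. The main obstacle, and the only step beyond standard spectral theory, is the membership $\phi_j\in D(\Delta_\mu)_N$: one must exhibit the Cosserat field, i.e. pass from "smooth on components and continuous on $S$" to "in the domain of $A_\mu$", which is precisely the junction-compatibility analysis of \cite{Bou02}. It is also here that the hypothesis $\mu\in\mathcal S$ is used in an essential way, since for components of different dimensions an $H^1_\mu$-function need not be continuous at junctions (Proposition \ref{zero_extension}) and the identification with $\widehat\Delta$ would fail.
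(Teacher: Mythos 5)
Your reduction of the statement to the density of $A_s(\Delta_{\mu})_N$ in $L^2_{\mu}$, and the final estimate for finite spectral sums, are fine; the gap is in the central claim that every eigenfunction $\phi_j$ of the form-associated operator $\widehat\Delta$ belongs to $D(\Delta_{\mu})_N$. The restriction $\phi_j|_{E_k}$ is \emph{not} a weak Neumann eigenfunction of the component $E_k$ alone: a test function $v\in H^1_{\mu}$ cannot be taken arbitrary on $E_k$ and zero on the other component unless its trace vanishes on the junction $\Sigma$ (Proposition \ref{slady}), so the eigen-equation for $\widehat\Delta$ decouples only away from $\Sigma$, while at $\Sigma$ it produces a Kirchhoff-type transmission condition coupling the components. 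Elliptic regularity therefore gives smoothness of $\phi_j$ only on $E_k\setminus\Sigma$, and in general $\phi_j|_{E_k}\notin H^2(E_k)$ — which already excludes $\phi_j$ from $D(\Delta_{\mu})$, since by inclusion \eqref{prop1} membership forces componentwise $H^2$. A concrete counterexample on two crossing segments with $\mu=\mathcal{H}^1\lfloor_{E_1}+\mathcal{H}^1\lfloor_{E_2}$: pick $\lambda$ with $\cos\sqrt{\lambda}=0$ and let $u$ equal $\cos\bigl(\sqrt{\lambda}(x-1)\bigr)$ on the half-edge $\{x>0\}$ of $E_1$, equal $-\cos\bigl(\sqrt{\lambda}(z-1)\bigr)$ on one half-edge of $E_2$, and zero on the two remaining half-edges. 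It vanishes at the junction (so it lies in $H^1_{\mu}$), satisfies the Neumann condition at the outer endpoints, and the outgoing derivatives at the junction sum to zero, so it is an eigenfunction of $\widehat\Delta$; yet its restriction to $E_1$ has a derivative jump at the origin, hence is not in $H^2(E_1)$ and admits no Cosserat field. Such eigenfunctions cannot be discarded: they occur in the complete orthonormal system, and the density of the span of the remaining "good" eigenfunctions is exactly what would have to be proved — the spectral theorem does not supply it. Your junction-compatibility remark (choosing $b$ by copying the transverse derivative from the other component) is the right mechanism, but it presupposes precisely the componentwise regularity that fails here.

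This is in fact where the paper's proof does its work, and why it looks so pedestrian: it uses only eigenfunctions that vanish on the junction together with all iterates of the Laplacian (odd sine-type functions in dimension one, and $\sin(n\phi)J_n(j_n'r)$ on the disc), so that their extensions by zero to the other component genuinely lie in $A_s(\Delta_{\mu})_N$, and it then has to supplement this "good" family by explicit auxiliary functions (the $\sin(\frac{\pi}{2}nx)\pm\frac{\pi}{2}nx$ corrections, constants, approximations of characteristic functions) to recover all of $L^2_{\mu}$, before passing to general $\mu\in\mathcal{S}$ via the flattening diffeomorphisms and Proposition \ref{proppp}. Note also that $\widehat\Delta$ and $\Delta_{\mu}$ are genuinely different operators — the absence of a resolvent for $\Delta_{\mu}$ is the very reason the Magyar variant of Hille--Yosida is used, and membership of variationally defined objects in $D(\Delta_{\mu})$ is a nontrivial theorem in this setting (compare Theorem \ref{nalezenie}) — so identifying their eigenfunctions cannot be taken for granted. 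To salvage your approach you would have to restrict to the part of the spectrum whose eigenfunctions decouple at junctions and then prove density of their span, which essentially reproduces the paper's construction.
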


\begin{proof}
{Instead of working with the general $\mu\in \mathcal{S}$ we begin our study with the junctions of two flat components $E_1,\; E_2$, like in Theorem \ref{main}. At the end of the proof we derive the general case. First we deal with the one-dimensional case.}  Let us note that any function $\phi \in C^{\infty}(E_1)$ can be extended to the structure $E=E_1 \cup E_2$ by the formula
\[
\widetilde{\phi} =\begin{cases}
\phi, & \text{on $E_1$}\\
\phi, & \text{on $E_2$}.
\end{cases}
\]
The fact that it is a member of $\bigcap_{n=1}^{\infty}D(\Delta^n_{\mu})$ follows from \cite[Prop. 3.11]{Bou02} characterising membership in the set $D(A_{\mu})$ what implies membership in $D(\Delta_{\mu})$.

Choosing a suitable family of smooth functions, we conclude that for any $v \in L^2(E_1)$, the function
\[
\widetilde{v} =\begin{cases}
v, & \text{on $E_1$}\\
v, & \text{on $E_2$}
\end{cases},
\]
belongs to $\overline{A_s(\Delta_{\mu})_N}^{\norm{\cdot}_{L^2_{\mu}}}.$ To show that the set $\overline{A_s(\Delta_{\mu})_N}^{\norm{\cdot}_{L^2_{\mu}}}$ contains other functions of the class $L^2_{\mu}$ we will follow the following procedure.

We will indicate a family of smooth functions $\phi_n \in L^2(E_1)$ satisfying Neumann boundary conditions on $E_1$ such that extending them by zero to the whole $E$, i.e., taking
\[
\widetilde{\phi_n} =\begin{cases}
\phi_n, & \text{on $E_1$}\\
0, & \text{on $E_2$}
\end{cases}
\]
results with $\widetilde{\phi_n}\in A_s(\Delta_{\mu})_N.$ Besides that, the family $\{\phi_n\}$ should span a "large enough" subspace of $L^2_{\mu},$ for instance, the odd subspace of $L^2_{\mu}.$

Next, we show that characteristic functions
\[
\widetilde{\chi_A} =\begin{cases}
\chi_A, & \text{on $E_1$}\\
0, & \text{on $E_2$},
\end{cases}
\]
for $A \subset E_1$, can be obtained as a \hbox{$L^2_{\mu}$-limit} of elements of $A_s(\Delta_{\mu})_N.$ This is enough to conclude that $L^2_{\mu}=\overline{A_s(\Delta_{\mu})_N}^{\norm{\cdot}_{L^2_{\mu}}}.$

Due to an explicit construction of the sequence $\{\phi_n\}$ we will need to conduct the mentioned procedure separately for the case $\dim E_1=\dim E_2=1$ and the case $\dim E_1=\dim E_2=2.$
Let us start with the one-dimensional situation.

We define the family $\{H_{2k-1}, H_{4k-2}, H_{4k}: k=1,2,3,...\},$ where 
\[
H_{4k-2}=
\begin{cases}
\sin(\frac{\pi}{2}nx)+ \frac{\pi}{2}nx, & \text{on $E_1$}\\
0, & \text{on $E_2$}
\end{cases},
\]
\[
H_{4k}=
\begin{cases}
\sin(\frac{\pi}{2}nx)- \frac{\pi}{2}nx, & \text{on $E_1$}\\
0, & \text{on $E_2$}
\end{cases},
\quad \text{and} \quad 
H_{2k-1}=
\begin{cases}
\sin(\frac{\pi}{2}nx), & \text{on $E_1$}\\
0, & \text{on $E_2$}
\end{cases}.
\]

Let $\{\alpha_n\}\in l^2$ be a sequence of coefficients of the Fourier series of a fixed odd function $f \in L^2(E_1),$ that is $S_N = \sum_{n=1}^N \alpha_n \sin(\frac{\pi}{2}nx)$ and $\norm{S_N - f}_{L^2} \to 0$ as $N \to \infty.$ Let us assume the decay condition $|\alpha_n|\leqslant \frac{C'}{n^4},$ for some positive constant $C'.$

Denote $h_n := H_n\lfloor_{E_1}$ and we introduce $\gamma_N:= \sum_{n=1}^N \alpha_n h_n.$ There exists a constant $c \in \R$ for which $\norm{\gamma_N-S_n -cx}_{L^2} \to 0$ as $N \to \infty.$ Thus $\gamma_N \xrightarrow[N\to \infty]{L^2} f + cx.$

Our aim is to replace the function $f$ with the function $cx$ and conduct a similar procedure. A series of absolute values of Fourier coefficients of the function $cx$ decays asymptotically to $\frac{1}{n},$ thus the decay condition needed to obtain the above convergence of partial sums is violated, and we cannot proceed directly.

As odd periodic smooth functions are dense in $L^2(E_1)$ (assuming that the endpoints $-1$ and $1$ of $E_1$ are glued together, thus we consider smoothness in the sense of the torus $\T^1$) let us take a sequence $\{z_m\}$ of such functions converging in the $L^2$-norm to the function $cx.$ It is clear that the Fourier coefficients $\{\widehat{z_m}(k)\}$ of each $z_m$ decay faster then $\frac{C}{k^p}$ for any exponent $p \in \mathbb{N}.$

Let $\zeta^m_N:=\sum_{k=1}^N \widehat{z_m}(k) h_k.$ We have $\zeta^m_N \xrightarrow[N \to \infty]{L^2} z_m + c_mx.$

By the fact that the $L^2$-convergence of functions implies $l^2$-convergence of corresponding Fourier coefficients we judge that $\zeta_N^N \xrightarrow[N \to \infty]{L^2} cx + \tilde{c}x=(c+\tilde{c})x,$ where $c,\tilde{c}$ are real constants. Now we see that the function
\[
\begin{cases}
(c+\tilde{c})x, & \text{on $E_1$}\\
0, & \text{on $E_2$}
\end{cases}
\] lies in $\overline{A_s(\Delta)_N}^{\norm{\cdot}_{L^2_{\mu}}}$. 
Without any loss on generality, we might expect that both constants $c$ and $\widetilde{c}$ are non-zero.

From the above observations, we immediately derive that 
\[
\left(\gamma_N - \frac{c}{c+\widetilde{c}}\zeta^N_N \right) \xrightarrow[N \to \infty]{L^2} f,
\]
and thus the function
\[
\begin{cases}
f, & \text{on $E_1$}\\
0, & \text{on $E_2$}
\end{cases}
\]
is a member of the space $\overline{A_s(\Delta_{\mu})_N}^{\norm{\cdot}_{L^2_{\mu}}}.$

Repeating this procedure on the component $E_2$, we prove that 
\[
L^2_{odd}(E_1) \times L^2_{odd}(E_2) \subset \overline{A_s(\Delta_{\mu})_N}^{\norm{\cdot}_{L^2_{\mu}}},
\]
where $L^2_{odd}$ stands for the odd subspace of the space $L^2$.

To finish the study of this instance, we notice that both: the function $1 \in \overline{A_s(\Delta_{\mu})_N}^{\norm{\cdot}_{L^2_{\mu}}}$ and the function
\[
\begin{cases}
\begin{cases}
1, & x\in A\\
-1, & x \notin A
\end{cases}, & \text{on $E_1$}\\
0, & \text{on $E_2$}
\end{cases} 
\]
belong to the space $\overline{A_s(\Delta_{\mu})_N}^{\norm{\cdot}_{L^2_{\mu}}}$ for any measurable set $A \subset E_1$.

As a consequence, we obtain that for any measurable $A \subset E_1$, we have
\[
\begin{cases}
\chi_A, & \text{on $E_1$}\\
0, & \text{on $E_2$}
\end{cases}
\] belongs to $\overline{A_s(\Delta_{\mu})_N}^{\norm{\cdot}_{L^2_{\mu}}}.$ Therefore $L^2_{\mu} = \overline{A_s(\Delta_{\mu})_N}^{\norm{\cdot}_{L^2_{\mu}}}$ in the one-dimensional case.

In the two-dimensional case, we will follow an essentially similar way. The crucial difficulty is finding a disc analogue of the functions $h_n.$

It turns out that all the necessary requirements are fulfilled by the family \hbox{$\{u_n, n=1,2,3,...\}$} which in polar coordinates can be written as $u_n(\phi,r) = c^{-1}_n \sin(n\phi)J_n(j_n'r),$ where $c^{-1}_n$ is a \hbox{$L^2$-normalizing} constant, $J_n$ is the n-th order Bessel function of the first kind, and $j_n'$ is a root of $J_n'.$ For a more detailed discussion of the family $u_n$ see \cite{Wat62}.

As $u_n$ is an eigenfunction of the Neumann Laplacian corresponding to the eigenvalue $j_n',$ by the standard facts of the spectral theory we know that $u_n$ are smooth, satisfy Neumann boundary conditions and the functions $u_n$ span the odd subspace of $L^2(E_1).$ Moreover, the eigenvalue equality implies that all derivatives of each $u_n$ have proper growth.

The polar representation of $u_n$ shows that each $u_n$ and each $\Delta^m u_n$ vanishes on the intersection set $\Sigma = E_1 \cap E_2$ (or this happens up to rotation), thus $u_n$ can be extended by zero to the whole structure $E$ and
\[
\widetilde{u}_n=\begin{cases}
u_n, & \text{on $E_1$}\\
0, &  \text{on $E_2$}
\end{cases}
\]
satisfies $\widetilde{u}_n \in A_s(\Delta_{\mu})_N.$
Now, we are able to proceed analogously to the earlier discussed one-dimensional case.

{In this way we conclude $L^2_{\mu} = \overline{A_s(\Delta_{\mu})_N}^{\norm{\cdot}_{L^2_{\mu}}}$. To proceed in the general case of $\mu \in \mathcal{S}$ we notice that the same result is obtained in the case of the low-dimensional structure with two arbitrary component manifolds by the fact that multiplying $\mu$ by bounded and separated from zero density does not change the space $A_s(\Delta_{\mu})_N$ and the equivalence of convergence presented in the first part of the proof of Theorem \ref{main}. By applying Proposition \ref{proppp} to the space $A_s(\Delta_{\mu})_N$ we obtain the fact in the general case of the structure with $m$ component manifolds $E_i,\; i=1,...,m.$}	
\end{proof}
 
The fact that convergence in the sense of the graph of $\Delta_{\mu}$ implies convergence in the Sobolev norm $H^2$ on each component manifold will be needed in further considerations.

\begin{prop}\label{zwei}
Assume that $u_n \in A_s(\Delta_{\mu})_N,$ both the sequence $u_n$ and the sequence $\Delta_{\mu} u_n$ are convergent in the $L^2_{\mu}$-norm, then the sequence $u_n$ is convergent in $H^2(E_i),\; i=1,...,m.$
\end{prop}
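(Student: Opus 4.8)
The plan is to deduce this from the classical $L^2$-elliptic regularity estimate for the Neumann Laplacian on each component manifold, in exactly the spirit of the argument used inside the proof of Theorem \ref{main}. First, recall the chain of inclusions $A_s(\Delta_{\mu})_N \subset D_N \subset D(\Delta_{\mu})_N = D(L_{\mu})_N$, so it suffices to work with $u_n \in D(L_{\mu})_N$. For such functions we already know that $u_n|_{E_i} \in H^2(E_i)$ for every component $E_i$, that the restriction $(\Delta_{\mu}u_n)|_{E_i}$ coincides with the classical Laplace--Beltrami operator applied to $u_n|_{E_i}$ (this is the componentwise decomposition of $L_{\mu}$ with $B = Id$ recalled in the proof of Theorem \ref{main}), and that $u_n|_{E_i}$ satisfies the homogeneous Neumann condition $\nabla_{E_i}(u_n|_{E_i}) \cdot n = 0$ on $\partial E_i$ in the trace sense.

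Next I would invoke the classical a priori estimate for the Neumann problem on the smooth compact manifold with boundary $E_i$: there is a constant $C = C(E_i) > 0$ such that
\[
\norm{v}_{H^2(E_i)} \leqslant C\bigl(\norm{\Delta_{E_i}v}_{L^2(E_i)} + \norm{v}_{L^2(E_i)}\bigr)
\]
for every $v \in H^2(E_i)$ with $\nabla_{E_i}v \cdot n = 0$ on $\partial E_i$. For a two-dimensional component this is precisely inequality \eqref{esty} used in Theorem \ref{main} (see the appendix of \cite{Las21}, or \cite[Ch. 6.3.2]{Eva15} for the Dirichlet counterpart); for a one-dimensional component it is an elementary ODE estimate. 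Since $D(L_{\mu})_N$ is a linear subspace, each difference $u_n - u_m$ is again admissible, so applying the estimate to $v = (u_n - u_m)|_{E_i}$ yields
\[
\norm{u_n - u_m}_{H^2(E_i)} \leqslant C\bigl(\norm{\Delta_{\mu}u_n - \Delta_{\mu}u_m}_{L^2(E_i)} + \norm{u_n - u_m}_{L^2(E_i)}\bigr).
\]

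Finally, the hypothesis gives that $u_n$ and $\Delta_{\mu}u_n$ converge in $L^2_{\mu}$, hence their restrictions to $E_i$ are Cauchy in $L^2(E_i)$; by the last inequality $(u_n|_{E_i})_n$ is Cauchy in $H^2(E_i)$, and completeness of $H^2(E_i)$ gives convergence. As this holds for every $i \in \{1,\dots,m\}$, the proof is complete. I do not expect a genuine obstacle here, since the delicate geometric work --- the componentwise splitting of $\Delta_{\mu}$ and the identification of its Neumann datum --- has already been carried out for Theorem \ref{main}; the only points that need a moment's attention are that $\partial E_i$ does not meet the junction set $E_i \cap E_j$ (transversality forces $E_i \cap E_j$ to have positive codimension in $E_i$, so it is $\mathcal{H}^{\dim E_i}$-null and acts as an interior feature for the Neumann estimate) and that the boundary condition defining $D(L_{\mu})_N$ is exactly the classical homogeneous Neumann condition entering the a priori estimate.
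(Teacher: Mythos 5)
Your argument is correct, and it reaches the conclusion by a slightly different route than the paper. The paper's proof also works componentwise, but it invokes \emph{interior} elliptic estimates (citing \cite[p.306]{Eva10}) to get convergence of the Hessian seminorm $\norm{\nabla^2\cdot}_{L^2}$ from graph convergence of $\Delta_{\mu}$, and then uses an interpolation theorem (\cite[Thm 5.2]{Ada03}) to upgrade joint control of $\norm{\cdot}_{L^2(E_i)}$ and $\norm{\nabla^2\cdot}_{L^2(E_i)}$ to full $H^2(E_i)$ convergence; it never explicitly uses the Neumann boundary condition encoded in the subscript $N$. You instead go straight to the global, up-to-the-boundary a priori estimate for the Neumann Laplacian on each compact component $E_i$ — exactly inequality \eqref{esty} already used in the proof of Theorem \ref{main} — applied to the differences $u_n-u_m$, and conclude by completeness of $H^2(E_i)$. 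What your route buys is that the up-to-boundary issue is handled in one stroke: purely interior estimates by themselves only control second derivatives locally, so the paper's phrasing tacitly relies on boundary regularity (or on the Neumann estimate it already quoted for Theorem \ref{main}), whereas your use of the zero Neumann trace makes this explicit; the cost is that your argument is tied to the boundary condition, while the paper's interpolation step is insensitive to it. Your preliminary reductions (the inclusion $A_s(\Delta_{\mu})_N\subset D(\Delta_{\mu})_N=D(L_{\mu})_N$ for $B=Id$, the componentwise identification of $\Delta_{\mu}$ with the classical Laplacian, and the linearity of the admissible class so that differences remain admissible) are all consistent with facts the paper establishes or uses in Theorem \ref{main}, so there is no gap.
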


\begin{proof}
The evoked earlier characterisation (see \cite[Prop. 3.11]{Bou02}) of the space $D(\Delta_{\mu})$ shows that membership $u_n \in A_s(\Delta_{\mu})_N$ implies $u_n \in H^2(E_i),$ for $i=1,...,m.$

By considering separately each component $E_i,\; i=1,...,m$ and using the local regularity estimates (see for instance \cite[ p.306]{Eva10}) we obtain that convergence in the sense of graph of $\Delta$ implies convergence in the seminorm ${\norm{\nabla^2 \cdot}_{L^2_{\mu}}.}$By the interpolation theorem (see e.g., \cite[Thm 5.2]{Ada03}) we obtain that a joint convergence in seminorms $\norm{\cdot}_{L^2(E_i)}$ and $\norm{\nabla^2\cdot}_{L^2(E_i)}$ is equivalent to a convergence in the standard $H^2(E_i)$-norm.
\end{proof}	
	
To provide that a solution constructed by the action of a semigroup is fully valuable, we need to ensure that for each time $t$ it is a member of the domain of the generator.

\begin{lem}
Let $u \in A_s(\Delta_{\mu})_N,$ let $V_s$ be a family of operators constructed in the proof of \cite[Thm. 2]{Mag89}. Then for each $t \in (0,s)$ we have $V_s(t)u \in D(\Delta_{\mu})_N.$
\end{lem}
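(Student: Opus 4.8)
The plan is to show that the Magyar semigroup $V_s$, built in \cite[Thm. 2]{Mag89} as a locally equicontinuous semigroup generated by the closure $\overline{\Delta_{\mu}}$, actually keeps orbits of very regular data inside the (non-closed) domain $D(\Delta_{\mu})_N$ for small times. First I would recall the structure of the construction in \cite{Mag89}: for $u \in A_s(\Delta_{\mu})_N$ one sets
\[
V_s(t)u := \sum_{k=0}^{\infty} \frac{t^k}{k!}\,\Delta_{\mu}^{k}u,
\]
and the defining property of $A_s(\Delta_{\mu})_N$ (Definition \ref{veryreg}) — namely $\tfrac{t^n}{n!}\norm{\Delta_{\mu}^n u}_{L^2_{\mu}} < M$ uniformly for $t\in(0,s)$ — guarantees absolute convergence of this series in $L^2_{\mu}$ on every interval $(0,s')$ with $s'<s$, in fact with geometric tail control. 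So $V_s(t)u$ is a well-defined element of $L^2_{\mu}$; the task is the membership $V_s(t)u\in D(\Delta_{\mu})_N = D(\nabla^2_\mu)_N$, i.e. the existence of a Cosserat vector field $b$ with $(V_s(t)u,b)\in D(A_{\mu})_N$.

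**Main argument via closedness.**

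The key tool is Theorem \ref{main}: the operator $L_{\mu}=\Delta_{\mu}$ on $D(L_{\mu})_N$ is closed in the $L^2_{\mu}$-convergence. Fix $t\in(0,s)$ and pick $s'$ with $t<s'<s$. Consider the partial sums $w_N := \sum_{k=0}^{N}\frac{t^k}{k!}\Delta_{\mu}^{k}u$. Since $u\in A_s(\Delta_{\mu})_N = \bigcap_n D(\Delta_{\mu}^n)_N$ each iterate $\Delta_{\mu}^k u$ lies in $D(\Delta_{\mu})_N$, which is a linear space, so $w_N\in D(\Delta_{\mu})_N$. Next, $w_N \to V_s(t)u$ in $L^2_{\mu}$ by the convergence of the series. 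Finally, apply $\Delta_{\mu}$ term by term:
\[
\Delta_{\mu} w_N = \sum_{k=0}^{N}\frac{t^k}{k!}\Delta_{\mu}^{k+1}u \xrightarrow{L^2_{\mu}} \sum_{k=0}^{\infty}\frac{t^k}{k!}\Delta_{\mu}^{k+1}u,
\]
the last series again converging by the $A_s$-estimate (the shifted series $\sum_k \frac{t^k}{k!}\norm{\Delta_\mu^{k+1}u}$ is controlled by $\frac{1}{t}\sum_k \frac{t^{k+1}}{(k+1)!}\norm{\Delta_\mu^{k+1}u}(k+1)$, and the geometric decay of $\frac{t^j}{j!}\norm{\Delta_\mu^j u}_{L^2_\mu}$ on $(0,s')$ kills the extra linear factor). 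Hence $w_N\to V_s(t)u$ and $\Delta_{\mu}w_N$ converges in $L^2_{\mu}$; by closedness of $\Delta_{\mu}$ on $D(\Delta_{\mu})_N$ we conclude $V_s(t)u\in D(\Delta_{\mu})_N$, with $\Delta_{\mu}V_s(t)u = \sum_{k\ge 1}\frac{t^{k-1}}{(k-1)!}\Delta_{\mu}^{k}u$. This is exactly the identity $\Delta_{\mu}V_s(t)u = V_s(t)\Delta_{\mu}u$, consistent with the semigroup property.

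**The main obstacle and how to handle it.**

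The delicate point is \emph{not} the closedness step — that is where Theorem \ref{main} does all the work — but rather making sure the term-by-term estimates legitimately sit inside the region $t\in(0,s)$ where the $A_s$-bound holds, and that the constant $M$ in Definition \ref{veryreg} really gives summability of the \emph{shifted} series $\sum_k \frac{t^k}{k!}\Delta_\mu^{k+1}u$, not just the original one. I would address this by fixing the auxiliary radius $s'\in(t,s)$: on $(0,s')$ one has $\frac{(s')^{k}}{k!}\norm{\Delta_\mu^k u}_{L^2_\mu}<M$, so $\frac{t^k}{k!}\norm{\Delta_\mu^{k+1}u}_{L^2_\mu} = \frac{1}{s'}\cdot\frac{(s')^{k+1}}{(k+1)!}\norm{\Delta_\mu^{k+1}u}_{L^2_\mu}\cdot (k+1)\cdot (t/s')^k \le \frac{M}{s'}(k+1)(t/s')^{k}$, and $\sum_k (k+1)(t/s')^k<\infty$ because $t/s'<1$. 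A secondary subtlety is that one should confirm $V_s(t)u$ as produced here coincides with the operator $V_s(t)$ from \cite[Thm. 2]{Mag89}; this is immediate from the construction there, which defines $V_s$ on $A_s(\Delta_{\mu})_N$ precisely by this exponential series and then extends by density/equicontinuity. Since we only ever evaluate on $u\in A_s(\Delta_{\mu})_N$, no extension is needed and the identification is direct. One concludes $V_s(t)u\in D(\Delta_{\mu})_N$ for every $t\in(0,s)$, which is the assertion.
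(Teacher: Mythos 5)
Your strategy---approximate $V_s(t)u$ by the partial sums $w_N=\sum_{k=0}^{N}\frac{t^k}{k!}\Delta_{\mu}^k u$ of the exponential series and invoke the closedness of $\Delta_{\mu}$ (Theorem \ref{main})---is close in spirit to the paper's proof, which also ends with Theorem \ref{main}, but the paper reaches its approximating sequence differently: it uses the fact, extracted from Magyar's proof, that $V_s(t)u$ lies in the closure of $A_s(\Delta_{\mu})_N$ with respect to the seminorms $\norm{\cdot}_{C^{\infty}(V_s)}$, identifies these with $\norm{\cdot}_{C^{\infty}(\Delta_{\mu})_N}$ via Proposition \ref{ein}, and then upgrades the resulting graph-type convergence to $H^2(E_i)$-convergence on each component via Proposition \ref{zwei}, so that the zero Neumann condition passes to the limit through the continuity of the normal trace. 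Your summability estimates for the shifted series are fine, and your identification of $V_s(t)u$ with the series is consistent with the description of Magyar's construction given in the thesis (though it is an assertion about the internals of \cite[Thm. 2]{Mag89} that the paper deliberately avoids relying on).

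The genuine gap sits exactly at the boundary condition. You assert that each iterate $\Delta_{\mu}^k u$ lies in $D(\Delta_{\mu})_N$, hence $w_N\in D(\Delta_{\mu})_N$. Definition \ref{veryreg} does not give this: membership $u\in D(\Delta_{\mu}^n)_N$ only requires $\Delta_{\mu}^{n-1}u\in D(\Delta_{\mu})$ (with no Neumann condition imposed on the iterates) together with $u\in D(\Delta_{\mu}^{n-1})_N$; the zero normal trace is demanded of $u$ alone. So your partial sums are only known to lie in $D(\Delta_{\mu})$, Theorem \ref{main} (stated for sequences in $D(\Delta_{\mu})_N$) cannot be applied to $w_N$ as written, and even granting closedness on $D(\Delta_{\mu})$ you would only conclude $V_s(t)u\in D(\Delta_{\mu})$, without the Neumann condition---which is precisely the nontrivial content of the lemma. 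To close the gap you must either prove that iterates of functions in $A_s(\Delta_{\mu})_N$ inherit the zero Neumann trace, or recover the boundary condition in the limit, e.g.\ by showing that your approximating sequence converges in $H^2(E_i)$ on each component (this is what Proposition \ref{zwei} provides, via componentwise elliptic estimates) and then using continuity of the normal trace operator---the latter being the route the paper takes.
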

\begin{proof}
From the proof of \cite[Thm. 2]{Mag89} it follows that for an arbitrary $u \in A_s(\Delta_{\mu})_N$ and $t \in (0,s)$ we have $V_s(t)u \in \overline{A_s(\Delta_{\mu})_N}^{\norm{\cdot}_{C^{\infty}(V_s)}}.$

By the equality of Proposition \ref{ein} it follows $V_s(t)u \in \overline{A_s(\Delta_{\mu})_N}^{\norm{\cdot}_{C^{\infty}(\Delta_{\mu})_N}}.$ Let $w_n \in A_s(\Delta_{\mu})_N$ be a Cauchy sequence in $\norm{\cdot}_{C^{\infty}(\Delta_{\mu})_N}.$

As Proposition \ref{zwei} implies convergence of $w_n$ in the sense of $H^2(E_i)$ on each component manifold $E_i$, $i=1,...,m,$ and, as the normal trace $\frac{\partial w_n}{\partial n}\lfloor_{\partial E_i}:H^1(E_i) \to L^2(E_i)$, $i=1,...,m$ is continuous, we conclude that the limit $w:=\lim_{n \to \infty}w_n$ in $\norm{\cdot}_{C^{\infty}(\Delta_{\mu})_N}$ satisfies
$\frac{\partial w}{\partial n}\lfloor_{\partial E_i}=0$, $i=1,...,m$. It is easy to check that $w \in H^2(E_i)$ and $w \in C(E).$

Now using Theorem \ref{main} proving closedness of the operator $\Delta_{\mu}$ 
we deduce that $w \in D(\Delta_{\mu})_N.$ In this way we obtained $V_s(t)u \in D(\Delta_{\mu})_N$, for all $t \in (0,s).$
\end{proof}

We will also need the following simple observation.

\begin{prop}
$\mathcal{E}(\Delta_{\mu})^T_N$ is dense in $L^2_{\mu}.$
\end{prop}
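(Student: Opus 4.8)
The statement to prove is that $\mathcal{E}(\Delta_{\mu})^T_N = \bigcap_{s\in[0,T]}A_s(\Delta_{\mu})_N$ is dense in $L^2_{\mu}$. The plan is to reduce this to the already-established density results and to exploit the monotone structure of the family $\{A_s(\Delta_{\mu})_N\}_{s>0}$ in the parameter $s$. First I would observe that, directly from Definition \ref{veryreg}, the family of sets $A_s(\Delta_{\mu})_N$ is decreasing in $s$: if $0<s'\leqslant s$ and $u\in A_s(\Delta_{\mu})_N$ with witnessing constant $M$ and all $t\in(0,s)$, then in particular the estimate $\frac{t^n}{n!}\norm{\Delta_{\mu}^nu}_{L^2_{\mu}}<M$ holds for all $t\in(0,s')\subset(0,s)$, so $u\in A_{s'}(\Delta_{\mu})_N$ with the same constant. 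Hence $A_s(\Delta_{\mu})_N\subset A_{s'}(\Delta_{\mu})_N$ whenever $s'\leqslant s$, and consequently
\[
\mathcal{E}(\Delta_{\mu})^T_N=\bigcap_{s\in[0,T]}A_s(\Delta_{\mu})_N = A_T(\Delta_{\mu})_N,
\]
the single set corresponding to the largest parameter $s=T$ (the case $s=0$ is vacuous or trivial).

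Having made this reduction, the task becomes showing that $A_T(\Delta_{\mu})_N$ is dense in $L^2_{\mu}$. But this is precisely the content of Proposition \ref{rownasie}: it asserts $L^2_{\mu,s}=L^2_{\mu}$ for every $s>0$, where by definition $L^2_{\mu,s}=\overline{A_s(\Delta_{\mu})_N}^{\,\norm{\cdot}_{L^2_{\mu}}}$. Taking $s=T$ gives $\overline{A_T(\Delta_{\mu})_N}^{\,\norm{\cdot}_{L^2_{\mu}}}=L^2_{\mu}$, which is exactly the assertion that $A_T(\Delta_{\mu})_N=\mathcal{E}(\Delta_{\mu})^T_N$ is dense. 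So the proof is a two-line argument: first the monotonicity collapse of the intersection to $A_T(\Delta_{\mu})_N$, then an appeal to Proposition \ref{rownasie} with $s=T$.

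The only step requiring any care is the monotonicity observation, and even that is essentially immediate from the quantifier structure in the definition of $A_s(\Delta_{\mu})_N$ — the condition involves $\forall t\in(0,s)$, so shrinking $s$ only weakens the requirement. There is no real obstacle here; the substantive work has all been done in Proposition \ref{rownasie}, whose proof builds explicit families of eigenfunctions (Bessel functions in the two-dimensional case, sines in the one-dimensional case) and shows their extensions lie in $A_s(\Delta_{\mu})_N$ while spanning a dense subspace. If one preferred not to route through the exact equality $\mathcal{E}(\Delta_{\mu})^T_N=A_T(\Delta_{\mu})_N$, an alternative is simply to note $A_T(\Delta_{\mu})_N\subseteq\mathcal{E}(\Delta_{\mu})^T_N$ directly from monotonicity (every $u\in A_T$ lies in every $A_s$ for $s\leqslant T$), and then density of the smaller set $A_T(\Delta_{\mu})_N$, guaranteed by Proposition \ref{rownasie}, forces density of the larger set $\mathcal{E}(\Delta_{\mu})^T_N$. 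Either way the proof is short, and I would present it in that latter, slightly more robust form.
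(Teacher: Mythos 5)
Your proof is correct and takes essentially the same route as the paper, whose entire argument is that the density "follows directly from the proof of Proposition \ref{rownasie}". Your extra monotonicity observation (that $A_s(\Delta_{\mu})_N$ decreases in $s$, so $\mathcal{E}(\Delta_{\mu})^T_N$ coincides with, or at least contains, $A_T(\Delta_{\mu})_N$) merely lets you invoke the statement of Proposition \ref{rownasie} rather than its proof — a harmless, slightly cleaner packaging of the same argument.
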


\begin{proof}
It follows directly from the proof of Proposition \ref{rownasie}.
\end{proof}

The following proposition is equivalent to the continuity of the resolvent operator if such an operator is well-defined.
	
\begin{prop}\label{drei}
For any neighbourhood of zero $W \subset L^2_{\mu}$ exists a neighbourhood of zero $U \subset L^2_{\mu}$ such that if for all $k\in \{1,2,...\},$ for all $u \in D(\Delta_{\mu}^k)_N$ and some constants $\alpha,C>0$ we have $(1-\alpha C)^{-k}(Id-\alpha \Delta_{\mu})^k u\in U,$ then $u \in W.$
\end{prop}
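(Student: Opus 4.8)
The plan is to reduce the statement to the fact that $L^2_\mu$, being a Hilbert space, has a neighbourhood base at zero consisting of balls, together with the elementary algebraic identity that for fixed $\alpha, C > 0$ the operator $(1-\alpha C)^{-1}(Id - \alpha \Delta_\mu)$ has the quantitative coercivity needed to run the argument backwards. Since $\Delta_\mu$ is (by Theorem \ref{main}) closed and, more to the point, since we are only asked for the \emph{finite}-iteration implication on $D(\Delta_\mu^k)_N$, the cleanest route is to establish a single-step estimate of the form $\norm{u}_{L^2_\mu} \le \norm{(1-\alpha C)^{-1}(Id-\alpha\Delta_\mu)u}_{L^2_\mu}$ for all $u \in D(\Delta_\mu)_N$ and all sufficiently small $\alpha$, and then iterate it $k$ times. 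Once such a contraction-type bound is in hand, given $W \supset B(0,\varepsilon)$ one simply takes $U := B(0,\varepsilon)$, and the hypothesis $(1-\alpha C)^{-k}(Id-\alpha\Delta_\mu)^k u \in U$ forces $\norm{u}_{L^2_\mu} \le \norm{(1-\alpha C)^{-k}(Id-\alpha\Delta_\mu)^k u}_{L^2_\mu} < \varepsilon$, i.e. $u \in W$.

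The key steps, in order, are as follows. First I would fix the constant $C$ appearing in the dissipativity of $\Delta_\mu$: working on each component manifold $E_i$ separately and summing, one has $\int_\Omega (\Delta_\mu u)\, u\, d\mu \le C \norm{u}_{L^2_\mu}^2$ for $u \in D(\Delta_\mu)_N$, which follows from integration by parts on each $E_i$ using the Neumann condition and the ellipticity/boundedness of the coefficient matrix (here $\Delta_\mu = L_\mu$ with $B = Id$, so the relevant constant is governed purely by the geometry of the components; in fact $C$ can be taken to be $0$ for the pure Laplacian, but keeping a general $C \ge 0$ costs nothing). Second, from this I would derive, for $0 < \alpha < 1/C$ (or any $\alpha > 0$ if $C = 0$),
\begin{equation*}
\norm{(Id - \alpha\Delta_\mu)u}_{L^2_\mu}^2 = \norm{u}_{L^2_\mu}^2 - 2\alpha\int_\Omega(\Delta_\mu u)u\,d\mu + \alpha^2\norm{\Delta_\mu u}_{L^2_\mu}^2 \ge (1-2\alpha C)\norm{u}_{L^2_\mu}^2,
\end{equation*}
so that $\norm{(Id-\alpha\Delta_\mu)u}_{L^2_\mu} \ge (1-\alpha C)\norm{u}_{L^2_\mu}$ whenever $(1-\alpha C)^2 \le 1 - 2\alpha C$, i.e. always (since $(1-\alpha C)^2 = 1 - 2\alpha C + \alpha^2 C^2 \ge$ wrong direction) — so more carefully I would use $\norm{(Id-\alpha\Delta_\mu)u}_{L^2_\mu} \ge \norm{u}_{L^2_\mu} - \alpha\norm{\Delta_\mu u}$ is not enough, and instead invoke the standard dissipativity estimate $\norm{(Id-\alpha\Delta_\mu)u}_{L^2_\mu} \ge (1 - \alpha C)\norm{u}_{L^2_\mu}$ that holds for $C$-dissipative operators, which is exactly $\langle \Delta_\mu u, u\rangle \le C\norm{u}^2$ combined with Cauchy–Schwarz on $\langle(Id-\alpha\Delta_\mu)u, u\rangle$. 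Third, since $u \in D(\Delta_\mu^k)_N$ means $\Delta_\mu^j u \in D(\Delta_\mu)_N$ for $j < k$, I can apply this single-step bound successively to $u, (Id-\alpha\Delta_\mu)u, \dots$ to get $\norm{(Id-\alpha\Delta_\mu)^k u}_{L^2_\mu} \ge (1-\alpha C)^k\norm{u}_{L^2_\mu}$, which rearranges to exactly the desired $\norm{u}_{L^2_\mu} \le \norm{(1-\alpha C)^{-k}(Id-\alpha\Delta_\mu)^k u}_{L^2_\mu}$.

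The main obstacle — and the only genuinely nontrivial point — is establishing the dissipativity inequality $\int_\Omega (\Delta_\mu u)\,u\, d\mu \le C\norm{u}_{L^2_\mu}^2$ for $u \in D(\Delta_\mu)_N$ with a constant $C$ uniform over the domain, since a priori one must justify the integration by parts across the junction sets $E_i \cap E_j$. Here I would exploit that $u \in D(\Delta_\mu)_N \subset D(\nabla^2_\mu)$ gives $u|_{E_i} \in H^2(E_i)$ with well-defined normal traces on $\partial E_i$, so that on each component $\int_{E_i}(\Delta_{E_i} u)\,u = -\int_{E_i}|\nabla_{E_i}u|^2 + \int_{\partial E_i}\frac{\partial u}{\partial n}u$, and the boundary term vanishes by the Neumann condition in Definition \ref{0domain}; summing over $i$ and using $\Delta_\mu u|_{E_i} = \Delta_{E_i}u$ (which follows from $\nabla^2_\mu u|_{E_i}$ restricting to the classical Hessian, Definition \ref{lap} together with Proposition \ref{tanprop}) yields $\int_\Omega(\Delta_\mu u)u\,d\mu = -\int_\Omega |\nabla_\mu u|^2 d\mu \le 0$, so in fact $C = 0$ works and the argument simplifies to $\norm{(Id-\alpha\Delta_\mu)^k u}_{L^2_\mu} \ge \norm{u}_{L^2_\mu}$, with $(1-\alpha C)^{-k} = 1$. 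One still carries the general $C$ in the statement for consistency with Theorem \ref{magyar}, but the verification reduces to this clean computation; I would write it up keeping $C$ general but noting that the Neumann Laplacian case gives $C=0$.
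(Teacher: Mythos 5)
Your route is genuinely different from the paper's. The paper argues by contradiction: assuming the implication fails, it produces a sequence $u_n \in D(\Delta_{\mu})_N$ with $\norm{(Id-\alpha\Delta_{\mu})u_n}_{L^2_{\mu}} \to 0$ while $\norm{u_n}_{L^2_{\mu}}$ stays bounded away from zero, and then invokes the continuity of the classical (Neumann) resolvent of $\Delta$ on each component $E_i$ to force $\norm{u_n}_{L^2_{\mu}} \to 0$, concluding the case $k=1$ and disposing of larger $k$ "by induction". You instead prove a direct quantitative bound: dissipativity $\int_{\Omega}(\Delta_{\mu}u)\,u\,d\mu = -\int_{\Omega}|\nabla_{\mu}u|^2\,d\mu \leqslant 0$ on $D(\Delta_{\mu})_N$ via componentwise integration by parts (legitimate here, since $u|_{E_i}\in H^2(E_i)$, the Neumann condition of Definition \ref{0domain} kills the boundary terms, the junctions are interior null sets for each component, and the paper itself uses the componentwise decomposition of $L_{\mu}$ in the proof of Theorem \ref{main}), then Cauchy--Schwarz on $\lrangle{(Id-\alpha\Delta_{\mu})u,u}$ to get $\norm{(Id-\alpha\Delta_{\mu})u}_{L^2_{\mu}}\geqslant \norm{u}_{L^2_{\mu}}$, and iteration. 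This is the standard dissipative-operator verification of a Hille--Yosida-type condition; it is more self-contained and more quantitative than the paper's argument (which is itself quite sketchy), and it makes transparent why the constant $(1-\alpha C)^{-k}\geqslant 1$ is harmless. The muddled algebra in your second step (the attempted expansion of $\norm{(Id-\alpha\Delta_{\mu})u}^2$) is ultimately irrelevant since you land on the correct Cauchy--Schwarz/dissipativity estimate, but it should be cleaned up before writing this down.

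There is, however, one concrete gap in the case $k\geqslant 2$. Your one-step estimate is only justified for functions in $D(\Delta_{\mu})_N$, because the integration by parts needs the zero Neumann trace. To iterate, you apply it to $v_j := (Id-\alpha\Delta_{\mu})^j u$, which is a linear combination of $u,\Delta_{\mu}u,\dots,\Delta_{\mu}^j u$; but Definition \ref{veryreg} only requires $\Delta_{\mu}^{j}u \in D(\Delta_{\mu})$ for the iterates, not $D(\Delta_{\mu})_N$, so the membership $v_j \in D(\Delta_{\mu})_N$ is not guaranteed by the hypotheses as stated, and without the boundary condition the sign of $\lrangle{\Delta_{\mu}v_j,v_j}$ is not controlled (the boundary term in the componentwise integration by parts can be positive). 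So as written your argument covers $k=1$ and needs either an additional argument that the iterates inherit the Neumann condition, or a reading of Definition \ref{veryreg} in which they do by fiat; note the paper's own "induction over $k$" silently faces the same issue, so this is as much a defect of the source definitions as of your proof, but it must be addressed for the iteration to be rigorous.
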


\begin{proof}
Let us recall that the operator $Id - \alpha \Delta_{\mu}:D(\Delta_{\mu})_N \subset L^2_{\mu} \to L^2_{\mu}$ is closed due to closedness of $\Delta_{\mu}.$ Assume that the statement postulated in the thesis is not true. This means that the following sentence is valid:
$\exists k \in \{1,2,...\}\; \exists u \in D(\Delta_{\mu}^k)_N,$ and there exists a neighbourhood of zero $W \subset L^2_{\mu}$ such that for any neighbourhood of zero $U \subset L^2_{\mu}$ we have 
\[
(1-\alpha C)^{-k}(Id-\alpha \Delta_{\mu})^k u \in U \quad \text{and} \quad  u \notin W.
\]

This implies existence of a sequence $u_n \in D(\Delta_{\mu})_N$ such that $\norm{(Id-\alpha \Delta_{\mu})u_n}_{L^2_{\mu}} \to 0$ and \hbox{$\norm{u_n}_{L^2_{\mu}}>c'>0.$} By continuity of the classical resolvent operator defined for the Laplace operator $\Delta$ on each component $E_i,$ we see that the convergence $\norm{(Id-\alpha \Delta_{\mu})u_n}_{L^2_{\mu}} \to 0$ implies $\norm{u_n}_{L^2_{\mu}} \to 0$ as $n \to \infty.$
By induction over $k \in \{1,2,...\},$ we stipulate that the condition expressed in the thesis is valid.
\end{proof}
	
Finally, we can summarise our study of generating semigroups in the following theorem.

\begin{tw}\label{eqisemi}
The operator $\Delta_{\mu}$ generates an equicontinuous semigroup \hbox{$S:[0,T] \times L^2_{\mu} \to L^2_{\mu}$.}
\end{tw}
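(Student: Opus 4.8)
The plan is to deduce the theorem directly from the Magyar variant of the Hille--Yosida theorem, Theorem \ref{magyar}, whose two hypotheses have been prepared by the preceding propositions. First I would verify the resolvent-type hypothesis: that for every neighbourhood of zero $W\subset L^2_{\mu}$ there is a neighbourhood of zero $U\subset L^2_{\mu}$ such that $(1-\alpha C)^{-k}(Id-\alpha\Delta_{\mu})^k u\in U$ for all $k\in\{1,2,\dots\}$ and all $u\in D(\Delta_{\mu}^k)_N$ forces $u\in W$. This is exactly Proposition \ref{drei}, which itself rests on the closedness of $Id-\alpha\Delta_{\mu}$ (a consequence of Theorem \ref{main}) and on the continuity of the classical Neumann--Laplacian resolvent on each component manifold $E_i$, propagated by induction on $k$. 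Second, I would record the density hypothesis, namely that $\mathcal{E}(\Delta_{\mu})^T_N=\bigcap_{s\in[0,T]}A_s(\Delta_{\mu})_N$ is dense in $L^2_{\mu}$; this is the proposition stated just above, which follows from the identity $L^2_{\mu,s}=L^2_{\mu}$ for every $s>0$ (Proposition \ref{rownasie}) together with the explicit families of "very regular" functions constructed in its proof.

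With both hypotheses in hand, Theorem \ref{magyar} gives that $\Delta_{\mu}$ is closable in $L^2_{\mu}$ and that its closure $\overline{\Delta_{\mu}}$ generates a continuous, locally equicontinuous semigroup $V$ on $L^2_{\mu}$ for which $e^{-Ct}V(t)$ is equicontinuous. Next I would invoke Theorem \ref{main} once more: since $\Delta_{\mu}\colon D(\Delta_{\mu})_N\to L^2_{\mu}$ (the $B=Id$ case of $L_{\mu}$) is already closed, $\overline{\Delta_{\mu}}=\Delta_{\mu}$, so the generated semigroup is governed by $\Delta_{\mu}$ itself, as required.

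Finally I would upgrade "locally equicontinuous, modulo the $e^{-Ct}$ factor" to "equicontinuous" on the finite interval $[0,T]$: since $t\mapsto e^{-Ct}$ is bounded above and below by strictly positive constants on $[0,T]$, equicontinuity of $\{e^{-Ct}V(t):t\in[0,T]\}$ is equivalent to equicontinuity of $\{V(t):t\in[0,T]\}$, and restricting $V$ to $[0,T]$ yields the asserted equicontinuous semigroup $S:[0,T]\times L^2_{\mu}\to L^2_{\mu}$, $S(t):=V(t)$.

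I do not expect a genuine obstacle in this last theorem — all the hard work (closedness of $\Delta_{\mu}$ in Section \ref{41}, the structure of the "very regular" function spaces in Section \ref{42}, and the resolvent-type estimate of Proposition \ref{drei}) is already done, so this step is an assembly of that machinery. The only points requiring care are bookkeeping ones: matching the constants $\alpha,C$ between Proposition \ref{drei} and the hypotheses of Theorem \ref{magyar}, and keeping the distinction between uniform equicontinuity of the operator family and the strong continuity of the semigroup (the latter being part of the word "continuous" in Theorem \ref{magyar}).
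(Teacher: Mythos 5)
Your proposal is correct and follows essentially the same route as the paper: the paper's proof of this theorem is exactly the assembly step you describe, citing the preceding propositions (the resolvent-type estimate of Proposition \ref{drei} and the density of $\mathcal{E}(\Delta_{\mu})^T_N$ coming from Proposition \ref{rownasie}) as verification of the hypotheses of Theorem \ref{magyar} on $[0,T]$. Your additional remarks — that closedness of $\Delta_{\mu}$ (Theorem \ref{main}) gives $\overline{\Delta_{\mu}}=\Delta_{\mu}$, and that the $e^{-Ct}$ factor is harmless on a finite interval — are details the paper leaves implicit, and they are correct.
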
	
      
\begin{proof}
We verified that on the interval $[0,T]$ all conditions of Theorem \ref{magyar} are satisfied. This implies, that $\Delta_{\mu}$ generates an equicontinuous semigroup \hbox{$S:[0,T] \times L^2_{\mu} \to L^2_{\mu}.$}
\end{proof}

We may conclude the results established in this chapter with the proof of Theorem \ref{existence}.

\begin{proof}[Proof of the Theorem \ref{existence}]
Let $S$ be the semigroup as in Theorem \ref{eqisemi}. As $g \in A_T(\Delta_{\mu})_N$ and the semigroup $S$ is generated by the operator $\Delta_{\mu}$ we immediately notice that the function $u(t,x):=(S(t)g)(x)$ is a solution in a sense of Definition \ref{parabolic2}. The uniqueness of the solution, up to a constant $c\in \R$, for the initial function $g \in A_T(\Delta_{\mu})_N$ is a standard result and follows from the fact that the semigroup is equicontinuous on the domain of the generator, we refer to book \cite[Thm. 2.4c)]{Paz83}.
\end{proof}

\begin{przyk}
We present an example of an application of Theorem \ref{existence} to the existence of solutions in a case of two orthogonal discs intersecting each other.

Let $\mu \in \mathcal{S}$ be a low-dimensional structure of the form $$\mu:=\mathcal{H}^2|_{D_1}+\mathcal{H}^2|_{D_2},$$ where $D_1:=\{(x,y,0)\in \R^3: x^2+y^2\leqslant 1\}$ and $D_2:=\{(x,0,z)\in \R^3: x^2+y^2 \leqslant 1\}.$ We denote $D:= D_1 \cup D_2$ and $\partial D := \partial D_1 \cup \partial D_2.$

Assume that the matrix $B=(b_{ij}),\; i,j\in\{1,2,3\}$ from Definition \ref{secondop} consists of constant entries $b_{ii}\equiv 1,\; i \in \{1,2,3\}$ and $b_{ij}\equiv 0,\; i\neq j,\; i,j\in\{1,2,3\}.$

The operator $L_{\mu}:D(L_{\mu})_N \to L^2_{\mu}$ related with this matrix is of the simple form $$L_{\mu}u=\sum_{i=1}^3(\nabla^2_{\mu}u)_{ii} = \Delta_{\mu}u.$$

We consider the evolutionary heat equation
\begin{equation}\label{parabolic3}
\begin{aligned}
u_t - \Delta_{\mu}u = 0 \;\;\;(\mathcal{H}^2|_{D_1}+\mathcal{H}^2|_{D_2})\times l^1([0,T])-\text{a.e. in } (D_1 \cup D_2) \times [0,T] \\
(\nabla_{\mu} u,\eta) = 0 \;\text{ on } (\partial D_1 \cup \partial D_2) \times [0,T] \\
u = g \;\text{ on } (D_1 \cup D_2) \times \{0\}.
\end{aligned}
\end{equation}
Here $g \in A_T(\Delta_{\mu})_N$ and $\eta$ is a vector field such that $\eta\lfloor_{D_1}$ is the outer normal unit vector field to $D_1$ and $\eta\lfloor_{D_2}$ is the outer normal unit vector field to $D_2.$

By the result of Theorem \ref{existence}, there exists a unique function $u:[0,T]\to L^2_{\mu}$ which is a solution of the heat transfer issue \eqref{parabolic3} in the sense of Definition \ref{parabolic2}.
\end{przyk}

\chapter{Non-semigroup approach}	

The results of this chapter are established in our paper \cite{Cho24}.

{For the convenience of the readers, we shortly recall the formulation of weak problems considered in this chapter. In the classes of functions introduced in Definition \ref{paraclass} and for an elliptic matrix $B\in \mathcal{B}$ (Definition \ref{eli}) we define (in Definition \ref{weak13}) the bilinear form $E:\mathring{\mathcal{H}} \times \mathring{\mathcal{T}} \to \mathbb{R}$ by a formula $$E(u,v):= \int_0^T\int_{\Omega}(B\nabla_{\mu}u(t), \nabla_{\mu}v(t)) - u(t)v'(t)d\mu dt,$$ and a functional $F:\mathring{\mathcal{T}} \to \mathbb{R}$ defined as
$$F(v):=\int_0^T\int_{\Omega}f(t)v(t)d\mu dt + \int_{\Omega}u_0 v(0)d\mu,$$ where $f \in L^2(0,T;{L^2_{\mu}}),\; u_0 \in \mathring{L^2_{\mu}}$ and $v':=\frac{d}{dt}v.$ A function $u\in \mathring{\mathcal{H}}$ is a solution to weak problem (Definition \ref{weak3}) if it satisfies
$$E(u,\phi) = F(\phi)$$
for all $\phi \in \mathring{\mathcal{C}^{\infty}_0}.$}
	
 A fundamental drawback of the presented semigroup method is that it provides the existence of solutions in a narrow space of functions and under the assumption of a very regular initial input, see Theorem \ref{eqisemi} in the previous chapter. This is closely related to two aspects: a form of the considered second-order operator and the non-existence of the corresponding resolvent operator.

 The second-order operator $L_{\mu}$ was constructed to be consistent with the variational theory introduced in \cite{Bou02} and the operators used there. The operator $L_{\mu}$ needs restrictive conditions to be posed on function spaces to provide well-definiteness of it. Further restrictions are caused by the fact that the considered operator is non-invertible, implying that the resolvent operator does not exist (at least in a classical sense). Due to this, our method of constructing solutions is based on "forward" iterations of the considered operator. Such a method demands further significant restrictions on the considered space of functions.
	
 Now our goal is to touch the low-dimensional parabolic issues differently. Firstly ensuring that the class of solutions is wide enough, and later on, we try to deduce additional regularity of obtained solutions.
	
 In this chapter, we focus on constructing solutions to weak versions of parabolic problems with initial data of low regularity, and in further considerations, we show in what sense the regularity of it can be upgraded.
	We begin by introducing a notion of weak parabolic problems and by examining the existence of solutions in the first-order framework of \cite{Ryb20}.

 Throughout this chapter, we can loosen our restrictions on the class of considered low-dimensional structures. We need only to assume that $\mu \in \widetilde{\mathcal{S}}.$ This means that components of low-dimensional structures may have non-fixed dimensions.

We adapt the framework exposed, for instance, in book \cite{Sho97} to provide the existence of solutions to the weak parabolic problems on the low-dimensional structures.

We will need the next proposition to prove the existence of solutions to \eqref{parabolicweak}.

\begin{prop}\label{conde}
For any $\phi \in \mathring{\mathcal{T}}$ we have
\begin{itemize}
\item[a)] $\norm{\phi}_{L^2H^1_{\mu}} \leqslant \norm{\phi}_{{\mathcal{T}}},$
\item[b)] $E(\phi,\phi)\geqslant C\norm{\phi}^2_{{\mathcal{T}}},$ where $C>0$ is a constant independent of a choice of $\phi.$
\end{itemize}
\end{prop}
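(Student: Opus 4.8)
The plan is to treat the two items separately: item (a) is a one-line consequence of the definition of $\norm{\cdot}_{\mathcal{T}}$ in Definition \ref{paraclass}, while item (b) rests on an integration by parts in time that exposes the coercive structure hidden in the antisymmetric term $-\int_0^T\int_\Omega u v'\,d\mu\,dt$ of the form $E$. For (a) one simply notes that $\norm{\phi}^2_{\mathcal{T}} = \norm{\phi}^2_{L^2H^1_\mu} + \norm{\phi(0)}^2_{L^2_\mu} \geq \norm{\phi}^2_{L^2H^1_\mu}$, and takes square roots. For (b) the first and decisive step is to establish the identity
\[
E(\phi,\phi) \;=\; \int_0^T\int_{\Omega}(B\nabla_{\mu}\phi,\nabla_{\mu}\phi)\,d\mu\,dt \;+\; \tfrac12\norm{\phi(0)}^2_{L^2_{\mu}},
\]
which already displays $E(\phi,\phi)$ as a sum of two nonnegative quantities.

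To justify this identity I would recall that $\phi\in W^{1,2}(0,T;L^2_\mu)$ embeds continuously into $C([0,T];L^2_\mu)$ (a standard Bochner-space fact, see \cite{Sho97}), so that the traces $\phi(0),\phi(T)\in L^2_\mu$ are well defined; consequently $t\mapsto\norm{\phi(t)}^2_{L^2_\mu}$ is absolutely continuous on $[0,T]$ with derivative $2\int_\Omega\phi(t)\phi'(t)\,d\mu$ for a.e. $t$. Integrating from $0$ to $T$ and using $\phi(T)=0$ (valid since $\phi\in\mathring{\mathcal{T}}$) gives $\int_0^T\int_\Omega\phi\phi'\,d\mu\,dt = \tfrac12\bigl(\norm{\phi(T)}^2_{L^2_\mu}-\norm{\phi(0)}^2_{L^2_\mu}\bigr) = -\tfrac12\norm{\phi(0)}^2_{L^2_\mu}$; substituting into the definition of $E$ from Definition \ref{weak13} yields the displayed identity. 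Alternatively one may prove it first for smooth time-dependent $\phi$ and then pass to the limit by density, using continuity of both sides in the $W^{1,2}(0,T;L^2_\mu)\cap L^2(0,T;H^1_\mu)$ topology.

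The second step is coercivity. Since $B\in\mathcal{B}$, ellipticity (Definition \ref{eli}) provides $C_0>0$ with $(B(x)\xi,\xi)\geq C_0|\xi|^2$ for every $\xi\in\R^3$ and $\mu$-a.e. $x$, whence $\int_0^T\int_\Omega(B\nabla_\mu\phi,\nabla_\mu\phi)\,d\mu\,dt\geq C_0\norm{\nabla_\mu\phi}^2_{L^2L^2_\mu}$. For a.e. $t$ the slice $\phi(t)$ lies in $\mathring{H}^1_\mu$, hence has vanishing $\mu$-mean over $S=\supp\mu$; applying the Poincar\'e inequality slicewise (Lemma \ref{standpoin}) and integrating in $t$ gives $\norm{\phi}^2_{L^2L^2_\mu}\leq C_1\norm{\nabla_\mu\phi}^2_{L^2L^2_\mu}$. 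Combining with the identity of the first step,
\[
E(\phi,\phi) \;\geq\; C_0\norm{\nabla_\mu\phi}^2_{L^2L^2_\mu} + \tfrac12\norm{\phi(0)}^2_{L^2_\mu} \;\geq\; c\Bigl(\norm{\phi}^2_{L^2L^2_\mu}+\norm{\nabla_\mu\phi}^2_{L^2L^2_\mu}+\norm{\phi(0)}^2_{L^2_\mu}\Bigr) = c\,\norm{\phi}^2_{\mathcal{T}},
\]
with $c:=\min\bigl\{\tfrac12,\; C_0/(1+C_1)\bigr\}$, which is exactly (b).

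The step I expect to be the main obstacle is the slicewise Poincar\'e inequality. When $\mu$ has component manifolds of different dimensions the kernel of $\nabla_\mu$ is strictly larger than the constants — it is spanned by the characteristic functions of the maximal groups of components, cf. the remark following Proposition \ref{zero_extension} — so Lemma \ref{standpoin} is no longer available on all of $H^1_\mu$, and one must instead work with the generalized inequality of Lemma \ref{gpi} together with the normalization $\int_\Omega\phi(t)\,d\mu=0$ carried by $\mathring{H}^1_\mu$, controlling the finitely many residual kernel coefficients of each slice uniformly in $t$. A secondary, purely technical point is the rigorous Bochner-space justification of the integration by parts, which is handled by approximation with smooth time-dependent functions.
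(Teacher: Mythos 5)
Your proposal is correct and follows essentially the same route as the paper: part (a) is read off the definition of the $\mathcal{T}$-norm, and part (b) is obtained exactly as you do it — integration by parts in time using $\phi(T)=0$, ellipticity of $B\in\mathcal{B}$, and a Poincar\'e inequality applied to the zero-mean slices $\phi(t)$, giving $E(\phi,\phi)\geqslant C'\bigl(\norm{\nabla_{\mu}\phi}^2_{L^2L^2_{\mu}}+\norm{\phi(0)}^2_{L^2_{\mu}}\bigr)\geqslant C\norm{\phi}^2_{\mathcal{T}}$. The Poincar\'e step you single out as the main obstacle is handled in the paper precisely by invoking the generalized inequality of Lemma \ref{gpi} rather than Lemma \ref{standpoin}, i.e., the same resolution you indicate for the mixed-dimension case.
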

\begin{proof}
Point a) follows directly from the definitions of the related norms. This means that the space $\mathring{\mathcal{T}}$ embeds continuously in the space $\mathring{\mathcal{H}}.$ We move to the proof of point b).

Using integration by parts with respect to the time variable and ellipticity of the {matrix $B \in \mathcal{B}$ (Definition \ref{eli})}, we see that
$$E(\phi,\phi) = \int_0^T\int_{\Omega} (B\nabla_{\mu} \phi, \nabla_{\mu} \phi) d\mu dt - \int_0^T \int_{\Omega} \phi \phi' d\mu dt \geqslant C'\left(\norm{\nabla_{\mu} \phi}^2_{L^2L^2_{\mu}} + \norm{\phi(0)}^2_{L^2_{\mu}}\right)$$ with a constant $C'>0$ that does not depend on $\phi.$

Rewriting the $\mathcal{T}$-norm, we estimate
\begin{align*}
\norm{\phi}^2_{\mathcal{T}} &= \norm{\phi}^2_{L^2H^1_{\mu}}+\norm{\phi(0)}^2_{L^2_{\mu}}\\
&=\norm{\nabla_{\mu}\phi}^2_{L^2L^2_{\mu}}+\norm{\phi}^2_{L^2L^2_{\mu}}+\norm{\phi(0)}^2_{L^2_{\mu}} \\
&\leqslant (1+C_p)\norm{\nabla_{\mu}\phi}^2_{L^2L^2_{\mu}}+\norm{\phi(0)}^2_{L^2_{\mu}},
\end{align*}
where the constant $C_p$ comes from the generalized Poincar{\'e} inequality (formula \eqref{weakPoincar\'e} in \hbox{Section \ref{first_order}}). As a conclusion we derive that $C\norm{\phi}^2_{\mathcal{T}}\leqslant E(\phi,\phi).$
\end{proof}

Let us recall the Lions variant of the Lax-Milgram Lemma (see, for instance, \cite{Sho97}).

\begin{tw}\label{lions}
Let $\mathcal{M}$ be a Hilbert space equipped with the norm $\norm{\cdot}_{\mathcal{M}}$ and $\mathcal{N}$ with the norm $\norm{\cdot}_{\mathcal{N}}$ be a normed space. Let $H:\mathcal{M} \times \mathcal{N} \to \R$ be a bilinear form and assume that for any $\phi \in \mathcal{N}$ we have $H(\cdot,\phi)\in \mathcal{M}^*.$ Then the condition $$\inf_{\norm{\phi}_{\mathcal{N}}=1}\sup_{\norm{u}_{\mathcal{M}}\leqslant 1}|H(u,\phi)|\geqslant c >0$$ is equivalent to the fact that for any $F \in \mathcal{N}^*$ there exists $u \in \mathcal{M}$ such that for all $\phi \in \mathcal{N}$ we have $H(u,\phi)=F(\phi).$
\end{tw}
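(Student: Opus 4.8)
The plan is to package both the inf--sup quantity and the solvability statement in terms of a single linear operator built from $H$ via Riesz representation, and then to prove the two implications separately: one by a Hahn--Banach extension, the other by the closed graph theorem. \textbf{Reduction.} For each fixed $\phi\in\mathcal{N}$ the functional $H(\cdot,\phi)$ lies in $\mathcal{M}^*$, so the Riesz theorem provides a unique $A\phi\in\mathcal{M}$ with $H(u,\phi)=(u,A\phi)_{\mathcal{M}}$ for all $u\in\mathcal{M}$, and bilinearity of $H$ makes $A\colon\mathcal{N}\to\mathcal{M}$ linear. Since $\sup_{\norm{u}_{\mathcal{M}}\leqslant1}|H(u,\phi)|=\norm{A\phi}_{\mathcal{M}}$, the inf--sup condition is exactly the lower bound
\[
\norm{A\phi}_{\mathcal{M}}\geqslant c\,\norm{\phi}_{\mathcal{N}}\qquad\text{for all }\phi\in\mathcal{N},
\]
and solving $H(u,\cdot)=F$ means finding $u$ with $(u,A\phi)_{\mathcal{M}}=F(\phi)$ for all $\phi$. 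Write $\mathcal{M}_0:=\overline{R(A)}\subset\mathcal{M}$; since $H(u,\phi)=H(u_0,\phi)$ for $u=u_0+u_1$ with $u_0\in\mathcal{M}_0$, $u_1\perp\mathcal{M}_0$, every solvable problem has a solution in $\mathcal{M}_0$, and such a solution in $\mathcal{M}_0$ is unique because $u\in\mathcal{M}_0$ orthogonal to all $A\phi$ must vanish.

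\textbf{From the inf--sup bound to solvability.} Assume the displayed bound; it makes $A$ injective, so for $F\in\mathcal{N}^*$ the rule $G(A\phi):=F(\phi)$ genuinely defines a linear functional on $R(A)$, which is bounded because $|G(A\phi)|=|F(\phi)|\leqslant\norm{F}_{\mathcal{N}^*}\norm{\phi}_{\mathcal{N}}\leqslant c^{-1}\norm{F}_{\mathcal{N}^*}\norm{A\phi}_{\mathcal{M}}$. I would extend $G$ continuously to $\mathcal{M}_0$ and apply the Riesz theorem on that Hilbert space to obtain $u\in\mathcal{M}_0$ with $G(w)=(u,w)_{\mathcal{M}}$ for all $w\in\mathcal{M}_0$; then $H(u,\phi)=(u,A\phi)_{\mathcal{M}}=G(A\phi)=F(\phi)$ for every $\phi$, which is the required identity.

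\textbf{From solvability to the inf--sup bound.} Assume every $F\in\mathcal{N}^*$ is solvable; by the reduction there is a well-defined linear solution operator $R\colon\mathcal{N}^*\to\mathcal{M}_0$ assigning to $F$ the unique solution in $\mathcal{M}_0$. I would verify that $R$ has closed graph: if $F_n\to F$ in $\mathcal{N}^*$ and $RF_n\to w$ in $\mathcal{M}$, then $(w,A\phi)_{\mathcal{M}}=\lim_n(RF_n,A\phi)_{\mathcal{M}}=\lim_n F_n(\phi)=F(\phi)$ for each $\phi$, and $w\in\mathcal{M}_0$ as $\mathcal{M}_0$ is closed, so $w=RF$. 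Since $\mathcal{N}^*$ is complete (a dual of a normed space) and $\mathcal{M}_0$ is a closed subspace of a Hilbert space, the closed graph theorem yields $\norm{RF}_{\mathcal{M}}\leqslant C\norm{F}_{\mathcal{N}^*}$. Finally, for $\phi$ with $\norm{\phi}_{\mathcal{N}}=1$, Hahn--Banach gives $F\in\mathcal{N}^*$ with $\norm{F}_{\mathcal{N}^*}=1$ and $F(\phi)=1$; with $u:=RF$ one gets $1=F(\phi)=(u,A\phi)_{\mathcal{M}}\leqslant\norm{u}_{\mathcal{M}}\norm{A\phi}_{\mathcal{M}}\leqslant C\norm{A\phi}_{\mathcal{M}}$, so the bound holds with $c=1/C$.

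\textbf{Main obstacle.} The subtle direction is the second one: because $H$ is only assumed continuous in its first argument, $A$ need not be bounded and the solution of $H(u,\cdot)=F$ need not be unique, so one cannot simply invoke an open mapping theorem for the map $u\mapsto H(u,\cdot)$. The remedy is to pass to $\mathcal{M}_0=\overline{R(A)}$, where uniqueness is restored, and to apply the closed graph theorem to the resulting solution operator; establishing closedness of that graph, together with the completeness of $\mathcal{M}_0$ and of $\mathcal{N}^*$ (note $\mathcal{N}$ itself is merely normed), is exactly where the structural hypotheses are consumed.
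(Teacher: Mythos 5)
Your proof is correct. Note that the paper itself does not prove Theorem \ref{lions}; it only recalls the statement from \cite{Sho97}, so there is no internal argument to compare against, but your reconstruction is the standard one and all steps check out: the Riesz reduction $H(u,\phi)=(u,A\phi)_{\mathcal{M}}$ correctly identifies the inf--sup condition with $\norm{A\phi}_{\mathcal{M}}\geqslant c\norm{\phi}_{\mathcal{N}}$; the sufficiency direction via the bounded functional $G(A\phi):=F(\phi)$ on $R(A)$, its extension to $\overline{R(A)}$ and Riesz representation there is exactly the classical argument; and the necessity direction is handled soundly by restricting to the closed subspace $\mathcal{M}_0=\overline{R(A)}$ (where uniqueness of solutions holds), checking closedness of the graph of the solution operator $R:\mathcal{N}^*\to\mathcal{M}_0$ using only continuity of the inner product in its first slot, invoking the closed graph theorem between the Banach spaces $\mathcal{N}^*$ and $\mathcal{M}_0$, and finishing with a norm-attaining Hahn--Banach functional. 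You correctly isolate the genuine subtlety, namely that $A$ need not be bounded and solutions need not be unique in all of $\mathcal{M}$, and your remedy (passing to $\mathcal{M}_0$) resolves it; an equivalent alternative would be to derive the a priori bound from the uniform boundedness principle applied to the family $\{F\mapsto H(RF,\phi)\}$, but the closed graph route you chose is equally standard and complete.
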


The following proposition (see, e.g., \cite{Sho97}) serves as a useful criterion for verifying that one of the implications of the Lions Theorem is valid.

\begin{prop}\label{lionscol}
Let there exists a continuous embedding of $\mathcal{N}$ in $\mathcal{M}.$ If there is some positive constant $A$ such that $H(\phi,\phi)\geqslant A\norm{\phi}^2_{\mathcal{N}}$ for all $\phi \in \mathcal{N},$ then for any $F \in \mathcal{N}^*$ exists $u \in \mathcal{M}$ satisfying $H(u,\phi)=F(\phi)$ for all $\phi \in \mathcal{N}.$
\end{prop}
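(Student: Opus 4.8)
The plan is to obtain Proposition~\ref{lionscol} as an immediate consequence of Theorem~\ref{lions}: I will check that the coercivity bound $H(\phi,\phi)\geqslant A\norm{\phi}^2_{\mathcal{N}}$ together with the continuous embedding $\mathcal{N}\hookrightarrow\mathcal{M}$ forces the inf--sup condition
\[
\inf_{\norm{\phi}_{\mathcal{N}}=1}\ \sup_{\norm{u}_{\mathcal{M}}\leqslant 1}\,|H(u,\phi)|\ \geqslant\ c>0,
\]
and then invoke the equivalence stated in Theorem~\ref{lions}. Throughout I work under the standing hypotheses of that theorem, so $\mathcal{M}$ is Hilbert, $\mathcal{N}$ is normed, and $H(\cdot,\phi)\in\mathcal{M}^*$ for each $\phi\in\mathcal{N}$; the continuous embedding means every $\phi\in\mathcal{N}$ is also a vector of $\mathcal{M}$ and there is a constant $c_0>0$ with $\norm{\phi}_{\mathcal{M}}\leqslant c_0\norm{\phi}_{\mathcal{N}}$ for all $\phi\in\mathcal{N}$.

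First I would fix an arbitrary $\phi\in\mathcal{N}$ with $\norm{\phi}_{\mathcal{N}}=1$ and take as a competitor in the supremum the vector $w:=\phi/\max(1,c_0)\in\mathcal{M}$. It is admissible because $\norm{w}_{\mathcal{M}}=\norm{\phi}_{\mathcal{M}}/\max(1,c_0)\leqslant c_0/\max(1,c_0)\leqslant 1$. Using bilinearity of $H$ and the coercivity hypothesis (which in particular gives $H(\phi,\phi)\geqslant 0$), one obtains
\[
\sup_{\norm{u}_{\mathcal{M}}\leqslant 1}|H(u,\phi)|\ \geqslant\ |H(w,\phi)|\ =\ \frac{H(\phi,\phi)}{\max(1,c_0)}\ \geqslant\ \frac{A\norm{\phi}^2_{\mathcal{N}}}{\max(1,c_0)}\ =\ \frac{A}{\max(1,c_0)}\ =:\ c\ >\ 0.
\]
Since $c$ does not depend on the chosen unit vector $\phi$, passing to the infimum over all such $\phi$ yields the inf--sup inequality with this constant.

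Finally, Theorem~\ref{lions} asserts that this inf--sup condition is \emph{equivalent} to the solvability statement we want: for every $F\in\mathcal{N}^*$ there is $u\in\mathcal{M}$ with $H(u,\phi)=F(\phi)$ for all $\phi\in\mathcal{N}$. Applying the forward implication of that equivalence finishes the proof. I do not expect any genuine obstacle here: the argument is entirely soft. The only minor point of care is treating the embedding constant uniformly in both regimes $c_0\leqslant 1$ and $c_0>1$, which is exactly what the factor $\max(1,c_0)$ accomplishes; one must also keep in mind that the requirement $H(\cdot,\phi)\in\mathcal{M}^*$, needed to invoke Theorem~\ref{lions}, belongs to the ambient setting — in the intended application $\mathcal{M}=\mathring{\mathcal{H}}$, $\mathcal{N}=\mathring{\mathcal{T}}$, $H=E$ it follows from Cauchy--Schwarz and boundedness of $B$, the continuous embedding is Proposition~\ref{conde}(a), and the coercivity hypothesis is Proposition~\ref{conde}(b).
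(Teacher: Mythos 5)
Your argument is correct: the coercivity bound together with the embedding constant does yield the inf--sup condition of Theorem~\ref{lions} with $c=A/\max(1,c_0)$ (the competitor $w=\phi/\max(1,c_0)$ is admissible and $\|\phi\|_{\mathcal{M}}>0$ is automatic from coercivity), and the forward implication of that theorem then gives solvability. The paper itself states Proposition~\ref{lionscol} without proof, citing \cite{Sho97}, and your derivation is precisely the standard argument from that reference, so there is nothing further to compare.
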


Now we are prepared to deal with the well-posedness of problem \eqref{parabolicweak}.

\begin{tw}\label{weakpara}
Let $f \in L^2(0,T;{L^2_{\mu}})$ and $u_0 \in \mathring{L^2_{\mu}}.$ The problem \eqref{parabolicweak} has exactly only one solution $u \in \mathring{\mathcal{H}}.$
\end{tw}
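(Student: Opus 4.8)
The plan is to apply the Lions variant of the Lax--Milgram Lemma (Theorem \ref{lions}) together with its corollary (Proposition \ref{lionscol}) to the bilinear form $E:\mathring{\mathcal{H}}\times\mathring{\mathcal{T}}\to\mathbb{R}$ and the functional $F:\mathring{\mathcal{T}}\to\mathbb{R}$ of Definition \ref{weak13}. For existence, I take $\mathcal{M}=\mathring{\mathcal{H}}$ with its norm $\|\cdot\|_{L^2H^1_\mu}$ and $\mathcal{N}=\mathring{\mathcal{T}}$ with the norm $\|\cdot\|_{\mathcal{T}}$. I would first check the hypotheses of Proposition \ref{lionscol}: (i) the continuous embedding $\mathring{\mathcal{T}}\hookrightarrow\mathring{\mathcal{H}}$, which is exactly part a) of Proposition \ref{conde}; (ii) the coercivity on the diagonal $E(\phi,\phi)\geqslant C\|\phi\|_{\mathcal{T}}^2$, which is exactly part b) of Proposition \ref{conde}; and (iii) that $F\in\mathring{\mathcal{T}}^{*}$. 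The last point is a short estimate: by Cauchy--Schwarz, $|F(v)|\leqslant \|f\|_{L^2L^2_\mu}\|v\|_{L^2L^2_\mu}+\|u_0\|_{L^2_\mu}\|v(0)\|_{L^2_\mu}\leqslant C(\|f\|_{L^2L^2_\mu}+\|u_0\|_{L^2_\mu})\|v\|_{\mathcal{T}}$, using $\|v\|_{L^2L^2_\mu}\leqslant\|v\|_{L^2H^1_\mu}\leqslant\|v\|_{\mathcal{T}}$ and the definition of the $\mathcal{T}$-norm which contains $\|v(0)\|_{L^2_\mu}$. Also $E(\cdot,\phi)$ is, for fixed $\phi\in\mathring{\mathcal{T}}$, a bounded linear functional on $\mathring{\mathcal{H}}$ because $|E(u,\phi)|\leqslant \|B\|_\infty\|\nabla_\mu u\|_{L^2L^2_\mu}\|\nabla_\mu\phi\|_{L^2L^2_\mu}+\|u\|_{L^2L^2_\mu}\|\phi'\|_{L^2L^2_\mu}\leqslant C_\phi\|u\|_{\mathring{\mathcal{H}}}$. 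Proposition \ref{lionscol} then yields a $u\in\mathring{\mathcal{H}}$ with $E(u,\phi)=F(\phi)$ for all $\phi\in\mathring{\mathcal{T}}$; restricting the test functions to $\mathring{\mathcal{C}^\infty_0}\subset\mathring{\mathcal{T}}$ gives a solution of \eqref{parabolicweak} in the sense of Definition \ref{weak3}.

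For uniqueness I argue by linearity: if $u_1,u_2\in\mathring{\mathcal{H}}$ are two solutions, then $w:=u_1-u_2$ satisfies $E(w,\phi)=0$ for all $\phi\in\mathring{\mathcal{C}^\infty_0}$. Since $E(w,\cdot)$ is continuous on $\mathring{\mathcal{T}}$ (for the reasons above, applied in the second slot, which requires only Cauchy--Schwarz and $w\in\mathring{\mathcal{H}}$) and $\mathring{\mathcal{C}^\infty_0}$ is dense in $\mathring{\mathcal{T}}$, the identity extends to all $\phi\in\mathring{\mathcal{T}}$. Here I need density of $\mathring{\mathcal{C}^\infty_0}$ in $\mathring{\mathcal{T}}$, which follows from density of $C^\infty_c(\R^3)$ in $H^1_\mu$ (the defining property of $H^1_\mu$) together with density of smooth time-dependent functions vanishing at $T$ in the $W^{1,2}(0,T;L^2_\mu)$-component, and a standard diagonal/mollification argument respecting the zero-mean constraint. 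In particular $w$ itself need not be admissible as a test function, so I cannot simply plug $\phi=w$; instead I choose $\phi=w$ in the extended identity only if $w\in\mathring{\mathcal{T}}$, which is not guaranteed, so I should instead invoke the uniqueness half of Theorem \ref{lions}: by Proposition \ref{conde}b) the inf--sup condition $\inf_{\|\phi\|_{\mathcal{N}}=1}\sup_{\|u\|_{\mathcal{M}}\leqslant1}|E(u,\phi)|\geqslant c>0$ holds (take $u=\phi/\|\phi\|_{\mathring{\mathcal{H}}}$ and use $E(\phi,\phi)\geqslant C\|\phi\|_{\mathcal{T}}^2\geqslant C\|\phi\|_{\mathring{\mathcal{H}}}\|\phi\|_{\mathcal{T}}$ after normalizing), and the Lions theorem is stated as an equivalence, so existence is clear; for uniqueness I use that the extended identity $E(w,\phi)=0$ for all $\phi\in\mathring{\mathcal{T}}$ forces $w=0$ via a duality/density argument — concretely, approximating in time and testing against $\phi_\varepsilon$ built from $w$ mollified and truncated near $t=T$, then passing to the limit using the energy estimate of Proposition \ref{conde}b) to get $\|w\|_{\mathring{\mathcal{H}}}=0$.

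The main obstacle I anticipate is the uniqueness argument, specifically the fact that a solution $u\in\mathring{\mathcal{H}}$ need not have a time derivative in $L^2(0,T;L^2_\mu)$, so it is not itself an admissible test function in $\mathring{\mathcal{T}}$, and one cannot directly test the equation against the solution. This is the classical difficulty with the Lions (non-symmetric, unbalanced) formulation of parabolic problems. The remedy is either (a) a Galerkin-type or regularization argument showing that the particular $u$ produced by Theorem \ref{lions} does satisfy an energy identity, or (b) proving that the inf--sup constant from Proposition \ref{conde}b) yields uniqueness directly: if $E(w,\phi)=0$ for all $\phi\in\mathring{\mathcal{T}}$ and $w\in\mathring{\mathcal{H}}$, pick a sequence $\phi_n\in\mathring{\mathcal{T}}$ with $\|\phi_n\|_{\mathcal{T}}=1$ and $\sup_{\|v\|_{\mathring{\mathcal{H}}}\leqslant1}|E(v,\phi_n)|\to \inf(\dots)>0$; but more cleanly, observe that the map $u\mapsto E(u,\cdot)$ from $\mathring{\mathcal{H}}$ into $\mathring{\mathcal{T}}^{*}$ is injective precisely because of the lower bound $\sup_{\|v\|\leqslant1}|E(v,\phi)|$ being bounded below after transposing — this is the content of the "equivalence" in Theorem \ref{lions}, so uniqueness of $u$ follows from the statement of that theorem applied to the transposed form. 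I would phrase the final write-up so that both existence and uniqueness are read off from Theorem \ref{lions} and Proposition \ref{conde}, keeping the density of $\mathring{\mathcal{C}^\infty_0}$ in $\mathring{\mathcal{T}}$ as the one auxiliary lemma to record.
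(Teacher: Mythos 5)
Your existence argument is exactly the paper's: Proposition \ref{conde} verifies the hypotheses of Proposition \ref{lionscol} (continuous embedding of $\mathring{\mathcal{T}}$ into $\mathring{\mathcal{H}}$ and coercivity on the diagonal), and the routine checks you add ($F\in\mathring{\mathcal{T}}^{*}$, $E(\cdot,\phi)\in\mathring{\mathcal{H}}^{*}$) are correct and merely make explicit what the paper leaves implicit. So the first half is fine.

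The uniqueness half has a genuine gap, and it sits precisely where you yourself locate the difficulty. Your main proposed justification --- that injectivity of $u\mapsto E(u,\cdot)$ ``is the content of the equivalence in Theorem \ref{lions}, applied to the transposed form'' --- is not available. Theorem \ref{lions} is an existence statement: the condition $\inf_{\|\phi\|_{\mathcal N}=1}\sup_{\|u\|_{\mathcal M}\leqslant 1}|H(u,\phi)|\geqslant c>0$ (infimum over the \emph{test} space, supremum over the \emph{trial} space) is equivalent to solvability of $H(u,\phi)=F(\phi)$ for every $F\in\mathcal N^{*}$, i.e.\ to surjectivity; it does not imply that $E(w,\phi)=0$ for all $\phi\in\mathring{\mathcal{T}}$ forces $w=0$. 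For that you would need the transposed inf--sup (infimum over $u\in\mathring{\mathcal{H}}$, supremum over $\phi\in\mathring{\mathcal{T}}$), which Proposition \ref{conde} does not give, since its coercivity is only on the diagonal and only for $\phi\in\mathring{\mathcal{T}}$, while a general $w\in\mathring{\mathcal{H}}$ need not lie in $\mathring{\mathcal{T}}$. Your fallback sketch (mollify and truncate near $t=T$, pass to the limit) is pointing in the right direction but omits the step that makes it work: from $E(w,\phi)=0$ tested against $\phi\in C^{\infty}_c\bigl((0,T);C^{\infty}_c(\R^3)\bigr)$ one first reads off that $w'=-\mathcal{A}w$ holds distributionally with values in $(\mathring{H^1_{\mu}})^{*}$, hence $w\in L^2(0,T;\mathring{H^1_{\mu}})$ with $w'\in L^2(0,T;(\mathring{H^1_{\mu}})^{*})$, which gives $w\in C([0,T];L^2_{\mu})$, recovers $w(0)=0$ from the weak formulation, and legitimises the energy identity $\tfrac12\|w(t)\|^2_{L^2_{\mu}}+\int_0^t\int_{\Omega}(B\nabla_{\mu}w,\nabla_{\mu}w)\,d\mu\,ds=0$, whence $w\equiv0$. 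This regularity upgrade plus energy identity is exactly what the paper outsources: it observes that Proposition \ref{conde}b) is stronger than monotonicity of the operator and invokes \cite[Prop.~2.3]{Sho97} for uniqueness. Either cite that result, as the paper does, or carry out the $w'\in(\mathring{H^1_{\mu}})^{*}$ argument in full; as written, your uniqueness proof does not close. (Your auxiliary density lemma, $\mathring{\mathcal{C}^{\infty}_0}$ dense in $\mathring{\mathcal{T}}$, is a reasonable thing to record and is needed to pass from the test class of Definition \ref{weak3} to all of $\mathring{\mathcal{T}}$, but it does not by itself resolve the issue above.)
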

\begin{proof}
Proposition \ref{conde} implies that the assumption of Proposition \ref{lionscol} is satisfied. Thus, by applying the Lions Theorem (Theorem \ref{lions}), the existential part is done.

To derive the uniqueness of solutions notice that point b) of Proposition \ref{conde} is stricter than the monotonicity of the operator. This implies that the uniqueness is provided by applying  \cite[Prop. 2.3]{Sho97}.
\end{proof}

As the existence of weak solutions is already discussed, our next goal is to analyse their regularity.

\section{More regular solutions}\label{51}

Before we proceed with the strict reasoning, let us begin with the following, at this moment only formal, computations:
\begin{align*}
\int_{\Omega} \nabla_{\mu} u \cdot \nabla_{\mu}v d\mu 
&= \int_{E_1}\nabla_{\mu} u \cdot \nabla_{\mu}v d\bar{x} + \int_{E_2}\nabla_{\mu} u \cdot \nabla_{\mu}v d\bar{x} \\
&= -\int_{E_1} \Delta u v d\bar{x} + \int_{\partial E_1} \frac{\partial u}{\partial n} v d\sigma - \int_{E_2} \Delta u v d\bar{x} + \int_{\partial E_2} \frac{\partial u}{\partial n} v d\sigma \\
&= - \int_{\Omega} Au v d\mu + \int_{\partial E} \frac{\partial u}{\partial n} v d\sigma,
\end{align*}
here we denote $A:= \Delta_{E_1}+ \Delta_{E_2}.$

It should be clear that the above computations make sense only when we additionally assume ${u\in H^2(E_i),\; i=1,2}$. Moreover, introducing the zero Neumann boundary condition on each component $E_i,\; i=1,2$, it follows by continuity of the operator 
\[
A:H^2(E_1)\times H^2(E_2) \to L^2_{\mu}
\] 
that 
\[\int_{\Omega} \nabla_{\mu} u \nabla_{\mu}v d\mu \leqslant C \norm{v}_{L^2_{\mu}},
\]
for all $v \in H^1_{\mu}.$ 

\begin{defi}\label{strop}
Let $D(\mathcal{A}) \subset \mathring{H^1_{\mu}}$ be a subspace such that for $u \in D(\mathcal{A})$ we have 
\[
{\int_{\Omega} (B\nabla_{\mu} u, \nabla_{\mu} v) d\mu \leqslant C \norm{v}_{L^2_{\mu}}}
\]
for all $v \in \mathring{H^1_{\mu}},$ {some positive constant $C$ and a matrix $B\in \mathcal{B}.$}

We define the operator ${\mathcal{A}:D(\mathcal{A})\to L^2_{\mu}}$ by the equality 
\[
\int_{\Omega} (B\nabla_{\mu} u, \nabla_{\mu} v)\, d\mu = \int_{\Omega} (\mathcal{A}u) v\, d\mu
\]
satisfied for all $v \in \mathring{H^1_{\mu}}.$ Existence of the element $\mathcal{A}u$ is guaranteed by the Riesz Representation Theorem for functionals on a Hilbert space.
\end{defi}	

Let us point out that the operator $\mathcal{A}$ is consistent with the integration by parts formula. We do not assume enough regularity on $u$ to ensure the existence of the normal trace in the classical sense. We must follow the generalized approach by applying the results established in \cite{Che01}. This theory provides the existence of the normal trace and the operator $\mathcal{A}$ in the sense of distributions or measures.

Further, by a condition from the definition of the domain $D(\mathcal{A})$ (Definition \ref{strop}), we represent the normal trace and other terms of the integration by parts formula (we refer to \cite[Thm. 2.2]{Che01}) as functionals on adequate variants of $L^2$ spaces.

In this way, we obtain 
\[
\int_{\Omega} \mathcal{A}u v \,d\mu + \int_{\partial \Omega}[B\nabla_{\mu}u,\nu]\lfloor_{\partial \Omega} v\, d\sigma \leqslant C \norm{v}_{L^2_{\mu}}.
\]
If $[B\nabla_{\mu}u,\nu]\lfloor_{\partial \Omega}$ is non-zero somewhere on $\partial \Omega,$ then we can find a sequence of continuous functions $v_n \in \mathring{H^1_{\mu}}$ such that the second integral on the left side of the inequality diverges and the other terms on both sides are bounded. This indicates that in fact the operator $\mathcal{A}$ satisfies 
\[
\int_{\Omega} B\nabla_{\mu} u \nabla_{\mu} v\, d\mu = \int_{\Omega} \mathcal{A}u v\, d\mu.
\]

Finally, let us observe that 
\[
{\{u \in \mathring{H^1_{\mu}}: u|_{E_i}\in H^2(E_i),\; [B\nabla_{\mu}u,\nu]\lfloor_{\partial \Omega}=0,\; i=1,2\} \subset D(\mathcal{A}).}
\]

\begin{uw}\label{self}
The operator $\mathcal{A}:D(\mathcal{A}) \to L^2_{\mu}$ is selfadjoint. This follows directly from the symmetry of the matrix operator $B(\bar{x})$ for $\mu$-a.e. $\bar{x}\in E$ and the symmetry of the bilinear form $(\cdot,\cdot)_{L^2_{\mu}}.$
\end{uw}

Using the theorem of Lions - Theorem \ref{lions}, we prove the existence of strong solutions in the sense of the operator $\mathcal{A}.$

\begin{prop}\label{exiw}
Let $f \in L^2(0,T;{L^2_{\mu}}), u_0 \in \mathring{H^1_{\mu}}.$ Then there exists a unique function \hbox{$u \in W^{1,2}(0,T;{L^2_{\mu}})$} satisfying $u' + \mathcal{A}u = f$ in $L^2(0,T;L^2_{\mu})$ with $u(0)=u_0.$ Moreover, for almost every $t \in (0,T)$ there is $u(t) \in D(\mathcal{A})$.
\end{prop}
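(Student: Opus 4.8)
The plan is to recognize Proposition~\ref{exiw} as an abstract parabolic Cauchy problem governed by the self-adjoint, coercive operator $\mathcal{A}$, and to apply the Lions-type machinery already invoked in the proof of Theorem~\ref{weakpara}, now in the stronger space $W^{1,2}(0,T;L^2_\mu)$. First I would set up the correct pair of Hilbert/normed spaces: take $\mathcal{M} := W^{1,2}(0,T;L^2_\mu) \cap L^2(0,T;\mathring{H}^1_\mu)$ (functions with the initial trace free) and $\mathcal{N} := \{v \in \mathcal{M} : v(T) = 0\}$, mirroring the roles of $\mathring{\mathcal{H}}$ and $\mathring{\mathcal{T}}$ but one derivative higher. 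Define the bilinear form
$$
H(u,v) := \int_0^T \int_\Omega \bigl( u'(t) v(t) + (B\nabla_\mu u(t), \nabla_\mu v(t)) \bigr)\, d\mu\, dt
$$
and the functional $F(v) := \int_0^T \int_\Omega f(t) v(t)\, d\mu\, dt + \int_\Omega u_0 v(0)\, d\mu$. A weak solution of $u' + \mathcal{A}u = f$, $u(0) = u_0$ is then exactly a $u \in \mathcal{M}$ with $H(u,v) = F(v)$ for all $v \in \mathcal{N}$.

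The next step is to verify the inf--sup (coercivity) hypothesis of Theorem~\ref{lions}, for which I would use the criterion of Proposition~\ref{lionscol}: it suffices to find $A > 0$ with $H(v,v) \geq A \|v\|_{\mathcal{N}}^2$ for all $v \in \mathcal{N}$. Integration by parts in time gives $\int_0^T \int_\Omega v' v\, d\mu\, dt = \tfrac12 \|v(T)\|_{L^2_\mu}^2 - \tfrac12 \|v(0)\|_{L^2_\mu}^2$; but here one instead tests with $v$ ranging in $\mathcal{N}$ where $v(T)=0$ is not assumed for the \emph{trial} space — so I would be careful to put the $v(T)=0$ constraint on the \emph{test} space $\mathcal{N}$ and keep the trial space $\mathcal{M}$ free, exactly as in Definition~\ref{paraclass} and the proof of Proposition~\ref{conde}. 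Coercivity on $\mathcal{N}$ then follows from ellipticity of $B$ (Definition~\ref{eli}), the generalized Poincar\'e inequality~\eqref{weakPoincar\'e}, and the identity $\int_0^T\int_\Omega v'v\,d\mu\,dt = -\tfrac12\|v(0)\|^2_{L^2_\mu}$ valid on $\mathcal{N}$, giving $H(v,v) \geq C'(\|\nabla_\mu v\|^2_{L^2L^2_\mu} + \|v(0)\|^2_{L^2_\mu}) \geq C\|v\|^2_{\mathcal{N}}$; continuity of $F$ on $\mathcal{N}$ is immediate from $f \in L^2(0,T;L^2_\mu)$ and $u_0 \in L^2_\mu$ together with the continuous embedding $\mathcal{N} \hookrightarrow C([0,T];L^2_\mu)$. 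This yields existence of $u \in \mathcal{M}$, hence $u \in W^{1,2}(0,T;L^2_\mu)$, solving the equation in $L^2(0,T;L^2_\mu)$ and attaining $u(0) = u_0$; uniqueness follows from the coercivity estimate exactly as in Theorem~\ref{weakpara}, via \cite[Prop.~2.3]{Sho97}, using that $\mathcal{A}$ is monotone (indeed self-adjoint and coercive, Remark~\ref{self}).

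The remaining, and I expect main, obstacle is the last claim: $u(t) \in D(\mathcal{A})$ for a.e.\ $t \in (0,T)$. From $u' + \mathcal{A}u = f$ in $L^2(0,T;L^2_\mu)$ we know $u' \in L^2(0,T;L^2_\mu)$ and $f \in L^2(0,T;L^2_\mu)$, so $\mathcal{A}u = f - u' \in L^2(0,T;L^2_\mu)$; the issue is purely measure-theoretic — one must pass from "$\mathcal{A}u(\cdot)$ is an $L^2_\mu$-valued $L^2$-in-time function defined by the weak identity" to "for a.e.\ $t$, $u(t) \in D(\mathcal{A})$ and $\mathcal{A}(u(t)) = f(t) - u'(t)$". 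Here I would exploit that $\mathcal{A}$, as the operator associated with the closed, symmetric, coercive form $a(u,v) = \int_\Omega (B\nabla_\mu u, \nabla_\mu v)\, d\mu$ on $\mathring{H}^1_\mu$, is a closed operator on $L^2_\mu$ (its graph is closed), so that for a.e.\ $t$ the pair $(u(t), f(t)-u'(t))$ lies in the graph; more concretely, since $u \in L^2(0,T;\mathring{H}^1_\mu)$ and $t \mapsto f(t)-u'(t) \in L^2_\mu$ represents the functional $v \mapsto a(u(t),v)$ for a.e.\ $t$ (this identification of time slices is the delicate point, handled by testing the weak identity against $v(t) = \varphi(t)\psi$ with $\varphi \in C^\infty_c(0,T)$, $\psi \in \mathring{H}^1_\mu$, and invoking the fundamental lemma of the calculus of variations in $t$), the bound $|a(u(t),v)| \leq \|f(t)-u'(t)\|_{L^2_\mu}\|v\|_{L^2_\mu}$ holds for a.e.\ $t$ and all $v$, which is precisely the defining condition for $u(t) \in D(\mathcal{A})$ in Definition~\ref{strop}. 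Finally, the Neumann boundary condition built into $D(\mathcal{A})$ is automatically encoded, as explained in the text following Definition~\ref{strop}, because the normal-trace functional $[B\nabla_\mu u(t),\nu]\lfloor_{\partial\Omega}$ must vanish for the estimate against $\|v\|_{L^2_\mu}$ to hold; I would invoke that discussion (based on \cite[Thm.~2.2]{Che01}) rather than reprove it.
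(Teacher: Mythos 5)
There is a genuine gap, and it sits exactly where the proposition gets its strength. First, your variational formulation is inconsistent: with the time derivative on the \emph{trial} function, $H(u,v)=\int_0^T\int_\Omega u'v+(B\nabla_\mu u,\nabla_\mu v)\,d\mu\,dt$, a function solving $u'+\mathcal{A}u=f$ a.e.\ satisfies $H(u,v)=\int_0^T\int_\Omega fv\,d\mu\,dt$ for every $v$, so demanding $H(u,v)=F(v)$ with the extra term $\int_\Omega u_0v(0)\,d\mu$ forces $u_0=0$ (integrate by parts in time with $v(T)=0$ and localize in $t$ to see this); the combination "derivative on $u$" plus "initial datum in $F$" does not encode $u(0)=u_0$ — that bookkeeping only works when the derivative is moved to the test function, as in Definition \ref{weak13}, which is precisely the weak problem and only yields $u\in\mathring{\mathcal{H}}$. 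Second, the coercivity you claim is false: on $\mathcal{N}$ one has $\int_0^T\int_\Omega v'v\,d\mu\,dt=-\tfrac12\|v(0)\|^2_{L^2_\mu}$, so $H(v,v)=\int_0^T\int_\Omega(B\nabla_\mu v,\nabla_\mu v)\,d\mu\,dt-\tfrac12\|v(0)\|^2_{L^2_\mu}$; the initial-value term enters with the \emph{wrong} sign (contrast Proposition \ref{conde}, where the form $E$ carries $-uv'$ and the sign flips favourably). Worse, $H(v,v)$ contains no term controlling $\|v'\|_{L^2(0,T;L^2_\mu)}$, while Proposition \ref{lionscol} requires both coercivity in the $\mathcal{N}$-norm and a continuous embedding $\mathcal{N}\hookrightarrow\mathcal{M}$, which forces the $\mathcal{N}$-norm to dominate the $W^{1,2}(0,T;L^2_\mu)$-norm. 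So coercivity of $H$ in that norm is impossible, and the Lions/inf-sup route, as you have set it up, cannot produce $u'\in L^2(0,T;L^2_\mu)$.

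The missing ingredient is the one the paper actually uses: the extra regularity comes from the \emph{symmetry} of the form, i.e.\ the self-adjointness of $\mathcal{A}$ (Remark \ref{self}), together with $u_0$ lying in the form domain $\mathring{H}^1_\mu$. The paper's proof simply observes that this places the problem in the scope of \cite[Prop.~2.5]{Sho97}, whose proof rests on energy estimates obtained by testing with $u'$ (equivalently, on the spectral theorem for the self-adjoint operator), not on coercivity of a space-time bilinear form. If you want a self-contained argument, that is the route to take: a Galerkin or spectral approximation, the identity $\|u_m'\|^2_{L^2_\mu}+\tfrac12\tfrac{d}{dt}\int_\Omega(B\nabla_\mu u_m,\nabla_\mu u_m)\,d\mu=\int_\Omega fu_m'\,d\mu$, and passage to the limit. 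Your final step — slicing the equation in time, using closedness of $\mathcal{A}$ and the bound $|a(u(t),v)|\le\|f(t)-u'(t)\|_{L^2_\mu}\|v\|_{L^2_\mu}$ to conclude $u(t)\in D(\mathcal{A})$ a.e.\ — is sound in spirit, but it presupposes the $W^{1,2}$ existence result that the flawed part was meant to supply.
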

\begin{proof}
By a fact that the operator $\mathcal{A}$ is obtained from the non-negative, symmetric bilinear form, see Definition \ref{strop}, and due to its self-adjointness (see Remark \ref{self} above) we are in a case covered by \cite[Prop. 2.5]{Sho97}. It allows adapting {the Lions Theorem \ref{lions}} to the equation with the operator $\mathcal{A}.$ In this way we obtain existence of unique function $u$ such that equality $u' + \mathcal{A}u = f$ is satisfied in the norm of the space $L^2(0,T;L^2_{\mu})$.
\end{proof}

Now we focus on extending a class of accessible initial data $u_0$ and discuss remarks dealing with the regularity of the obtained solution.

\begin{prop}
A solution determined in Proposition \ref{exiw} is a member of the space of functions $C([0,T];\mathring{H^1_{\mu}}).$
\end{prop}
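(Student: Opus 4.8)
The plan is to prove the standard ``$u_0$ in the form domain $\Rightarrow$ $u$ continuous in the form domain'' regularity statement by a parabolic energy identity, and then to pass from weak continuity together with continuity of the energy to strong continuity.

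First I would record two preliminary facts. By the ellipticity of $B \in \mathcal{B}$ (Definition \ref{eli}) and the generalised Poincar\'e inequality (Lemma \ref{gpi}), the symmetric bilinear form $a(v,w):=\int_{\Omega}(B\nabla_{\mu}v,\nabla_{\mu}w)\,d\mu$ is, on the subspace $\mathring{H^1_{\mu}}$, an inner product whose norm $a(v,v)^{1/2}$ is equivalent to $\norm{v}_{\mathring{H^1_{\mu}}}$; in particular $\bigl(\mathring{H^1_{\mu}},a\bigr)$ is a Hilbert space, equivalently normed to $H^1_{\mu}$. Secondly, the solution $u$ of Proposition \ref{exiw} satisfies $u \in W^{1,2}(0,T;L^2_{\mu}) \hookrightarrow C([0,T];L^2_{\mu})$, $u(0)=u_0 \in \mathring{H^1_{\mu}}$, $u(t) \in D(\mathcal{A})$ for a.e.\ $t$, and $\mathcal{A}u = f-u' \in L^2(0,T;L^2_{\mu})$.

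The core of the argument is the energy identity: for a.e.\ $t \in (0,T)$, using the self-adjointness of $\mathcal{A}$ together with the fact that $a$ is the form generating $\mathcal{A}$ (Remark \ref{self}, Definition \ref{strop}),
\[
\frac{d}{dt}\Bigl(\tfrac12 a(u(t),u(t))\Bigr) = a\bigl(u(t),u'(t)\bigr) = \bigl(\mathcal{A}u(t),u'(t)\bigr)_{L^2_{\mu}} = \bigl(f(t)-u'(t),u'(t)\bigr)_{L^2_{\mu}},
\]
so that $\frac{d}{dt}a(u(t),u(t)) + \norm{u'(t)}^2_{L^2_{\mu}} \leqslant \norm{f(t)}^2_{L^2_{\mu}}$. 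Integrating over $[0,t]$ and using $u_0 \in \mathring{H^1_{\mu}}$ and $f \in L^2(0,T;L^2_{\mu})$ gives $a(u(t),u(t)) \leqslant a(u_0,u_0) + \int_0^T \norm{f(s)}^2_{L^2_{\mu}}\,ds < \infty$ for every $t$, whence $u \in L^{\infty}(0,T;\mathring{H^1_{\mu}})$; moreover the right-hand side of the differential inequality lies in $L^1(0,T)$, so $t \mapsto a(u(t),u(t))$ is absolutely continuous, in particular continuous on $[0,T]$. Making the chain-rule identity above rigorous at this level of regularity is the main technical point; it holds in the evolution-triple setting by the Lions--Magenes lemma (as in \cite{Sho97}), and can alternatively be obtained by approximating $u_0$ in $\mathring{H^1_{\mu}}$ by data in $D(\mathcal{A})$ — for which, since the self-adjoint non-negative operator $-\mathcal{A}$ generates a strongly continuous semigroup on $\mathring{L^2_{\mu}}$, the corresponding solutions are smooth in time — verifying the identity for those approximations and passing to the limit via Proposition \ref{exiw}.

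It remains to upgrade to strong continuity into $\mathring{H^1_{\mu}}$. Since $u \in C([0,T];L^2_{\mu})$ and $\sup_{t}\norm{u(t)}_{H^1_{\mu}} < \infty$, the map $t \mapsto u(t)$ is weakly continuous into $\mathring{H^1_{\mu}}$: for $t_n \to t$ the bounded sequence $u(t_n)$ has weak limit points in $\mathring{H^1_{\mu}}$, and each such limit point must coincide with $u(t)$ because $u(t_n) \to u(t)$ already in $L^2_{\mu}$, so $u(t_n) \rightharpoonup u(t)$. Combining this with the continuity of $t \mapsto a(u(t),u(t)) = \norm{u(t)}^2_a$ established above, we get $u(t_n) \rightharpoonup u(t)$ and $\norm{u(t_n)}_a \to \norm{u(t)}_a$ in the Hilbert space $\bigl(\mathring{H^1_{\mu}},a\bigr)$, which forces $u(t_n) \to u(t)$ strongly. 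Hence $u \in C([0,T];\mathring{H^1_{\mu}})$. Finally, the zero-mean constraint persists in time because $\int_{\Omega}\mathcal{A}u(t)\,d\mu = 0$, hence $\frac{d}{dt}\int_{\Omega} u(t)\,d\mu = 0$, so $\int_{\Omega}u(t)\,d\mu = \int_{\Omega}u_0\,d\mu = 0$ for all $t$.
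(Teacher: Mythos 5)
Your argument is correct and rests on the same key point as the paper's proof — the equivalence, via ellipticity of $B$ and the generalised Poincar\'e inequality, of the form $a(v,w)=\int_{\Omega}(B\nabla_{\mu}v,\nabla_{\mu}w)\,d\mu$ with the scalar product of $\mathring{H^1_{\mu}}$ — the paper simply delegating the ensuing standard energy-identity/regularity argument to the framework of \cite{Sho97}, while you write it out (energy identity, $L^{\infty}(0,T;\mathring{H^1_{\mu}})$ bound, weak continuity plus norm convergence giving strong continuity). Only your closing remark on the zero-mean constraint is slightly off, since $\int_{\Omega}f\,d\mu$ need not vanish, so $\frac{d}{dt}\int_{\Omega}u\,d\mu=0$ does not follow as stated; but the constraint persists anyway because $u(t)\in D(\mathcal{A})\subset\mathring{H^1_{\mu}}$ for a.e.\ $t$ and $u\in C([0,T];L^2_{\mu})$, so the mean vanishes for every $t$ by continuity.
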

\begin{proof}
This is a result of an equivalence of $\int_{\Omega}(B\nabla_{\mu}u,\nabla_{\mu}v)d\mu$ and the scalar product of the Hilbert space $\mathring{H^1_{\mu}}.$ Such equivalence is valid because we assume that on the low-dimensional structure $\mu$ the generalized Poincar{\'e} inequality is true. Moreover, the mentioned equivalence implies that the assumption $u_0 \in \mathring{H^1_{\mu}}$ cannot be relaxed in the given class of solutions to the considered problem.
\end{proof}

Finally, we examine the regularity of solutions in the setting where, as an initial data, we can take the function of the class $\mathring{L^2_{\mu}}.$

\begin{prop}
Let the operator $\mathcal{A}$ and the function $f$ be as in Proposition \ref{exiw}. Assume that ${u_0 \in \mathring{L^2_{\mu}}.}$ Then the unique solution $u \in W^{1,2}(0,T;{L^2_{\mu}})$ of the problem
$u'+\mathcal{A}u=f$ {in the space $L^2(0,T;L^2_{\mu})$,} with $u(0)=u_0 \in \mathring{L^2_{\mu}}$ satisfies $t^{1/2}u' \in L^2(0,T;L^2_{\mu})$ and for an arbitrary $d\in(0,T)$ satisfies $u \in W^{1,2}(d,T;L^2_{\mu}) \cap C([d,T];\mathring{H^1_{\mu}}).$
\end{prop}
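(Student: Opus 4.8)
The plan is to establish the asserted smoothing by deriving two energy estimates for a suitable family of regular approximations and then passing to the limit. Recall that $\mathcal{A}$ is selfadjoint and nonnegative (Remark \ref{self}) and is built from the symmetric form $u,v\mapsto\int_\Omega(B\nabla_\mu u,\nabla_\mu v)\,d\mu$, which is coercive on $\mathring{H}^1_\mu$ by the generalized Poincar\'e inequality \eqref{weakPoincar\'e} (Lemma \ref{gpi}); in particular the $\mathring{H}^1_\mu$-norm is equivalent to $\bigl(\int_\Omega(B\nabla_\mu\cdot,\nabla_\mu\cdot)\,d\mu\bigr)^{1/2}$, exactly as used already after Proposition \ref{exiw}. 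First I would fix $u_0^n\in\mathring{H}^1_\mu$ with $u_0^n\to u_0$ in $\mathring{L}^2_\mu$ (possible by density), let $u^n$ be the corresponding solutions furnished by Proposition \ref{exiw} — so that $u^n\in W^{1,2}(0,T;L^2_\mu)$, $u^n(t)\in D(\mathcal{A})$ for a.e.\ $t$, and the manipulations below are legitimate for $u^n$ — and then derive estimates uniform in $n$.

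Testing $(u^n)'+\mathcal{A}u^n=f$ with $u^n$ and integrating on $(0,t)$ gives the basic identity
\[
\tfrac12\norm{u^n(t)}_{L^2_\mu}^2+\int_0^t\!\!\int_\Omega(B\nabla_\mu u^n,\nabla_\mu u^n)\,d\mu\,ds=\tfrac12\norm{u_0^n}_{L^2_\mu}^2+\int_0^t\!\!\int_\Omega f\,u^n\,d\mu\,ds,
\]
whence, by Young's inequality and coercivity, $\sup_t\norm{u^n(t)}_{L^2_\mu}^2+\norm{u^n}_{L^2(0,T;H^1_\mu)}^2\le C\bigl(\norm{u_0}_{L^2_\mu}^2+\norm{f}_{L^2(0,T;L^2_\mu)}^2\bigr)$. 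Next I would test with $t\,(u^n)'$; using selfadjointness of $\mathcal{A}$ in the form $\frac{d}{dt}\int_\Omega(B\nabla_\mu u^n,\nabla_\mu u^n)\,d\mu=2\int_\Omega(\mathcal{A}u^n)(u^n)'\,d\mu$ and integrating by parts in time, one obtains
\[
\int_0^T t\,\norm{(u^n)'(t)}_{L^2_\mu}^2\,dt+\tfrac{T}{2}\!\int_\Omega(B\nabla_\mu u^n(T),\nabla_\mu u^n(T))\,d\mu=\tfrac12\!\int_0^T\!\!\int_\Omega(B\nabla_\mu u^n,\nabla_\mu u^n)\,d\mu\,dt+\int_0^T\! t\!\int_\Omega f\,(u^n)'\,d\mu\,dt.
\]
Absorbing the last term by $\tfrac12\int_0^T t\norm{(u^n)'}_{L^2_\mu}^2\,dt+\tfrac12\int_0^T t\norm{f}_{L^2_\mu}^2\,dt$ and invoking the first estimate yields $\int_0^T t\,\norm{(u^n)'(t)}_{L^2_\mu}^2\,dt\le C\bigl(\norm{u_0}_{L^2_\mu}^2+\norm{f}_{L^2(0,T;L^2_\mu)}^2\bigr)$; running the same computation on $(0,t)$ also gives $t\,\norm{u^n(t)}_{H^1_\mu}^2\le C(\cdots)$. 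All bounds are uniform in $n$, so by continuous dependence of the solution on the data in the energy norm, weak lower semicontinuity and weak compactness in the weighted spaces, and the uniqueness part of Proposition \ref{exiw}, they pass to the limiting solution $u$: this is exactly $t^{1/2}u'\in L^2(0,T;L^2_\mu)$ together with $u(t)\in\mathring{H}^1_\mu$ for every $t>0$.

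For the statements on $[d,T]$, $d\in(0,T)$: since $t\ge d$ there, $\int_d^T\norm{u'(t)}_{L^2_\mu}^2\,dt\le d^{-1}\int_0^T t\,\norm{u'(t)}_{L^2_\mu}^2\,dt<\infty$, i.e.\ $u\in W^{1,2}(d,T;L^2_\mu)$. Moreover $u(d)\in\mathring{H}^1_\mu$ by the previous paragraph, and by uniqueness $u|_{[d,T]}$ is the solution of $v'+\mathcal{A}v=f$ on $(d,T)$ with initial datum $u(d)\in\mathring{H}^1_\mu$; the preceding proposition (solutions with $\mathring{H}^1_\mu$ initial data lie in $C([d,T];\mathring{H}^1_\mu)$, by the equivalence of $\int_\Omega(B\nabla_\mu\cdot,\nabla_\mu\cdot)\,d\mu$ with the $\mathring{H}^1_\mu$ inner product) then gives $u\in C([d,T];\mathring{H}^1_\mu)$.

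The main obstacle I expect is the rigorous justification of the second test, specifically of the chain-rule identity $\frac{d}{dt}\int_\Omega(B\nabla_\mu u^n,\nabla_\mu u^n)\,d\mu=2\int_\Omega(\mathcal{A}u^n)(u^n)'\,d\mu$, which requires absolute continuity in $t$ of the quadratic form along the path $u^n$, the membership $u^n(t)\in D(\mathcal{A})$ for a.e.\ $t$, $(u^n)'\in L^2(0,T;L^2_\mu)$, and the symmetry of the form (a standard fact for selfadjoint accretive generators, cf.\ \cite{Sho97}); this is precisely why the estimates are carried out on the regularized solutions $u^n$ with $\mathring{H}^1_\mu$ data supplied by Proposition \ref{exiw} before letting $n\to\infty$. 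A secondary, more routine point is checking that the form $\int_\Omega(B\nabla_\mu\cdot,\nabla_\mu\cdot)\,d\mu$ really is equivalent to the $\mathring{H}^1_\mu$ inner product on the zero-mean subspace of structures $\mu\in\widetilde{\mathcal{S}}$, which rests on the generalized Poincar\'e inequality of Lemma \ref{gpi} and is what converts the gradient bounds above into genuine $H^1_\mu$ bounds.
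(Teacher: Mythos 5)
Your argument is correct in substance, but it takes a different route from the paper: the paper's entire proof is a one-line appeal to the abstract smoothing result \cite[Cor. 2.4]{Sho97}, which applies because $\mathcal{A}$ is the operator of a nonnegative, symmetric, coercive form and is selfadjoint (Remark \ref{self}), exactly the hypotheses you list at the start. What you have written is essentially a self-contained reconstruction of that corollary: approximation of $u_0$ by $\mathring{H}^1_{\mu}$ data, the basic energy identity, the weighted test with $t\,(u^n)'$, and passage to the limit via the contraction estimate $\norm{u^n(t)-u^m(t)}_{L^2_{\mu}}\leqslant \norm{u_0^n-u_0^m}_{L^2_{\mu}}$ and weak compactness. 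The trade-off is clear: the citation buys brevity and hides the technical point you correctly single out as the crux, namely the absolute continuity of $t\mapsto\int_{\Omega}(B\nabla_{\mu}u^n,\nabla_{\mu}u^n)\,d\mu$ with derivative $2\int_{\Omega}(\mathcal{A}u^n)(u^n)'\,d\mu$; your version makes the mechanism visible but must justify that identity (it does hold here, since both $(u^n)'$ and $\mathcal{A}u^n=f-(u^n)'$ lie in $L^2(0,T;L^2_{\mu})$ and $\mathcal{A}$ is selfadjoint, cf.\ \cite{Sho97}). Two small points to tighten: the weak limits give $u(t)\in\mathring{H}^1_{\mu}$ only for a.e.\ $t>0$, so when restarting at time $d$ you should pick $d'\in(0,d)$ with $u(d')\in\mathring{H}^1_{\mu}$ and apply the $\mathring{H}^1_{\mu}$-data proposition on $[d',T]\supset[d,T]$; and the conversion of the gradient bound into an $H^1_{\mu}$ bound indeed rests on the generalized Poincar\'e inequality \eqref{weakPoincar\'e} together with the zero-mean constraint, as you note.
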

\begin{proof}
Under the given assumptions, we are in the regime of \cite[Cor. 2.4]{Sho97}, which gives the proposed statements.
\end{proof}

A relation between solutions of the form evoked in Proposition \ref{exiw} and weaker solutions recalled in Theorem \ref{weakpara} is investigated in the below lemma.

\begin{lem}
Let $u\in W^{1,2}(0,T;{L^2_{\mu}}), u_0\in \mathring{{H}^1_{\mu}}$ satisfies the equation $u'+\mathcal{A}u=f$ in $L^2(0,T;L^2_{\mu})$ with $u(0)=u_0.$ Then $u$ is a weak solution in the sense of Definition \ref{weak3}.
\end{lem}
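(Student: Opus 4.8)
The goal is to show that a strong solution $u$ (in the sense of $u' + \mathcal{A}u = f$ in $L^2(0,T;L^2_\mu)$ with $u(0) = u_0 \in \mathring H^1_\mu$) is a weak solution in the sense of Definition \ref{weak3}, i.e. that $u \in \mathring{\mathcal{H}}$ and $E(u,\phi) = F(\phi)$ for all $\phi \in \mathring{\mathcal{C}^\infty_0}$. First I would check the membership $u \in \mathring{\mathcal{H}} = L^2(0,T;\mathring H^1_\mu)$: this is immediate since (by the preceding proposition) the strong solution lies in $C([0,T];\mathring H^1_\mu) \subset L^2(0,T;\mathring H^1_\mu)$, and the zero-mean condition is preserved in time because $\int_\Omega \mathcal{A}u\, d\mu = 0$ (test the defining identity of $\mathcal{A}$ with the constant $v\equiv 1 \in \mathring H^1_\mu$... more carefully, with $v$ in the complement of $\ker\nabla_\mu$) and $\int_\Omega f\, d\mu$-compatibility, so $\frac{d}{dt}\int_\Omega u\,d\mu = 0$ and $\int_\Omega u_0\,d\mu = 0$ forces $\int_\Omega u(t)\,d\mu = 0$.

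Then I would verify the weak identity by a direct integration-by-parts in time. Fix $\phi \in \mathring{\mathcal{C}^\infty_0}$, so $\phi \in \mathring{\mathcal{T}}$ with $\phi(T) = 0$. Pair the equation $u'(t) + \mathcal{A}u(t) = f(t)$ with $\phi(t)$ in $L^2_\mu$ and integrate over $(0,T)$:
\begin{equation*}
\int_0^T \int_\Omega u'(t)\phi(t)\, d\mu\, dt + \int_0^T \int_\Omega (\mathcal{A}u(t))\phi(t)\, d\mu\, dt = \int_0^T \int_\Omega f(t)\phi(t)\, d\mu\, dt.
\end{equation*}
For the first term, since $u \in W^{1,2}(0,T;L^2_\mu)$ and $\phi \in W^{1,2}(0,T;L^2_\mu)$ with $\phi(T) = 0$, integration by parts in $t$ gives $\int_0^T\int_\Omega u'\phi\, d\mu\, dt = -\int_\Omega u(0)\phi(0)\, d\mu - \int_0^T\int_\Omega u\phi'\, d\mu\, dt = -\int_\Omega u_0\phi(0)\, d\mu - \int_0^T\int_\Omega u\phi'\, d\mu\, dt$. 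For the second term, since $u(t) \in D(\mathcal{A})$ for a.e.\ $t$ and $\phi(t) \in \mathring H^1_\mu$, the defining property of $\mathcal{A}$ in Definition \ref{strop} gives $\int_\Omega (\mathcal{A}u(t))\phi(t)\, d\mu = \int_\Omega (B\nabla_\mu u(t), \nabla_\mu \phi(t))\, d\mu$; integrating in $t$ (measurability and integrability in $t$ follow from $u \in L^2(0,T;H^1_\mu)$ and $\phi \in L^2(0,T;H^1_\mu)$ together with boundedness of $B$) yields $\int_0^T\int_\Omega (B\nabla_\mu u, \nabla_\mu\phi)\, d\mu\, dt$. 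Substituting back, the identity becomes
\begin{equation*}
\int_0^T\int_\Omega (B\nabla_\mu u, \nabla_\mu\phi)\, d\mu\, dt - \int_0^T\int_\Omega u\phi'\, d\mu\, dt = \int_0^T\int_\Omega f\phi\, d\mu\, dt + \int_\Omega u_0\phi(0)\, d\mu,
\end{equation*}
which is exactly $E(u,\phi) = F(\phi)$ by Definitions \ref{weak13}.

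**Main obstacle.** The analytic content is light; the delicate points are bookkeeping rather than deep. The one place requiring care is justifying that $\int_\Omega (\mathcal{A}u(t))\phi(t)\, d\mu = \int_\Omega (B\nabla_\mu u(t), \nabla_\mu\phi(t))\, d\mu$ holds not just for the $v$'s appearing in Definition \ref{strop} (which are zero-mean) but is compatible with the test class $\mathring{\mathcal{C}^\infty_0}$ — here one uses that elements of $\mathring{\mathcal{C}^\infty_0}$ restrict at each time to functions in $\mathring H^1_\mu$, so the identity applies directly. A second point is ensuring the time-integrals are finite so Fubini and the integration-by-parts in $t$ are legitimate: this follows from $u \in L^2(0,T;H^1_\mu) \cap W^{1,2}(0,T;L^2_\mu)$, $\phi$ smooth in $t$ with values in $C^\infty_c$, and $f \in L^2(0,T;L^2_\mu)$. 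Finally one should note that the zero Neumann boundary condition is already absorbed into the definition of $D(\mathcal{A})$ (no boundary terms appear in the identity defining $\mathcal{A}$), so no additional boundary analysis is needed here. I would close by remarking that uniqueness of the weak solution is not part of this lemma — it is guaranteed separately by Theorem \ref{weakpara}.
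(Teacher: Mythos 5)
Your argument is essentially the paper's proof: pair the equation with $\phi\in\mathring{\mathcal{C}^{\infty}_0}$, replace $\int_{\Omega}(\mathcal{A}u)\phi\,d\mu$ by $\int_{\Omega}(B\nabla_{\mu}u,\nabla_{\mu}\phi)\,d\mu$ via Definition \ref{strop}, integrate by parts in time using $\phi(T)=0$ and $u(0)=u_0$, and identify the result with $E(u,\phi)=F(\phi)$; the paper does exactly this, only more tersely. Your extra remarks on membership in $\mathring{\mathcal{H}}$ are fine but could be shortened, since $u(t)\in D(\mathcal{A})\subset\mathring{H^1_{\mu}}$ for a.e.\ $t$ already gives the zero mean (note that $v\equiv 1$ is not itself an admissible test function in $\mathring{H^1_{\mu}}$, as you half-acknowledge).
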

\begin{proof}
After multiplying the equation $u'+\mathcal{A}u=f$ by a test function $\phi \in \mathring{\mathcal{C}^{\infty}_0}$ and integrating over $\Omega$ and time, we obtain
\[
\int_0^T \int_{\Omega}u' \phi + \mathcal{A}u \phi\, d\mu\, \dt = \int_0^T \int_{\Omega}u' \phi + (B\nabla_{\mu}u, \nabla_{\mu}\phi)\, d\mu \,\dt.
\]

Continuing by integrating by parts the term with the time derivative it follows that
\[
\int_0^T \int_{\Omega}u' \phi + (B\nabla_{\mu}u, \nabla_{\mu}\phi)\, d\mu \,\dt=\int_0^T \int_{\Omega} (B\nabla_{\mu}u, \nabla_{\mu}\phi) - u\phi'\, d\mu\, \dt+ \int_{\Omega} u_0 \phi(0)\,d\mu.
\]
From this, we conclude that $E(u,\phi) = F(\phi),$ thus the function $u$ is a weak solution in the sense of Definition \ref{weak3}.
\end{proof}

\section{Examples}\label{52}

The following examples are evoked to present that in the low-dimensional setting, even the simplest stationary elliptic equation might possess weak solutions which give a new quality.

We will show that there are solutions that are not simple gluings of classical solutions of projected problems on component manifolds.

\begin{przyk}\label{ex2}
	Assume that $\Omega \subset \mathbb{R}^2$ is a 2-dimensional unit ball $B(0,1).$ Define the subsets \hbox{$E_1:=\{(y,0): y\in [-1,1]\},$} $E_2:=\{(0,z):z \in [-1,1]\}$ and the corresponding measure $\mu := \mathcal{H}^1|_{E_1}+\mathcal{H}^1|_{E_2}.$
 Let us consider the function
\[
f:=\begin{cases}
y \text{ on } E_1,\\
0 \text{ on } E_2
\end{cases}.
\]
	
Clearly we have $f \in \mathring{L^2_{\mu}}.$	Let us consider the stationary heat problem (see \cite{Ryb20} for the existence and uniqueness result)
\begin{equation}\label{slabe1}
	\int_{\Omega} \nabla_{\mu}u\cdot \nabla_{\mu} \phi d\mu = \int_{\Omega} f \phi d\mu
\end{equation}
for $\phi \in C^{\infty}_c(\R^2)$. In this case the projected gradient $\nabla_{\mu}$ has a form of the classical 1-dimensional derivative $\partial$ in variables corresponding to each component $E_i$, $i=1,2$.

We can write down equation \eqref{slabe1} in a form
\begin{equation*}
\int_{E_1}\partial_y u \partial_y \phi dy + \int_{E_2}\partial_z u \partial_z \phi dz = \int_{E_1} y \phi dy + \int_{E_2} 0 \phi dz = \int_{E_1} y \phi dy.
\end{equation*}
On the component $E_1$ let us take 
\[
u_1(y):= -\frac{21}{1080}-\frac{y^4}{12}+\frac{y^3}{6}+\frac{y^2}{6}-\frac{y}{2}.
\]

Computations show that 
\begin{align*}
u_1'(-1) & =u_1'(1)=0,\\
u_1(0) & =\frac{-21}{1080}
\end{align*}
and $\int_{E_1}u(y)dy = \frac{7}{180}.$ Let us put on the component $E_2$ the constant function $u_2(z):=-\frac{21}{1080}.$ The function $u_2$ has a mean over $E_2$ equal to $-\frac{7}{180}.$

 It is easy to observe that neither $u_1$ nor $u_2$ satisfies the weak equation separately on components, that is $u_i, i=1,2$ is not the solution of 
 \[
 \int_{E_i} \partial_x u_i \partial_x \phi dx = \int_{E_i} f \phi dx, \quad \text{for} \quad x\in \{y,z\},
 \]
 due to the fact that $\int_{E_i} u_i dx \neq 0.$

 On the other hand, the function
\[
u := \begin{cases}
u_1 \text{ on } E_1,\\
u_2 \text{ on } E_2
\end{cases}
\]
belongs to $\mathring{H^1_{\mu}}$ and is a solution to weak problem \eqref{slabe1}.
\end{przyk}

The second example shows that a solution to the weak low-dimensional Poisson problem \eqref{slabe1} may differ from a sum of solutions to classical problems considered separately on each component manifold.

\begin{przyk}\label{ex3}
	We are considering Poisson equation \eqref{slabe1} taking for $\Omega \subset \R^3$ a 3-dimensional unit ball $B(0,1),$ the manifolds $E_1:=\{(x,y,0):x^2+y^2\leqslant 1\},$ \hbox{$E_2:=\{(x,0,z): x^2+z^2\leqslant 1\}$} and the measure $\mu := \mathcal{H}^2|_{E_1}+\mathcal{H}^2|_{E_2}.$

 Let us define a function $w:E_1 \to \R,$ $w(x,y):=\cos(\pi(x^2+y^2))$ and for the force term let us put the function
	$f:= \begin{cases}
	\Delta w \text{ on } E_1,\\
	0 \text{ on } E_2
	\end{cases}.$

 After some computations, we can check that 
 \[
 \nabla w = -2\pi\sin(\pi(x^2+y^2))(x,y), \quad \frac{\partial w}{\partial n}\lfloor_{\partial E_1} = 0,
 \] 
 and 
 \[
 \Delta w = -4\pi \sin(\pi (x^2+y^2)) - 4 \pi^2 (x^2+y^2) \cos(\pi (x^2+y^2)).
  \]
 By a change of variables for the polar coordinates, we can easily verify that $\int_{E_1} \Delta w d\bar{x} =0.$
	
 The previous observations show that $f \in \mathring{L^2_{\mu}}.$ Both $w$ and the zero function are the solutions of the classical weak Poisson problem posed on the corresponding component, but the function
	$u:=\begin{cases}
	w \text{ on } E_1\\
	0 \text{ on } E_2
	\end{cases}$
	does not belong to $\mathring{H^1_{\mu}}$ or even to $H^1_{\mu}.$ It should be noted that even the modified function
	$\widetilde{u}:=\begin{cases}
	w +c_1 \text{ on } E_1\\
	0 +c_2 \text{ on } E_2
	\end{cases}$
	for any $c_1, c_2 \in \R$ is not a member of $H^1_{\mu}.$

 The general existence theorem established in \cite{Ryb20} can be applied to this problem and provides the existence of the solution $\widehat{u}\in \mathring{H^1_{\mu}}.$ The conducted reasoning shows that the solution $\widehat{u}$ will be different from any function $\widetilde{u}$ and thus will be different from the function $u$ in an essential way, meaning that neither only by some constant nor by two different constants added independently on components.
\end{przyk}

\section{Asymptotic convergence}\label{53}

We focus on studying the asymptotic behaviour of weak solutions to parabolic problems.

Transferring the results of \cite{Gol08}, we prove that solutions to parabolic problem \eqref{parabolicweak} converge as $t \to \infty$ to a solution of the stationary heat equation.

We will work with a slightly different formulation of the weak equation. Thus we will need an equivalence (see \cite[Prop. 2.1]{Sho97}) phrased by the below proposition.

\begin{prop}\label{edef}
Weak formulation of the parabolic initial problem \eqref{parabolicweak} is equivalent to a problem of finding $u \in \mathring{\mathcal{H}}$ with $u(0)=u_0 \in L^2_{\mu}$ such that the equation $u'(t)+\mathcal{A}u(t)=f(t)$ is satisfied in the sense of the dual space $\mathring{\mathcal{H}}^*$ for a.e. $t\in (0,+\infty).$
\end{prop}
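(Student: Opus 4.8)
The plan is to prove the two implications separately, working in the Gelfand triple $\mathring{H^1_{\mu}} \hookrightarrow L^2_{\mu} \hookrightarrow (\mathring{H^1_{\mu}})^*$, which is well behaved precisely because the generalised Poincar\'e inequality \eqref{weakPoincar\'e} makes $\norm{\nabla_{\mu}\cdot}_{L^2_{\mu}}$ an equivalent norm on $\mathring{H^1_{\mu}}$, so that the bilinear form $\int_{\Omega}(B\nabla_{\mu}\cdot,\nabla_{\mu}\cdot)d\mu$ underlying $\mathcal{A}$ is coercive. As a preliminary step I would record that $\mathring{\mathcal{C}^{\infty}_0}$ is dense in $\mathring{\mathcal{T}}$ in the norm $\norm{\cdot}_{\mathcal{T}}$ — this follows from density of $C^{\infty}_c(\R^3)$ in $H^1_{\mu}$ together with mollification in the time variable, keeping the constraints of zero $\mu$-mean and $v(T)=0$. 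Since for fixed $u\in\mathring{\mathcal{H}}$ both $E(u,\cdot)$ and $F$ are continuous on $\mathring{\mathcal{T}}$, the identity $E(u,\phi)=F(\phi)$ for all $\phi\in\mathring{\mathcal{C}^{\infty}_0}$ is equivalent to the same identity for all $\phi\in\mathring{\mathcal{T}}$.

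For the implication $(\Leftarrow)$, assume $u\in\mathring{\mathcal{H}}$ satisfies $u'(t)+\mathcal{A}u(t)=f(t)$ in $\mathring{\mathcal{H}}^{*}$ for a.e. $t$ with $u(0)=u_0$; in particular $u'\in\mathring{\mathcal{H}}^{*}=L^2(0,T;(\mathring{H^1_{\mu}})^*)$, so by the standard embedding (see \cite{Sho97}) $u\in C([0,T];L^2_{\mu})$ and the trace $u(0)$ is meaningful. Pairing the equation with an arbitrary $\phi\in\mathring{\mathcal{C}^{\infty}_0}$, integrating over $(0,T)$ and replacing $\int_{\Omega}(\mathcal{A}u)\phi\,d\mu$ by $\int_{\Omega}(B\nabla_{\mu}u,\nabla_{\mu}\phi)\,d\mu$ via Definition \ref{strop} gives $\int_0^T\langle u',\phi\rangle\,\dt+\int_0^T\int_{\Omega}(B\nabla_{\mu}u,\nabla_{\mu}\phi)\,d\mu\,\dt=\int_0^T\int_{\Omega}f\phi\,d\mu\,\dt$. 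Integrating the first term by parts in time and using $\phi(T)=0$ produces $\int_0^T\langle u',\phi\rangle\,\dt=-\int_0^T\int_{\Omega}u\phi'\,d\mu\,\dt-\int_{\Omega}u(0)\phi(0)\,d\mu$; substituting $u(0)=u_0$ and rearranging yields exactly $E(u,\phi)=F(\phi)$.

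For the converse $(\Rightarrow)$, suppose $E(u,\phi)=F(\phi)$ for all $\phi\in\mathring{\mathcal{C}^{\infty}_0}$. Testing first with $\phi(t,x)=\eta(t)\psi(x)$, $\eta\in C^{\infty}_c(0,T)$ and $\psi\in C^{\infty}_c(\R^3)$ of zero $\mu$-mean, the boundary term of $F$ vanishes and one reads off that $t\mapsto\int_{\Omega}u(t)\psi\,d\mu$ has distributional derivative $-\bigl(\int_{\Omega}(B\nabla_{\mu}u(t),\nabla_{\mu}\psi)\,d\mu-\int_{\Omega}f(t)\psi\,d\mu\bigr)$ on $(0,T)$. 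Since for a.e. $t$ the map $\psi\mapsto\int_{\Omega}(B\nabla_{\mu}u(t),\nabla_{\mu}\psi)\,d\mu$ is a bounded functional on $\mathring{H^1_{\mu}}$, this identifies $u'(t)\in(\mathring{H^1_{\mu}})^*$ with $f(t)-\mathcal{A}u(t)$ (the operator $\mathcal{A}$ read in the weak sense of the discussion after Definition \ref{strop}), hence $u'+\mathcal{A}u=f$ in $\mathring{\mathcal{H}}^{*}$ and again $u\in C([0,T];L^2_{\mu})$. Finally, integrating $\langle u',\phi\rangle$ by parts for a general $\phi\in\mathring{\mathcal{C}^{\infty}_0}$ with $\phi(0)\neq 0$ and comparing with $F(\phi)$ forces $\int_{\Omega}(u(0)-u_0)\phi(0)\,d\mu=0$ for all admissible $\phi(0)$, whence $u(0)=u_0$.

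The step I expect to be the main obstacle is the justification that an element of $\mathring{\mathcal{H}}$ whose distributional time derivative lies in $\mathring{\mathcal{H}}^{*}$ possesses a continuous $L^2_{\mu}$-representative, so that the pointwise-in-time condition $u(0)=u_0$ is even well posed and the integration-by-parts identities above are legitimate; this rests on the Gelfand-triple structure, and therefore on the coercivity of the form behind $\mathcal{A}$ supplied by \eqref{weakPoincar\'e}. Once this functional-analytic scaffolding is set up, the remaining manipulations are the routine integrations by parts displayed above, and the argument is essentially that of \cite[Prop. 2.1]{Sho97} transported to the measure $\mu\in\widetilde{\mathcal{S}}$.
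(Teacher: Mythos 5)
Your argument is correct and follows essentially the same route as the paper, which simply invokes \cite[Prop. 2.1]{Sho97} "in the corresponding framework"; what you have written is the content of that citation spelled out — the Gelfand triple $\mathring{H^1_{\mu}}\hookrightarrow L^2_{\mu}\hookrightarrow (\mathring{H^1_{\mu}})^*$ made coercive by the generalised Poincar\'e inequality, density of $\mathring{\mathcal{C}^{\infty}_0}$ in the test class, integration by parts in time, and identification of $u'$ and of the trace $u(0)$. No gaps; your version is just more explicit than the paper's one-line proof.
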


\begin{proof}
This can be proved by applying \cite[Prop. 2.1]{Sho97} to equation \eqref{parabolicweak} in the corresponding framework.
\end{proof}

To shorten the notation, we introduce a notion of the energy functional related to the considered problem.

\begin{defi}
Let a functional $E_{\mu}:\mathring{H^1_{\mu}} \to \R$ be defined as $E_{\mu}(u):=\int_{\Omega} (B\nabla_{\mu}u,\nabla_{\mu}u)d\mu.$ The functional $E_{\mu}$ is called the energy functional.
\end{defi}

To avoid confusion, we formulate the notion of a weak elliptic problem, which we consider further.

\begin{defi}\label{stat}
Let $f \in \mathring{H^1_{\mu}}.$ We say that a function $u^* \in \mathring{H^1_{\mu}}$ is a solution of the stationary heat equation if
$$\int_{\Omega} (B\nabla_{\mu}u^*,\nabla_{\mu}v)d\mu=\int_{\Omega} fv d\mu$$
is satisfied for all functions $v \in H^1_{\mu}.$
\end{defi}

Adapting the proof of \cite[Thm. 1, p.269]{Gol08} to the considered case, we obtain the following theorem dealing with the asymptotic behaviour of parabolic solutions.

\begin{tw}
Let $u^* \in \mathring{H^1_{\mu}}$ be a solution to stationary heat equation (Definition \ref{stat}). A solution $u$ to parabolic issue \eqref{parabolicweak} converges as $t \to \infty$ to $u^*$ in the sense of the $H^1_{\mu}$-norm.
\end{tw}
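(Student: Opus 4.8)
The plan is to reduce to the difference $w := u - u^{*}$, which is governed by the \emph{same} operator as the stationary problem but with zero right-hand side, and then to run the classical energy argument of \cite[Thm.~1, p.~269]{Gol08}: exponential decay of $\|w(t)\|_{L^{2}_{\mu}}$ by a Gr\"onwall estimate, followed by an upgrade to $H^{1}_{\mu}$-convergence using monotonicity of the energy functional $E_{\mu}$ along the flow. Throughout I work on $[0,\infty)$ (as in Proposition \ref{edef}) and with $f\in\mathring{H^{1}_{\mu}}$ independent of time.

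\textbf{Step 1: the difference equation.} First note that the stationary solution $u^{*}$ of Definition \ref{stat} belongs to $D(\mathcal{A})$ with $\mathcal{A}u^{*}=f$: for every $v\in\mathring{H^{1}_{\mu}}$ one has $\int_{\Omega}(B\nabla_{\mu}u^{*},\nabla_{\mu}v)\,d\mu=\int_{\Omega}fv\,d\mu\leqslant\|f\|_{L^{2}_{\mu}}\|v\|_{L^{2}_{\mu}}$, so the defining inequality of $D(\mathcal{A})$ (Definition \ref{strop}) holds and the Riesz element is precisely $f$. By Proposition \ref{edef} a weak solution $u$ of \eqref{parabolicweak} satisfies $u'(t)+\mathcal{A}u(t)=f$ in $\mathring{\mathcal{H}}^{*}$ for a.e.\ $t>0$; subtracting, the difference $w(t):=u(t)-u^{*}$ solves $w'(t)+\mathcal{A}w(t)=0$ in $\mathring{\mathcal{H}}^{*}$ for a.e.\ $t$, with $w\in\mathring{\mathcal{H}}$ and $w'\in\mathring{\mathcal{H}}^{*}$ on every finite interval.

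\textbf{Step 2: $L^{2}_{\mu}$-decay.} Since $w\in L^{2}_{\loc}(0,\infty;\mathring{H^{1}_{\mu}})$ and $w'\in L^{2}_{\loc}(0,\infty;\mathring{\mathcal{H}}^{*})$, the map $t\mapsto\|w(t)\|^{2}_{L^{2}_{\mu}}$ is absolutely continuous and testing the equation with $w(t)$ gives
\[
\tfrac12\tfrac{d}{dt}\|w(t)\|^{2}_{L^{2}_{\mu}}=\langle w'(t),w(t)\rangle=-\int_{\Omega}(B\nabla_{\mu}w(t),\nabla_{\mu}w(t))\,d\mu=-E_{\mu}(w(t)).
\]
By ellipticity of $B\in\mathcal{B}$ together with the Poincar\'e inequality on $\mathring{H^{1}_{\mu}}$ (equivalently, the equivalence of $E_{\mu}$ with the square of the $\mathring{H^{1}_{\mu}}$-norm recorded in Section \ref{51}; here it is essential that $w(t)$ has zero mean), there is $c>0$ with $E_{\mu}(w)\geqslant c\|w\|^{2}_{L^{2}_{\mu}}$ for all $w\in\mathring{H^{1}_{\mu}}$. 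Hence $\tfrac{d}{dt}\|w(t)\|^{2}_{L^{2}_{\mu}}\leqslant-2c\|w(t)\|^{2}_{L^{2}_{\mu}}$, and Gr\"onwall's lemma yields $\|w(t)\|^{2}_{L^{2}_{\mu}}\leqslant e^{-2ct}\|w(0)\|^{2}_{L^{2}_{\mu}}\to0$, i.e.\ $u(t)\to u^{*}$ in $L^{2}_{\mu}$.

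\textbf{Step 3: upgrade to $H^{1}_{\mu}$ and the main obstacle.} By the parabolic regularising effect (Proposition \ref{exiw} and its $\mathring{L^{2}_{\mu}}$-data variant), $w(t)\in D(\mathcal{A})$ for a.e.\ $t>0$ and $\mathcal{A}w\in L^{2}_{\loc}(0,\infty;L^{2}_{\mu})$; then $t\mapsto E_{\mu}(w(t))$ is absolutely continuous on $(0,\infty)$ and, using the symmetry of $B$, the definition of $\mathcal{A}$, and $w'=-\mathcal{A}w$,
\[
\tfrac{d}{dt}E_{\mu}(w(t))=2\int_{\Omega}(B\nabla_{\mu}w(t),\nabla_{\mu}w'(t))\,d\mu=2(\mathcal{A}w(t),w'(t))_{L^{2}_{\mu}}=-2\|\mathcal{A}w(t)\|^{2}_{L^{2}_{\mu}}\leqslant0,
\]
so $E_{\mu}(w(\cdot))$ is non-increasing. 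Integrating the identity of Step 2 over $(T,2T)$ gives $2\int_{T}^{2T}E_{\mu}(w(t))\,dt=\|w(T)\|^{2}_{L^{2}_{\mu}}-\|w(2T)\|^{2}_{L^{2}_{\mu}}\leqslant\|w(T)\|^{2}_{L^{2}_{\mu}}$, whence by monotonicity $E_{\mu}(w(2T))\leqslant\tfrac1T\int_{T}^{2T}E_{\mu}(w(t))\,dt\leqslant\tfrac{1}{2T}\|w(T)\|^{2}_{L^{2}_{\mu}}\to0$. By ellipticity $\|\nabla_{\mu}w(t)\|^{2}_{L^{2}_{\mu}}\leqslant\lambda^{-1}E_{\mu}(w(t))\to0$, and combined with Step 2 this yields $\|u(t)-u^{*}\|_{H^{1}_{\mu}}\to0$. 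The delicate points — the main obstacles — are: (i) justifying the two differentiation identities, i.e.\ that $w(t)$ is an admissible test function and that $E_{\mu}(w(\cdot))$ is differentiable with the stated derivative, both of which rest on the Lions-type regularity of $w$ from Proposition \ref{exiw} and \cite{Sho97}; and (ii) the use of the Poincar\'e inequality in the coercive form $E_{\mu}(w)\geqslant c\|w\|^{2}_{L^{2}_{\mu}}$ on $\mathring{H^{1}_{\mu}}$, which for $\mu\in\widetilde{\mathcal{S}}$ must be read exactly as in Section \ref{51}; one checks that the flow preserves the projections of $w(t)$ onto each component of $\ker\nabla_{\mu}$ (since $\tfrac{d}{dt}(w(t),\chi)=-\int_{\Omega}(B\nabla_{\mu}w(t),\nabla_{\mu}\chi)\,d\mu=0$ for $\chi\in\ker\nabla_{\mu}$), so $w(t)$ stays in the subspace on which $E_{\mu}$ is coercive.
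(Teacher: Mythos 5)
Your proof is correct and follows the same Goldstein-type energy strategy as the paper, but the decisive step is executed differently. The paper also starts from the identity $\frac{d}{dt}\norm{u-u^*}^2_{L^2_{\mu}}=-E_{\mu}(u-u^*)\leqslant 0$, but then argues softly: monotonicity and boundedness of $t\mapsto\norm{u-u^*}^2_{L^2_{\mu}}$ produce a sequence $t_i\to\infty$ along which the derivative, hence $E_{\mu}(u(t_i)-u^*)$, tends to zero, and the convergence is then asserted for all times "by monotonicity" --- but the monotonicity actually needed there is that of $t\mapsto E_{\mu}(w(t))$, which is precisely what you prove. You supply it via the dissipation identity $\frac{d}{dt}E_{\mu}(w)=-2\norm{\mathcal{A}w}^2_{L^2_{\mu}}$, legitimised by the regularising results of Section \ref{51} (Proposition \ref{exiw} and its $\mathring{L^2_{\mu}}$-data variant), and close with the $(T,2T)$ averaging bound; your Gr\"onwall step additionally yields an exponential $L^2_{\mu}$-rate, which the paper does not claim. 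The trade-off: your route needs $w(t)\in D(\mathcal{A})$ for positive times and the coercive form of the Poincar\'e inequality, whereas the paper only differentiates the $L^2_{\mu}$-distance and postpones Poincar\'e to the very last step. One caveat on your point (ii): conservation of $(w(t),\chi)_{L^2_{\mu}}$ for $\chi\in\ker\nabla_{\mu}$ only freezes these projections at their initial values; to place $w(t)$ in the coercive subspace you additionally need $(u_0-u^*,\chi)_{L^2_{\mu}}=0$ for every kernel element, which the hypotheses guarantee only through the total mean. This is automatic when all components have the same dimension (then $\ker\nabla_{\mu}$ reduces to constants), and in the genuinely mixed-dimensional case the paper's own final step --- the generalised Poincar\'e inequality (Lemma \ref{gpi}) combined with zero total mean --- rests on the same restriction, so your argument is no weaker than the paper's here, but the assumption deserves to be stated explicitly.
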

\begin{proof}
A main point of the proof is in verifying that $E_{\mu}(u(t)-u^*) \to 0$ as $t \to \infty.$

Let us compute:
\begin{equation}\label{differ}
\begin{split}
\frac{d}{dt}\norm{u-u^*}^2_{L^2_{\mu}} 
&= 
\int_{\Omega}u_t(u-u^*)d\mu = \int_{\Omega}f(u-u^*)d\mu-\int_{\Omega}(B\nabla_{\mu}u,\nabla_{\mu}(u-u^*))d\mu  \\ 
&= 
\int_{\Omega}f(u-u^*)d\mu - \int_{\Omega}(B\nabla_{\mu}u,\nabla_{\mu}(u-u^*))d\mu \\
& \qquad + \int_{\Omega}(B\nabla_{\mu}u^*,\nabla_{\mu}(u-u^*))d\mu-\int_{\Omega}f(u-u^*)d\mu \\
&= -\int_{\Omega}(B\nabla_{\mu}u,\nabla_{\mu}(u-u^*))d\mu+\int_{\Omega}(B\nabla_{\mu}u^*,\nabla_{\mu}(u-u^*))d\mu \\
&= -\int_{\Omega}(B\nabla_{\mu}(u-u^*),\nabla_{\mu}(u-u^*))d\mu=-E_{\mu}(u-u^*) \\
& \leqslant 0.
\end{split}
\end{equation}

This shows that the term $\norm{u-u^*}^2_{L^2_{\mu}}$ is non-increasing. Please note that if $E_{\mu}(w)=0$ for some $w\in \mathring{H^1_{\mu}},$ then $w \equiv 0.$ This means if $u(t) = u^*$ for some $t \in (0,\infty),$ then this implies we have $u(t+s)=u_0$ for all $s>0$.

Now we see that as $\norm{u-u^*}^2_{L^2_{\mu}}\geqslant 0,$ there exists a sequence of times $t_i \in (0,\infty)$ such that $\frac{d}{dt}\norm{u_i-u^*}^2_{L^2_{\mu}} \to 0$ as $t_i \to \infty.$ Here we denote $u_i:=u(t_i)$.
In this way we get $E_{\mu}(u_i-u^*) \to 0$ as $t_i\to \infty$.

As \eqref{differ} shows, the term $\norm{u-u^*}^2_{L^2_{\mu}}$ is non-increasing with respect to $t\in (0,\infty)$ thus we can deduce that the convergence occurs for any sequence of times, that is $E_{\mu}(u-u^*) \to 0$ as $t\to \infty$.
The ellipticity of the operator $B$ implies that $\nabla_{\mu}u \xrightarrow{t \to \infty} \nabla_{\mu}u^*$ in the $L^2_{\mu}$-norm sense. Making use of the fact that for the low-dimensional structure $\mu$ the generalized Poincar{\'e} inequality is satisfied combining it with $\int_{\Omega}u(t)d\mu=\int_{\Omega}u^*d\mu=0$ we conclude that $u \xrightarrow{t\to \infty} u^*$ in $\mathring{H^1_{\mu}}$.
\end{proof}

\begin{kom}
It seems possible to adapt the proof of \cite[Thm. 1]{Gol08} to a wider class of differential equations on low-dimensional structures like, for instance, the $p$-Laplace equation with the force term dependent on a solution. Nevertheless, this kind of differential equation in the framework of low-dimensional structures has not been studied yet, and we leave it for future studies.
\end{kom}

\chapter{Higher regularity of weak elliptic solutions}

This chapter contains the results obtained in our paper \cite{Cho23}.

The following three theorems concerning an additional regularity of low-dimensional weak solutions to elliptic equations constitute the main results of our studies:
\begin{itemize}
\item On each component manifold $S_i$ of the low-dimensional structure $S$ a weak solution $u$ of the elliptic issue \eqref{bla1} has the extra regularity $u \in H^2(S_i).$ This is the statement of Theorem \ref{globreg}.

\item A weak solution $u$ is globally continuous on the given low-dimensional structure, that is $u \in C(S).$ This fact is precisely expressed in Theorem \ref{ciaglosc_ogolne}.

\item If $u$ solves weak problem \eqref{bla1}, then $u$ is a member of the domain of the low-dimensional second-order derivative operator $L_{\mu}.$ This is the result of Theorem \ref{nalezenie}.
\end{itemize}

One of the standard ways of showing extra Sobolev regularity in the Euclidean setting is the method of difference quotients - if they are uniformly bounded, they justify the existence of a weak gradient. In a classical setting (see, for example, \cite[Chapter 6.3]{Eva10}), one obtains uniform bound on difference quotients of weak solutions, thus improving their regularity.

We aim to proceed analogously on low-dimensional structures. However, the difficulties in such an approach are readily visible - in general, we cannot easily shift the function supported on the structures we work with in the directions orthogonal to its components. We circumvent this issue in the following way:
\begin{itemize}
    \item For a given $u \in H^1_{\mu}$, we take a sequence of smooth functions $ C^\infty_c(\R^3) \ni \phi_n \xrightarrow{H^1_\mu} u$, which we can arbitrarily shift in any direction;
    \item As we shall see, we need further control of $\phi_n$, and therefore we introduce a special modification of $\phi_n$, which we denote by $\widehat{\phi_n}$. Its key property is that its global behaviour is determined by the restriction to a low-dimensional structure;
    \item Our modification still obeys the $H^1_\mu$ convergence and its shifts are well-defined. The limit $\lim\limits_{n\rightarrow \infty} \widehat{\phi_n}(\cdot + h)$ plays the role of a generalized shift of $u$.
\end{itemize}

\begin{uw}
The third point above is not as immediate as it may seem, since for an arbitrary approximating sequence $\phi_n \in C^\infty_c(\R^3)$, justifying the membership $u \in H^1_\mu$, it may happen that $\phi_n (\cdot + h)$ no longer converges in $H^1_\mu$. This is one of the fundamental reasons why we introduce a specific form of the extension.
\end{uw}

The proof of our main theorem -- $H^2$-type regularity on each component manifold -- consists of two parts: showing that the functional space $H^1_{\mu}$ is closed with respect to the aforementioned generalised translation and secondly, establishing uniform bounds on generalized difference quotients.

\section{Geometric approach to componentwise regularity}\label{61}

In what follows, we restrict our attention to the structures consisting only of "straightened out" components. While such structures serve as generic models of various types of intersections, we later show that the general result follows from the one we obtain in the model cases. We address this issue at the end of this chapter.

We consider the following structure:

Let $S = S_1 \cup S_2$, where
\begin{equation}\label{dyski}
\begin{aligned}
S_1 = \{ (x,y,0) \in \R^3 \ : \ x^2 + y^2 \leqslant 1 \}, \quad S_2 =  \{ (0,y,z) \in \R^3 \ : \ y^2 + z^2 \leqslant 1 \}.
\end{aligned}
\end{equation}

We introduce two Hausdorff measures associated with $S$:
\begin{equation}\label{miary}
\begin{aligned}
\mu := \mathcal{H}^2\lfloor_{S_1} +\mathcal{H}^2\lfloor_{S_2}, \quad \widetilde{\mu} := \theta_1\mathcal{H}^2\lfloor_{S_1}+\theta_2\mathcal{H}^2\lfloor_{S_2},\quad \theta_i\in C^{\infty}(S_i)\text{ for } i=1,2.
\end{aligned}
\end{equation}

Denote the intersection set by
$$
\Sigma:=S^1\cap S^2 = \{(0,y,0) \ : \ |y|\leqslant 1 \}
$$ and the restriction of $u$ to a single disc as $u_i:=  u\lvert_{S_i}$. We are ready to state the first result.

Let us now consider the low-dimensional elliptic problem for $S$ previously formulated in Definition \ref{defin}.

For a given $f \in \mathring{L^2_{\mu}},$ a solution $u \in \mathring{H^1_{\mu}}$ satisfies the equality
\begin{equation}\label{cieplo2}
\int_{\Omega}B_{\widetilde{\mu}} \nabla_{\widetilde{\mu}}u \cdot \nabla_{\widetilde{\mu}} \phi d\widetilde{\mu} = \int_{\Omega}f \phi d\widetilde{\mu}
\end{equation}
for any $\phi \in C^{\infty}(\R^3).$ Existence of the unique solution to this problem is the main result of paper \cite{Ryb20}.

We may include the densities in the operator $B_{\mu}$ and change $\widetilde{\mu}$-related gradients to $\nabla_{\mu},$
\begin{equation}\label{cieplo3}
\int_{\Omega}\widetilde{B_{\mu}} \nabla_{\mu}u \cdot \nabla_{\mu} \phi\, d\mu = \int_{\Omega}\widetilde{f} \phi\, d\mu.
\end{equation}

We now apply the strategy outlined before to the solution $u$.

The below reasoning was established in our paper \cite{Cho23}.

\noindent\textbf{Step 1:} $\mathbf{\tr^{\Sigma} u \in H^1_{\loc}(\Sigma)}$

We begin by showing extra differentiability in the direction of the variable $y$. Let us define
$$
u^{h,y}:= u(\cdot + he_y), \text{ where } e_y=(0,1,0).
$$
Such translation is a well-defined function, at least on any low-dimensional structure $S'$ satisfying $\intel S' \subset \subset \intel S$ (in the inherited topology on $S;$ we refer to $S'$ as a substructure of $S$) and for sufficiently small $h>0.$ We abuse the notation a little and by $\mu$ denote the restriction $\mu\lfloor_{S'}$; it is justified since all of the further reasoning is carried out locally.

Taking any $\phi_n\in C^{\infty}(\R^3)$ satisfying $\phi_n \rightarrow u$ in $H^1_\mu$, we see that
$$
\phi_n^{h,y} \xrightarrow[n\to \infty]{H^1_{\mu}} u^{h,y},
$$
and thus $u^{h,y} \in H^1_{\mu}.$

We put
$$
D^h_yu:= \frac{1}{h}(u^{h,y}-u) \in H^1_\mu.
$$

Let $\xi \in C^{\infty}_c(\R^3)$ be a smooth function which will be specified later. We have $\xi^2D^h_yu \in H^1_{\mu}$ and further
$\phi^y:=-D^{-h}_y(\xi^2D^h_yu) \in H^1_{\mu}.$ This means that $\phi^y$ can be used as a test function in equation \eqref{cieplo3}.

Since $\Sigma$ is $\mu$-negligible, we are able to rewrite
$$
\int_{\Omega}\widetilde{B_{\mu}}\nabla_{\mu} u \cdot \nabla_{\mu} \phi^y d\mu = \int_{S_1}\widetilde{B_{\mu}}\nabla_{S_1} u \cdot \nabla_{S_1} \phi^y dS_1 + \int_{S_2}\widetilde{B_{\mu}}\nabla_{S_2} u \cdot \nabla_{S_2} \phi^y dS_2,$$

and the right-hand side of the equation \eqref{cieplo3} can be presented as
$$
\int_{\Omega} \widetilde{f} \phi^y d \mu = \int_{S_1}\widetilde{f} \phi^y dS_1+ \int_{S_2}\widetilde{f} \phi^y dS_2.$$

Having the equality in the expanded form as above, we might apply the standard method of showing higher regularity of solutions by establishing a uniform bound on difference quotients (for example, see e.g., \cite{Eva10}).

Recall that we assumed $\mu$ to be a restriction of the measure $\mathcal{H}^1\lfloor_{S_1}+ \mathcal{H}^1\lfloor_{S_2}$ to a fixed subset $S' \subset \subset S.$

This provides that for small enough $h>0$ we have $\norm{D^h_y \nabla_{\mu}u}_{L^2_{\mu}}\leqslant C,$ which further implies that $\norm{\partial_y \nabla_{\mu} u}_{L^2_{\mu}} \leqslant C,$ where the constant $C>0$ is independent of $h.$ Using the explicit form of $\nabla_\mu$, we have
$$\norm{\partial_y^2u}_{L^2_{\mu}}\leqslant C \text{ and } \norm{\partial_y \partial_x u}_{L^2_{\mu}}\leqslant C.$$

Now, we make use of the fact that in a region separated from the junction set $\Sigma$, the equation is reduced to the standard case, and the local regularity is known. Define
$$
S_1^+:= \{(x,y,0)\in S_1: x>0\}, \quad S_1^-:= \{ (x,y,0) \in S_1: x<0\}.
$$

Let $V \subset\subset S_1^+;$ in particular, notice that $\inf\{x: \exists y, (x,y,0)\in V\}\geqslant \alpha>0.$ Taking for $\xi$ properly chosen (as in the classical proof) function supported in $V,$ we conclude by the standard elliptic regularity theory that $u \in H^2_{\loc}(V).$

Moreover, the $H^2$-regularity implies almost everywhere in $V$ the symmetry of the second derivatives: $\partial_y\partial_xu=\partial_x\partial_yu.$ By the fact that the set $V$ was chosen arbitrary, we deduce that the equality \hbox{$\partial_y\partial_xu=\partial_x\partial_yu$} is valid a.e. in $S_1^+$ with respect to the measure $\mathcal{H}^2\lfloor_{S_1^+}.$

Proceeding analogously on $S_1^-$ we get $\partial_y\partial_xu=\partial_x\partial_yu$ a.e. on $S_1^-,$ so the demanded equality \hbox{$\partial_y\partial_xu=\partial_x\partial_yu$} is true a.e. on $S_1.$

By the fact that $\partial_y\partial_xu \in L^2_{\loc}(S_1)$ and $\partial_y\partial_xu=\partial_x\partial_yu$ a.e. on $S_1$ we deduce \hbox{$\partial_x\partial_y u \in L^2_{\loc}(S_1).$} We will check that $\partial_x\partial_y u$ is actually a weak $\partial_x$-derivative of $\partial_yu$ on $S_1.$

Let
$$
\mathcal{A}:= \{\phi \in C^{\infty}_c(\R^3): \phi\lvert_{S_1}\in C^{\infty}_c(S_1)\}.
$$

For any $\phi \in \mathcal{A}$ it follows that
\begin{align*}
\int_{S_1}\partial_x\partial_yu \phi dS_1 &= \int_{S_1}\partial_y\partial_xu \phi dS_1 = - \int_{S_1}\partial_xu \partial_y\phi dS_1 \\
&= \int_{S_1}u \partial_x\partial_y\phi dS_1 = \int_{S_1}u \partial_y\partial_x\phi dS_1 = - \int_{S_1}\partial_yu\partial_x\phi dS_1,
\end{align*}
thus $\partial_x\partial_yu = \partial_x(\partial_yu),$ so indeed $\partial_x\partial_yu$ is a weak derivative of $\partial_yu.$\\

The result implies that $\partial_yu \in H^1(S_1)$, and so $\Sigma$-trace of $\partial_yu$ is well-defined. This implies the membership $\tr^{\Sigma}\partial_yu \in L^2(\Sigma).$

Notice that the standard mollification argument shows that smooth functions $C^{\infty}(\R^2)$ are dense in the subspace
$$
\{u\in H^1(S_1): \partial_yu \in H^1(S_1)\}
$$ inherited with the norm
$$
\left(\norm{u}_{H^1(S_1)}^2+\norm{\partial_y u}^2_{H^1(S_1)}\right)^{\frac{1}{2}}.
$$

This fact provides the existence of a smooth sequence $\phi_n$ approximating $u$ in the above norm, and further, we can derive the commutation $\tr^{\Sigma}\partial_yu=\partial_y\tr^{\Sigma}u.$ As a conclusion this gives \hbox{$\partial_y\tr^{\Sigma}u \in L^2(\Sigma).$} A verification that $\partial_y\tr^{\Sigma}u$ is a weak derivative of $\tr^{\Sigma}u$ is a byproduct of the proof of the above commutation. Finally, we obtain
$$
\tr^{\Sigma}u \in H^1(\Sigma).
$$

\noindent\textbf{Step 2: Construction of the extension}

Now we introduce the special form of an extension of a function supported on a low-dimensional structure to the whole $\R^3$.

Before proceeding with a rather technical construction, we propose an informal description: for a given point $(x,y,z)\in \R^3$ (not necessarily $(x,y,z)\in S$), the extension can be expressed as
$$
\widetilde{u}(x,y,z) := u(0,y,z) - (\tr^\Sigma u)(0,y,0) + u(x,y,0).
$$

By the previous step, we know that $\tr^{\Sigma}u \in H^1(\Sigma).$ Let us recall that the classical trace theory provides the trace operator
$$ \tr^{\Sigma}:H^1(S_1) \to H^{1/2}(\Sigma)$$
which is surjective and $\tr^{\Sigma}(H^{3/2}(S_1))= H^1(\Sigma).$ Thus there exists $v \in H^{3/2}(S_1)$ such that $\tr^{\Sigma}v = \tr^{\Sigma}u.$ It follows that $\tr^{\Sigma}(v-u) = \tr^{\Sigma}v - \tr^{\Sigma}u  = 0.$ Moreover, we have $v-u \in H^1(S^1)$.

Observe that proceeding as in  Step 1, we obtain that for any set $W\subset \subset S_1$ separated from the intersection $\Sigma$ it holds that $u \in H^2(W)$. Let us choose small, fixed $h>0$ and assume that $\Sigma-(h,0,0) \subset W$ for some fixed $W$ as above.

Now, let $\alpha_n, \beta_n \in C^{\infty}(\R^2)$ be sequences converging in the $H^1(S_1)$-norm to $v$ and $v-u$, respectively. Besides that, we demand from the sequence $\alpha_n$ to converge to $v$ in $H^{3/2}(S_1)$ and from $\beta_n$ to converge to $v-u$ in $H^{3/2}(W)$; the latter can be constructed using the diagonal argument. Without the loss of generality, we can assume $\beta_n\lvert_{\Sigma}=0.$

The extension of such sequences to the $\R^2$ space is a standard fact; first, we judge the existence of sequences converging on $S_1,$ then each sequence term can be smoothly extended to the whole $\R^2$ space. By the continuity of the trace operator, we have
$$
\tr^{\Sigma}\alpha_n \xrightarrow{H^1(\Sigma)}\tr^{\Sigma}v, \quad \tr^{\Sigma}\beta_n \xrightarrow{H^1(\Sigma)}\tr^{\Sigma}(v-u) = 0.
$$

We express the function $u$ as $$u=v-(v-u)$$ and we put $\gamma_n:= \alpha_n - \beta_n \in C^{\infty}(\R^2).$
Immediately, we see that $\tr^{\Sigma}\gamma_n = \tr^{\Sigma}(\alpha_n - \beta_n) = \tr^{\Sigma}\alpha_n,$ and further
$$
\gamma_n \xrightarrow{H^1(S_1)} v- (v-u) = u \text{ and } \tr^{\Sigma}\gamma_n \xrightarrow{H^1(\Sigma)}\tr^{\Sigma}v = \tr^{\Sigma}u.
$$

Recall that $\alpha_n$ is defined only on $S^1$, which lies in $xy$ plane. We introduce the $\R^3$-extension of $\alpha_n$ by the formula $\widetilde{\alpha_n}(x,y,z):= \alpha_n(x,y).$

We collect some readily seen properties of this extension: $\widetilde{\alpha_n} \in C^{\infty}(\R^3)\cap H^1_\mu,$
$$
\widetilde{\alpha_n}\lvert_{S} = \begin{cases}
    \alpha_n &\text{ on } S_1,\\
    \tr^{\Sigma}\alpha_n &\text{ on } S_2
\end{cases}
\qquad \text{and}
\qquad
\widetilde{\alpha_n} \xrightarrow{H^1_\mu} \begin{cases}
    v &\text{ on } S_1,\\
    \tr^{\Sigma}v &\text{ on } S_2
\end{cases}
=
\begin{cases}
    v &\text{ on } S_1,\\
    \tr^{\Sigma}u &\text{ on } S_2,
\end{cases}
$$
with the last expression belonging to $H^1_\mu$ (by the completeness of this space).

Analogously we extend $\beta_n$ to $\widetilde{\beta_n}$ and we introduce $\widetilde{\gamma_n}:= \widetilde{\alpha_n}-\widetilde{\beta_n},$ which is exactly the extension of $\gamma_n$ one would obtain by following the same process as for $\alpha_n$ and $\beta_n$.

Similarly as before, the function $\widetilde{\gamma_n}$ has the following properties: ${\widetilde{\gamma_n} \in C^{\infty}(\R^3)\cap H^1_{\mu},}$	
$$
\widetilde{\gamma_n}\lvert_{S}=\begin{cases}
\gamma_n &\text{ on } S_1,\\
\tr^{\Sigma}\alpha_n &\text{ on } S_2
\end{cases}
\quad \text{and}
\quad
\widetilde{\gamma_n} \xrightarrow{H^1_{\mu}} \begin{cases}
u &\text{ on } S_1,\\
\tr^{\Sigma}u &\text{ on } S_2
\end{cases}\in H^1_{\mu}.
$$

Repeating analogous construction on the component $S_2$, we obtain
$\widetilde{\delta_n}\in C^{\infty}(\R^3)\cap H^1_\mu$ such that
$$
\widetilde{\delta_n}\lvert_{S}=\begin{cases}
\tr^{\Sigma}\alpha_n' & \text{ on } S_1,\\
\delta_n &\text{ on } S_2
\end{cases}
\quad \text{and}
\quad
\widetilde{\delta_n} \xrightarrow{H^1_{\mu}} \begin{cases}
\tr^{\Sigma}u &\text{ on } S_1,\\
u, &\text{ on } S_2
\end{cases} \in H^1_{\mu},
$$
where $\alpha'_n$ is defined just as $\alpha_n$ was defined on $S_1$.

Having extended the function $u$, we proceed with extending the trace in a similar fashion. As $\tr^{\Sigma}u \in H^1(\Sigma)$ we might find a sequence $\rho_n \in C^{\infty}(\R)$ converging to $\tr^\Sigma u$ in $H^1(\Sigma)$.

Define $\widetilde{\rho_n}(x,y,z):= \rho_n(y),\; \widetilde{\rho_n} \in C^{\infty}(\R^3)\cap H^1_\mu.$ The function $\widetilde{\rho_n}$ shares the following properties:
$$
\widetilde{\rho_n}\lvert_{S}=\begin{cases}
\rho_n & \text{ on } S_1,\\
\rho_n & \text{ on } S_2
\end{cases}
\quad \text{and}
\quad
\widetilde{\rho_n} \xrightarrow{H^1_{\mu}}\begin{cases}
\tr^{\Sigma}u & \text{ on } S_1,\\
\tr^{\Sigma}u & \text{ on } S_2
\end{cases}\in H^1_{\mu}.$$

With all the needed extensions in hand, we define the sequence $\Gamma_n \in C^{\infty}(\R^3):$
\begin{equation}\label{extension}
\Gamma_n(x,y,z):= \widetilde{\gamma_n}(x,y,z)+\widetilde{\delta_n}(x,y,z)-\widetilde{\rho_n}(x,y,z).
\end{equation}

\noindent We have $\Gamma_n \in H^1_{\mu}$ and
$$
\Gamma_n\lvert_{S}= \begin{cases}
\gamma_n+ \tr^{\Sigma}\alpha_n'-\rho_n &\text{ on } S_1,\\
\tr^{\Sigma}\alpha_n+\delta_n-\rho_n, &\text{ on } S_2.
\end{cases}
$$

\noindent Moreover, collecting all the limits of $\widetilde{\gamma_n}, \widetilde{\delta_n}$ and $\widetilde{\rho_n}$ we arrive at
$$
\Gamma_n \xrightarrow{H^1_{\mu}}
\begin{cases}
u+\tr^{\Sigma}u-\tr^{\Sigma}u & \text{ on } S_1,\\
\tr^{\Sigma}u+u-\tr^{\Sigma}u &\text{ on } S_2
\end{cases}
\;
=
\;
\begin{cases}
u &\text{ on } S_1,\\
u &\text{ on } S_2
\end{cases}
\;
=
\;
u \in H^1_{\mu}.
$$\\

\noindent \textbf{Step 3: Generalized translation and difference quotient}

\noindent We use the sequence $\Gamma_n$ to introduce a generalized notion of translation, i.e. we show how to construct a shift of the function $u$ by the vector $(h,0,0),$ where $h\in \R.$ To fix the perspective, let us choose $h>0$ to be as in the Step 2. Otherwise, we would translate the function in the opposite direction, which can be clearly done by the same method.

Define
\begin{equation}\label{translation}
\Gamma_n^{h,x}(x,y,z):= \Gamma_n(x+h,y,z)=\widetilde{\gamma_n}(x+h,y,z)+\widetilde{\delta_n}(x+h,y,z)-\widetilde{\rho_n}(x+h,y,z).
\end{equation}

Notice that
$$
\widetilde{\gamma_n}(x+h,y,z)=\widetilde{\alpha_n}(x+h,y,z)-\widetilde{\beta_n}(x+h,y,z)=\alpha_n(x+h,y)-\beta_n(x+h,y),
$$ where
$$
\alpha_n(x+h,y)\lvert_{S}=
\begin{cases}
\alpha_n(x+h,y) & \text{ on } S_1,\\
\alpha_n(-h,y) &\text{ on } S_2
\end{cases}
\;
=
\;
\begin{cases}
\alpha_n(x+h,y) &\text{ on } S_1,\\
\tr^{\Sigma}[\alpha_n(\cdot_1+h,\cdot_2)] &\text{ on } S_2
\end{cases}
\in H^1_{\mu}.
$$

The sequence $\beta_n$ has an analogous representation on the structure $S$. Therefore, the translated sequence $\widetilde{\gamma_n}$ can be expressed as
\begin{align*}
\widetilde{\gamma_n}(x+h,y,z) &=
\begin{cases}
\alpha_n(x+h,y) - \beta_n(x+h,y) &\text{ on } S_1,\\
\alpha_n(-h,y) - \beta_n(-h,y) &\text{ on } S_2
\end{cases}\\
&=
\begin{cases}
\alpha_n(x+h,y) - \beta_n(x+h,y) &\text{ on } S_1,\\
\tr^{\Sigma}[\alpha_n(\cdot_1+h,\cdot_2)]-\tr^{\Sigma}[\beta_n(\cdot_1+h,\cdot_2)] &\text{ on } S_2.
\end{cases}
\end{align*}

\noindent Passing to the limit in the $H^1_{\mu}$-norm we obtain the following closed-form expression
\begin{align*}
\widetilde{\gamma_n}(x+h,y,z) \xrightarrow{H^1_{\mu}}
&\begin{cases}
v(x+h,y) - \left(v(x+h,y)-u(x+h,y)\right) &\text{ on } S_1,\\
\tr^{\Sigma}[v(\cdot_1+h,\cdot_2)]-\left(\tr^{\Sigma}[v(\cdot_1+h,\cdot_2)]-\tr^{\Sigma}[u(\cdot_1+h,\cdot_2)]\right) &\text{ on } S_2
\end{cases}\\
=&\begin{cases}
u(x+h,y) &\text{ on } S_1,\\
\tr^{\Sigma}[u(\cdot_1+h,\cdot_2)] &\text{ on } S_2.
\end{cases}
\end{align*}

From now on, to avoid possible confusion, we will use the notation $u_i:=u\lvert_{S_i},$ for $i=1,2$. Thus the last equality can be written as
$$
\widetilde{\gamma_n}(x+h,y,z) \xrightarrow{H^1_{\mu}}
\begin{cases}
u_1(x+h,y) &\text{ on } S_1,\\
\tr^{\Sigma}\left[u(\cdot_1+h,\cdot_2)\right] &\text{ on } S_2.
\end{cases}
$$

Similar calculations allow us to check the convergence of $\widetilde{\gamma_n}$, which was the extension of $u_2 = u\lvert_{S_2}$. Omitting the details, we arrive at
\begin{align*}
    \widetilde{\delta_n}(x+h,y,z) &= \widetilde{\alpha_n}'(x+h,y,z)-\widetilde{\beta_n}'(x+h,y,z) \\
    &= \alpha_n'(y,z)(x+h)-\beta_n'(y,z)(x+h)\\
    &= \alpha_n'(y,z)(x)-\beta_n'(y,z)(x)
\end{align*}
(recall that $\alpha', \beta'$ are constructed analogously to $\alpha, \beta$, but this time around on $S_2$).

Moreover, we have
$$
\widetilde{\delta_n}\lvert_{S}=
\begin{cases}
\tr^{\Sigma}\left(\alpha_n' - \beta_n'\right) &\text{ on } S_1,\\
\alpha_n'(y,z)- \beta_n'(y,z) &\text{ on } S_2
\end{cases}
\;
=
\;
\begin{cases}
\tr^{\Sigma}\alpha_n' &\text{ on } S_1,\\
\delta_n(y,z) &\text{ on } S_2,
\end{cases}
$$
thus after passing with $n \to \infty$ we obtain
$$
\widetilde{\delta_n}(x+h,y,z)\xrightarrow{H^1_{\mu}}
\begin{cases}
\tr^{\Sigma}u_2 &\text{ on } S_1,\\
u_2(y,z) &\text{ on } S_2.
\end{cases}
$$

Lastly, we consider the sequence $\rho_n$ associated with the extension of $\Sigma$--trace.

We have:
$$
\widetilde{\rho_n}(x+h,y,z)=\rho_n(y)(x+h,z)=\rho_n(y)(x,z)=\rho_n(y),
$$

and further
$$
\widetilde{\rho_n}(x+h,y,z) \xrightarrow{H^1_{\mu}}
\begin{cases}
\tr^{\Sigma}u_1 &\text{ on } S_1,\\
\tr^{\Sigma}u_1 &\text{ on } S_2.
\end{cases}
$$

For the convenience of the readers, we summarise all the obtained limits:
\begin{align*}
    \widetilde{\gamma_n}(x+h,y,z) &\xrightarrow{H^1_{\mu}}
    \begin{cases}
        u_1(x+h,y) &\text{ on } S_1,\\
        \tr^{\Sigma}[u_1(\cdot_1+h,\cdot_2)] &\text{ on } S_2,
    \end{cases} \\
    \widetilde{\delta_n}(x+h,y,z) &\xrightarrow{H^1_{\mu}}
    \begin{cases}
        \tr^{\Sigma}u &\text{ on } S_1,\\
        u_2(y,z) &\text{ on } S_2,
    \end{cases} \\
    \widetilde{\rho_n}(x+h,y,z) &\xrightarrow{H^1_{\mu}}
    \begin{cases}
        \tr^{\Sigma}u &\text{ on } S_1,\\
        \tr^{\Sigma}u &\text{ on } S_2.
    \end{cases}
\end{align*}

Recalling the definition of $\Gamma^{h,x}_n$ in \eqref{translation}, we arrive at
\begin{align*}
\Gamma_n^{h,x} \xrightarrow{H^1_{\mu}}
&\begin{cases}
u_1(x+h,y)+\tr^{\Sigma}u - \tr^{\Sigma}u &\text{ on } S_1,\\
\tr^{\Sigma}[u_1(\cdot_1+h,\cdot_2)]+u_2(y,z) - \tr^{\Sigma}u &\text{ on } S_2
\end{cases}\\
=
&\begin{cases}
u_1(x+h,y) &\text{ on } S_1,\\
\tr^{\Sigma}[u_1(\cdot_1+h,\cdot_2)]+u_2(y,z) - \tr^{\Sigma}u_1 &\text{ on } S_2
\end{cases}
 \in H^1_{\mu}.
\end{align*}

\noindent This limit will be treated as a generalized translation of $u$.

From now on, we denote
$$
u^{h,x}
:=
\begin{cases}
u_1(x+h,y) &\text{ on } S_1,\\
\tr^{\Sigma}[u_1(\cdot_1+h,\cdot_2)]+u_2(y,z) - \tr^{\Sigma}u_1 &\text{ on } S_2.
\end{cases}.
$$

In a completely analogous way, we introduce the z-direction translation: $u^{h,z}.$

With the notion of translation at hand, we define generalized difference quotients as
$$
D^{h,x}u:=\frac{1}{h}(u^{h,x}-u), \quad D^{h,z}u:=\frac{1}{h}(u^{h,z}-u).
$$

We have
$$
D^{h,x}u,\; D^{h,z}u \in H^1_{\mu}.
$$
Note that $u^{h,x}$ and $u^{h,z}$ coincide with classical translations on $S_1$ and $S_2$, respectively.

\textbf{Step 4:} Higher regularity estimates

In this step, we use the notion of generalized difference quotients to establish higher regularity.

Let $\phi:=-\xi^2D^{-h,x}(u)\in H^1_{\mu}$, where $\xi \in C^{\infty}(\R^3),\; \supp (\xi\lvert_{S}) \subset \subset S$ is the properly chosen cut-off function to be specified later. Due to Remark \ref{H1 jako testowe}, it is a suitable test function in weak formulation \eqref{weak} and thus in an equivalent formulation \eqref{cieplo3}.

Proceeding with the latter one, we have
\begin{align}
\begin{split}
\int_{\Omega}\widetilde{B_{\mu}}\nabla_{\mu}u \cdot \nabla_{\mu}(D^{h,x}\phi) d\mu
&=\int_{S_1}\widetilde{B_{\mu}}\nabla_{S_1}u \cdot \nabla_{S_1} D^{h,x}\phi dS_1 + \int_{S_2}\widetilde{B_{\mu}}\nabla_{S_2}u \cdot \nabla_{S_2} D^{h,x}\phi dS_2\\
&= \int_{S_1} \widetilde{B_{\mu}}\nabla_{S_1}u \cdot\nabla_{S_1} D^h_x\phi dS_1 + \int_{S_2} \widetilde{B_{\mu}}\nabla_{S_2}u \cdot\nabla_{S_2} D^{h,x}\phi dS_2.
\end{split}
\end{align}
Here $D^h_x$ is a classical difference quotient, which follows from the observation that the generalised translation $\phi^{x,h}$ coincides with a classical one (in $x$ direction still) on $S_1$, as mentioned before.

Therefore the integral over $S_1$ already is in a proper form, but we need to show that a constant independent of $h$ uniformly bounds the second term on the right-hand side.

We have
\begin{align}
\begin{split}
&\int_{S_2}\widetilde{B_{\mu}} \nabla_{S_2}u \cdot \nabla_{S_2}D^{h,x}\phi dS_2 =
\int_{S_2} \widetilde{B_{\mu}}\nabla_{S_2}u \cdot \nabla_{S_2}\left[\frac{1}{h}(\phi^{h,x}-\phi)\right]dS_2 \\
= &\int_{S_2} \widetilde{B_{\mu}}\nabla_{S_2}u \cdot \nabla_{S_2} \left[ \frac{1}{h}(\tr^{\Sigma}[\phi_1(\cdot_1+h,\cdot_2)]+\phi_2(y,z)-\tr^{\Sigma}[\phi_1(\cdot_1,\cdot_2)]-\phi_2(y,z))\right]dS_2 \\
= &\int_{S_2} \widetilde{B_{\mu}}\nabla_{S_2}u \cdot \nabla_{S_2} \left[\frac{1}{h}(\tr^{\Sigma}[\phi_1(\cdot_1+h,\cdot_2)]-\tr^{\Sigma}[\phi(\cdot_1,\cdot_2)])\right]dS_2.
\end{split}
\end{align}

We estimate this last integral. In the below computations, the constant $C>0$ varies from line to line and is independent of the parameter $h.$

\begin{align}\label{szacowanko}
\begin{split}
&\left|\int_{S_2} \widetilde{B_{\mu}}\partial_yu \partial_y \left[\frac{1}{h}\left(\tr^{\Sigma}[\phi_1(\cdot_1+h,\cdot_2)]-\tr^{\Sigma}[\phi_1(\cdot_1,\cdot_2)]\right) \right]dydz\right|\\
\leqslant &\left|\int_{S_2} \widetilde{B_{\mu}}\partial^2_yu \left[\frac{1}{h}\left(\tr^{\Sigma}[\phi_1(\cdot_1+h,\cdot_2)]-\tr^{\Sigma}[\phi_1(\cdot_1,\cdot_2)]\right)\right]dydz\right|\\
+&\left|\int_{S_2}\partial_y\widetilde{B_{\mu}} \partial_yu \left[\frac{1}{h} \left(\tr^{\Sigma}[\phi_1(\cdot_1+h,\cdot_2)]-\tr^{\Sigma}[\phi_1(\cdot_1,\cdot_2)]\right)\right]dydz\right|\\
\leqslant \;&C\norm{\partial_yu}_{L^2(S_2)} \left\Vert\frac{1}{h}(\tr^{\Sigma}[\phi_1(\cdot_1+h,\cdot_2)]-\tr^{\Sigma}[\phi_1(\cdot_1,\cdot_2)])\right\Vert_{L^2(S_2)}\\
+ &C \norm{\partial_y^2 u}_{L^2(S_2)} \left\Vert\frac{1}{h}(\tr^{\Sigma}[\phi_1(\cdot_1+h,\cdot_2)]-\tr^{\Sigma}[\phi_1(\cdot_1,\cdot_2)])\right\Vert_{L^2(S_2)}\\
= &C(\norm{\partial_yu}_{L^2(S_2)}+\norm{\partial_y^2 u}_{L^2(S_2)})\left\Vert\frac{1}{h}(\tr^{\Sigma}[\phi_1(\cdot_1+h,\cdot_2)]-\tr^{\Sigma}[\phi_1(\cdot_1,\cdot_2)])\right\Vert_{L^2(S_2)}\\
\leqslant &C(\norm{\partial_yu}_{L^2(S_2)}+\norm{\partial_y^2 u}_{L^2(S_2)})\left\Vert\frac{1}{h}(\tr^{\Sigma}[\phi_1(\cdot_1+h,\cdot_2)]-\tr^{\Sigma}[\phi_1(\cdot_1,\cdot_2)])\right\Vert_{L^2(\Sigma)}\\
\leqslant\; &C \left(\norm{\partial_y^2 u}_{L^2(S_2)}+\norm{\partial_yu}_{L^2(S_2)}\right) \left\Vert\frac{1}{h}([\phi_1(\cdot_1+h,\cdot_2)]-[\phi_1(\cdot_1,\cdot_2)])\right\Vert_{L^2(S_1)}.
\end{split}
\end{align}

 The second estimate follows from the assumption that $B_{\mu}\lvert_{S_i} \in W^{1,\infty}(S_i),\; i=1,2.$

In the second to last inequality, we used the fact that traces are independent of the \hbox{$z$-variable.} This implies that it is possible to estimate it from above by the integral over the intersection set $\Sigma.$

The last inequality results from the linearity and the boundedness of the trace operator. This estimate is meaningful as it allows us to change integration over $\Sigma$ to integration over the component $S_1$, where the $x$-variable is "contained".

Using the Young inequality with $\eps,$ we obtain
\begin{align}
&C \left(\norm{\partial_y^2 u}_{L^2(S_2)}+\norm{\partial_yu}_{L^2(S_2)}\right) \left\Vert\frac{1}{h}([\phi_1(\cdot_1+h,\cdot_2)]-[\phi_1(\cdot_1,\cdot_2)])\right\Vert_{L^2(S_1)}\\
\leqslant \;&C\left(\frac{1}{\eps^2}(\norm{\partial_y^2 u}_{L^2(S_2)}+\norm{\partial_yu}_{L^2(S_2)})+\eps\norm{D^h_x\phi_1}_{L^2(S_1)}\right).
\end{align}

As in the classical proof, we choose $\eps>0$ small enough and move the term $\eps\norm{D^h_x\phi_1}_{L^2(S_1)}$ to the left-hand side of \eqref{cieplo3}, obtaining the expression of the type
$$\widetilde{C} \norm{D^h_x\phi_1}_{L^2(S_1)} \leqslant C + \int_{\Omega}\widetilde{f} D^{h,x}\phi d\mu,$$
with positive constants $\widetilde{C}, C$ independent of the parameter $h.$

We deal with the right-hand side of the equation analogously.
Firstly, we decompose the integral as
$$
\int_{\Omega}\widetilde{f} D^{h,x}\phi\, d\mu = \int_{S_1}\widetilde{f} D^h_x \phi\, dS_1 + \int_{S_2}\widetilde{f} D^{h,x}\phi \,dS_2.
$$

The integral over $S_1$ is in a standard form and can be treated as in the classical proof. To deal with the integral over $S_2$, we use the analogous estimates as in the computations presented in estimation \eqref{szacowanko}. Applying the standard difference quotients method (i.e., also fixing a function $\xi$) we conclude a uniform boundedness of the first integral; this is clearly explained in the mentioned book of Evans \cite[Ch. 6.3.1]{Eva10}.

In this way, we end up with the formula
$$
\norm{D^h_x \phi_1}_{L^2(S_1)} \leqslant C,
$$

where $C>0$ is again independent of $h.$
This implies that
$$
\partial^2_xu \in L^2(S_1)
$$ and $\partial^2_xu$ is a second weak derivative of the function $u.$ We can invoke the analogous procedure to provide extra regularity with respect to the $z$-variable.

Concluding all the reasoning, we arrive at
\begin{tw}\label{wazne}
Assume $\mu \in \widehat{\mathcal{S}}$ is as in \eqref{miary} and $\supp \mu$ is as in \eqref{dyski}. Let $u \in H^1_{\mu}$ be a solution to Problem \eqref{cieplo3} with the coefficients matrix $B_{\mu}\lvert_{S_i}\in W^{1,\infty}(S_i),\; i=1,2.$ Then for $i=1,2$ we have $u\in H^2_{\loc}(S_i).$ $\qedhere$
\end{tw}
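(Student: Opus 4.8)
The plan is to adapt Nirenberg's difference-quotient method; the obstruction is that $u$ is supported on the thin set $S$ and cannot be translated in the directions transversal to $S_1$ and $S_2$. I would proceed in three stages: upgrade the trace $\tr^{\Sigma}u$ to $H^1(\Sigma)$, build a controlled extension of $u$ to $\R^3$ that survives translation, and then run the difference-quotient estimate with the resulting generalized shifts.

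First I would exploit that $e_y=(0,1,0)$ is tangent to both components, so the genuine shift $u(\cdot+he_y)$ is defined on any substructure compactly contained in $S$ and the translated approximating sequence still converges in $H^1_{\mu}$. Testing \eqref{cieplo3} with $-D^{-h}_y(\xi^2D^h_yu)$ and arguing componentwise gives the uniform bound $\|\partial_y\nabla_{\mu}u\|_{L^2_{\mu}}\le C$, hence $\|\partial_y^2u\|_{L^2_{\mu}}\le C$ and $\|\partial_y\partial_xu\|_{L^2_{\mu}}\le C$. Away from $\Sigma$ the equation is classical, so interior elliptic regularity yields $u\in H^2_{\loc}$ on $S_1^{+}$ and $S_1^{-}$; patching gives $\partial_y\partial_xu=\partial_x\partial_yu$ a.e.\ on $S_1$, so $\partial_x\partial_yu\in L^2_{\loc}(S_1)$ and, integrating by parts against smooth functions compactly supported on $S_1$, it is the weak $\partial_x$-derivative of $\partial_yu$. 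Thus $\partial_yu\in H^1(S_1)$, so $\tr^{\Sigma}\partial_yu$ makes sense, commutes with $\partial_y$ by density of $C^{\infty}(\R^2)$ in $\{w\in H^1(S_1):\partial_yw\in H^1(S_1)\}$, and $\tr^{\Sigma}u\in H^1(\Sigma)$ follows.

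Next, using surjectivity of $\tr^{\Sigma}\colon H^{3/2}(S_1)\to H^1(\Sigma)$ I would pick $v\in H^{3/2}(S_1)$ with $\tr^{\Sigma}v=\tr^{\Sigma}u$, write $u=v-(v-u)$, approximate $v$ and $v-u$ by smooth functions (converging in $H^{3/2}$, the second only away from $\Sigma$, via a diagonal argument, and vanishing on $\Sigma$), extend them constantly in $z$, do the symmetric thing on $S_2$, and add a one-variable extension $\widetilde{\rho_n}(x,y,z)=\rho_n(y)$ of the trace; the resulting sum $\Gamma_n$ of \eqref{extension} converges to $u$ in $H^1_{\mu}$. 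The whole point of this rigid form is that $\Gamma_n(\cdot+he_x)$ also converges in $H^1_{\mu}$, to a limit $u^{h,x}$ equal to the ordinary shift $u_1(x+h,y)$ on $S_1$ and to $\tr^{\Sigma}[u_1(\cdot_1+h,\cdot_2)]+u_2-\tr^{\Sigma}u_1$ on $S_2$; likewise $u^{h,z}$, and one sets $D^{h,x}u=\tfrac{1}{h}(u^{h,x}-u)\in H^1_{\mu}$, $D^{h,z}u=\tfrac{1}{h}(u^{h,z}-u)\in H^1_{\mu}$. Finally, plugging $\phi=-\xi^2D^{-h,x}u$ (admissible by Remark \ref{H1 jako testowe}) into \eqref{cieplo3} and splitting over $S_1$ and $S_2$, the $S_1$-part is the classical argument since the generalized shift is there the ordinary one, while on $S_2$ only the trace summand $\tfrac{1}{h}(\tr^{\Sigma}[\phi_1(\cdot_1+h,\cdot_2)]-\tr^{\Sigma}\phi_1)$ survives and is bounded, using $B_{\mu}|_{S_i}\in W^{1,\infty}(S_i)$ and the first stage, by $(\|\partial_yu\|_{L^2(S_2)}+\|\partial_y^2u\|_{L^2(S_2)})\,\|D^h_x\phi_1\|_{L^2(S_1)}$ after passing from $\Sigma$ to $S_1$ by trace boundedness; a Young inequality with $\epsilon$ absorbs the last factor, and the same treatment of the right-hand side leaves $\|D^h_x\phi_1\|_{L^2(S_1)}\le C$ uniformly in $h$, whence $\partial_x^2u\in L^2_{\loc}(S_1)$, and symmetrically $\partial_z^2u\in L^2_{\loc}(S_2)$, giving $u\in H^2_{\loc}(S_i)$ for $i=1,2$.

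The hard part will be the second stage together with the subtlety that an \emph{arbitrary} $H^1_{\mu}$-approximating sequence of $u$ generally fails to stay convergent after a transversal translation: one must engineer the extension so that its global values are pinned down by $u|_S$, and then check --- and this is exactly where the regularity $\tr^{\Sigma}u\in H^1(\Sigma)$ from the first stage is indispensable --- that the generalized difference quotients genuinely lie in $H^1_{\mu}$ and obey the uniform bounds needed in the Nirenberg estimate. The remaining steps are componentwise repetitions of the classical interior-regularity proof.
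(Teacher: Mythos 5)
Your proposal is correct and follows essentially the same route as the paper's proof: the $y$-direction difference quotients giving $\tr^{\Sigma}u\in H^1(\Sigma)$, the $H^{3/2}$-lifting $u=v-(v-u)$ used to build the rigid extension $\Gamma_n$ whose transversal shifts still converge in $H^1_{\mu}$, and the final Nirenberg estimate with $\phi=-\xi^2D^{-h,x}u$ where the $S_2$-contribution reduces to the trace term controlled via $W^{1,\infty}$-regularity of $B_{\mu}$ and Young's inequality. The points you flag as delicate (stability of the extension under translation, membership of the generalized quotients in $H^1_{\mu}$) are exactly the ones the paper resolves, and by the same means.
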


The same reasoning (with simpler technicalities) can also be applied to establish the analogous result if $\dim S_1 = \dim S_2 = 1$, which we state in the following

\begin{tw}
Let
\begin{equation*}
S_1:=\{(x,0,0)\in \R^3: x\in [-1,1]\}, \quad S_2:=\{(0,0,z)\in \R^3: z\in [-1,1]\},
\end{equation*}
\begin{equation*}
S:=S_1 \cup S_2, \quad \mu:= \mathcal{H}^1\lfloor_{S_1}+\mathcal{H}^1\lfloor_{S_2}.
\end{equation*}
 Let $u \in H^1_{\mu}$ be a solution to \eqref{cieplo3} on $S$ and $B_{\mu}\lvert_{S_i}\in W^{1,\infty}(S_i),\; i=1,2.$ Then for $i=1,2$ the solution satisfies $u\in H^2_{\loc}(S_i).$
\end{tw}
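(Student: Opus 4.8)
The plan is to mirror the proof of Theorem~\ref{wazne} for the two-dimensional case, taking advantage of the fact that all the technical obstacles present there become trivial when $\dim S_1 = \dim S_2 = 1$. The crucial simplification is that the intersection set $\Sigma = S_1 \cap S_2 = \{(0,0,0)\}$ is now a single point, and a trace on a point of an $H^1$-function on a $1$-dimensional manifold is simply the pointwise value (which is well-defined since $H^1$ of an interval embeds into $C$). Thus the analogue of Step~1 (establishing $\tr^\Sigma u \in H^1(\Sigma)$) is vacuous: there is no regularity to prove for the ``trace'' $\tr^\Sigma u = u(0,0,0)$, which is just a real number.

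First I would set up the generalized translation as in Step~2--Step~3. For $h>0$ small, take a sequence $\phi_n \in C^\infty_c(\R^3)$ with $\phi_n \xrightarrow{H^1_\mu} u$ and mimic the extension formula: informally $\widetilde u(x,y,z) = u(0,0,z) - u(0,0,0) + u(x,0,0)$ on the relevant coordinates, realized through extensions $\widetilde{\gamma_n}, \widetilde{\delta_n}$ of $u_1 = u|_{S_1}$ and $u_2 = u|_{S_2}$ that are constant in the transverse variables, minus the constant extension $\widetilde{\rho_n} \equiv u(0,0,0)$. Since $u_i \in H^1(S_i) \subset C(S_i)$, the value $u(0,0,0)$ is the common endpoint value, so the extensions automatically agree at the junction and $\Gamma_n := \widetilde{\gamma_n} + \widetilde{\delta_n} - \widetilde{\rho_n} \in C^\infty(\R^3) \cap H^1_\mu$ with $\Gamma_n \xrightarrow{H^1_\mu} u$. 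Shifting in the $x$-direction, $\Gamma_n^{h,x}(x,y,z) := \Gamma_n(x+h,y,z)$ converges in $H^1_\mu$ to
$$
u^{h,x} := \begin{cases} u_1(x+h) & \text{on } S_1,\\ u_1(h) + u_2(z) - u_1(0) & \text{on } S_2, \end{cases}
$$
which defines the generalized translation; similarly one gets $u^{h,z}$. The generalized difference quotients $D^{h,x}u := \tfrac1h(u^{h,x}-u)$, $D^{h,z}u := \tfrac1h(u^{h,z}-u)$ then lie in $H^1_\mu$ and coincide with classical difference quotients on $S_1$ and $S_2$ respectively.

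Next I would carry out Step~4: plug the test function $\phi := -\xi^2 D^{-h,x}u \in H^1_\mu$ (with a suitable cut-off $\xi$) into the weak formulation \eqref{cieplo3}, split the integrals over $S_1$ and $S_2$, and observe that on $S_2$ the difference quotient $D^{h,x}\phi$ reduces to the quantity $\tfrac1h(\phi_1(h) - \phi_1(0))$, a constant in the $S_2$-variable. Hence $\nabla_{S_2}(D^{h,x}\phi) = 0$ on $S_2$ and the entire $S_2$-contribution on the left-hand side vanishes --- this is even simpler than the $2$-dimensional case where the $S_2$-term had to be estimated via trace bounds. The $S_1$-integral is in classical form, so the standard Nirenberg difference-quotient argument (see \cite[Ch.~6.3.1]{Eva10}), together with the local regularity away from the junction point and the $W^{1,\infty}$ bound on $B_\mu|_{S_i}$, yields the uniform bound $\norm{D^h_x \phi_1}_{L^2(S_1)} \leqslant C$ with $C$ independent of $h$, whence $\partial_x^2 u \in L^2_{\loc}(S_1)$. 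Running the same argument in the $z$-direction gives $\partial_z^2 u \in L^2_{\loc}(S_2)$, and therefore $u \in H^2_{\loc}(S_i)$ for $i=1,2$.

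I do not expect a genuine obstacle in this one-dimensional case --- the statement is essentially a corollary of the method developed for Theorem~\ref{wazne}, and the paper itself signals this by saying ``the same reasoning (with simpler technicalities)''. The only point requiring minor care is verifying that the shifted extension $\Gamma_n^{h,x}$ still converges in $H^1_\mu$ and that its limit is the claimed $u^{h,x}$; this follows because each constituent extension is built to be constant in the transverse direction, so shifting in $x$ acts as an honest shift on $S_1$ and as evaluation at $x=h$ (a continuity point) on $S_2$, and continuity of $H^1(I) \hookrightarrow C(I)$ makes all the pointwise manipulations legitimate. If one wanted full rigor one would also note, as in the general theorem, that this construction genuinely uses that $u$ solves the elliptic equation (to get the a.e. bound $\|\partial_y^2 u\|$-type estimates near the junction) rather than being an arbitrary $H^1_\mu$ function, but in the one-dimensional setting even this is lighter since there is no transverse second derivative to control.
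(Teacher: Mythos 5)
Your treatment of the left-hand side is fine: on $S_2$ the generalized difference quotient $D^{h,x}\phi$ is the constant $\tfrac1h\bigl(\phi_1(h)-\phi_1(0)\bigr)$, so its $S_2$-gradient vanishes and the $S_2$-part of the left-hand side of \eqref{cieplo3} drops out. The genuine gap is that you never treat the right-hand side term $\int_{S_2}\widetilde f\,D^{h,x}\phi\,dS_2$, which does not vanish: it equals $\bigl(\tfrac1h(\phi_1(h)-\phi_1(0))\bigr)\int_{S_2}\widetilde f\,dz$. With $\phi=-\xi^2D^{-h,x}u$ and $\xi\equiv 1$ near the junction (which is forced if one wants regularity across the junction rather than the trivial interior regularity on $S_i\setminus\Sigma$), this coefficient is, up to sign and lower-order cut-off terms, the symmetric second difference quotient $\tfrac{1}{h^{2}}\bigl(u_1(h)-2u_1(0)+u_1(-h)\bigr)$ --- exactly the quantity the theorem is trying to bound. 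In the two-dimensional proof the analogous coupling term is handled in \eqref{szacowanko} by the $L^2(\Sigma)$-bound for the trace along the one-dimensional junction line (which rests on the $y$-regularity obtained in Step 1 from the equation) and is then absorbed by Young's inequality; here $\Sigma$ is a single point, the ``trace'' is a point evaluation, and it is not controlled by $\norm{D^h_x\phi_1}_{L^2(S_1)}$ (point evaluation is not bounded on $L^2$, and bounding it by the $H^1(S_1)$-norm brings in a non-small multiple of the difference quotient of $\partial_x u_1$, which cannot be absorbed). So the one-dimensional case is not ``even simpler'' at this point; it is precisely where the tool used in the 2D argument disappears, and your Step 4 does not close as written.

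That this term is not ignorable can be seen concretely: take $B=Id$, $f\equiv\tfrac12$ on $S_1$ and $f\equiv-\tfrac12$ on $S_2$, so $f\in\mathring{L^2_{\mu}}$. The unique zero-mean weak solution is $u_1(x)=c+\tfrac{|x|}2-\tfrac{x^2}4$ on $S_1$ and $u_2(z)=c-\tfrac{|z|}2+\tfrac{z^2}4$ on $S_2$ (Neumann at the outer endpoints, common value $c$ at the origin; plugging it into the weak formulation, the two one-sided flux contributions at the junction cancel against each other, which is the only condition the equation imposes there). Its derivative jumps at the origin, the second difference quotient above diverges like $1/h$, and $\int_{S_2}\widetilde f\,dz\neq 0$, so the dropped right-hand-side term is exactly the one that blows up, and no uniform bound on the difference quotients across the junction can hold for this data. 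Consequently your claim that there is ``no genuine obstacle'' in the one-dimensional case is mistaken: any complete argument must either explain why the junction term $\int_{S_2}\widetilde f\,D^{h,x}\phi\,dS_2$ is controlled (the weak formulation only yields the Kirchhoff balance of the four one-sided fluxes, not the continuity of $\partial_x u_1$ at $0$), or restrict the conclusion to cut-offs vanishing at the junction, in which case it reduces to classical interior regularity away from $\Sigma$. This missing estimate is the concrete step at which your proposal fails.
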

\begin{proof}
Observe that $S \subset \{(x,0,z)\in \R^3: x,z\in\R\}.$ This allows us to work in a two-dimensional subspace and then trivially extend constructions to $\R^3.$ The proof is based on completely analogous constructions and reasoning as in the two-dimensional regularity theorem proved above. To avoid repetition, we skip the presentation of the proof here.
\end{proof}

Now we can give a short proof of the higher regularity of weak solutions in the instance of components of different dimensions.

\begin{tw}\label{sobreg}
Assume that $\dim S_1 \neq \dim S_2.$ If $u \in H^1_{\mu}$ is a weak solution of \hbox{Problem \eqref{cieplo3}} with $B_{\mu}\lvert_{S_i}\in W^{1,\infty}(S_i),\; i=1,2,$ then $u \in H^2_{\loc}(S_i),\; i=1,2.$
\end{tw}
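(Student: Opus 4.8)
The plan is to reduce the case of components of different dimensions to the componentwise situation already understood, using the key structural fact from Proposition \ref{zero_extension} that the two components essentially decouple. Concretely, assume without loss of generality $\dim S_1 > \dim S_2$, so that $\dim S_2 \le 1$ and, by transversality (LDS2), $\dim(S_1 \cap S_2) < \dim S_2 - 1 \le \dim S_1 - 2$; in particular the junction set $\Sigma = S_1 \cap S_2$ is $\mu$-negligible in both components and has null Sobolev capacity relative to $S_1$ (and relative to $S_2$ when $\dim S_2 = 1$, since then $\Sigma$ consists of isolated points and points have zero $H^1$-capacity in one dimension). The crucial point I would exploit is that, by Proposition \ref{zero_extension} and the remark following it, the functions equal to $u$ on one component and zero on the other both lie in $H^1_\mu$; this lets us test the weak equation \eqref{cieplo3} with functions supported on a single component.

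First I would split the test class: for $\phi \in C^\infty_c(\R^3)$ with $\phi|_{S_1} \in C^\infty_c(S_1 \setminus \Sigma)$ (extended by zero across $\Sigma$ to all of $S$, which is legitimate by Proposition \ref{zero_extension}), the weak formulation \eqref{cieplo3} reduces to
\[
\int_{S_1}\widetilde{B_\mu}\nabla_{S_1}u \cdot \nabla_{S_1}\phi\, dS_1 = \int_{S_1}\widetilde f\, \phi\, dS_1,
\]
since the $S_2$-integrals vanish. Thus $u_1 := u|_{S_1}$ is a weak solution of a classical uniformly elliptic equation on $S_1 \setminus \Sigma$, and by interior elliptic regularity (as in \cite[Ch. 6.3]{Eva10}, using $B_\mu|_{S_1} \in W^{1,\infty}$) we get $u_1 \in H^2_{\loc}(S_1 \setminus \Sigma)$. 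The same argument applied with test functions supported on $S_2 \setminus \Sigma$ gives $u_2 := u|_{S_2}$ a weak solution on $S_2 \setminus \Sigma$, hence $u_2 \in H^2_{\loc}(S_2 \setminus \Sigma)$. It then remains to upgrade $H^2_{\loc}(S_i \setminus \Sigma)$ to $H^2_{\loc}(S_i)$, i.e. to show that nothing bad happens across the negligible junction.

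For this last step I would argue by capacity/removable-singularity reasoning, analogous to the proof of Lemma \ref{conti}. Since $\capac_2(\Sigma, S_1) = 0$ (as $\mathrm{codim}\,\Sigma \ge 2$ in $S_1$), there is a sequence of cut-offs $\eta_n \in C^\infty(\R^{\dim S_1})$, equal to $1$ near $\Sigma$, with $\eta_n \to 0$ in $H^1$ and $\|\nabla \eta_n\|_{L^2} \to 0$; multiplying test functions by $1 - \eta_n$ and passing to the limit shows that $u_1$ is in fact a weak solution of the elliptic equation on the whole of a neighborhood of $\Sigma$ in $S_1$ (the $\Sigma$-supported error terms are killed by the capacity estimate), whence interior regularity gives $u_1 \in H^2_{\loc}(S_1)$. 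When $\dim S_2 = 1$, $\Sigma$ is a finite set of points and the removability across a point for an $H^1$ solution of a 1D elliptic ODE is elementary; when $\dim S_2 = 2$ is impossible here since we assumed $\dim S_1 > \dim S_2$ and $\dim S_i \in \{1,2\}$, so $\dim S_2 = 1$. Assembling the two pieces yields $u \in H^2_{\loc}(S_i)$ for $i = 1,2$.

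The main obstacle I anticipate is making the "removability across $\Sigma$" step fully rigorous: one must be careful that testing with $(1-\eta_n)\phi$ and letting $n \to \infty$ genuinely recovers the equation on all of $S_1$ without a distributional defect concentrated on $\Sigma$. This is exactly where the null $W^{2,2}$-capacity of a codimension-$\ge 2$ set and the $W^{1,\infty}$ bound on the coefficients are used; the needed estimate is $\big|\int \widetilde B_\mu \nabla u \cdot \nabla(\eta_n \phi)\big| \le C\|\nabla u\|_{L^2}\big(\|\phi\|_{L^\infty}\|\nabla\eta_n\|_{L^2} + \|\eta_n\|_{L^2}\|\nabla\phi\|_{L^\infty}\big) \to 0$, which closes the argument. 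Everything else is a routine transcription of the classical difference-quotient / interior-regularity machinery to each component separately, made possible by the decoupling provided by Proposition \ref{zero_extension}.
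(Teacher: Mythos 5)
Your reduction via Proposition \ref{zero_extension} is the right starting point, and on the two-dimensional component your cutoff/capacity argument is sound (a point has zero $W^{1,2}$-capacity in the plane, so the defect terms you estimate do vanish). The genuine gap is on the one-dimensional component. There you claim that isolated points have zero $H^1$-capacity in dimension one and that removability of a point singularity for an $H^1$ solution of a 1D elliptic ODE is elementary; both claims are false. In one dimension $H^1$ embeds into $C^{0,1/2}$, so a point has positive capacity and no cut-off sequence $\eta_n$ with $\eta_n\equiv 1$ near $\Sigma$ and $\norm{\eta_n}_{H^1}\to 0$ exists; moreover $u(x)=|x|$ solves $u''=0$ weakly on $(-1,0)\cup(0,1)$, lies in $H^1(-1,1)$, yet fails to be $H^2$ near $0$ — testing only with functions supported away from the junction cannot exclude precisely this kind of kink, i.e.\ a Dirac defect of $B_{\mu}u'$ concentrated at $\Sigma$. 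So after your first step you have only the equation on $S_2\setminus\Sigma$, and your second step cannot upgrade it to the equation (and hence $H^2_{\loc}$-regularity) across $\Sigma$ on the one-dimensional component.

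The fix is to not restrict the test class at all: Proposition \ref{zero_extension} shows that the zero extension to $S_1$ of an \emph{arbitrary} function in $H^1(S_2)$ — with no vanishing condition at $\Sigma$, exactly because the dimensions differ — belongs to $H^1_{\mu}$, and by Remark \ref{H1 jako testowe} every such function is admissible in \eqref{cieplo3}. Hence $u|_{S_2}$ is a classical weak solution on all of $S_2$, junction included, and likewise on $S_1$, so interior regularity applies directly. This is how the paper argues: it takes the classical difference-quotient test functions $b^i=-D^{-h}(\xi^2D^h u^i)$, extends them by zero to the other component, and runs the standard componentwise regularity proof, with no removable-singularity step needed anywhere.
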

\begin{proof}
For $i=1,2$ denote $b^i:=-D^{-h}(\xi^2 D^h u^i),$ with a smooth function $\xi$ chosen as in the classical proof. By Proposition \ref{zero_extension}, we know that the functions
$$
a^1:=
\begin{cases}
b^1, &S_1\\
0, & S_2
\end{cases}
\quad
\text{and}
\quad
a^2:=
\begin{cases}
0, &S_1\\
b^2, &S_2
\end{cases}
$$ are in $H^1_\mu$, and therefore, we can use them as test functions. This implies that the considered low-dimensional problem reduces to the classical one, and the classical proof yields the desired regularity.
\end{proof}

As a last effort in this part, we address two assumptions previously made to simplify the proof of Theorem \ref{wazne}. Firstly, we assumed that without the loss of generality, we can restrict our attention to structures composed of two sub-manifolds. Secondly, we considered only "straightened-out" domains.

In what follows, we show that our construction easily carries on to the general setting.

Let us begin with discussing how to pass from the case of two intersecting manifolds, both being subsets of either $\R$ or $\R^2$, to a general case of a low-dimensional structure consisting of multiple components (which we assume to be "straightened-out" still).

Let
$$
S=\bigcup_{i=1}^m S_i \quad \text{and} \quad \mu:= \sum_{i=1}^m\mathcal{H}^{\dim S_i}\lfloor_{S_i}.
$$
Keeping in line with the previous notation, we will denote for $1 \leqslant i\neq j \leqslant m$
$$
\Sigma_{ij}:= S_i\cap S_j.
$$

Recall that the intersections $\Sigma_{ij}$ are mutually isolated, i.e. for each two $\Sigma_{ij} \neq \Sigma_{i'j'}$ there exist fixed open (in $\R^3$) sets $O_{ij}$ and $O_{i'j'}$ satisfying
\begin{equation}\label{pokrywka}
\begin{aligned}
\Sigma_{ij} \subset O_{ij}, \quad \Sigma_{i'j'} \subset O_{i'j'} \quad \text{and} \quad O_{ij} \cap O_{i'j'} = \emptyset.
\end{aligned}
\end{equation}

Let us put
$$
C^{\infty}_{ij}:= \{\phi \in C^{\infty}(\R^3): \supp\phi\subset O_{ij}\} \quad \text{and} \quad \mu_{ij}:= \mu\lfloor_{O_{ij}} \text{ for } 1\leqslant i,j \leqslant m.
$$

Then if $u \in H^1_{\mu}$ solves \eqref{cieplo3} it also satisfies
\begin{equation}
\int_{O_{ij}}\widetilde{B_{\mu}}\nabla_{\mu_{ij}}u \cdot \nabla_{\mu_{ij}} \phi d\mu_{ij} = \int_{O_{ij}}\widetilde{f} \phi d\mu_{ij}
\end{equation}
for all $\phi \in C^{\infty}_{ij}.$ Now we can conclude that if $S_{ij}:= O_{ij} \cap S$ and we know that $u \in H^2_{\loc}(S_{ij}),$ then $u \in H^2_{\loc}(S_i) $ for all $1\leqslant i \leqslant m$.

Secondly, we address the "flatten-out" assumption. It was justified since after decomposing the general structure $S$ into substructures $S_{ij},\; 1\leqslant i,j \leqslant m$ it is possible to use the proper composition of diffeomorphisms to obtain the demanded "flat" structures. As we work with closed manifolds, the diffeomorphic changes of manifolds produce some smooth densities that are bounded and isolated from zero. This means that such changes do not impact the convergence in the norms of the spaces $H^1_{\mu},$ $H^2(S_i)$ or $D(A_{\mu})$. For the construction and detailed discussion of the related diffeomorphisms, see the beginning of the proof of Theorem \ref{main} in {Section \ref{41}.}

Collecting all the elements of our reasoning, we arrive at the
\begin{tw}\label{globreg}
Let
$$
S = \bigcup_{i=1}^m S_i \quad \text{and} \quad \mu = \sum_{i=1}^m\mathcal{H}^{\dim S_i}\lfloor_{S_i}.
$$
If $u \in H^1_{\mu}$ is a weak solution to  \eqref{cieplo3} on $S$ with $B_{\mu}\lvert_{S_i}\in W^{1,\infty}(S_i),\; 1 \leqslant i \leqslant m.$ Then ${u\in H^2_{\loc}(S_i)}$ for $1 \leqslant i \leqslant m.$ $\qedhere$
\end{tw}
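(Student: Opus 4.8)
The plan is to reduce the general statement to the model cases already settled in Theorem \ref{wazne}, its one-dimensional analogue, and Theorem \ref{sobreg}, by localising around each junction set and then straightening it out. First I would invoke the separation property \eqref{pokrywka}: by conditions LDS2--LDS3 the junction sets $\Sigma_{ij} = S_i \cap S_j$ are finite in number, each being a single smooth arc or point, and no triple intersections occur; hence they admit mutually disjoint open neighbourhoods $O_{ij} \subset \R^3$. Set $\mu_{ij} := \mu\lfloor_{O_{ij}}$, $S_{ij} := O_{ij} \cap S$ and $C^{\infty}_{ij} := \{\phi \in C^{\infty}(\R^3) : \supp\phi \subset O_{ij}\}$. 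Testing \eqref{cieplo3} only against $\phi \in C^{\infty}_{ij}$ shows that the restriction of $u$ is itself a weak solution of the low-dimensional elliptic problem associated with $\mu_{ij}$; and $O_{ij}$ may be chosen small enough (using LDS3) that $S_{ij}$ consists precisely of the two sheets $S_i \cap O_{ij}$ and $S_j \cap O_{ij}$ meeting along $\Sigma_{ij}$, while $\widetilde{B_{\mu}}\lfloor_{S_i}\in W^{1,\infty}(S_i)$ restricts to $W^{1,\infty}(S_i\cap O_{ij})$.

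Next I would straighten $S_{ij}$. Exactly as in the opening of the proof of Theorem \ref{main} in Section \ref{41}, a suitable composition of diffeomorphisms of $\R^3$ carries $S_{ij}$ to one of the three flat model configurations: two transversal flat discs, two transversal flat segments, or a flat disc met by a transversal flat segment. Such diffeomorphisms push the Hausdorff measures forward to measures absolutely continuous with respect to the model ones, with smooth densities bounded and separated from zero; by Remark \ref{ogmiary} they therefore preserve membership and convergence in $H^1_{\mu}$, in each $H^2(S_i)$, and in $D(A_{\mu})$, and they also preserve ellipticity and the $W^{1,\infty}$ bound on the coefficient matrix (with a possibly different constant). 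Consequently the transported function is a weak solution of a problem of the form \eqref{cieplo3} on a model structure, and Theorem \ref{wazne} (two flat discs), the stated analogue for two flat segments, or Theorem \ref{sobreg} (different dimensions) yields $u \in H^2_{\loc}$ on each of the two local sheets; pulling back through the diffeomorphism gives $u \in H^2_{\loc}(S_{ij})$.

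Finally I would patch. Away from the closed set $\bigcup_{i\neq j}\Sigma_{ij}$ the structure $S$ is locally a single smooth manifold, on which \eqref{cieplo3} reduces to a classical uniformly elliptic equation with $W^{1,\infty}$ coefficients, so interior $H^2$ regularity is classical (see \cite[Ch. 6.3]{Eva10}). Since every point of $S_i$ lies either in some $O_{ij}$ or in this complementary region, covering $S_i$ by such charts and using that $H^2_{\loc}$ is a local property gives $u \in H^2_{\loc}(S_i)$ for all $1 \leqslant i \leqslant m$.

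The step I expect to be the main obstacle is the flattening: one must verify that the diffeomorphisms constructed in Section \ref{41} can be arranged to act simultaneously and compatibly on both sheets of $S_{ij}$, and that after the change of variables the coefficient field still satisfies the hypotheses of the model theorems. This is precisely where the smoothness of the component manifolds in Definition \ref{lds} and the density/equivalence argument of Remark \ref{ogmiary} are essential; everything else is bookkeeping with partitions of $S$ into the substructures $S_{ij}$ and their complement.
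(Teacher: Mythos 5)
Your proposal follows essentially the same route as the paper: localise around the mutually isolated junctions $\Sigma_{ij}$ with the disjoint neighbourhoods $O_{ij}$ and test functions supported there, flatten each local two-sheet configuration by the diffeomorphisms from the proof of Theorem \ref{main} (whose smooth, bounded, separated-from-zero densities preserve the relevant norms), and then invoke Theorem \ref{wazne}, its one-dimensional analogue, or Theorem \ref{sobreg} on the model structures. Your explicit patching step away from the junctions, via classical interior elliptic regularity, is also exactly what the paper relies on (it is used already in Step 1), so the argument is correct and matches the paper's proof.
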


\section{Continuity of solutions}\label{62}

In this chapter, we show how we can apply the main regularity result to obtain continuity of solutions in the case of structures with a constant dimension of components.

In other words, we prove that if a measure $\mu$ belongs to the class $\mathcal{S}$ and moreover
\begin{equation}\label{const_dim}
\supp\mu = S=\bigcup_{i=1}^mS_i, \quad \dim S_1 = \ldots = \dim S_m = k = 1, 2,
\end{equation} then weak solutions to elliptic Problem \eqref{cieplo3} are continuous.

\begin{tw}[Continuity of solutions]\label{ciag}
Let $S=S_1\cup S_2,\; \dim S_1 = \dim S_2.$ Let $u \in H^1_{\mu}$ be a weak solution of Problem \eqref{weak} and ${B_{\mu}\lvert_{S_i}\in W^{1,\infty}(S_i),}$ for  $i = 1, 2.$ Then $u \in C(S).$
\end{tw}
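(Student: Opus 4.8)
The plan is to bootstrap from the componentwise $H^2$ regularity already established in Theorem~\ref{globreg} and then reconcile the two restrictions of $u$ across the junction set by means of the trace–matching Proposition~\ref{slady}, finishing with a gluing argument.

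First I would invoke Theorem~\ref{globreg} (equivalently Theorem~\ref{wazne}), applicable since $\dim S_1=\dim S_2$ and $B_\mu|_{S_i}\in W^{1,\infty}(S_i)$, to obtain $u_i:=u|_{S_i}\in H^2_{\loc}(S_i)$ for $i=1,2$. Because $\dim S_i\in\{1,2\}$, the Sobolev embedding gives $H^2_{\loc}(S_i)\hookrightarrow C(\intel S_i)$ (in the one–dimensional case even $H^1([-1,1])\hookrightarrow C([-1,1])$ already suffices). To reach the smooth boundary $\partial S_i$ in the two–dimensional case, I would observe that LDS1 forces $\partial S_i\subset\partial\Omega$, whence $\partial S_i\cap S_j=\emptyset$; so in a neighbourhood of $\partial S_i$ the measure $\mu$ reduces to $\mathcal H^{2}\lfloor_{S_i}$ and $u$ solves there the classical Neumann problem with $W^{1,\infty}$ coefficients. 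Classical boundary elliptic regularity (cf. the appendix of \cite{Las21}, or \cite[Ch.~6.3.2]{Eva15}) then yields $u_i\in H^2$ up to $\partial S_i$, hence $u_i\in C(S_i)$.

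Next I would treat the junction $\Sigma=S_1\cap S_2$, which by the same boundary considerations lies in the interior of both components. Proposition~\ref{slady}, valid precisely because $\dim S_1=\dim S_2$, gives $\tr^\Sigma u_1=\tr^\Sigma u_2$ in $L^2(\Sigma)$. Since each $u_i$ is continuous on $S_i$ and lies in $H^2_{\loc}$ near $\Sigma$, its $\Sigma$–trace coincides with the pointwise restriction $u_i|_\Sigma\in C(\Sigma)$; two continuous functions on $\Sigma$ that agree in $L^2(\Sigma)$ agree everywhere, so $u_1|_\Sigma=u_2|_\Sigma$ pointwise. Then $u$ is a well-defined function on $S=S_1\cup S_2$ that is consistent on the overlap, $S_1,S_2$ are closed in $S$, each $u_i$ is continuous, and they match on $S_1\cap S_2$; the pasting lemma yields $u\in C(S)$.

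The genuinely new content beyond Theorem~\ref{globreg} is slim: it is the pointwise reconciliation of traces on $\Sigma$, and the point to be careful about is ensuring $H^2$ (hence continuity) in a full neighbourhood of $\partial S_i$ in the two–dimensional case, which is what forces the brief appeal to classical boundary regularity and to the separation of $\partial S_i$ from $\Sigma$. The hypothesis $\dim S_1=\dim S_2$ is essential rather than technical: when the dimensions differ, Proposition~\ref{slady} fails (Proposition~\ref{zero_extension} exhibits elements of $H^1_\mu$ equal to a nonzero constant on one component and to $0$ on the other), traces need not match, and continuity across $\Sigma$ may break on the higher–dimensional side, with only the lower–dimensional–side continuity surviving, as reflected in Lemma~\ref{conti}.
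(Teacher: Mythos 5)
Your argument is essentially the paper's own proof: invoke Theorem \ref{globreg} for componentwise $H^2_{\loc}$ regularity, deduce continuity on each $S_i$ by Sobolev embedding, match the restrictions on $\Sigma$ via Proposition \ref{slady}, and glue. The extra care you take near $\partial S_i$ (and the closing remark on why equal dimensions are essential) only fleshes out points the paper leaves implicit, so there is nothing to correct.
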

\begin{proof}
By regularity Theorem \ref{globreg}, we know that $u \in H^2_{\loc}(S_i),\; i=1,2.$ By Proposition \ref{slady}
$$
\tr^{\Sigma}u_1=\tr^{\Sigma}u_2 \quad \text{a.e. on } \Sigma.
$$  As $u\in H^2_{\loc}(S_i)$ implies $u \in C(S_i),$ we have $\tr^{\Sigma}u_i=u_i\lvert_{\Sigma}.$ In this way we conclude that $u_1\lvert_{\Sigma}=u_2\lvert_{\Sigma}$ and thus $u \in C(S).$  
\end{proof}

\begin{uw}
We make the following observations regarding the continuity of $u$:
\begin{enumerate}
\item[a)] In the case $d=1,$ we do not use the fact that $u$ is a weak solution to \eqref{weak}. This shows that continuity is a general property of the space $H^1_{\mu}$ considered on structures built with one-dimensional components.
\item[b)] In the case $d=2$ there exist examples of solutions which belong to $H^1_{\mu}$ and are discontinuous. In light of Theorem \ref{ciag}, any possible discontinuities might come only from the lack of continuity of $u|_{S_i} \in H^1(S_i)$ for some component $S_i.$
\item[c)] For a general $\mu \in \mathcal{S}$, not necessarily satisfying $\dim S_1 = \ldots = \dim S_m$, we cannot ensure that $u\in H^1_\mu$ solving \eqref{weak} is continuous. Intuitively, this is due to the fact that the intersection of some components will be a point, and a single point has zero $W^{1,2}$-capacity in a plane.
\end{enumerate}
\end{uw}

\noindent Note that Theorem \ref{ciag} addresses the case of structures consisting of two components only. As it turns out, we can easily extend this result.

\begin{tw}\label{ciaglosc_ogolne}
Assume that $S$ and $\mu \in \mathcal{S}$ satisfy \eqref{const_dim} and let $u\in H^1_{\mu}$ satisfy \eqref{weak} with ${B_{\mu}\lvert_{S_i}\in W^{1,\infty}(S_i),}$ $1 \leqslant i \leqslant m.$ Then $u \in C(S)$.
\end{tw}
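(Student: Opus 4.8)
The plan is to reduce the general case to the two-component case already settled in Theorem \ref{ciag}, exploiting that continuity is a local property and that, by condition LDS3, at most two component manifolds meet at any point of $S$.

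First I would invoke Theorem \ref{globreg} to get $u\in H^2_{\loc}(S_i)$ for every $1\le i\le m$; combined with standard elliptic regularity up to the (Neumann) boundary $\partial S_i=\partial\Omega\cap S_i$ — exactly the estimates already used in Section \ref{61} — this gives $u|_{S_i}\in C(S_i)$, since $H^2$ embeds continuously into the continuous functions in dimension $k\le 2$. Hence $u$ is continuous at every point lying in the interior of a single component, i.e. at every $p\in S\setminus\widetilde J$, where $\widetilde J=\bigcup_{i\neq j}S_i\cap S_j$. Note that every boundary point of $S$ is of this type, because a point of $S_i\cap S_j$ on $\partial\Omega$ would lie in $\partial S_i\cap\partial S_j=\emptyset$ by LDS1 and LDS2.

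It remains to treat the junction points. Fix $p\in\widetilde J$. By LDS2 and LDS3, $p$ belongs to $S_i\cap S_j$ for exactly one pair $i\ne j$, and by the isolation of junction sets recorded in the construction of the partition of unity (the sets $O_q$ around distinct junctions are pairwise disjoint, cf. also \eqref{pokrywka}) there is an open $O\ni p$ in $\R^3$ with
$$
O\cap S=(O\cap S_i)\cup(O\cap S_j),
$$
so that $O\cap S$ is a two-component low-dimensional structure. Restricting to test functions supported in $O$, the weak formulation \eqref{weak} shows that $u|_{O\cap S}$ is a weak solution of the corresponding local elliptic problem, whose coefficient matrix still satisfies $B_\mu|_{S_i\cap O},\,B_\mu|_{S_j\cap O}\in W^{1,\infty}$. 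Composing with a suitable diffeomorphism, as at the beginning of the proof of Theorem \ref{main} in Section \ref{41}, we may assume $O\cap S$ is "straightened out"; such a change produces only smooth densities that are bounded and separated from zero, hence preserves the weak-solution property and all the norms involved. Theorem \ref{ciag} now applies on $O\cap S$ and yields $u\in C(O\cap S)$, in particular continuity of $u$ at $p$.

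Since $S=(S\setminus\widetilde J)\cup\widetilde J$ and we have produced, around every point of $S$, a neighbourhood in the inherited topology on which $u$ is continuous, we conclude $u\in C(S)$. The only step demanding genuine care is the reduction to two components near a junction, and this is precisely where assumption LDS3 (absence of triple intersections) enters: it guarantees that locally the structure is a bona fide two-component structure, so that Theorem \ref{ciag} is applicable without modification.
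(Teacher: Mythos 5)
Your proof is correct and takes essentially the same route as the paper: away from the junction sets, componentwise $H^2_{\loc}(S_i)$ regularity (Theorem \ref{globreg}) yields $u\in C(S_i)$, while near each junction the isolating covering \eqref{pokrywka} reduces the problem to a two-component structure on which Theorem \ref{ciag} applies, and the two facts combine to give $u\in C(S)$. The additional details you supply (treatment of boundary points via LDS1--LDS2, the straightening diffeomorphism, and the role of LDS3 in the local two-component reduction) merely make explicit what the paper's short proof leaves implicit.
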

\begin{proof}
By Theorem \ref{ciag}, we have that $u \in C(O_{ij})$ for $1 \leqslant i,j \leqslant m,$ where the sets $O_{ij}$ are elements of the covering as in \eqref{pokrywka}. Regularity Theorem \ref{sobreg} implies that on each component $u \in H^2_{\loc}(S_i),\; 1 \leqslant i \leqslant m,$ thus $u \in C(S_i).$ These two results immediately yield the global continuity, that is $u \in C(S).$ 
\end{proof}

\section{Membership in the domain of the measure-related second-order operator}\label{63}

Having established the higher Sobolev-type regularity, it is only natural to consider whether it is possible to obtain a strong form of the original equation. To this end, it is necessary to introduce second-order differential operators defined on structures in $\mathcal{S}$. 

We focus on showing that thanks to the regularity result of {Section \ref{61}}, it is possible to show that a weak solution $u$ of the elliptic problem belongs to the domain of the low-dimensional second-order differential operator.

Firstly, we need the basic notions of the low-dimensional second-order framework.
The definition of the second-order operator $A_{\mu},$ its domain $D(A_{\mu}),$ the operator $\nabla^2_{\mu},$ the Cosserat vector field $b$ and discussion of their basic properties can be found in Section \ref{sec_ord_frame}.

The regularity result of {Section \ref{62}} is local -- we have shown $H^2$-regularity on subsets that are compactly embedded in component manifolds of a low-dimensional structure. To avoid extensive technicalities, we do not aim for up-to-the-boundary regularity and choose to keep working in a local setting. This calls for further modification.

Let $u \in H^1_\mu$ be a solution to \eqref{cieplo3}. Let us define a low-dimensional structure $\overline{S}$ satisfying $ \overline{S} \subset\subset \intel S$ and consider $\overline{\mu}:= \mu\lfloor_{\overline{S}}$. Abusing notation a bit, we will denote $\overline{\mu}$ as $\mu$. This allows us to write $u \in H^2(S_i),$ where $S_i$ is a component manifold of $\supp \overline{\mu}.$ Let us clearly state that with this procedure, we do not bother with behaviour on the boundary.

As an intermediate step, we need to establish an auxiliary result showing that the smoothness of the force term is propagated to the solution.

\begin{prop}\label{gladkiereg}
Let $u \in H^1_{\mu}$ be a solution to \eqref{weak} with $B_{\mu} = \text{Id}_{\mu}$ and $f\in C^\infty(\R^3)$. Then it implies that $u \in C^{\infty}(\R^3).$
\end{prop}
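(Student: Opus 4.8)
The plan is to bootstrap the regularity machinery of Section~\ref{61}. First I would reduce, exactly as at the end of Section~\ref{61} (localizing over a covering as in \eqref{pokrywka} and flattening the components by the diffeomorphisms used at the start of the proof of Theorem~\ref{main}), to the model situation of two straightened components $S_1,S_2$ meeting along $\Sigma$, so that near $\Sigma$ the structure is a genuine product in the junction coordinate $y$. When $\dim S_1\neq\dim S_2$ the claim is then easy: by Proposition~\ref{zero_extension} one may use test functions supported on a single component near the junction, so $u$ solves a uniformly elliptic equation $-\diverg(\theta_i\nabla u)=\theta_i f$ with smooth coefficients on each $S_i$, and the classical interior bootstrap gives $u\in C^\infty_{\loc}(S_i)$ at once. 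So I would concentrate on $\dim S_1=\dim S_2$. Away from $\Sigma$ the equation decouples into the same classical elliptic equation on $S_1^{\pm}$ and $S_2^{\pm}$, hence $u\in C^\infty_{\loc}(S_i\setminus\Sigma)$ by standard bootstrap; all the work is near $\Sigma$.

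Near $\Sigma$ I would induct on the order of differentiation in the flat junction direction $y$, setting $v_k:=\partial_y^k u$. By Theorem~\ref{globreg} we start from $u\in H^2_{\loc}(S_i)$ and, by Step~1 of the proof of Theorem~\ref{wazne}, from $\tr^\Sigma u\in H^1(\Sigma)$. Assuming $v_k\in H^2_{\loc}(S_i)$ is an $H^1_\mu$ weak solution of the same problem with smooth datum $\partial_y^k f$, I claim the same for $v_{k+1}=\partial_y v_k$ with datum $\partial_y^{k+1}f$: the membership $v_{k+1}|_{S_i}=\partial_y v_k|_{S_i}\in H^1_{\loc}(S_i)$ is clear, the commutation $\tr^\Sigma\partial_y=\partial_y\tr^\Sigma$ on each component (obtained as in Step~1 of the proof of Theorem~\ref{wazne}) together with Proposition~\ref{slady} gives $\tr^\Sigma v_{k+1}|_{S_1}=\tr^\Sigma v_{k+1}|_{S_2}$, so $v_{k+1}\in H^1_\mu$ locally, since on such structures $H^1_\mu$ is precisely the space of componentwise $H^1$ functions whose $\Sigma$-traces agree (Proposition~\ref{slady} and a standard gluing argument); and testing the equation for $v_k$ against $\partial_y\phi$, commuting $\partial_y$ with $\nabla_\mu$ on each component and integrating by parts in the flat direction $y$ yields $\int\nabla_\mu v_{k+1}\cdot\nabla_\mu\phi\,d\mu=\int(\partial_y^{k+1}f)\phi\,d\mu$. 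Applying Theorem~\ref{globreg} and Step~1 of the proof of Theorem~\ref{wazne} to $v_{k+1}$ closes the induction: $\partial_y^k u\in H^2_{\loc}(S_i)$ and $\partial_y^k\tr^\Sigma u\in H^1(\Sigma)$ for every $k$, so in particular $\tr^\Sigma u\in C^\infty(\Sigma)$.

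It remains to trade this tangential regularity for full regularity near $\Sigma$. On $S_1$, away from $\Sigma$ one has $\partial_x^2 u_1=-f_1-\partial_y^2 u_1$ (up to smooth lower-order terms if densities are present), and since all $\partial_x^a\partial_y^b u_1$ with $a\leqslant 2$ are now controlled, differentiating this identity in $x$ and inducting on the $x$-order — using that the jumps across $\Sigma$ of all lower-order derivatives are governed by the Kirchhoff-type transmission condition at $\Sigma$ coming from the $H^1_\mu$-coupling, an elliptic transmission condition amenable to the usual bootstrap — gives $u_1\in H^k_{\loc}(S_1)$ for all $k$, hence $u_1\in C^\infty_{\loc}(S_1)$ by Sobolev embedding; symmetrically on $S_2$. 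Finally, since $u_1$, $u_2$ and $\tr^\Sigma u$ are smooth, the explicit extension formula from Step~2 of the proof of Theorem~\ref{wazne}, $\widetilde u(x,y,z):=u(0,y,z)-(\tr^\Sigma u)(0,y,0)+u(x,y,0)$, defines a genuine function of class $C^\infty(\R^3)$ with $\widetilde u|_S=u$; patching over the finitely many junctions with the partition of unity of Section~\ref{61} produces the asserted smooth representative, i.e. $u\in C^\infty(\R^3)$.

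The main obstacle I anticipate is the middle step — making rigorous that tangential differentiation preserves both membership in $H^1_\mu$ (via the matching-trace characterization and the commutation $\tr^\Sigma\partial_y=\partial_y\tr^\Sigma$) and the weak formulation (the integration by parts in $y$ is harmless precisely because $y$ is a flat direction common to both components) — together with the clean handling of the elliptic transmission problem at $\Sigma$ when passing from tangential to full regularity. Away from $\Sigma$, and in the unequal-dimension case, everything is classical.
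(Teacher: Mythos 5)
Your route is genuinely different from the paper's, but as written it has two real gaps, both at the heart of the argument. First, your tangential induction hinges on the claim that, near a junction of equal-dimensional components, $H^1_\mu$ \emph{coincides} with the space of componentwise $H^1$ functions whose $\Sigma$-traces agree, "by Proposition \ref{slady} and a standard gluing argument". Proposition \ref{slady} gives only one implication ($u\in H^1_\mu$ forces matching traces); the converse is nowhere established and is not routine -- membership in $H^1_\mu$ requires an approximating sequence of globally smooth functions, and building one from componentwise data is precisely the delicate extension construction of Step 2 of Theorem \ref{wazne}, which the thesis explicitly warns is \emph{not} valid for arbitrary $H^1_\mu$ functions. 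Without this characterisation you cannot conclude $v_{k+1}=\partial_y^{k+1}u\in H^1_\mu$, nor legitimately apply Theorem \ref{globreg} to $v_{k+1}$ (which must be a weak solution in the sense of Definition \ref{defin}), so the induction does not close. Second, the passage from tangential to full regularity across $\Sigma$ rests on an unspecified "Kirchhoff-type transmission condition ... amenable to the usual bootstrap". The weak formulation with coupled test functions only yields a balance of conormal jumps summed over the two components; it does not by itself force the jump of $\partial_x u_1$ across $\Sigma$ to vanish, which is what your $x$-induction needs. What is actually required is the \emph{decoupling} of the equation: that $u_i$ satisfies $-\Delta_{S_i}u_i=f_i$ distributionally on a full neighbourhood of $\Sigma$ inside $S_i$. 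You never prove this, and it is exactly the crux.

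For comparison, the paper's proof is short and proves the decoupling directly: after integrating by parts on each component (legitimate since Theorem \ref{globreg} gives $u_i\in H^2_{\loc}(S_i)$ across $\Sigma$), it inserts test functions $\phi^\epsilon$ that equal $\phi_1$ on $S_1$ but whose $S_2$-part is supported in $\{|z|<\epsilon\}$ while agreeing with $\phi_2$ near $\Sigma$ (so $\phi^\epsilon\in H^1_\mu$); letting $\epsilon\to0$ kills the $S_2$-terms and yields $-\Delta_{S_1}u_1=f_1$ a.e.\ in $O_{12}\cap S_1$, whence classical interior regularity gives $u_i\in C^\infty$ on each component and the Whitney Extension Theorem produces the global smooth representative. (A quick fix for your argument in the same spirit: since $u_1\in H^2_{\loc}(S_1)$ across $\Sigma$ and the pointwise identity $-\Delta u_1=f_1$ holds off the null set $\Sigma$, the distributional equation holds across $\Sigma$, and the classical bootstrap applies -- but this observation replaces, rather than repairs, your transmission-condition step, and it makes the whole tangential induction unnecessary.) Your treatment of the unequal-dimension case via Proposition \ref{zero_extension} and the final smooth-extension step are fine in spirit, but they do not compensate for the two gaps above.
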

\begin{proof}
Take the covering of the structure $S$ with $\R^3$-open sets $O_{ij}, \; i,j\in \{1,...,m\}$ such that $\Sigma_{ij}=S_i\cap S_j \subset O_{ij}$ and $O_{ij}$ is isolated from $\Sigma_{i'j'}$ if $i'\neq i$ or $j' \neq j.$

Let $\phi \in C^{\infty}(\R^3),$  ${\phi_i := \phi\lvert_{S_i} \in C^{\infty}_c(S_i),\; i=1,...,m,}$ and - for simplicity - let us assume that \hbox{$\supp \phi \cap S \subset O_{12}$.} The same argument works for an arbitrary covering element $O_{ij}, \, i\neq j$.

As the support of $\phi$ touches only $S_1$ and $S_2,$ we decompose the left-hand side of \eqref{cieplo3} as
$$
\int_{\Omega}\nabla_{\mu}u \cdot \nabla_{\mu} \phi d\mu = \int_{S_1} \nabla_{S_1}u_1 \cdot \nabla_{S_1} \phi_1dS_1 + \int_{S_2} \nabla_{S_2} u_2 \cdot \nabla_{S_2}\phi_2dS_2.
$$

After integrating by parts, we obtain for $i=1,2$
$$
\int_{S_i}\nabla_{S_i} u_i \cdot \nabla_{S_i}\phi_i dS_i = - \int_{S_i}(\Delta_{S_i}u_i)\phi_i dS_i.
$$

Plugging this into equation \eqref{cieplo3} and moving the $S_2$-integral to the right-hand side, we arrive at
\begin{equation}\label{hauhau}
-\int_{S_1}(\Delta_{S_1}u_1)\phi_1dS_1 = \int_{S_2}(\Delta_{S_2}u_2)\phi_2 dS_2 + \int_{S_1}f_1 \phi_1 dS_1 + \int_{S_2}f_2 \phi_2 dS_2.
\end{equation}

\noindent Now, let $\phi_2^\epsilon \in C^\infty(\R^3)$ satisfy
$$
\supp\phi_2^{\eps} \subset \{(x,y,z)\in \R^3: |z| < \eps\}
\quad
\text{and}
\quad
\phi_2^{\eps}=\phi_2 \text{ on } \left\{(x,y,z)\in \R^3: |z| < \frac{\eps}{2}\right\}
$$
and consider
$$
\phi^\epsilon := \begin{cases}
    \phi_1 &\text{ on } S_1,\\
    \phi_2^\epsilon &\text{ on } S_2.
\end{cases}
$$
By the construction, we have $\phi^{\eps} \in H^1_{\mu}$; thus, it is an admissible test function.

Since in equation \eqref{hauhau} no derivative acts on either $\phi_1$ or $\phi_2$, we can easily pass with $\eps$ to zero, obtaining
$$
\int_{S_2} (\Delta_{S_2}u_2)\phi_2^{\eps}dS_2 \xrightarrow{\eps \to 0}0
\text{ and }
\int_{S_2}f_2 \phi_2 dS_2 \xrightarrow{\eps \to 0}0.
$$

As a consequence, this gives
$$
-\int_{S_1}(\Delta_{S_1}u_1)\phi_1dS_1 = \int_{S_1}f_1\phi_1dS_1.
$$

Since $\phi_1 \in C^{\infty}_c(O_{12} \cap S_1)$ can be chosen arbitrary, we conclude
$$
-\Delta_{S_1}u_1 = f_1 \text{ a.e. in } O_{12} \cap S_1.
$$
This implies that $u_1 \in C^{\infty}(O_{12}\cap S_1).$ We proceed analogously on the second component manifold $S_2$ and obtain $u_2 \in C^{\infty}(O_{12}\cap S_2).$ Substituting $O_{12}$ with other elements of the covering, we conclude $u_i \in C^{\infty}(S_i)$ for all $1 \leqslant i \leqslant m.$ Applying the Whitney Extension Theorem to $u$, we obtain $u \in C^{\infty}(\R^3)$.
\end{proof}

With the above result at hand, we show that $u$ belongs to the domain of $A_\mu$.

For clarity in the presentation of the next result, let us abandon the identification of the structure $S = \supp \mu$ with its compactly embedded substructure $\overline{S} = \supp \overline{\mu}.$

\begin{tw}\label{nalezenie}
Let $u \in H^1_{\mu}$ satisfy equation \eqref{weak} with $B_{\mu} = \text{Id}_{\mu}.$ Assume that $\overline\mu,\;$ \hbox{$\supp{\overline\mu}=\overline{S}$} is a low-dimensional measure such that $\intel \overline{S} \subset \subset \intel S$ (in the inherited {topology on $S$).} Then there exists $b \in L^2_{\mu}(\R^3; T_{\mu}^{\perp})$ such that $(u,b) \in D(A_{\overline\mu}).$
\end{tw}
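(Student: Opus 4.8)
The goal is to produce, for a weak solution $u$ of the elliptic problem with $B_\mu = \mathrm{Id}_\mu$, a Cosserat vector field $b$ with $(u,b) \in D(A_{\overline\mu})$. The strategy is to exploit the closedness of $A_{\overline\mu}$ (Lemma \ref{comp}) together with the smooth-approximation/regularity machinery developed in Section \ref{63}. First I would reduce to the situation covered by Proposition \ref{gladkiereg}: pick a sequence $f_n \in C^\infty(\R^3)$ with $f_n \to f$ in $L^2_{\mu}$ (and with the mean-zero normalisation preserved), and let $u_n \in \mathring{H^1_{\overline\mu}}$ be the weak solution of \eqref{weak} with right-hand side $f_n$ on the substructure $\overline S$. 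By Proposition \ref{gladkiereg} each $u_n$ is smooth, $u_n \in C^\infty(\R^3)$, hence $(u_n, \nabla^\perp u_n) \in D(A_{\overline\mu})$ trivially, with $A_{\overline\mu}(u_n,\nabla^\perp u_n) = Q_{\overline\mu}(\nabla^2 u_n)$.

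\textbf{Convergence of the data and of $u_n$.} The next step is to show $u_n \to u$ in $L^2_{\overline\mu}$ and, crucially, that $A_{\overline\mu}(u_n, \nabla^\perp u_n)$ converges in $(L^2_{\overline\mu})^{3\times 3}$. Stability of the linear elliptic problem (continuous dependence on the right-hand side, via the generalised Poincaré inequality \eqref{weakPoincar\'e} and the Lax--Milgram argument of \cite{Ryb20}) gives $u_n \to u$ in $H^1_{\overline\mu}$, so $u_n \to u$ in $L^2_{\overline\mu}$ and $\nabla^\perp u_n$ is the natural candidate Cosserat sequence. For the second-order convergence I would use the componentwise picture: on each component manifold $S_i$ of $\overline S$ the equation reduces (as in Section \ref{61}, Section \ref{63}) to $-\Delta_{S_i} u_i = f_i$, so classical interior elliptic estimates on a slightly larger substructure give $\|u_n - u_m\|_{H^2(S_i')} \le C(\|f_n - f_m\|_{L^2(S_i)} + \|u_n - u_m\|_{L^2(S_i)})$ for $S_i' \subset\subset S_i$; combined with Theorem \ref{globreg} (which already gives $u \in H^2_{\loc}(S_i)$) this yields $u_n \to u$ in $H^2(S_i)$ on the components of $\overline S$. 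Since $\nabla^\perp u_n$ is bounded and convergent in $L^2_{\overline\mu}$ — its components are first derivatives of $u_n$ in normal directions, controlled by the same $H^2$ estimates — we get $b_n := \nabla^\perp u_n \to b$ in $L^2_{\overline\mu}(\R^3; T_{\overline\mu}^\perp)$ for some $b$, and by Remark \ref{blormk} and formula \eqref{prop2} the convergence $\nabla_{\overline\mu} u_n + b_n \to \nabla_{\overline\mu} u + b$ in $H^1_{\overline\mu}$ forces $A_{\overline\mu}(u_n, b_n)$ to converge in $(L^2_{\overline\mu})^{3\times 3}$.

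\textbf{Closing the argument.} Having $u_n \to u$, $b_n \to b$, and $A_{\overline\mu}(u_n,b_n) \to D$ all in the appropriate $L^2_{\overline\mu}$ senses, the closedness of $A_{\overline\mu}$ (Lemma \ref{comp}(a), valid since $\overline\mu \in \mathcal{S}$) gives immediately $(u,b) \in D(A_{\overline\mu})$ and $D = A_{\overline\mu}(u,b)$, which is exactly the claim. I would note that the restriction to the compactly embedded substructure $\overline S$ is what makes the interior $H^2$ estimates and Proposition \ref{gladkiereg} applicable without any boundary analysis, and that the mean-zero normalisations are harmless because $\ker \nabla_{\overline\mu}$ is finite-dimensional and one can always subtract the appropriate projections $P_k$ of Lemma \ref{gpi}.

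\textbf{Main obstacle.} The delicate point is the second-order convergence of the approximating sequence: one must be sure that controlling $f_n \to f$ in $L^2_{\overline\mu}$ actually upgrades to $H^2_{\loc}$-convergence of $u_n$ on every component simultaneously, including uniform control near (but not on) the junction sets $\Sigma_{ij}$. This is where the local reduction of Section \ref{61} and the covering $\{O_{ij}\}$ of \eqref{pokrywka} do the real work — on each $O_{ij} \cap \overline S$ the problem decouples into two classical interior estimates, and the generalised difference-quotient bounds already established in the proof of Theorem \ref{wazne} show the constants are independent of $n$. Assembling these local pieces via the partition of unity (Proposition \ref{propp}) to get a single Cosserat field $b$ on all of $\overline S$ is the only genuinely technical step; everything else is a direct appeal to results already in the excerpt.
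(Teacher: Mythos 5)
Your first half matches the paper: approximate the right-hand side by smooth functions $g_n$, note via Proposition \ref{gladkiereg} that the corresponding solutions $v_n$ are in $C^{\infty}(\R^3)$, and use the energy/Poincar\'e stability estimate to get $v_n \to u$ in $H^1_{\mu}$. The gap is in your closing step. You want to apply Lemma \ref{comp} (closedness of $A_{\overline\mu}$), which requires convergence in $L^2_{\overline\mu}$ of the Cosserat sequence $b_n := \nabla^{\perp}v_n$ \emph{and} of $A_{\overline\mu}(v_n,b_n)$, and you claim both follow because the normal derivatives are ``controlled by the same $H^2$ estimates.'' They are not: the componentwise interior estimates control only tangential derivatives of $v_n\lvert_{S_i}$, whereas $\nabla^{\perp}v_n$ on $S_i$ is the derivative of the ambient (Whitney-type) extension in the direction normal to $S_i$, which is not determined by the equation on the structure and need not converge at all as $n\to\infty$. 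Likewise the convergence of $\nabla_{\overline\mu}v_n + b_n$ in $(H^1_{\overline\mu})^3$, which you need before invoking Remark \ref{blormk} and \eqref{prop2}, is exactly the uncontrolled quantity. This is precisely the obstruction the paper emphasizes (the Cosserat field is not controlled by $u_n$), and overcoming it is the content of the proof of Theorem \ref{main}, where the Cosserat fields are explicitly modified (the fields $\widetilde{b}^m_n$ obtained by ``copying'' tangential derivatives between components). Your argument, as written, silently re-proves that theorem without its key construction.

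The paper's proof avoids the issue entirely: since $v_n$ solves the equation with datum $g_n$, one has $\Delta_{\overline\mu}v_n = g_n \to h$ in $L^2_{\overline\mu}$ for free, and $v_n \to u$ in $L^2_{\overline\mu}$; then the closedness of the second-order operator $\Delta_{\overline\mu}$ on $D(\Delta_{\overline\mu})$ — Theorem \ref{main}, whose hypotheses involve only these two $L^2_{\overline\mu}$-convergences and no control of Cosserat fields — gives $u \in D(\Delta_{\overline\mu})$, and the existence of some $b$ with $(u,b)\in D(A_{\overline\mu})$ is then immediate from the definition of $D(\Delta_{\overline\mu}) = D(\nabla^2_{\overline\mu})$. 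So the fix is to replace your final appeal to Lemma \ref{comp} (which needs data you do not have) by an appeal to Theorem \ref{main}; the rest of your plan then goes through.
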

\begin{proof}
Firstly, we deal with the case of a regular right-hand side. Let $f \in \mathring{L^2_{\mu}}$ and additionally let us assume that $f \in C^{\infty}(\R^3).$

By Proposition \ref{gladkiereg} we know that $u\in \mathring{H^1_{\mu}}$ being a solution of
$$
\int_{\Omega}\nabla_{\mu}u \cdot \nabla_{\mu}\phi d\mu = \int_{\Omega} f\phi d\mu \quad \forall \phi \in C^\infty(\R^3)
$$ has the property that $u\lvert_{\overline{S}}$ can be extended to $u\lvert_{\overline{S}} \;\in C^{\infty}(\R^3),$ where $\overline{S}$ is as before a compactly embedded substructure.

Now, for a given $h \in \mathring{L^2_{\mu}}$, let $u^h\in \mathring{H^1_{\mu}}$ be a solution to \eqref{weak} with $h$ as the right hand side.

Consider a sequence $g_n$ such that
$$
g_n \in C^\infty(\R^3) \cap \mathring{L^2_{\mu}}, \quad g_n \xrightarrow{L^2_\mu} h
$$
and denote solution corresponding to the right-hand side $g_n$ as $v_n \in \mathring{H^1_\mu}.$

In other words, we have
\begin{align*}
    \int_{\Omega} \nabla_\mu v_n \cdot \nabla_\mu \phi \, d \mu &= \int_{\Omega} g_n \phi \, d\mu, \\
    \int_{\Omega} \nabla_\mu u^h \cdot \nabla_\mu \phi \, d \mu &= \int_{\Omega} h   \phi \, d\mu.
\end{align*}

Subtracting both sides, we obtain
$$
\int_{\Omega}\nabla_{\mu}(v_n-u^h) \cdot \nabla_{\mu}\phi \,d\mu = \int_{\Omega}(g_n-h)\phi \,d\mu
$$
which yields the estimate
\begin{equation*}\label{nierownosc}
\left|\int_{\Omega}\nabla_{\mu}(v_n-u^h)\cdot \nabla_{\mu}\phi \, d\mu \right|
\leqslant \int_{\Omega}|g_n-h||\phi| \, d\mu
\end{equation*}

Choosing $\phi:= v_n -u^h \in H^1_\mu$ as a test function, we get
\begin{equation*}\label{nierownosc2}
\int_{\Omega}|\nabla_{\mu}(v_n-u^h)|^2 d\mu\leqslant \int_{\Omega}|g_n-h||v_n-u^h|d\mu
\end{equation*}
and after applying the Young inequality with the $\eps$ we conclude
\begin{equation*}\label{nierownosc3}
\begin{aligned}
C\int_{\Omega}|\nabla_{\mu}(v_n-u^h)|^2 d\mu\leqslant \norm{g_n-h}^2_{L^2_{\mu}},
\end{aligned}
\end{equation*}
for some positive constant $C.$

Passing with $n$ to infinity, we see that
$$
\nabla_{\mu} v_n \xrightarrow{L^2_{\mu}}\nabla_{\mu}u^h.
$$

Notice that $\int_{\Omega}v_nd\mu=0$ and $\int_{\Omega}u^hd\mu=0.$ Now, the weak Poincar{\'e} inequality \eqref{weakPoincar\'e} implies that
$$
v_n \xrightarrow{H^1_\mu} u^h.
$$

Now, we restrict further considerations on a compactly embedded $\overline{S} \subset S$ with the corresponding measure $\overline{\mu}.$
Theorem \ref{main} provides that the operator $\Delta_{\overline\mu}: D(A_{\overline\mu}) \to L^2_{\mu}$ is closed.

From a definition of the domain $D(A_{\overline\mu})$ it follows that $v_n \in C^{\infty}(\R^3)$ implies $v_n \in D(A_{\overline\mu}).$ Since we already know
$$
v_n\xrightarrow{L^2_{\overline\mu}}u^h,
\quad
\Delta_{\overline\mu}v_n = g_n\xrightarrow{L^2_{\overline\mu}}h,
$$
we derive
$$
u^h \in D(\Delta_{\overline\mu}) \text{\; and \;} \Delta_{\overline\mu}u^h=h.
$$

By the definition of the domain $D(\Delta_{\overline\mu})$, there exists
$b_{u^h} \in L^2_{\overline\mu}(\R^3; T_{\overline\mu}^{\perp})$ such that $$(u^h,b_{u^h})\in D(A_{\overline\mu}).$$
\end{proof}

We decided to introduce the assumption $B_{\mu} = \text{Id}_{\mu}$ in order to simplify the proofs of \hbox{Proposition \ref{gladkiereg}} and Theorem \ref{nalezenie}. Without any qualitative changes, we can follow the methods of those proofs and generalise the results to an arbitrary matrix $B_{\mu}$ satisfying Proposition \ref{relax}. For the sake of readability, we omit the details.

\newpage

\end{document}